\documentclass[12pt]{article}
\usepackage{amsthm,amsfonts,amssymb,amsmath}
\usepackage{fullpage}
\usepackage{enumerate}
\usepackage[colorlinks=true]{hyperref}
\usepackage{xcolor}
\usepackage{tikz-cd}

\newcommand{\BC}{{\mathbb {C}}}

\newcommand{\BQ}{{\mathbb {Q}}}
\newcommand{\BR}{{\mathbb {R}}}

\newcommand{\BZ}{{\mathbb {Z}}}

\newcommand{\CA}{{\mathcal {A}}}

\newcommand{\CD}{{\mathcal {D}}}

\newcommand{\CH}{{\mathcal {H}}}

\newcommand{\CL}{{\mathcal {L}}}
\newcommand{\CM}{{\mathcal {M}}}
\newcommand{\CN}{{\mathcal {N}}}
\newcommand{\CO}{{\mathcal {O}}}
\newcommand{\CP}{{\mathcal {P}}}

\newcommand{\CU}{{\mathcal {U}}}

\newcommand{\CX}{{\mathcal {X}}}
\newcommand{\CY}{{\mathcal {Y}}}

\newcommand{\an}{{\mathrm{an}}}

\newcommand{\Div}{{\mathrm{Div}}}

\renewcommand{\div}{{\mathrm{div}}}

\newcommand{\emb}{{\hookrightarrow}}

\renewcommand{\Im}{{\mathrm{Im}}}

\newcommand{\intb}{{\mathrm{int}}}

\newcommand{\NS}{{\mathrm{NS}}}

\newcommand{\ord}{{\mathrm{ord}}}

\newcommand{\Pic}{\mathrm{Pic}}
\newcommand{\Picc}{{\mathcal{P}\mathrm{ic}}}

\DeclareMathOperator{\Spec}{Spec}

\newcommand{\supp}{{\mathrm{supp}}}

\newcommand{\red}{{\mathrm{red}}}

\newcommand{\sk}{{\mathrm{sk}}}
\newcommand{\val}{{\mathrm{val}}}

\newcommand{\wt}{\widetilde}
\newcommand{\wh}{\widehat}

\newcommand{\pair}[1]{\langle {#1} \rangle}

\newcommand{\ol }{\overline}

\newcommand{\lra}{\longrightarrow}

\newcommand{\fh}{\mathfrak{h}}

\newcommand{\nef}{\mathrm{nef}}

\newcommand{\CDD}{\overline{\mathcal D}}

\newcommand{\OL}{{\overline{L}}}
\newcommand{\OM}{{\overline{M}}}
\newcommand{\OD}{{\overline{D}}}
\newcommand{\ON}{{\overline{N}}}
\newcommand{\OH}{{\overline{H}}}

\newcommand{\TL}{{\widetilde{L}}}

\renewcommand{\AA}{\mathbb{A}}
\newcommand{\CC}{\mathbb{C}}
\newcommand{\RR}{\mathbb{R}}
\newcommand{\ZZ}{\mathbb{Z}}
\newcommand{\QQ}{\mathbb{Q}}
\newcommand{\FF}{\mathbb{F}}
\newcommand{\PP}{\mathbb{P}}

\newcommand{\SX}{\mathsf{X}}
\newcommand{\SY}{\mathsf{Y}}

\newcommand{\kkk}{Let $k$ be either $\ZZ$ or a field. }

\newcommand\HPic{\widehat{\mathrm{Pic}}}

\newcommand\TD{\widetilde{D}}
\renewcommand\TH{\widetilde{H}}

\newcommand\intt{\mathrm{int}}
\newcommand\TPic{\widetilde{\mathrm{Pic}}}

\newcommand\m{\mathrm{mod}}
\newcommand\dist{\mathrm{dist}}

\newcommand\fin{\mathrm{fin}}
\newcommand\ft{\mathrm{ft}}
\renewcommand{\d}{\textnormal{d}}

\newcommand\ttrd{\mathrm{Top.tr.deg}}
\newcommand\trd{\mathrm{tr.deg}}
\newcommand\Frac{\mathrm{Frac}}

\newtheorem{thm}{Theorem}[subsection]
\newtheorem{cor}[thm]{Corollary}
\newtheorem{lem}[thm]{Lemma}
\newtheorem{prop}[thm]{Proposition}
\newtheorem{conj}[thm]{Conjecture}
\newtheorem{defn}[thm]{Definition}

\newtheorem{theorem}[thm]{Theorem}
\newtheorem{proposition}[thm]{Proposition}
\newtheorem{lemma}[thm]{Lemma}
\newtheorem{corollary}[thm]{Corollary}

\theoremstyle{definition}
\newtheorem{definition}[thm]{Definition}
\newtheorem{example}[thm]{Example}

\theoremstyle{remark}
\newtheorem{remark}[thm]{Remark}

\begin{document}

\title{Equidistribution of small points over finitely generated fields}
\author{Ruoyi Guo, Lai Shang, Chengyuan Yang, Xinyi Yuan}
\maketitle

\setcounter{tocdepth}{4}
\tableofcontents

\section{Introduction}

In \cite{SUZ}, Szpiro--Ullmo--Zhang proved a novel equidistribution theorem of small points over number fields, which played a fundamental role in the proof of the Bogomolov conjecture by Ullmo \cite{Ull} and Zhang \cite{Zh3}. 
The equidistribution theorem was later generalized to a relative setting by Moriwaki \cite{Mor3}, to non-archimedean places by 
Chambert-Loir \cite{CL}, and to semipositive metrics by Yuan \cite{Yua08}.
The equidistribution theorem has become a fundamental tool in Diophantine geometry and arithmetic dynamics. 

In \cite{YZ26}, Yuan--Zhang introduced a theory of adelic line bundles on quasi-projective varieties, which extended Zhang's original theory over projective varieties in \cite{Zha2}. 
In \cite{YZ26}, the equidistribution theorem was further extended to quasi-projective varieties over number fields. 
In the case of abelian schemes, the theorem was proved independently by K\"uhne \cite{Kuh} and applied to refine the uniform Mordell conjecture proved by Dimitrov--Gao--Habegger \cite{DGH}. 

Moreover, Yuan--Zhang \cite{YZ26} proposed an equidistribution conjecture, which extends their equidistribution theorem from number fields to finitely generated fields.
The importance of this conjecture is justified by the Lefschetz principle, which asserts that any quasi-projective variety over any field can be descended to a finitely generated field. This lies in the mechanism of \cite{YZ26} to apply arithmetic methods to treat results on quasi-projective varieties over arbitrary fields. 
The goal of this paper is to prove the equidistribution conjecture at fully transcendental valuations, and present a counterexample to an earlier version of the conjecture. 

\subsection{An equidistribution conjecture}
\label{subsec equi conj}

Let us recall an early version of the equidistribution conjecture in \cite[Conj. 5.4.1]{YZ26}. We will follow the terminology of adelic line bundles in Yuan--Zhang \cite{YZ26}.

\kkk
Let $F$ be a finitely generated field over $k$, i.e. a finitely generated extension of the fraction field $\Frac(k)$ of $k$. 
Let $X$ be a geometrically integral quasi-projective variety of dimension $n$ over $F$. 
Let $\overline L\in\HPic(X/k)_\nef$ be a nef adelic line bundle on $X/k$.

For any integrable adelic line bundle $\OH\in \wh\Pic(F/k)_{\intb}$, the \emph{Moriwaki height function}
$$
h_{\overline L}^{\overline H}:  X(\overline F)\lra
\RR
$$
is defined as follows. For an element $x$ in $X(\overline F)$ with image $x'\in X$, set $\deg(x')=\deg(x)=[F(x):F]$.
Let $\OL|_{x'}$  and $\OH|_{x'}$ be pullbacks of $\OL$ and $\OH$ along the morphisms $x'\emb X$ and $x'\to \Spec(F)$ respectively. Then $\OL|_{x'}$ and $\OH|_{x'}$ are adelic line bundles in $\HPic(x'/k)$. Define the \emph{Moriwaki height} by
$$
h_{\overline L}^{\overline H}(x)=\frac{1}{\deg(x)}\OL|_{x'}\cdot \OH|_{x'}^{d-1}.
$$
Here if $k=\ZZ$, we denote $d=\trd(F/\QQ)+1$; if $k$ is a field, we denote $d=\trd(F/k)$.
Here $\trd$ denotes the transcendence degree. 

Denote by $\wt L$ the image of $\OL$ under the canonical map
$\wh\Pic(X/k)_{\nef} \to \wh\Pic(X/F)_{\nef}$
introduced in \cite[\S 2.5.5]{YZ26}.
Assume that the self-intersection number 
$$
\deg_{\wt L}(X/F)=\wt L^{\dim X}>0.
$$
Then we have a well-defined \emph{Moriwaki height} of $X$ as
$$h_{\overline L}^{\overline H}(X)
=\frac{ \overline L^{n+1} \cdot \overline H^{d-1}}
{(n+1)\deg_{\wt L}(X/F) }.$$

A sequence $\{x_m\}_{m\geq 1}$ in $X(\overline F)$
is said to be \emph{generic} if any closed subvariety $Y\subsetneqq X$ contains only finitely many terms of the sequence.

Let $\overline H\in \HPic (F/k)_{\intb}$ be any integrable adelic line bundle.
A sequence $\{x_m\}_{m\geq 1}$ in $X(\overline F)$ is said to be \emph{$h_{\overline L}^{\OH}$-small} if $h_{\overline L}^{\OH} (x_m)$ converges to $h_{\overline L}^{\OH} (X)$.

A sequence $\{x_m\}_{m\geq 1}$ in $X(\overline F)$ is said to be {\em numerically small} with respect to $\OL$ if it is {$h_{\overline L}^{\OH}$-small} for every $\overline H\in \HPic (F/k)_{\intb}.$
Note that by linear combination, it suffices to require this for all nef adelic line bundles
 $\overline H\in \HPic (F/k)_{\nef}.$

If $d=1$, then $F$ is either a number field (for $k=\ZZ$) or a function field of one variable (for $k$ being a field). In this case, $h_{\overline  L}^{\overline  H}$ is just the usual height function $h_{\overline  L}$. Then the numerical smallness is equivalent to the usual one given by $h_{\overline  L}(x_m)\to h_{\overline  L}(X)$.

Let $v$ be a non-trivial valuation of $F$, and denote by $F_v$ the completion.
Let $X_v^\an$ be the analytic space associated to $X_{F_v}$. 
Namely, if $F_v\simeq \CC$, then  $X_v^\an$ is just the complex analytic space $X(F_v)$; if $F_v\simeq \RR$, then  $X_v^\an$ is just the quotient of the complex analytic space $X(\ol F_v)$ by the complex conjugation;
if $v$ is non-archimedean,  $X_v^\an$ is the Berkovich analytic space associated to $X_{F_v}$ from \cite{Ber1}. 
  
From \cite[\S3.6.8]{YZ26}, there is an \emph{equilibrium measure}
$$
\d\mu_{\overline L,v}:=\frac{1}{\deg_{\wt L}(X/F)}c_1(\OL)_v^n
$$
over the analytic space $X_v^\an$.
If $v$ is archimedean, this is essentially the classical {Monge--Amp\`ere} measure.
If $v$ is non-archimedean, this is a generalized Chambert-Loir measure.

For each point $x\in X(\overline F)$, we have the measure
$$\mu_{x,v}:=\frac{1}{\deg(x)}\delta_{x'_v}$$ 
on $X_v^\an$.
Namely, $x'$ is the closed point of $X$ corresponding to $x$ as above,
and $x'_v=x'\times_F F_v$ is a 0-dimensional closed subscheme of $X_{F_v}$. 
If $x'_v$ is reduced, then it is equivalent to {a finite set of closed points} of $X_{F_v}$, and thus a finite subset of classical points of $X_v^\an$; in this case, 
$\delta_{x'_v}$ is the Dirac measure of $x'_v$ in $X_v^\an$. 
If $x'_v$ is not reduced, which only happens if $k$ is a field of positive characteristic, then we take 
$$
\delta_{x'_v}=\sum_{z\in x'_v} \mathrm{mult}_z(x'_v) \delta_z,
$$
where the multiplicity $\mathrm{mult}_z(x'_v)$ is the length of the local ring $\CO_{x'_v,z}$, and we view $z$ as a classical point of $X_v^\an$ in $\delta_z$.

Let $\d\rho_v$ be a probability measure on $X_v^\an$, and our equidistribution theorem concerns the case
$\d\rho_v=\d\mu_{\overline L,v}$. 
We say that the Galois orbit of a sequence $\{x_m\}_{m\geq 1}$ of points of $X(\overline F)$ is \emph{equidistributed in $X_v^\an$ for} $\d \rho_v$ if the weak convergence
$$
\mu_{x_m,v}\lra \d\rho_v
$$
holds on $X_v^\an$.
Namely,
$$\int_{X_v^\an} f\, \mu_{x_m,v}\lra \int_{X_v^\an} f\, \d\rho_v$$
for any $f\in C_c(X_v^\an)$. Here $C_c(X_v^\an)$ is the space of real-valued continuous and compactly supported functions on $X_v^\an$.

Denote by $\CM(F/k)$ the set of valuations $v$ on $F$ such that $v|_k$ is trivial if $k$ is a field.
Finally, we are ready to state an early version of the equidistribution conjecture in Yuan--Zhang \cite{YZ26}.

\begin{conj}[equidistribution conjecture] \label{equi conj}
	\kkk
	Let $F$ be a finitely generated field over $k$.
	Let $X$ be a geometrically integral quasi-projective variety over $F$.
	Let $\overline L$ be a nef adelic line bundle on $X/k$ such that $\deg_{\wt L}(X/F)>0$.
	Let $\{x_m\}_m$ be a generic sequence  in $X(\overline F)$ which is numerically small with respect to $\OL$.
	Let $v$ be a non-trivial valuation of $F$. 
	If $k$ is a field, assume that the restriction $v|_k$ is trivial.  
	Then the Galois orbit of $\{x_m\}_m$ is equidistributed in $X_v^\an$ for $\d\mu_{\overline L,v}$.
\end{conj}

In the arithmetic case ($k=\ZZ$), if $F$ is a number field and $X$ is projective, the conjecture was already known.
In fact, the pioneering work of Szpiro--Ullmo--Zhang \cite{SUZ} proved the equidistribution for number fields $F$ and archimedean places $v$ assuming pointwise positivity of the Chern form $c_1( L, \|\cdot\|_v)$.
Their work was extended to non-archimedean places $v$ by Chambert-Loir \cite{CL}.
The full case of number fields with $L$ ample was proved by Yuan \cite{Yua08} by developing a bigness theorem for difference of ample hermitian line bundles.
The proof of \cite{Yua08} actually works by replacing the ampleness of $L$ by the positivity $\deg_{L}(X)>0$. For more history of this subject, we refer to \cite[\S6.3]{Yua12}.

The above arguments were also generalized to the geometric case.
In that case, if $X$ is projective over $F$, and the valuation $v$ of $F$ comes from a prime divisor of a projective model of $F$ over $k$, the conjecture was proved independently by Faber \cite{Fab} when the {transcendence degree} of $F/k$ is 1 and by Gubler \cite{Gub08} for general {transcendence degrees}.
 
In \cite[Thm. 5.4.3]{YZ26}, the conjecture is proved for any quasi-projective $X$ and for any number field $F$ or function field $F$ of one variable.

\subsection{A counterexample}

The equidistribution conjecture in Conjecture \ref{equi conj} appeared as 
Conjecture 5.4.1 in the {arXiv} versions (arXiv:2105.13587v9 and earlier ones) of \cite{YZ26}. 
However, in preparation of our current paper, we came up with a counterexample to the conjecture, which motivated Yuan--Zhang {to make} a revised conjecture in the published version of \cite{YZ26}. 
Here we describe the counterexample and make some remarks on the revised conjecture. 

Recall the construction of invariant adelic line bundles of polarized dynamical systems in \cite[\S6.1]{YZ26}. 
We state our counterexample as an existence theorem as follows.

\begin{theorem}[Theorem \ref{thm counterexample super}] 
 \label{thm counterexample super intro}
Denote $k = \mathbb{Z}$, $F = \mathbb{Q}(t)$, and $X = \mathbb{P}^1_F$.
Let $f:X\to X$ be the square map.
Let $L$ be the tautological line bundle $\mathcal{O}(1)$ on $X$, and let $\overline{L}$ be the $f$-invariant adelic line bundle on $X/\ZZ$ extending $L$. 
Then there exists a generic sequence $\{x_m\}_m$ of $X(\overline{F})$, numerically small with respect to $\overline{L}$, such that for any valuation $v$ of $F$ satisfying the condition that $F_v$ is a finite extension of $\QQ_w$ for the restriction $w=v|_\QQ$,  
 the Galois orbit of $\{x_m\}_m$ is not equidistributed in $X_v^{\mathrm{an}}$ for any probability measure on $X_v^{\mathrm{an}}$. 
\end{theorem}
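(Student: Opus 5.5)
The approach is to construct the sequence $\{x_m\}$ by \emph{interleaving} two families of small points, each generic and numerically small, whose $v$-adic Galois orbits converge to two different measures for every valuation $v$ as in the statement. The interleaved sequence is then generic and numerically small. Since $X_v^\an$ is compact, the measures $\mu_{x_m,v}$ are probability measures with at least two distinct subsequential limits. So no weak limit exists, and in particular no probability measure is the limit. I am not certain a suitable second family exists; the plan below commits to one candidate, and a proof of Step 3 requires a second family that defeats every $v$ simultaneously.

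\textbf{Step 1: heights.} Since $\OL$ is $f$-invariant, $\OL^{2}\cdot\OH^{d-1}$ is computed by the usual toric argument: $f^*\OL=2\OL$ and $\deg f=2$ give $h^{\OH}_{\OL}(X)=0$. For a point $x$, the Moriwaki height $h^{\OH}_{\OL}(x)$ is the $\OH$-weighted canonical (Weil) height of $x$, viewed as an element of $\ol F^\times$. This is the height attached to the $\max(|x|_w,1)$-type metrics, integrated against $c_1(\OH)^{d-1}$ over the places of $F$, in the sense of \cite{YZ26}. In particular:
\begin{enumerate}[(a)]
\item if $x^m=g$ with $g\in F^\times$, then $h^{\OH}_{\OL}(x)=\frac1m h^{\OH}_{\OL}(g)$;
\item more generally, for a polynomial $P\in\ZZ[t]$ with $\deg P=o(m)$ and $\log\|P\|=o(m)$, every root of $x^m=P(t)$ has $h^{\OH}_{\OL}\to0$, uniformly enough to handle all nef $\OH$.
\end{enumerate}
It suffices to check this for nef $\OH$ and then argue by linearity. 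Genericity is automatic for pairwise distinct points, since proper closed subvarieties of $\PP^1_F$ are finite.

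\textbf{Step 2: the first family.} Take $x_m=t^{1/m}$, with $x^m-t$ irreducible over $F$ by Eisenstein at $t$. Fix $v$ with $F_v/\QQ_w$ finite, and let $a\in F_v$ be the image of $t$, which is transcendental over $\QQ$. Then $\mu_{x_m,v}$ is the uniform measure on the $m$ roots of $x^m=a$ in $\ol F_v$.
\begin{itemize}
\item If $v$ is archimedean, $|a|^{1/m}\to1$ and these roots converge to Haar measure on the unit circle.
\item If $v$ is non-archimedean, the roots lie on spheres of radius $|a|_v^{1/m}\to1$. A standard residue-field count, splitting $m=p^e m'$, shows convergence to the Dirac mass at the Gauss point, at least along $m$ prime to $p$. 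Restricting $m$ to a suitable subsequence (say $m$ prime) gives a clean limit $\rho_v^{(1)}$ for all $v$ at once.
\end{itemize}

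\textbf{Step 3: the second family (main obstacle).} I need points $y_m$ of heights tending to $0$ whose $v$-adic orbits do \emph{not} converge to $\rho_v^{(1)}$, uniformly over all $v$. My first try is $y_m$ a root of $x^{m}=P_m(t)$ with $P_m$ as in Step 1(b). The conjugates of $y_m$ lie on the circle (or sphere) of radius $|P_m(a)|_v^{1/m}$. Choosing $P_m$ so that $\frac1m\log|P_m(a)|_v$ stays bounded away from $0$ would move the limit off the unit circle or Gauss point.

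The difficulty is that $a$ ranges over all embeddings $t\mapsto a$, so the choice must be uniform. A Liouville or Mahler-measure bound suggests $\frac1m\log|P_m(a)|_v\to0$ for height-$o(m)$ polynomials at \emph{generic} $a$. Because of this, I would instead try to exploit the non-archimedean and ramification structure: use $m=p$-powers, or nonreduced fibres $x'_v$, to force clustering of orbits near a non-Gauss point. If the radius approach fails, the fallback is to alternate two families whose limits coincide in absolute value but differ in angular distribution, for example roots of $x^m=t$ versus roots of $x^m=t\cdot u_m$ with $u_m$ chosen so that the orbit clusters in arcs. This step is where I expect the real work of the proof to lie.
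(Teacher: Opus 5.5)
Your Steps 1 and 2 are essentially sound. The paper's first family is $[1:2^{1/m}]$ rather than $t^{1/m}$, but the limits are the same: Haar measure on the unit circle, resp.\ the Gauss point. The gap is Step 3, which is the whole content of the theorem and which you leave open. Worse, you abandon your first try for a reason that does not apply, and that first try is essentially the paper's construction.

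Two ideas are missing.

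First, you need neither a second family with a clean limit nor any uniformity in $v$. To defeat convergence at a given $v$, it suffices that \emph{some} subsequence, which may depend on $v$, has a limit different from $\rho_v^{(1)}$. So the paper builds the odd-indexed terms in finite blocks $Q_1,Q_2,\dots$.
\begin{itemize}
\item Block $N$ consists of points $[1:p^{1/n}]$ with $n\ge N\deg p$ and $n\ge N\sum_w|g_{p,w}|_{\sup}$. By Lemma \ref{lem bound height of x} this gives $h^{\OH}_{\OL}\le\frac1N\bigl(h^{\OH}_{\OL}(x_t)+C_{\OH}\bigr)$.
\item It is arranged that every $a$ in a compact set $\mathcal K_N$ satisfies $|p(a)|_w<2^{-Nn}$ for at least one $(p,n)\in Q_N$.
\item If $\bigcup_N\mathcal K_N$ contains every possible image $a$ of $t$, then each such $a$ has a subsequence whose conjugates all have absolute value $<2^{-N}$ with $N\to\infty$. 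That subsequence converges to $\delta_0$.
\end{itemize}

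Second, the blocks come from a Dirichlet-type box principle, not from uniform control of a radius.
\begin{itemize}
\item Among the $(s+1)^r$ sparse polynomials $\sum_{i\le r}a_it^{2^i-1}$ with $0\le a_i\le s$, two have close values at $a$. This gives a nonzero $p$ with $|p(a)|\lesssim s^{1-r/2}$ in the archimedean case.
\item In the non-archimedean case one counts residue classes of the finite extension $\QQ_w(a)$ and gets $|p(a)|_w\lesssim s^{-r/d}$, where $d=[\QQ_w(a):\QQ_w]$.
\item The height cost is only $O(\log s+\log r)$ by Lemma \ref{poly_bd}. Sparsity keeps the Green functions bounded independently of the degree $2^r-1$.
\item Take $n\approx 3N\log s$, $r$ of size $N^2$ (resp.\ $D_NN^2$), and let $s\to\infty$. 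Then $|p(a)|^{1/n}<2^{-N}$.
\item This is an open condition in $a$. Compactness of $\{|a|\le N\}$, resp.\ of $\{a\in K_1\cdots K_N:\ |a|_w\le N\}$ where $K_1,K_2,\dots$ enumerate the finite extensions of $\QQ_w$, then yields finite blocks.
\end{itemize}

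Your Liouville/Mahler heuristic is not an obstruction. It only says that one fixed sequence $P_m$ cannot work at almost every $a$, and that is exactly why the paper uses a covering adapted to each $a$. Moreover, $-\log|p(a)|$ is governed by the number of monomials times the height, here $\approx\frac r2\log s\approx\frac{4N}{3}n$, not by the height alone. So having both degree and height at most $n/N$ does not force $\frac1n\log|p(a)|\to0$.

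Finally, none of your fallback ideas uses the hypothesis that $F_v/\QQ_w$ is finite. Also, non-reduced fibres $x'_v$ cannot occur here, since everything is in characteristic $0$. Any valid argument must use this hypothesis: at fully transcendental $v$, such as the type $2,3,4$ valuations over $\QQ_p$, a generic numerically small sequence does equidistribute by Theorem \ref{equi ft}. In the paper the hypothesis enters through the finite residue-class count in $\QQ_w(a)$ and the countability of finite extensions of $\QQ_w$. You note that $a$ is transcendental over $\QQ$, but the key fact is that $a$ is algebraic over $\QQ_w$.
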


There are plenty of valuations $v$ of $F$ satisfying the condition of the theorem. 
First, every archimedean $v$ automatically satisfies the condition. 
Second, for any (non-trivial) non-archimedean place $w$ of $\QQ$, as
$\QQ_w$ is transcendental over $\QQ$, there are plenty of embeddings $F\to \ol\QQ_w$, and every such embedding induces a valuation $v$ on $F$ by restriction. 

Let us briefly recall our construction of the small sequence. 
The idea is to choose a suitable sequence  $\{p_m\}_m$ of polynomials in $\QQ[t]$ and a corresponding sequence $\{n_m\}_m$ of integers, and then define $x_{2m-1}$ to be the $\overline{F}$-point
$$
x_{2m-1} = \left[1: (p_m)^{1/n_m}\right]\in X(\ol F).
$$
We may carefully choose the sequences so that $\{x_{2m-1}\}_m$ is small, and 
the $\mu_{x_{2m-1},v}$ on $X_v^\an$ converges to the Dirac measure of the origin or the infinity. 
{This is achieved} by some explicit estimates. 
On the other hand, we define $x_{2m}=[1:2^{\frac{1}{m}}]\in X(\ol F)$.
It is easy to see that the measure $\mu_{x_{2m},v}$ on $X_v^\an$ converges to the Haar measure of the unit circle when $v$ is archimedean, or the Dirac measure at the Gauss point when $v$ is non-archimedean.
As the limit measures of $\mu_{x_{2m-1},v}$  and $\mu_{x_{2m},v}$ do not agree, we see that $\mu_{x_{m},v}$ does not converge for any $v$ in our consideration.

Let us return to Conjecture \ref{equi conj}, which fails
by the counterexample in Theorem \ref{thm counterexample super intro}.  
The failure might be explained {by the fact that} the condition ``$h_\OL^\OH$-small for every  $\OH\in \HPic (F/k)_{\intb}$'' is not as strong as it seems.
In fact, in the dynamical case (for example, in Theorem \ref{thm counterexample super intro}), since the Moriwaki height $h_\OL^\OH(X)=0$, ``$h_\OL^\OH$-small'' means $h_\OL^\OH(x_m)\to 0$. 
We claim that ``$h_\OL^\OH$-small'' for a nef and big $\OH \in \HPic (F/k)_{\nef}$ implies ``$h_\OL^{\OH'}$-small'' for all integrable $\OH'\in  \HPic (F/k)_{\intb}$. 
In fact, by multilinearity, we can assume that $\OH'$ is nef. 
By Yuan's arithmetic Siu inequality (cf. \cite[Thm. 5.2.2(2)]{YZ26}), 
 $\overline H-\epsilon\OH'$ is big for some rational number $\epsilon>0$. 
Then we have $h_\OL^{\OH}(x_m)\geq h_\OL^{\epsilon\OH'}(x_m)\geq0$. 
Then ``$h_\OL^\OH$-small'' implies ``$h_\OL^{\OH'}$-small''. 

Now we sketch Conjecture 5.4.1 in the published version of \cite{YZ26}, which revises the arxiv version (i.e. Conjecture \ref{equi conj}). This new version will not be used in this paper. 
The idea is to replace the condition ``numerically small'' by a stronger condition ``small''.
First, via the Deligne pairing (or just the norm map), we have a 
\emph{vector-valued height function} 
$$
\fh_{\OL}:X(\ol F)\lra \HPic (F/k)_{\intb, \QQ}.
$$
If $X$ is projective over $F$, we also have a {vector-valued} height  $\fh_{\OL}(X)\in  \HPic (F/k)_{\intb, \QQ}$.
The {vector-valued} heights refine the Moriwaki heights in the sense that 
$$
h_{\OL}^\OH(x)=\fh_{\OL}(x)\cdot \OH^{d-1}, \qquad 
h_{\OL}^\OH(X)=\fh_{\OL}(X)\cdot \OH^{d-1}. 
$$
There is a canonical quotient group
$$\wh \NS(F/k)=\wh\Pic(F/k)/\wh\Pic^0(F/k),$$
which can be viewed as an adelic version of the classical N\'eron--Severi group. 
We say that a sequence $\{\alpha_m\}_{m\geq1}$ in $\wh \NS(F/k)_\RR$ 
\emph{converges} to an element $\alpha\in \wh \NS(F/k)_\RR$ \emph{under the linear topology} if there exists a finite-dimensional $\RR$-subspace $W$ of $\wh \NS(F/k)_\RR$ containing $\{\alpha_m\}_m$ and $\alpha$ such that $\{\alpha_m\}_m$ converges to $\alpha$ in $W$ under the Euclidean topology of $W$. 
A sequence $\{x_m\}_{m\geq1}$ of $X(\ol F)$ is \emph{small} if $\fh_{\OL}(x_m)$ converges to $\fh_{\OL}(X)$ under the linear topology. 
Then Yuan--Zhang revised the conjecture using this strong notion ``small''  in the case where $X$ is projective.

\subsection{Equidistribution at fully transcendental valuations}

By the counterexample in Theorem \ref{thm counterexample super intro}, 
Conjecture \ref{equi conj} fails if $F_v$ is algebraic over $\QQ_w$ in the case $k=\ZZ$.
This disproves the conjecture for all archimedean $v$, but only for some special non-archimedean $v$. 
Based on the foundational work of Temkin \cite{Tem} about valuations, we introduce a notion of \emph{fully transcendental valuations} of $F$ in \S\ref{tran deg section}, which can be viewed as the opposite extreme of the property that $F_v$ is algebraic over $\QQ_w$. 
Then our main theorem proves the conjecture for every {fully transcendental} 
valuation $v$ on $F$.  

To be precise, let $k$ be either $\ZZ$ or a field. 
Denote by $k'=\Frac(k)$ the fraction field. 
Let $F$ be a finitely generated field over $k$. 
Denote by $\CM(F/k)$ the set of valuations $v$ on $F$ such that $w=v|_{k'}$ is trivial if $k$ is a field.
The \emph{topgebraic generating degree} of $F_v$ over $k'_w$, 
denoted by
$\ttrd(F_v/k'_w)$, is the smallest cardinality $\# S$ among all subsets $S$ of $F_v$ which generates $F_v/k'_w$ topgebraically, i.e. $F_v$ is contained in the completion of the algebraic closure of $k'_w(S)$ in a natural sense. 

For a valuation $v\in \CM(F/k)$, it is easy to have 
$$\ttrd(F_v/k'_w)\leq \trd(F/k'). $$
We say that $v$ is \emph{fully transcendental} over $k$ if 
$$
\ttrd(F_v/k'_w)= \trd(F/k'). 
$$
Denote by $\CM(F/k)^\ft$ the subset of $\CM(F/k)$ consisting of \emph{fully transcendental} valuations.

As an easy example, let $F$ be the function field of a smooth projective curve $Y$ over $\QQ$. A non-archimedean valuation $v$ of $F$ extending a non-archimedean place $w$ of $\QQ$ defines a point in the Berkovich curve $Y_{\QQ_w}^\an$. 
It follows that $v$ is a fully transcendental valuation of $F$ if and only if $v$ is a point of type $2,3,4$ in $Y_{\QQ_w}^\an$; i.e. $v$ is the image of a point of type $2,3,4$ via the natural map $Y_{\CC_w}^\an\to Y_{\QQ_w}^\an$.
The valuation $v$ in Theorem \ref{thm counterexample super intro} always corresponds to a point of type 1, i.e., a classical point.

Finally, the following is our main theorem, which proves that the equidistribution conjecture (Conjecture \ref{equi conj}) holds at fully transcendental valuations. 

\begin{thm}[equidistribution at fully transcendental valuations] \label{equi ft}
	\kkk
	Let $F$ be a finitely generated field over $k$.
	Let $X$ be a geometrically integral quasi-projective variety over $F$.
	Let $\overline L$ be a nef adelic line bundle on $X/k$ such that $\deg_{\wt L}(X/F)>0$.
	Let $\{x_m\}_m$ be a generic sequence  in $X(\overline F)$ which is numerically small with respect to $\OL$.
	Then for every fully transcendental valuation $v\in\CM(F/k)$, the Galois orbit of $\{x_m\}_m$ is equidistributed in $X_v^\an$ for $\d\mu_{\overline L,v}$.
\end{thm}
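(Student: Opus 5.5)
We will follow the variational method of Szpiro--Ullmo--Zhang and Yuan, as extended to quasi-projective varieties in \cite[Thm. 5.4.3]{YZ26}. The new ingredient is a supply of test metrics at $v$. Concretely, for a fully transcendental $v$ we want to realize enough continuous functions on $X_v^\an$ as perturbations of the metric of $\OL$ at $v$, i.e. as adelic line bundles $\OL(\epsilon f)$ on $X/k$.

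\textbf{Step 1: reduce to model functions.} By the standard reductions (approximating $C_c(X_v^\an)$, compactly supported on a projective model, and using Stone--Weierstrass), it suffices to prove
$$\lim_m \int f\,\mu_{x_m,v}=\int f\,\d\mu_{\OL,v}$$
for $f$ in a dense subspace. In the archimedean case we take smooth $f$; in the non-archimedean case we take $f=\log|\cdot|$-type model functions coming from vertical Cartier divisors on models over $F_v$.

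\textbf{Step 2: descend the perturbation to $X/k$.} We must produce, for such $f$ and small $\epsilon$, an integrable adelic line bundle $\OM_\epsilon$ on $X/k$ with three properties:
\begin{itemize}
\item its underlying bundle is $L$;
\item its $v$-component is $\OL_v$ twisted by $e^{-\epsilon f}$, up to an error whose contribution to both integrals is negligible;
\item its difference from $\OL$ is controlled by a bounded multiple of an adelic bundle pulled back from $F/k$.
\end{itemize}
This is where full transcendence enters. Using Temkin's theory, $\ttrd(F_v/k'_w)=\trd(F/k')$ implies that $F$ is topgebraically dense in $F_v$ in the appropriate sense. Consequently, a model of $X$ over $F_v$ and a vertical divisor (or smooth function) on it can be approximated by data defined over a model of $F$ over $k$. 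Equivalently, the map from adelic divisors on $X/k$ (or on $F/k$ pulled back) to continuous functions on $X_v^\an$ has dense image. Without full transcendence this map misses functions, which is exactly the mechanism behind the counterexample in Theorem \ref{thm counterexample super intro}, where $v$ is a type-1 point.

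\textbf{Step 3: variational inequality.} Fix a nef and big $\OH\in\HPic(F/k)_\nef$. We use genericity and numerical smallness to apply the fundamental inequality (successive minima / Zhang's theorem in the quasi-projective setting of \cite{YZ26}) to the height $h^{\OH}_{\OM_\epsilon}$, which gives
$$\liminf_m h^{\OH}_{\OM_\epsilon}(x_m)\ge h^{\OH}_{\OM_\epsilon}(X)\ge\frac{\OM_\epsilon^{n+1}\OH^{d-1}}{(n+1)\deg_{\wt L}(X/F)}.$$
The right-hand side is a volume bound when $\OM_\epsilon$ is not nef, obtained from Yuan's bigness/Siu inequality \cite[Thm. 5.2.2]{YZ26}. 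We then expand both sides to first order in $\epsilon$. The left side gives $h^{\OH}_\OL(x_m)+\epsilon\int f\,\mu_{x_m,v}\cdot c_v(\OH)$, where $c_v(\OH)>0$ is the local weight of $\OH$ at $v$. The right side gives $h^\OH_\OL(X)+\epsilon\int f\,\d\mu_{\OL,v}\cdot c_v(\OH)+O(\epsilon^2)$. Letting $m\to\infty$, using smallness, and then letting $\epsilon\to0^{\pm}$ yields the equality.

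\textbf{Main obstacle.} The delicate points are Step 2 and the positivity of the coefficient $c_v(\OH)$. A valuation $v$ of $F$ is generally not a divisorial place of a model of $F/k$, so the local contribution at $v$ must be isolated. To do this we choose $\OH$, or a sequence $\OH_j$, whose curvature measures on the analytification of $F/k$ concentrate near $v$. We also need the perturbation $f$ to be realized by an adelic divisor supported near $v$. We would first try to handle this through the density statement furnished by full transcendence, approximating $v$ by quasi-monomial valuations on models of $F$ and passing to a limit using continuity of intersection numbers in the adelic topology.
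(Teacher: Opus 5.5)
There is a genuine gap. It sits where you place your "main obstacle", and the role you assign to full transcendence in Step 2 cannot be the right one.

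The field $F$ is dense in $F_v$ for every valuation $v$. Restrictions of global adelic Green functions to the fiber $X_v^\an$ are also dense in $C_c(X_v^\an)$ for every $v$ (compare Lemma \ref{density fiber}). At an archimedean $v$ coming from $F\hookrightarrow\CC$, this just says that Green functions on the complex points of a model restrict to arbitrary continuous functions on a fiber. So Step 2 holds verbatim at the type-1 valuations of Theorem \ref{thm counterexample super intro}, where the conclusion is false. Your argument must therefore break elsewhere, and it breaks in Step 3.

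The first-order term of a Moriwaki height in the direction of an adelic divisor $(0,\wt f)$ is not $c_v(\OH)\int f\,\mu_{x_m,v}$. By the projection formula it is, place by place, essentially $\int_{Y_w^\an}\big(\int \wt f\,\mu_{x_m,y}\big)\,c_1(\OH)_w^{d-1}(y)$. That is an average over all nearby fibers, weighted by the curvature measure of $\OH$ on the base. To isolate $v$, that measure must have an atom at $v$.

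For divisorial $v$, a model polarization does this, and that is exactly the paper's Step 1 (Theorem \ref{equi divisorial}). It applies Theorem \ref{equi relative} with $\OH=\varphi^*\overline{\CO}(1)$, whose measure is a finite positive sum of Dirac masses at divisorial points including $v$. It then uses a cut-off on the base and Lemma \ref{density fiber}. For a non-divisorial fully transcendental $v$, no model $\OH$ charges $v$. Producing an integrable nef one that also meets the Moriwaki-type conditions of the relative variational argument is a non-archimedean Monge--Amp\`ere problem. The paper explicitly identifies this as the hard route and sets it aside.

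Your fallback is to take $\OH_j$ concentrating at divisorial $v_j\to v$ and appeal to continuity of intersection numbers. This amounts to interchanging $\lim_j$ and $\lim_m$, i.e. to continuity of $y\mapsto\int g\,\mu_{x_m,y}$ at $y=v$ uniformly in $m$. Continuity of intersection numbers gives that only if the $\OH_j$ converge in the boundary topology. But then the limit is an integrable bundle whose measure charges $v$, which is the Monge--Amp\`ere problem again.

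The counterexample shows this uniformity is false at type-1 points, so it must come from an estimate that distinguishes point types. That estimate is the heart of the paper. It first reduces to transcendence degree one over a complete algebraically closed field (Theorems \ref{approximation high} and \ref{approximation1}, via Lemma \ref{sub of fully trans} and Proposition \ref{extension by divisorial}). It then restricts to test functions pulled back from $\PP^1$ (Theorem \ref{test function}). It proves the uniform H\"older bound of Theorem \ref{holder}: for $v,u$ of type $2,3,4$,
\[
\Big|\int g\,\mu_{x,v}-\int g\,\mu_{x,u}\Big|\le A_1\sqrt{h_{\TL}(x)+A_0}\,\sqrt{\dist(v,u)}.
\]
The proof goes through the distance inequality (Theorem \ref{distance}), which combines:
\begin{itemize}
\item the Yuan--Zhang height inequality;
\item a strictly positive form on the skeleton of $\PP^1$, giving a partial height;
\item transport plans for non-rational points;
\item Cauchy--Schwarz.
\end{itemize}
Full transcendence enters only through $\dist(v,u)<\infty$ for points of type $2,3,4$ (Theorem \ref{finite distance}). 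In this approximation step, smallness enters only through boundedness of the $\TL_K$-heights. Without such an estimate, or an actual Monge--Amp\`ere solution with an atom at $v$, your Step 3 does not go through.
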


It is worth noting that there are still valuations $v\in \CM(F/k)$ which are not covered in either Theorem \ref{thm counterexample super intro} or Theorem \ref{equi ft}. 
In fact, for $F=\QQ(t)$ as in Theorem \ref{thm counterexample super intro}, if $v$ is induced by an embedding $F \to \CC_w$ which sends the generator $t$ to an element of $\CC_w \setminus \ol\QQ_w$, then $F_v$ is not finite over $\QQ_w$, and $v$ is not fully transcendental by $\ttrd(F_v/\QQ_w)=0$.

If $F$ is endowed with a structure of a proper adelic curve in the sense of Chen--Moriwaki, the theorem was proved for $v$ under some strong assumptions. 
In fact, if $X$ is furthermore projective over $F$, then Chen--Moriwaki \cite[Thm. F]{CM24} proved the equidistribution at every $v\in\CM(F/k)$ with a strictly positive mass under the adelic curve structure. This result was further extended to the quasi-projective setting by the recent work of Biswas--Cai \cite{BC26}. 
These results imply Theorem \ref{equi ft} for divisorial valuations $v$. 
In this paper, we will obtain the divisorial case from an equidistribution theorem of Yuan--Zhang \cite{YZ26}, and the major effort of this paper is to deduce the fully transcendental case of Theorem \ref{equi ft} from the divisorial case.

\subsection{Idea of proof}\label{sec idea}

With many preliminary results in \S\ref{tran deg section} and \S\ref{NA preparation section}, we will prove our main theorem (Theorem \ref{equi ft}) in \S\ref{equi on NS fibers section}.
We will actually prove a slightly stronger variant of the theorem in Theorem \ref{equi ft2}.
The variant implies Theorem \ref{equi ft} by Lemma \ref{equivalence}, and they are equivalent if $k=\ZZ$ or $k$ is a field of characteristic 0. 
If $k$ is a field of {positive characteristic}, they might still be equivalent, but we do not know how to prove it due to some delicate issue of topgebraic generating degrees. 

We now sketch the strategy of the proof of the main theorem. 
Our starting point is the following equidistribution theorem in the relative setting from \cite[Thm. 5.4.6]{YZ26}, which in turn extends the equidistribution theorem of Moriwaki \cite{Mor3}.

\begin{thm}[relative equidistribution] \label{equi relative}
\kkk
Let $K$ be a number field if $k=\ZZ$; let $K$ be a function field of one variable over $k$ if $k$ is a field.
Let $\pi:\CX\to Y$ be a flat morphism of relative dimension $n$ of quasi-projective varieties over $K$.
Let $X\to \Spec F$ be the generic fiber of $\CX\to Y$.
Denote $d=\dim Y+1$.

Let $\overline L$  be an element of $\wh\Pic (X/k)_{\nef}$ such that 
$\deg_{\wt L}(X/F)>0$.
Here $\wt L$ is the image of $\OL$ in $\wh\Pic (X/F)_{\nef}.$
Let $\overline H$ be an element of $\wh\Pic (Y/k)_{\nef}$ satisfying the Moriwaki condition that $\overline H$ is nef, $\OH^{d}=0$ and $\wt H^{d-1}>0$. Here $\wt H$ is the image of $\OH$ in $\wh\Pic (Y/K)_{\nef}$.
	
Let $\{x_m\}_m$ be a generic and $h_{\overline L}^{\OH}$-small sequence in $X(\overline F)$.
Then for any place $w$ of $K$, there is a weak convergence
	$$
	\frac{1}{\deg(x_m)} \delta_{\triangle(x_m),w}\, c_1(\pi^*\OH)_w^{d-1}
	\lra \frac{1}{\deg_{\wt L}(X/F)} c_1(\OL)_w^{n} c_1(\pi^*\OH)_w^{d-1}
	$$
	of measures on $\CX_w^\an$.
	Here $\triangle(x_m)\subset \CX$ denotes the Zariski closure of the image of $x_m$ in $\CX$, and
		$\delta_{\triangle(x_m),w}$ denotes the Dirac current of $\triangle(x_m)_{K_w}^\an$ in $\CX_w^\an$.
\end{thm}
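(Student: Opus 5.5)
Theorem \ref{equi relative} is quoted from \cite[Thm. 5.4.6]{YZ26}, where it is proved in the quasi-projective setting. My plan follows the variational method of Szpiro--Ullmo--Zhang and Yuan, applied to the arithmetic (or geometric) variety $\CX$ over $K$. Here the smallness is measured against the fixed vertical class $\pi^*\OH$.

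\textbf{Reduction and perturbation.} First I would reduce to test functions $f$ that arise as model functions, i.e. $f=-\log\|1\|_{\CO(f)}$ for an adelic line bundle $\CO(f)$ on $\CX$ which is trivial away from $w$ and has metric $e^{-f}$ at $w$. In the quasi-projective setting I would also require $f$ to be compactly supported in $\CX_w^\an$. Such functions are dense in $C_c(\CX_w^\an)$ for the uniform norm, by Stone--Weierstrass in the archimedean case and by Gubler's density of model functions in the non-archimedean case. The total masses are controlled by $c_1(\pi^*\OH)^{d-1}$ and $\deg$, so it suffices to test on these $f$. For small rational $\epsilon$, I consider the perturbation $\OL(\epsilon f)=\OL+\epsilon\,\CO(f)$.

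\textbf{Lower bound via bigness.} For the generic small sequence, I apply the successive minimum (Zhang-type) inequality in the relative Moriwaki form. By genericity,
$$\liminf_m h^{\OH}_{\OL(\epsilon f)}(x_m)\ \ge\ \frac{\OL(\epsilon f)^{n+1}\cdot \pi^*\OH^{d-1}}{(n+1)\deg_{\wt L}(X/F)}+O(\epsilon^2).$$
To justify this, I would use Yuan's bigness theorem in the form of \cite[Thm. 5.2.2]{YZ26}. For $f$ a difference of nef model functions, $\OL(\epsilon f)+\pi^*\OH$-twisted volume estimates show the right side is bounded by the volume, hence essentially by the arithmetic intersection expanded to second order. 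The Moriwaki condition $\OH^d=0$ and $\wt H^{d-1}>0$ is exactly what makes the leading term $\OL^{n+1}\cdot\pi^*\OH^{d-1}$ correspond to $h^{\OH}_\OL(X)$. It also makes the vertical contributions of $\OH$ vanish to first order. Expanding both sides to first order in $\epsilon$, the left side contributes $\epsilon\cdot\frac{1}{\deg(x_m)}\int f\,\delta_{\triangle(x_m),w}c_1(\pi^*\OH)_w^{d-1}$. The right side contributes $\epsilon\cdot\frac{1}{\deg_{\wt L}}\int f\,c_1(\OL)_w^nc_1(\pi^*\OH)_w^{d-1}$. The height term $h^{\OH}_\OL(x_m)$ converges to $h^{\OH}_\OL(X)$ by hypothesis.

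\textbf{Conclusion and main obstacle.} Dividing by $\epsilon>0$ and $\epsilon<0$ and letting $\epsilon\to0$ gives the two opposite inequalities, which together yield the weak convergence. I expect the main obstacle to be the bigness/volume step for $\OL(\epsilon f)$ when $\OL$ is merely nef adelic on a quasi-projective $\CX$, with only $\deg_{\wt L}(X/F)>0$ as positivity. My approach there is to approximate $\OL$ by a Cauchy sequence of nef model bundles on projective models $\CX_i$. On each model I would apply the Siu-type inequality $\mathrm{vol}(A-B)\ge A^{n+d}-(n+d)A^{n+d-1}B$, with $A=\OL_i+\epsilon\,\CO(f)^+$ and $B=\epsilon\,\CO(f)^-$, intersected against $\pi^*\OH^{d-1}$. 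I would then pass to the limit using the boundary-topology continuity of intersection numbers.
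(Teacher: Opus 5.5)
The paper gives no proof of this statement: it is quoted from \cite[Thm.~5.4.6]{YZ26} (extending Moriwaki \cite{Mor3}), as you note. Your outline (perturb $\OL$ by $\epsilon\,\CO(f)$, bound heights of generic points from below by arithmetic bigness, compare first-order terms in $\epsilon$) is the standard variational argument underlying that reference, and it is sound in outline. The one step to state precisely is the Siu estimate, since there is no Siu inequality for a volume ``intersected against $\pi^*\OH^{d-1}$'': apply it on $\CX$ to $A=\OL_i+a\pi^*\OH+\epsilon\,\CO(f)^+$ and $B=\epsilon\,\CO(f)^-+\ol\CO(c)$ (with $\ol\CO(c)$ the trivial bundle with constant metric $e^{-c}$) and let $a\to\infty$, where $\OH^{d}=0$ kills both the $a^{d}$-terms of the volume and the $a$-dependence of $h^{\OH}_{\OL(\epsilon f)+a\pi^*\OH}(x_m)$, so the $a^{d-1}$-coefficient gives your lower bound.
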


From the setting of Theorem \ref{equi ft} to that of Theorem \ref{equi relative}, we take an extra subfield $K$ of $F$ satisfying the requirement, take $w=v|_K$ to be the restriction, 
and take a quasi-projective model $\CX\to Y$ of $X\to \Spec F$ over $K$. 
Let $v$ be a fully transcendental valuation of $F/k$, viewed as a point of $Y_w^\an$ for $w=v|_K$. 
Note that the fiber of $\CX_w^\an\to Y_w^\an$ above $v$ is exactly $X_v^\an$.  
In Theorem \ref{equi relative}, if we can find an $\OH$ such that $c_1(\OH)_w^{d-1}$ is equal to a positive multiple of the Dirac measure $\delta_v$, then Theorem \ref{equi relative} for $(w,\OH)$ exactly gives Theorem \ref{equi ft} for $v$. 

This amounts to {solving} a non-archimedean Monge--Ampere equation. 
This approach {meets} two difficulties. 
The first difficulty is the restriction by the Moriwaki condition in the theorem. With  huge effort, the restriction might be removed following an idea of Chen--Moriwaki \cite[Thm. 9.11.2]{CM24}, which treated equidistribution in a different setting.
The second difficulty comes from the Monge--Ampere equation. 
The breakthrough work on this problem by Boucksom--Favre--Jonsson \cite{BFJ15} and its sequel \cite{BGJKM20, BGM22} proves existence of solutions under some technical assumptions. To prove an {unconditional} result, we can relax the equation drastically; for example, it suffices to find integrable adelic line bundles $\OH_1, \dots, \OH_{d-1}$ on $Y/k$ such that $c_1(\OH_1)\cdots c_1(\OH_{d-1})$ is a linear combination of Dirac measures, and that one of the Dirac measures is supported at $v$. This problem can be viewed as a non-archimedean Poisson equation. 

Due to technicality, we will address the above approach in an ongoing project. 
In this paper, we will take a different approach to prove the main theorem 
(Theorem \ref{equi ft}). Our idea is as follows. 

\paragraph*{Step 1: Equidistribution at divisorial valuations.}
In Step 1, we prove Theorem \ref{equi ft} for all divisorial valuations 
$v \in \CM(F/k)$, i.e. points of $Y_{K_w}^\an$ coming from discrete valuations induced by irreducible components of special fibers of integral models of $Y$ over $O_{K_w}$. 
These valuations are automatically fully transcendental. 
To achieve this, take the polarization $\OH$ in Theorem \ref{equi relative}
to be a model adelic line bundle, so $c_1(\OH)_w^{d-1}$ is a linear combination of Dirac measures supported at finitely many divisorial points. 
We can further arrange that $v$ is one of the divisorial points in the support. 

\paragraph*{Step 2: Reduce to an approximation theorem.}
Note that the divisorial points are dense in $Y_w^\an$; see \cite[Corollaire 4.5]{Poi13} and \cite[Proposition A.5]{GM}. 
Once we have the equidistribution theorem for divisorial valuations, 
we plan to approximate {a general fully transcendental valuation} $v$ by divisorial valuations. 
More precisely, we have the following approximation theorem. 

\begin{theorem}[Theorem \ref{approximation1}]
\label{approximation intro}
Let $K$ be an algebraically closed non-archimedean field. 
Let $F$ be a finitely generated field extension over $K$ of {transcendence degree} $1$. 
Let $X$ be a geometrically integral quasi-projective variety of dimension $n$ over $F$. Let $\ol L$ be a nef adelic line bundle on $X/O_K$ such that $\deg_{\wt L}(X/F)>0$.
Let $\{\alpha_m\}_{m\geq1}$ be a generic sequence of effective 0-cycles of $X$ with bounded $\TL_K$-heights over $K$.  If the weak convergence
	$$\mu_{\alpha_m,v}\longrightarrow \d\mu_{\OL,v}$$
holds for all divisorial valuations $v\in \CM(F/K)$, then it holds for all fully transcendental valuations $v\in\CM(F/K)^\ft$.
\end{theorem}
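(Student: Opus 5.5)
Since $K$ is algebraically closed and $\trd(F/K)=1$, $F$ is the function field of a smooth projective curve $Y$ over $K$. Valuations in $\CM(F/K)$ extending the valuation of $K$ are points of the Berkovich curve $Y^\an$ other than the type~1 (classical) points. Fully transcendental valuations are the points of type $2,3,4$; divisorial valuations are the type~2 points. The plan is to run a continuity argument along the curve $Y^\an$. We spread $X$ out to a flat quasi-projective family $\pi:\CX\to U$ over a dense open $U\subset Y$, with generic fiber $X$. For a point $v$ of type $2,3,4$ in $U^\an$, the fiber $\pi^{-1}(v)$ of $\CX^\an\to U^\an$ is $X_v^\an$, viewed over the completed residue field $\mathcal H(v)=F_v$. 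Both $\mu_{\alpha_m,v}$ and $\d\mu_{\OL,v}$ can then be regarded as measures on $\CX^\an$ supported on the fiber over $v$. The goal is to show that, for $f\in C_c(\CX^\an)$, the functions
$$\Phi_m(v)=\int f\,\mu_{\alpha_m,v},\qquad \Phi(v)=\int f\,\d\mu_{\OL,v}$$
are continuous in $v$ in a suitable uniform sense on the type $2,3,4$ locus. Then convergence on the dense set of divisorial points propagates to all fully transcendental $v$.

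\textbf{Continuity of the limit.} First we show that $v\mapsto \Phi(v)$ is continuous on the non-classical locus. Write $\OL$ as a limit of model (or at least projective-model) adelic line bundles $\OL_j$ under the boundary topology. For model metrics, the fiberwise Monge--Amp\`ere measures $c_1(\OL_j)_v^n$ vary continuously in $v$, by the continuity of fiberwise Chambert-Loir measures for flat families of models (as in Gubler's and Chen--Moriwaki's work on families). For the limit, the standard convergence of Monge--Amp\`ere measures under uniform convergence of metrics gives estimates that are uniform in $v$, because the boundary topology controls all fibers simultaneously. A uniform limit of continuous functions is continuous, so $\Phi$ is continuous.

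\textbf{Equicontinuity of the sequence.} This is the main obstacle. For $\alpha_m$, $\Phi_m(v)$ is an average of $f$ over the points of $\alpha_m$ base-changed to $F_v$. Each component of $\alpha_m$ is a closed point whose Zariski closure $\triangle$ is a curve finite over $U$ (after shrinking). So $\Phi_m(v)=\frac{1}{\deg\alpha_m}\sum_{z\in\triangle^\an,\pi(z)=v} f(z)$, which is continuous in $v$ because $\triangle^\an\to U^\an$ is finite flat. We need equicontinuity in $m$, and here I would use the bounded-height hypothesis. Take a small affinoid neighbourhood $V$ of $v$, for instance a skeleton segment for a type~3 point or a nested disc system for a type~4 point, shrinking to $v$. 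Write $\Phi_m(v')-\Phi_m(v)$ in terms of the degrees of the fibres of $\triangle$ over $V$ together with the modulus of continuity of $f$. Bounded $\TL_K$-heights should force the points of $\alpha_m$ to have bounded ``ramification/distance distortion'' over $V$. Concretely, one can compare Dirac currents along the family using the Poincar\'e--Lelong formula on $V$ with a test function: the height bound controls $\int \log|g|\,\d\delta_{\triangle}$ and thus the mass of the $dd^c$ of fibrewise integrals near $v$, via subharmonicity on the curve $U^\an$. It suffices to test $f$ of the form $\log\|s\|$ for model sections, whose span is dense in $C_c$. For such $f$, the function $\Phi_m$ is a potential on $U^\an$ whose Laplacian has mass bounded by the heights; a uniform Laplacian-mass bound gives uniform Lipschitz bounds on skeleta, hence equicontinuity.

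\textbf{Conclusion.} Fix a fully transcendental $v$ and $\epsilon>0$. Pick a divisorial $v'$ near $v$, using density of type~2 points together with the equicontinuity, so that $|\Phi_m(v)-\Phi_m(v')|<\epsilon$ for all $m$ and $|\Phi(v)-\Phi(v')|<\epsilon$. The hypothesis gives $\Phi_m(v')\to\Phi(v')$, and hence $\limsup_m|\Phi_m(v)-\Phi(v)|\le 2\epsilon$. Since this holds for every $\epsilon>0$, $\Phi_m(v)\to\Phi(v)$, and a density argument in $C_c(X_v^\an)$ finishes the proof.
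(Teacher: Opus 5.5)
Your overall architecture is the paper's: continuity of $v\mapsto\int f\,\d\mu_{\OL,v}$ (the paper's Lemma~\ref{continuity lem}, which realizes the fiber integral of a model function as the metric of a Deligne pairing on $Y$; your appeal to uniform convergence of metrics needs an extra uniformity argument, since the boundary topology does not give uniform convergence), an equicontinuity estimate for $v\mapsto\frac{1}{\deg x}f(x_v)$ with respect to the canonical distance on points of type $2,3,4$ (finite there by \cite{BPR}, which is exactly why type~$1$ is excluded), and density of divisorial points.

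The genuine difference is in the equicontinuity step. The paper restricts to test functions $\prod_i\psi_i^*f_i$ pulled back from $\PP^1$ (Theorem~\ref{test function}). It then builds an explicit transport plan between the cycles $[x_v]$ and $[x_w]$ on skeleta from expansion factors. It bounds the cost of that plan by integrating over $\Gamma'$ the pull-back of a strictly positive form $\omega_0$ dominated by $c_1(\overline{\CO(1)})$ (Corollary~\ref{strictly positive measure on skeleton}). Cauchy--Schwarz then gives a H\"older-$\frac12$ bound on the transport distance, valid for all Lipschitz functions on the skeleta at once.

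Your potential-theoretic route also works, and for a single test function it is sharper.
\begin{enumerate}[(1)]
\item Write a model function as $f=-\log(\|\cdot\|_1/\|\cdot\|_2)$ for two ample models of one line bundle $A$ on a compactification of the total space.
\item Then $\d'\d''(f|_{Y'})$ is a difference of two positive measures, each of mass $\deg_{Y'}(A)=\deg(x)h_A(x)$.
\item The function $v\mapsto f(x_v)$ has Laplacian $\pi'_*\d'\d''(f|_{Y'})$, because push-forward commutes with the Laplacian for finite morphisms of curves. On skeleta this is the bookkeeping of Lemma~\ref{total expansion}.
\item Green's functions on a metric graph have slopes at most $1$, since they correspond to unit current flows. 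Hence slopes are bounded by the total variation of the Laplacian, which gives $\frac{1}{\deg x}|f(x_v)-f(x_w)|\le 2h_A(x)\dist(v,w)$.
\end{enumerate}
This bypasses Theorem~\ref{test function}, transport plans and the Monge--Amp\`ere input, and gives a Lipschitz rather than H\"older estimate.

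What your sketch leaves out is the one step where the hypotheses actually enter. The Laplacian mass is controlled by $h_A$ for an ample $A$ on the total space, not by $\TL_K$. Moreover $\TL$ is only big on the generic fiber, so ``the Laplacian has mass bounded by the heights'' is not true as stated.

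You need the height inequality \cite[Thm.~5.3.7]{YZ26}, the same input the paper uses: $h_A(x)\le C(h_{\TL}(x)+A_0)$ for $x$ in some Zariski open $X'\subset X$. You then need the genericity of $\{\alpha_m\}$ to ensure $|\alpha_m|\subset X'$ for $m\gg0$. Your proposal never uses genericity, and without it the comparison can fail, e.g.\ for points on a curve contracted by $\TL$ on the generic fiber, which have bounded $\TL$-height but unbounded $h_A$. Since your bound is linear in the height, averaging over the components of $\alpha_m$ then gives the uniform estimate directly, with no Cauchy--Schwarz needed.

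Finally, the support of the Laplacian depends on $x$, so the Lipschitz bound must be stated for the canonical distance rather than on a fixed skeleton. For each $x,v,w$, choose a skeleton containing $v$, $w$ and that support; the constant does not depend on the choice. Type~$4$ points are then reached by continuity.
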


The datum $(K, F, X, \ol L)$ of Theorem \ref{approximation intro} comes from some slight variants of the datum $(K_w, F_v, X_{F_v}, \ol L |_{X_{F_v}})$ (with $w=v|_K$) from the setting of Theorem \ref{equi ft}. 
In particular, the approximation theorem is completely over the non-archimedean field $K$ (instead of the original global field $K$). 
The key condition to guarantee such a uniformity is that $\alpha_m$ has bounded 
$\TL_K$-heights, where $\TL_K$ is the image of $\OL$ in $\wt\Pic(X/K)$, and the height function 
$$
h_{\TL_K}: X(\ol F) \lra \RR
$$
is defined by viewing $F$ as a function field of one variable over $K$.

\paragraph*{Step 3: Choose special test functions.}
The weak convergence in Theorem \ref{approximation intro} means 
$$\int_{X_v^\an} f \mu_{\alpha_m,v}\longrightarrow \int_{X_v^\an} f  \d\mu_{\OL,v}$$
for all continuous and compactly supported functions $f\in C_c(X_v^\an)$. 
In this step, we choose special test functions $f$, which are more convenient to work on.

Let $Y$ be the unique smooth projective model of $F$ over $K$, and let
$U\to Y$ be a quasi-projective model of $X$ over $F$. 
Denote by 
$C_{P}^{\m}(U^\an)_{\rm pure}$
the set of functions on $U^\an$ of the form
$$
(\psi_1^*f_1)(\psi_2^*f_2)\cdots (\psi_N^*f_N),
$$
where $N\geq 1$ is an integer, every $\psi_i: U\to\PP^1$ is a $K$-morphism, and every $f_i: \PP^{1,\an}\to \RR$ is a model function. 
Denote 
$$
C_{P,c}^{\m}(U^\an)_{\rm pure}= C_{P}^{\m}(U^\an)_{\rm pure} \cap C_c(U^\an).
$$
Denote by $C_{P,c}^{\m}(U^\an)$ the $\QQ$-vector subspace of $C_c(U^\an)$ generated by $C_{P,c}^{\m}(U^\an)_{\rm pure}$.

In Theorem \ref{test function}, we prove that if $U$ is affine, then $C_{P,c}^{\m}(U^\an)$ is actually dense in $C_c(U^\an)$ under the uniform topology. 
This result is in the spirit of a theorem of Gubler \cite[Thm. 7.12]{Gub98}
on the density of model functions, and the proof is an application of the classical Stone--Weierstrass theorem. 

We take the morphisms $\psi_i$ to $\PP^1$ because it is hard to work on the high-dimensional space $U^\an$ especially when it involves integral models and skeletons.

\paragraph*{Step 4: Reduce to a uniform H\"older property.}
By the above step, we only need to prove the approximation theorem for every special test {function} $g\in C_{P,c}^{\m}(U^\an)_{\rm pure}$. 
In this step, we reduce the problem to a  H\"older property (cf. Theorem \ref{holder}), which asserts that 
there exist positive constants $A_0, A_1$ such that for any generic $x\in X(\ol F)$ and any two points $v, w\in Y^\an$ of type $2,3,4$, we have
	$$
	\left|
		\int_{X_v^\an} g \mu_{x,v}-\int_{X_w^\an} g \mu_{x,w} \right|
	\leq
	A_1\cdot\sqrt{h_{\TL}(x)+A_0}\cdot\sqrt{\dist(v,w)}.
	$$
Here $\dist(v,w)$ is the canonical distance on $Y^\an$ induced by the canonical metrics on its skeletons. 
Here we say that  $x\in X(\ol F)$
is \emph{generic} if it lies in a fixed Zariski open subset $X'$ of $X$. 
 
The property asserts that $\displaystyle\int_{X_v^\an} g \mu_{x,v}$ is uniformly continuous in $v$, and thus we can approximate it by divisorial points. 
Note that $\dist(v,w)$ is finite between two points $v,w$ of type $2,3,4$; see  
Theorem \ref{finite distance}, which is originally from
 Baker--Payne--Rabinoff \cite[Cor. 5.7]{BPR}. 
However, $\dist(v,w)$ becomes infinity if we allow one of the points to be of type 1. 
This is exactly the reason why the method {does not work} for points of type 1.

\paragraph*{Step 5: Reduce to a distance inequality.}

To illustrate the idea, let us assume $g=\psi_1^* f_1$ (and thus $N=1$), as the general case is not much more difficult. 

To prove the H\"older property, we take semistable models $\CY$ of $Y$ and $\CP_1$ of $\PP^1$ over $O_K$. The reduction graphs give skeletons $\Gamma$ of $Y^\an$
and $\Gamma_1$ of $\PP^1$. 
Assume that the morphisms involved are extended to the semistable models. 
As the function $g$ is essentially Lipschitz continuous, the uniform H\"older property is implied by a distance inequality (Theorem \ref{distance}), which asserts that 
there exist positive constants $A_0, A_2$ such that for any generic closed point $x\in X$, and for any two points $v, w\in \Gamma$, 
we have
$$
\frac{1}{\deg(x)} \dist_{\Gamma_1}([x_v]_{\Gamma_1},[x_w]_{\Gamma_1})
\leq A_2\cdot\sqrt{h_{\TL}(x)+A_0}\cdot \sqrt{\dist_\Gamma(v,w)}.
$$
Here $x_v$ denotes the Galois orbit of $x$ in $U_v^\an$, and $[x_v]_{\Gamma_1}$ denotes its image in the skeleton $\Gamma_1$. 
The distances are on the skeletons, but a further limit can convert them to the Berkovich spaces.

If $x\in X(\ol F)$ is not $F$-rational, we have to make lots of extra effort on the problem. In this case,   
$\dist_{\Gamma_1}([x_v]_{\Gamma_1},[x_w]_{\Gamma_1})$
is interpreted {as the distance} between two effective 0-cycles introduced in \S\ref{subsec transport plan}. The distance is calculated by choosing paths between points of the two cycles, and it turns out that the situation naturally fits the setting of a finite-dimensional linear programming problem.  
Once the terminology is {set up}, the treatment is similar to the case of rational points.

\paragraph*{Step 6: Prove the distance inequality.}

It remains to prove the distance inequality. 
Our proof of the distance inequality comes from the philosophy of \emph{partial heights} introduced in Xie--Yuan \cite{XY}, which was used there to prove some case of the geometric Bombieri--Lang conjecture.
The upshot is to use 
$h_{\TL}(x)$ to bound some integral, and use the integral to bound the distance $\dist_{\Gamma_1}([x_v]_{\Gamma_1},[x_w]_{\Gamma_1})$. 

First, for a generic closed $x\in X$, we have a bound of the form
$$
h_{\psi_1^*\CO(1)}(x)\leq A_3(h_{\TL}(x)+A_0).
$$
This follows from a height inequality of Yuan--Zhang \cite[Thm. 5.3.7]{YZ26}.

Second, we interpret the height in terms of integration by 
$$h_{\psi_1^*\CO(1)}(x)=\frac{1}{\deg(x)} \int_{Y'^\an} \psi_1'^*c_1(\ol \CO(1)).
$$
Here $Y'$ is the normalization of the multi-section of $U\to Y$ corresponding to $x$, the morphism $\psi_1':Y'\to \PP^1$ is induced by $\psi_1:U\to \PP^1$, and we will choose a reasonable piecewise smooth metric $\|\cdot\|$ on $\CO(1)$, so that the pullback $\psi_1'^*c_1(\ol \CO(1))$ is a $(1,1)$-form on $Y'^\an$. 
This process requires the theory of differential forms on Berkovich spaces originally introduced by Chambert-Loir--Ducros \cite{CLD}, and refined by Gubler--Jell--Rabinoff  \cite{GJR1}. We mainly require the further theory of Gubler--Jell--Rabinoff \cite{GJR2} on metric graphs and Berkovich curves. 

Third, take a piecewise smooth and strictly positive $(1,1)$-form $\omega_0$ on $\Gamma_1$. We can choose the metric of $\ol \CO(1)$ so that $c_1(\ol \CO(1))$
is greater than a positive multiple of $\omega_0$ on $\Gamma_1$. 
It follows that the integral 
$$
I(\omega_0)=\int_{\Gamma'}\psi_1'^*\omega_0
$$
is bounded above by $h_{\psi_1^*\CO(1)}(x)$ and thus by $h_{\TL}(x)$. 
Here $\Gamma'$ is the skeleton of a semistable integral model $\CY'$ of $Y'$ dominating $\CY$ and $\CP_1$. 

Finally, by computing the integral $I(\omega_0)$ on edges of $\Gamma'$, we eventually get a bound of the distance $\dist_{\Gamma_1}([x_v]_{\Gamma_1},[x_w]_{\Gamma_1})$.
To illustrate the idea, assume that $x\in X(F)$ is a rational point, and thus assume that $Y'\to Y$, $\CY'\to \CY$, and $\Gamma'\to \Gamma$ are isomorphisms.
We further assume that $\dist_\Gamma(v,w)$ is given by a line segment $\ell$ connecting $v$ and $w$ contained in an edge of $\Gamma$. 
Denote by  $\ell'$ the preimage of $\ell$ in $\Gamma'$, and by $\bar\ell$ the image of $\ell'$ in $\Gamma_1$, which is also assumed to be a line segment contained in an edge of $\Gamma_1$. 
The canonical metrics of the graphs give canonical coordinates $t, t',\bar t$ on $\Gamma, \Gamma', \Gamma_1$. 
Take $\omega_0$ with restriction 
$$
\omega_0|_{\bar \ell}= \d'\bar t\wedge \d''\bar t. 
$$
Then the pull-back 
$$
\psi_1'^*\omega_0|_{\ell'}=d_{\ell'}(\psi_1)^2 \d't'\wedge \d''t'. 
$$
Here the expansion factor of the linear map $\ell'\to \bar \ell$ is given by  
$$
d_{\ell'}(\psi_1)=\frac{l(\bar\ell)}{l(\ell')}=\left| \frac{\d \bar t}{\d t'}\right|, 
$$
where $l(\ell')$ and $l(\bar \ell)$ denote the lengths. 
It follows that 
$$
I(\omega_0)\geq \int_{\ell'} \psi_1'^*\omega_0
=\int_{\ell'} d_{\ell'}(\psi_1)^2 \d't'\wedge \d''t'
= d_{\ell'}(\psi_1)^2 l(\ell'). 
$$
Now we can get the distance inequality by 
$$
\dist_{\Gamma_1}([x_v]_{\Gamma_1},[x_w]_{\Gamma_1})=l(\bar \ell)
=d_{\ell'}(\psi_1) l(\ell')
$$
and 
$$
\dist_{\Gamma}(v,w)=l(\ell)
=l(\ell'). 
$$

\subsection{Notation and terminology}

We will take the terminology of Yuan--Zhang \cite{YZ26}, and we will review the relevant terminology before using it.
The following are some extra notation used in this paper. 

\begin{enumerate}[(1)]
\item For a real number $t$, define $\log^+t = \log\max\{t,1\}.$

\item By a \emph{variety}, we mean an integral scheme separated and of finite type over the base field. 
By a \emph{curve}, we mean a variety of dimension 1. 

\item
For a field $F$ with a valuation $v$, denote by $F_v$ the completion of $F$ with respect to $v$. 

\item 
For a field $F$, denote by $\ol F$ an algebraic closure of $F$.
If $F$ is complete with respect to a valuation, then the valuation extends uniquely to the algebraic closure $\ol F$, and we denote by $\wh{\ol F}$ the completion of the algebraic closure $\ol F$.  

\item 
By a \emph{analytic field} $K$, we mean a complete field equipped with a non-archimedean real-valued valuation.
We denote by $O_K$ the valuation ring of $K$, and by $\wt K$ the residue field of $O_K$. 
It is called a \emph{non-archimedean field} if the valuation is non-trivial. 
By convention, any inclusion of analytic fields is assumed to respect the valuations.

\item 
For a topological space $M$, denote by $C(M)$ the space of real-valued continuous functions on $M$, and denote by $C_c(M)$ the space of real-valued, compactly supported, and continuous functions on $M$.
\end{enumerate}

\subsubsection*{Acknowledgment}
The authors would like to thank Junyi Xie for many valuable communications. 
The authors are supported by the grants NO. 12250004 and NO. 12321001 from the National Natural Science Foundation of China. 
The fourth author would also like to thank the Xplorer Prize from the New Cornerstone Science Foundation and the support of the Sino--Russian Mathematics Center.

\subsubsection*{AI Statement}
When writing the paper, the authors used GPT to search for well-known mathematics facts and references; after finishing the first draft, the authors used GPT to check grammars, typos, and correctness of proofs.

\section{Fully transcendental points on Berkovich spaces} \label{tran deg section}
In this section, we review \emph{Berkovich spaces} and \emph{fully transcendental points}. These notions are central in the statement of our main theorem.  

\subsection{Berkovich spaces}
Berkovich spaces are best known as analytic spaces associated with varieties over non-archimedean fields, whose foundation was introduced by Berkovich \cite{Ber1}. 
By Berkovich \cite[\S1]{Ber2}, the base fields are relaxed to be Banach rings, and the old construction works similarly. 
The {latter} case plays an important role in the theory of adelic line bundles in \cite{YZ26}.
Here we review the basics of Berkovich spaces following the terminology of 
\cite{YZ26}. 

Let $(R,|\cdot|_\mathrm{Ban})$ be a commutative Banach ring with unity 1.
Namely, $R$ is a commutative ring with unity 1, and $|\cdot|_\mathrm{Ban}:R\to \RR_{\geq0}$ is a map satisfying the following properties:
\begin{enumerate}[(1)]
\item (norm property) $|a|_\mathrm{Ban}=0$ if and only if $a=0$;
\item (triangle inequality) $|a+b|_\mathrm{Ban}\leq |a|_\mathrm{Ban}+|b|_\mathrm{Ban}$ for all  $a,b\in R$;
\item (sub-multiplicativity) $|ab|_\mathrm{Ban}\leq |a|_\mathrm{Ban}\cdot |b|_\mathrm{Ban}$ for all  $a,b\in R$;
\item (completeness) $R$ is complete under the topology induced by $|\cdot|_\mathrm{Ban}$.
\end{enumerate}
In our applications, we will always have one of the following three cases:
\begin{enumerate}[(1)]
\item $R$ is a non-archimedean field, i.e. a complete field with a non-trivial non-archimedean valuation;
\item $R$ is a field, and $|\cdot|_\mathrm{Ban}$ is the trivial valuation $|\cdot|_0$;
\item $R=\ZZ$, and $|\cdot|_\mathrm{Ban}$ is the usual archimedean absolute value $|\cdot|_\infty$.
\end{enumerate}
To refer to the last two cases, in the uniform terminology of \cite[\S1.5]{YZ26}, we will always say that \emph{let $R$ be either $\ZZ$ or a field}.

Let $A$ be a commutative ring over $R$. 
Then the 
\emph{Berkovich space} $\CM(A/R)$, sometimes abbreviated as $\CM(A)$,  is the set of multiplicative
semi-norms on $A$ whose restriction to $R$ is bounded by 
$|\cdot|_\mathrm{Ban}$. 
In particular, we usually abbreviate $\CM(R/R)$ as $\CM(R)$.

For each $x\in \CM(A/R)$, 
denote its corresponding semi-norm on $A$ by $|\cdot|_x:A\to \RR_{\geq0}$.
For any $f\in A$, write $|f|_x$ as $|f(x)|$, which gives a real-valued function $|f|$ on $\CM(A/R)$.
The topology on $\CM(A/R)$ is the weakest one such that the function $|f|:\CM(A/R)\to\RR$ is continuous for all $f\in A$. 

We have an easy partition
$$
\CM(A/R)=\CM(A/R)_\fin\ \cup\ \CM(A/R)_\infty.
$$
where $\CM(A/R)_\fin$ (resp. $\CM(A/R)_\infty$) denotes the subset of non-archimedean (resp. archimedean) semi-norms.

Let $X$ be a scheme over $R$. 
Assume that $X$ is covered by an affine open cover $\{\Spec A_i\}_i$.
Then the \emph{Berkovich space} $(X/R)^\an$, sometimes abbreviated as $X^\an$, is the union of $\CM(A_i/R)$, glued canonically. 
The topology of $(X/R)^\an$ is the weakest one such that each $\CM(A_i/R)$ is an open subspace of $(X/R)^\an$.
As in the affine case, we also have a partition
$$
(X/R)^\an=(X/R)_\fin^\an\ \cup\ (X/R)^\an_\infty.
$$

Note that in the affine case $X=\Spec A$, the definition gives 
$(X/R)^\an=\CM(A/R)$. 
We have the following extra concepts.

\begin{enumerate}[(1)]
\item \emph{Residue field.}
For each $x\in \CM(A/R)$, the corresponding semi-norm $|\cdot|_x$ induces a norm on the integral domain $A/\ker(|\cdot|_x)$. The completion of the fraction field of  $A/\ker(|\cdot|_x)$ is called the \emph{completed residue field} of $x$ and denoted by $\CH_x$. Denote by $|\cdot|$ the valuation (multiplicative norm) on $\CH_x$ induced by $|\cdot|_x$. 
Then $|\cdot|_x:A\to \RR$ is equal to the composition 
$$A\lra \CH_x\overset{|\cdot|}{\lra} \BR.$$
We write the first map as $f\mapsto f(x)$, which is compatible with the convention $|f|_x=|f(x)|$.
The notation $\CH_x$ generalizes to any scheme $X$ over $R$.

\item \emph{Contraction.}
There is a canonical contraction map $\kappa: (X/R)^\an\to X$. It suffices to describe it in the case $X=\Spec A$. 
For each $x\in \CM(A)$, 
the kernel of the map $|\cdot|_x:A\to \RR$ is a prime ideal of $A$, and thus defines an element $\kappa(x)\in \Spec A$. 

\item \emph{Functoriality.}
Any morphism $f:X\to Y$ over $R$ induces a continuous map 
$$f^\an:(X/R)^\an\lra (Y/R)^\an.$$ 
For any point $v\in Y^\an$, the \emph{fiber} 
$$X_v^\an=(X/R)_v^\an=(f^\an)^{-1}(v),$$
 is a subspace of $(X/R)^\an$, and is canonically homeomorphic to the Berkovich space $(X_{\CH_v}/\CH_v)^\an$.
\end{enumerate}
By \cite[Lem. 1.1, Lem. 1.2]{Ber2}, we have the following basic topological properties: 
\begin{enumerate}[(1)]
\item If $X$ is separated and of finite type over $R$, then $(X/R)^\an$ is Hausdorff.
\item If $X$ is of finite type over $R$, then $(X/R)^\an$ is locally compact.
\item If $X$ is projective over $R$, then $(X/R)^\an$ is compact.
\end{enumerate}

In the definition, we do not assume that $X$ is reduced. 
This does not bring much trouble, since there is always a canonical 
homeomorphism $(X_{\red}/R)^\an\to (X/R)^\an$, where $X_\red$
denotes the reduced structure of $X$. 
Moreover, we can further write 
$(X_{\red}/R)^\an$ as a union of the Berkovich spaces associated to the irreducible components of $X_{\red}$.

\subsection{Topgebraic degrees}

To introduce fully transcendental points, we first recall the notion of \emph{topgebraic degree} introduced in Temkin \cite{Tem}. 

In this paper, by an \emph{analytic field}, we mean a complete field equipped with a non-archimedean real-valued valuation. 
It is called a \emph{non-archimedean field} if the valuation is non-trivial. 
By convention, any inclusion of analytic fields is assumed to respect the valuations.

For an analytic field $K$,  denote by $\ol K$ an algebraic closure of $K$, and denote by $\wt K$ the residue field of $K$. 
The valuation of $K$ extends uniquely to $\ol K$. Denote by
$\widehat{\ol K}$ the completion of $\ol K$.

Let $K\subset L$ be an extension of analytic fields.
An element $x\in L$ is \emph{topgebraic} over $K$ if $x\in \widehat{\ol K}\cap L$.
Here we view $\ol K$ as the algebraic closure of $K$ in $\ol L$, and thus 
$\widehat{\ol K}$ is viewed as the closure of $\ol K$ in $\widehat{\ol L}$.

Let $S \subset L$ be a subset.
We say that $S$ is a \emph{topgebraic generating subset} of $L$ over $K$, if every element of $L$ is topgebraic over $\widehat{K(S)}$.
Define the \emph{topgebraic generating degree} of $L$ over $K$ by
$$
\ttrd(L/K) := \inf \{ \#S \mid S \text{ is a topgebraic generating subset of } L \text{ over } K \}.
$$
This is either a non-negative integer or infinity. 
See \cite[\S2.1.9]{Tem} for more details. 

We say that a subset $S \subset L$ is \emph{topgebraically independent} over $K$ if for every $x \in S$, and every non-archimedean valuation of $K(S \setminus \{x\})$ extending the one on $K$, the element $x$ is not topgebraic over $\widehat{K(S \setminus \{x\})}$.
A \emph{topgebraic basis} of $L$ over $K$ is a topgebraically independent topgebraic generating subset $S$ of $L$. 

Before we prove any results about topgebraic generating degrees, let us first note the following basic result. 

\begin{lem} \label{trd compare}
Let $K$ be an analytic field, and $F/K$ be an extension of finite transcendence degree. 
Let $v$ be a valuation of $F$ extending that of $K$. 
Then 
$$\trd(\wt F_v/\wt K)\leq \trd(F/K).$$
\end{lem}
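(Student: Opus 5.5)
This is the classical Abhyankar-type inequality for residue fields, and I would prove it by reducing to the statement that residually algebraically independent elements are algebraically independent. Let $r=\trd(F/K)$. Suppose $\bar a_1,\dots,\bar a_s\in \wt F_v$ are algebraically independent over $\wt K$; the goal is to show $s\le r$. Since $\wt F_v$ equals the residue field of $(F,v)$ (completion does not change the residue field), each $\bar a_i$ lifts to an element $a_i\in F$ with $|a_i|_v\le 1$. It then suffices to show that $a_1,\dots,a_s$ are algebraically independent over $K$, since this forces $s\le r$.

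The key step is a Gauss-norm computation. Take a nonzero polynomial $P\in K[T_1,\dots,T_s]$ with $P(a_1,\dots,a_s)=0$. Scale $P$ by an element of $K$ so that its largest coefficient has absolute value exactly $1$. This is possible because the coefficients are elements of $K$: if $c$ is a coefficient of maximal absolute value, divide by $c$. Now all coefficients lie in $O_K$ and at least one is a unit. Reducing modulo the maximal ideal gives a nonzero polynomial $\bar P\in \wt K[T_1,\dots,T_s]$. Since $|a_i|_v\le 1$, the element $P(a_1,\dots,a_s)$ lies in the valuation ring of $F$, and its residue is $\bar P(\bar a_1,\dots,\bar a_s)$. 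This residue is nonzero by the algebraic independence of the $\bar a_i$. Hence $|P(a_1,\dots,a_s)|_v=1\ne 0$, which is a contradiction. In particular, $v$ restricted to $K[a_1,\dots,a_s]$ is the Gauss norm.

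Finally, $\trd(\wt F_v/\wt K)$ is the supremum of the sizes of such finite independent families. If it were infinite, we could take $s=r+1$ and get a contradiction, so the inequality holds even when the left side is a priori a cardinal. There is no real obstacle here. The only points needing care are the identification of the residue field of $F_v$ with that of $F$, and the normalization of the coefficients; the latter uses only that $K$ is a valued field, not that the valuation is discrete or nontrivial. If the valuation is trivial, the argument goes through verbatim: $\wt K=K$ and the normalization is automatic.
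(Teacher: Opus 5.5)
Your proof is correct and is essentially the paper's argument: lift the residue classes to elements of $F$ of absolute value at most $1$, normalize a polynomial relation so that it is primitive, and reduce it modulo the maximal ideal. The only difference is direction: you show that lifts of residually independent elements are algebraically independent, while the paper shows directly that any $d+1$ residues satisfy the reduction of a primitive relation among their lifts.
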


\begin{proof}
This is a special case of Abhyankar's lemma (cf. \cite[Lem. 1(1*)]{Abh}), but we sketch a proof {for convenience}.

Denote $d=\trd(F/K)$. 
We need to prove that any $d+1$ elements $\wt x_1, \dots, \wt x_{d+1}$ of $\wt F_v$ are algebraically dependent over $\wt K$. 
Let  
$x_1, \dots, x_{d+1}\in O_{F_v}\cap F$ be the liftings of
$\wt x_1, \dots, \wt x_{d+1}$. 
There is a non-trivial polynomial relation $f(x_1, \dots, x_{d+1})=0$ in $F$.
We can assume that the polynomial $f$ is primitive, i.e., the maximum of the valuations of the coefficients is 1. 
Then the reduction of $f(x_1, \dots, x_{d+1})=0$ gives a non-trivial polynomial relation of $\wt x_1, \dots, \wt x_{d+1}$.
\end{proof}

The following are some basic properties of the topgebraic degree.

\begin{prop} \label{prop of ttrd}
Let $L/K$ be an extension of analytic fields. Assume that
$\ttrd(L/K)<\infty$. Then the following hold.
\begin{enumerate}[(1)]
\item
There exists a topgebraic basis of $L/K$, and the cardinality of any such a topgebraic basis is $\ttrd(L/K)$. 
	
\item
For  any dense subfield $L_0$ of $L$, 
$$\trd(\wt L/\wt K)\leq \ttrd(L/K) \leq \trd(L_0/K).$$

\item
Let $E$ be an intermediate analytic field of $L/K$. Then
$$\ttrd(E/K)\leq \ttrd(L/K)\leq \ttrd(L/E)+\ttrd(E/K).$$	
If the residue field of $K$ has characteristic 0, then the second inequality is an equality. 

\end{enumerate}		
\end{prop}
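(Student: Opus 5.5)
Most of this is a repackaging of Temkin's foundational results on topgebraic degrees \cite[\S2.1]{Tem}, so the plan is to deduce each part from the analogous statements there and verify the two inequalities in (2) directly.

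For (1), I would run the usual exchange argument for transcendence bases, with ``algebraic'' replaced by ``topgebraic''. The key input is a topgebraic exchange lemma. Let $S\subset L$ and $x,y\in L$. Suppose $y$ is topgebraic over $\widehat{K(S\cup\{x\})}$ but not over $\widehat{K(S)}$. Then $x$ is topgebraic over $\widehat{K(S\cup\{y\})}$. This is the content of Temkin's treatment in \cite[\S2.1.9]{Tem}, and I would simply cite it rather than reprove it. Granting it, proceed as follows.
\begin{enumerate}[(a)]
\item Any finite topgebraic generating set of minimal cardinality is topgebraically independent. If some $x\in S$ were topgebraic over $\widehat{K(S\setminus\{x\})}$, then $S\setminus\{x\}$ would already generate, since topgebraicity is transitive: the closure of the algebraic closure of a closure of an algebraic closure is still $\widehat{\ol{K(S\setminus\{x\})}}$. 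This contradicts minimality, so a basis exists.
\item Any two bases have the same cardinality, by the Steinitz exchange argument.
\end{enumerate}

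For (2), the upper bound is direct. Let $T$ be a transcendence basis of $L_0/K$. Then $L_0\subset \ol{K(T)}$, and hence $L=\widehat{L_0}\subset \widehat{\ol{\widehat{K(T)}}}$. So $T$ generates topgebraically and $\ttrd(L/K)\le \trd(L_0/K)$.

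For the lower bound, let $S$ be a basis with $\#S=\ttrd(L/K)$, and set $E=\widehat{K(S)}$. Lemma~\ref{trd compare} gives $\trd(\wt E/\wt K)\le \#S$, since $K(S)$ is dense in $E$ and has the same residue field. The residue field of $\widehat{\ol E}$ is $\ol{\wt E}$, which is algebraic over $\wt E$, and $\wt L$ embeds into it. Hence $\trd(\wt L/\wt K)\le \trd(\wt E/\wt K)\le \ttrd(L/K)$.

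For (3), both inequalities are formal.
\begin{enumerate}[(a)]
\item Upper bound: take a basis $S_E$ of $E/K$ and a basis $S'$ of $L/E$. Then $S_E\cup S'$ generates $L/K$ topgebraically. Indeed, $E\subset \widehat{\ol{\widehat{K(S_E)}}}$, so $\widehat{E(S')}$ lies in the completed algebraic closure of $\widehat{K(S_E\cup S')}$, and transitivity finishes.
\item Lower bound: use (1). Extract from a basis of $L/K$, via exchange, enough elements to generate $E$. This is the step where one really needs Temkin's theory, because elements of $E$ are not a priori expressible through a subset of a basis of $L$. I would cite \cite[\S2.1.9]{Tem} for the monotonicity $\ttrd(E/K)\le\ttrd(L/K)$.
\item Equality in residue characteristic $0$: this is Temkin's additivity result, which uses that topgebraic extensions are then ``tame'', with no defect. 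I expect this to be the main obstacle, and I would cite it directly from \cite{Tem} rather than reprove it.
\end{enumerate}
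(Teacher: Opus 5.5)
\textbf{The gap is in (1).} The ``topgebraic exchange lemma'' you plan to cite from \cite[\S2.1.9]{Tem} is false in general, so the Steinitz argument for the invariance of the cardinality of bases cannot work.

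To see why, set $\mathrm{cl}(A)=\widehat{\ol{K(A)}}\cap L$ for finite $A\subset L$, with the closure taken in $\widehat{\ol L}$. This operator is extensive, monotone and idempotent: if $B\subset\mathrm{cl}(A)$ then $\mathrm{cl}(A\cup B)=\mathrm{cl}(A)$, because $\widehat{\ol{K(A)}}$ is complete and algebraically closed. Your exchange lemma would therefore make $\mathrm{cl}$ a matroid closure on finite subsets.

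Now take a tower $K\subset E\subset L$ and minimal generating sets $S_E$ of $E/K$ and $S'$ of $L/E$.
\begin{itemize}
\item Since $E\subset\widehat{\ol{K(S_E)}}$, one has $\widehat{\ol{E(T)}}=\widehat{\ol{K(S_E\cup T)}}$ for every finite $T\subset L$.
\item Hence minimality of $S_E$ and of $S'$ says exactly that $S_E$ is independent and that $S'$ is independent in the contraction by $S_E$.
\item In a matroid, this makes $S_E\cup S'$ independent. Moreover, an independent set contained in $\mathrm{cl}(S)$ has at most $\#S$ elements.
\item Applying this to a minimal generating set $S$ of $L/K$ gives $\ttrd(L/K)\geq\ttrd(L/E)+\ttrd(E/K)$.
\end{itemize}
Combined with your upper bound in (3), this would give additivity in every tower. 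But additivity fails when the residue characteristic is positive: see the Remark following the proposition, citing \cite[Thm. 5.2.2]{Tem}. So exchange fails there; at best it could hold in residue characteristic $0$. The same objection applies to your heuristic ``extract via exchange'' in (3)(b).

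This is why the paper does not argue by exchange. It takes the invariance of the cardinality of topgebraic bases and the monotonicity $\ttrd(E/K)\leq\ttrd(L/K)$ directly from \cite[Thm. 3.2.3]{Tem}. Those results cannot rest on an exchange argument, for the reason above, and you should cite them in the same way.

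The rest of your proposal is fine and agrees with the paper:
\begin{itemize}
\item the existence of a basis, via a minimal generating set;
\item both inequalities in (2). The paper's lower bound is the same argument, using the closure $L'$ of $K(x_1,\dots,x_d)$ in $L$ and the algebraicity of $\wt L/\wt L'$. Note you only need a generating set of minimal size there, not a basis, so (2) need not depend on (1).
\item the upper bound in (3), by concatenating generating sets;
\item the citation \cite[Thm. 4.2.7]{Tem} for equality in residue characteristic $0$.
\end{itemize}
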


\begin{proof}
Part (1) is from \cite[Theorem 3.2.3]{Tem}.

For (2), assume that $L$ is topgebraically generated by $x_1,\dots, x_d\in L$ over $K$. It suffices to prove $\trd(\wt L/\wt K)\leq d$. 
Denote by $L'$ the topological closure of $K(x_1,\dots, x_d)$ in $L$.
Then the residue field extension
$\wt L/\wt L'$ is algebraic.  Hence
$$
\trd(\wt L/\wt K)
=\trd(\wt L'/\wt K)
\leq \trd(K(x_1,\dots, x_d)/K)
\leq d.
$$
Here the first inequality follows from Lemma \ref{trd compare}.

For (3), the first inequality is a non-trivial consequence of \cite[Theorem 3.2.3]{Tem}, and the second inequality follows from the definition. 
The condition of the equality follows from
\cite[Thm. 4.2.7]{Tem}. 
\end{proof}

\begin{remark}
The second inequality in (3) is not necessarily an equality if 
the residue field of $K$ {has positive characteristic}. See \cite[Thm. 5.2.2]{Tem}.
\end{remark}

\subsection{Fully transcendental valuations}

Here we introduce two {important} notions: fully transcendental valuations and divisorial valuations.

\subsubsection{Extension of fields}

We have the following key definition, which concerns when the inequalities in Proposition \ref{prop of ttrd}(2) become equalities. 

\begin{defn}\label{def fully tran}
Let $F/K$ be a field extension of finite transcendence degree.
Let $v$ be a non-archimedean valuation of $F$ and let $v_0=v|_K$ be its restriction to $K$.   
\begin{enumerate}[(1)]
\item
We say that $v$ is \emph{fully transcendental} over $K$, or that $F_{v}/K_{v_0}$ is \emph{fully transcendental}, if
\[
\ttrd(F_{v}/K_{v_0})=\trd(F/K).
\]

\item
We say that $v$ is \emph{divisorial} over $K$, or that $F_{v}/K_{v_0}$ is \emph{divisorial}, if
\[
\trd(\wt F_v/\wt K_{v_0})=\trd(F/K).
\]
\end{enumerate}
\end{defn}

Denote by $\CM(F/K)^\ft_{v_0}$ the set of all fully transcendental valuations $v$ of $F$ extending $v_0$. 
If $K$ is an analytic field under a valuation $v_0$, then we may write $\CM(F/K)^\ft$ for $\CM(F/K)^\ft_{v_0}$. 

As a basic fact, every divisorial valuation is a fully transcendental valuation. In fact, with the notation of the definition,  Proposition \ref{prop of ttrd}(2) gives 
\[
\trd(\wt F_v/\wt K_{v_0})\leq  \ttrd(F_{v}/K_{v_0})\leq \trd(F/K).
\]
If $v$ is divisorial, then the inequalities are equalities.

Let $R$ be an integral domain, which includes especially the case $R=\ZZ$.
Let $F$ be a finitely generated field over $R$, i.e. a finitely generated field over the fraction field $\Frac(R)$ of $R$. 
By abuse of notation, we may also write the transcendence degree 
$\trd(F/\Frac(R))$ as $\trd(F/R)$.
A valuation $v$ of $F$ is  
\emph{fully transcendental} (resp. \emph{divisorial}) over $R$ if it is \emph{fully transcendental} (resp. \emph{divisorial}) over $\Frac(R)$.

\begin{example} \label{point types}
Let $K$ be an algebraically closed non-archimedean field, and $F=K(t)$ be the function field of $Y=\PP^1_K$.
Recall that points of $Y^\an$ are classified into 4 types by \cite[\S1.4.4]{Ber1} (cf. \cite[\S1]{Bak}), and every point corresponds to a decreasing sequence of closed discs in $K$. 
We have a natural injection $\CM(F/K)\to Y^\an$, and thus we can view a valuation $v\in \CM(F/K)$ as a point of $Y^\an$. 
Then $v$ is fully transcendental if and only if it corresponds to a point of type $2,3,4$ on $Y^\an$; and  
$v$ is divisorial if and only if it corresponds to a point of type $2$ on $Y^\an$.
\end{example}

We will need the following nice result. 

\begin{lem} \label{sub of fully trans}
Let $F/E/K$ be field extensions of finite transcendence degrees.
Let $v$ be a non-archimedean valuation of $F$. Denote by 
$w=v|_E$ and $v_0=v|_K$ the restrictions.
Then the following are true.
\begin{enumerate}[(1)]
\item
If $F_v/K_{v_0}$ is fully transcendental, then
$F_v/E_w$ and $E_w/K_{v_0}$ are also fully transcendental. 
If the residue field $\wt F_v$ has characteristic 0, then the converse statement also holds.

\item
If $F_v/K_{v_0}$ is divisorial, then
$F_v/E_w$ and $E_w/K_{v_0}$ are also divisorial. 
The converse statement always holds.
\end{enumerate}

\end{lem}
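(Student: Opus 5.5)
The plan is to reduce everything to the chains of inequalities in Proposition \ref{prop of ttrd}, together with additivity of transcendence degrees $\trd(F/K)=\trd(F/E)+\trd(E/K)$ and the analogous additivity for residue fields. Note that $E_w$ is naturally an intermediate analytic field of $F_v/K_{v_0}$, being the closure of $E$ in $F_v$. Since $F$ is dense in $F_v$, Proposition \ref{prop of ttrd}(2) gives $\ttrd(F_v/E_w)\leq \trd(F/E)$; here one uses that $F$ is dense in $F_v$ and that an $E$-transcendence basis of $F$ topgebraically generates $F_v$ over $E_w$. The same argument gives $\ttrd(E_w/K_{v_0})\le \trd(E/K)$ and $\ttrd(F_v/K_{v_0})\le\trd(F/K)$, and all these quantities are finite.

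For (1), assume $\ttrd(F_v/K_{v_0})=\trd(F/K)$. Proposition \ref{prop of ttrd}(3) applied to $K_{v_0}\subset E_w\subset F_v$ gives
$$\trd(F/K)=\ttrd(F_v/K_{v_0})\leq \ttrd(F_v/E_w)+\ttrd(E_w/K_{v_0})\leq \trd(F/E)+\trd(E/K)=\trd(F/K).$$
So all inequalities are equalities. Since each summand is individually bounded by its transcendence degree, each must be attained: $\ttrd(F_v/E_w)=\trd(F/E)$ and $\ttrd(E_w/K_{v_0})=\trd(E/K)$.

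For the converse, assume the residue field of $F_v$ has characteristic $0$. Then the residue field of $K_{v_0}$, which is a subfield, also has characteristic $0$, so the equality case of Proposition \ref{prop of ttrd}(3) applies: $\ttrd(F_v/K_{v_0})=\ttrd(F_v/E_w)+\ttrd(E_w/K_{v_0})=\trd(F/E)+\trd(E/K)=\trd(F/K)$.

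For (2), I would use the residue fields $\wt K_{v_0}\subset \wt E_w\subset \wt F_v$. Transcendence degrees are additive:
$$\trd(\wt F_v/\wt K_{v_0})=\trd(\wt F_v/\wt E_w)+\trd(\wt E_w/\wt K_{v_0}).$$
By Lemma \ref{trd compare}, applied to the valued extensions $F/E$ and $E/K$ (after completing the base, or directly via the Abhyankar argument in that proof, which works without completeness), we get $\trd(\wt F_v/\wt E_w)\le\trd(F/E)$ and $\trd(\wt E_w/\wt K_{v_0})\le \trd(E/K)$. Hence if the total equals $\trd(F/K)=\trd(F/E)+\trd(E/K)$, both pieces are equalities. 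Conversely, if both pieces are equalities, adding them gives the total equality with no characteristic hypothesis.

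The main obstacle is the bookkeeping at the first step. One must verify that the residue field of $E_w$ equals that of $E$, and that Lemma \ref{trd compare} and Proposition \ref{prop of ttrd}(2) apply to the non-complete base $E$ with the induced valuation. This holds because completion does not change residue fields, and the proof of Lemma \ref{trd compare} only uses lifting elements to $F$.
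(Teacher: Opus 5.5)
Your proposal is correct and follows the paper's own proof essentially step for step. Part (1) uses the same chain of inequalities from Proposition~\ref{prop of ttrd}(2)(3), and its converse uses the equality case of (3). Part (2) uses Lemma~\ref{trd compare} together with additivity of residue transcendence degrees. Your extra bookkeeping makes explicit what the paper uses tacitly: density of $F$ in $F_v$, the fact that completion does not change residue fields, and the fact that the Abhyankar argument does not need a complete base.
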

\begin{proof}
We first prove (1).
The second statement follows from the second statement of Proposition \ref{prop of ttrd}(3).  
For the first statement, we have a series of inequalities by
\begin{align*}
\ttrd(F_v/K_{v_0})
\leq &\  \ttrd(F_v/E_w)+\ttrd(E_w/K_{v_0})\\
\leq &\ \trd(F/E)+\trd(E/K)\\
=&\ \trd(F/K).
\end{align*}
Here the inequalities follow from Proposition \ref{prop of ttrd}(2)(3).
If $F_v/K_{v_0}$ is fully transcendental, then the equality of the first term and the last term forces all inequalities to be equalities. 

The proof of (2) is similar. In fact, 
the second statement follows from additivity of the transcendence degree.
For the first statement, 
we have 
\begin{align*}
\trd(F/K)
=&\ \trd(F/E)+\trd(E/K) \\
\geq&\ \trd(\wt F_v/\wt E_w)+\trd(\wt E_w/\wt K_{v_0})\\
=&\ \trd(\wt F_v/\wt K_{v_0}). 
\end{align*}
Here the inequality follows from Lemma \ref{trd compare}.
If $F_v/K_{v_0}$ is divisorial, then the equality of the first term and the last term forces all inequalities to be equalities. 
\end{proof}

We do not know whether the second statement of (1) holds if the characteristic of $\wt K_{v_0}$ is positive. 
However, we present the following special case, which will be sufficient for our future purpose. 

\begin{prop} \label{extension by divisorial}
Let $K$ be a non-archimedean field. 
Let $F/E/K$ be finitely generated field extensions with $\trd(E/K)=1$.
Let $v$ be a valuation of $F$ extending the valuation of $K$, and  $w=v|_E$ be the restriction to $E$.
If $F_v/E_w$ is divisorial and $E_w/K$ is fully transcendental, then $F_v/K$ is fully transcendental. 
\end{prop}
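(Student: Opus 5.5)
Denote $r=\trd(F/E)$. By hypothesis $\trd(\wt F_v/\wt E_w)=r$ and $\ttrd(E_w/K)=1$. We need $\ttrd(F_v/K)\ge r+1$; the reverse inequality comes from Proposition \ref{prop of ttrd}(2). Proposition \ref{prop of ttrd}(2) also gives the lower bound $\ttrd(F_v/K)\ge \trd(\wt F_v/\wt K)$. So the plan is to split into two cases according to the type of $E_w/K$, which is a one-variable extension.

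\textbf{Case 1: $E_w/K$ is divisorial}, i.e. $\trd(\wt E_w/\wt K)=1$. Then additivity of residue transcendence degrees gives
\[
\trd(\wt F_v/\wt K)=\trd(\wt F_v/\wt E_w)+\trd(\wt E_w/\wt K)=r+1.
\]
So $F_v/K$ is divisorial, hence fully transcendental. This is just Lemma \ref{sub of fully trans}(2).

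\textbf{Case 2: $E_w/K$ is fully transcendental but not divisorial.} Here the residue degree drops to $0$, so we need a genuinely topgebraic argument. The intended route is to build a topgebraic basis of $F_v/K$ of size $r+1$ and check independence directly.

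First, choose $y_1,\dots,y_r\in O_{F_v}\cap F$ whose reductions form a transcendence basis of $\wt F_v/\wt E_w$. By the argument of Lemma \ref{trd compare}, the restriction of $v$ to $E(y_1,\dots,y_r)$ is the Gauss norm over $E_w$. Hence $F_v$ is a finite extension of the completion $E_w\langle y\rangle$ of $E_w(y_1,\dots,y_r)$ with the Gauss norm. (Finiteness holds because $F$ is algebraic over $E(y)$; completion of a finite extension is finite.) Since topgebraic degree is insensitive to finite extensions, it suffices to prove $\ttrd(E_w\langle y\rangle/K)=r+1$.

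Next, choose $t\in E_w$ topgebraically generating $E_w$ over $K$, which exists because $\ttrd(E_w/K)=1$. I claim $\{t,y_1,\dots,y_r\}$ is a topgebraic basis of $E_w\langle y\rangle$ over $K$. Generation is clear. For independence, Proposition \ref{prop of ttrd}(1) says every basis has the same cardinality, so it suffices to rule out $\ttrd\le r$.

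Suppose $\ttrd(E_w\langle y\rangle/K)\le r$. Consider the intermediate field $E_w$. By Proposition \ref{prop of ttrd}(3),
\[
\ttrd(E_w\langle y\rangle/K)\ge \ttrd(E_w\langle y\rangle/\widehat{\ol{K(t)}}\cdots),
\]
which is useless by itself. Instead I would pass to the residue field after a base change: replace $K$ by $K'=\widehat{\ol{E_w}}$. A topgebraic generating set of size $\le r$ over $K$ remains generating after base change to $K'$. So the compositum $K'\langle y\rangle$ would have $\ttrd\le r$ over $K'$. But $K'\langle y\rangle$ is Gauss-norm, so its residue field has transcendence degree $r$ over $\wt K'$. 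This only gives $\ge r$, which is not a contradiction, so the accounting must also use $t$.

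The better route is the opposite order. Take $E'=\widehat{K(y)}$ with the Gauss norm, which is divisorial of residue degree $r$ over $K$. Then view $E_w\langle y\rangle$ as the completion of $E'\cdot E$, a one-variable extension of $E'$ generated by $t$. I would show that $t$ is not topgebraic over $E'$. If it were, then by restricting the approximating algebraic elements via a specialization of the $y_i$ to generic residue classes, $t$ would be topgebraic over $K$ itself. That would contradict $\ttrd(E_w/K)=1$. Then $\ttrd(E_w\langle y\rangle/E')=1$ and $\ttrd(E'/K)=r$ because $E'$ is divisorial.

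The remaining issue is additivity, and this is the main obstacle: the second inequality of Proposition \ref{prop of ttrd}(3) goes the wrong way in residue characteristic $p$. I would try to prove the needed lower bound in this special configuration directly. The lower-level extension $E'/K$ is divisorial, with a topgebraic basis $y$ whose reductions are residually transcendental. Temkin's invariants, namely residue transcendence degree plus the contribution of $\Omega$-type defect in \cite{Tem}, should then be additive: the divisorial part is detected purely by residue fields, which are additive, and the remaining part comes from the single variable $t$. Concretely, I would use \cite[Thm.~3.2.3]{Tem}, which computes $\ttrd$ via completed differentials, and check that the differentials $dy_i$ and $dt$ stay independent using the Gauss-norm splitting.
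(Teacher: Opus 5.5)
\textbf{There is a genuine gap in Case 2.} Your Case 1 is fine. The preliminary reductions in Case 2 are also fine: the Gauss norm on $E(y_1,\dots,y_r)$, and the invariance of $\ttrd$ under finite extensions via Proposition \ref{prop of ttrd}(3). But Case 2 is the whole content of the proposition, and there the lower bound $\ttrd(F_v/K)\geq r+1$ is never proved.

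Your final route gives $\ttrd(E'/K)=r$. It also gives $\ttrd(E_w\langle y\rangle/E')=1$, but only modulo the ``specialization to generic residue classes'' step, which is not justified. To conclude you still need $\ttrd(E_w\langle y\rangle/K)\geq \ttrd(E_w\langle y\rangle/E')+\ttrd(E'/K)$. That is the reverse of the inequality in Proposition \ref{prop of ttrd}(3). It is exactly the inequality that is not available in general when the residue characteristic is positive (see the remark after Proposition \ref{prop of ttrd}). This is also why the converse in Lemma \ref{sub of fully trans}(1) is only available in residue characteristic $0$. Saying that Temkin's differential invariants ``should be additive'' and that $dt,dy_1,\dots,dy_r$ ``stay independent'' is a hope rather than an argument. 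You would need a precise lower bound of $\ttrd$ by a completed-differential invariant, and an actual verification of independence after completion; neither is supplied.

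\textbf{Missing idea: base change so that $w$ becomes divisorial.} This avoids additivity of $\ttrd$ altogether, because additivity does hold for residue transcendence degrees. You were close with $\widehat{\ol{E_w}}$: you correctly observed that a topgebraic generating set survives base change, so $\ttrd$ can only drop. But that field absorbs $t$, which is why you only got $r$. The paper's argument runs as follows.
\begin{enumerate}[(1)]
\item Reduce to $E=K(x_0)$ and $F=K(x_0,\dots,x_r)$.
\item Take $K'\supset K$ algebraically closed and spherically complete with $\log|K'^\times|=\RR$.
\item Because $w$ is of type $2,3,4$, its nested discs have positive limiting radius, so they acquire a common disc of positive radius in $K'$. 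This is exactly where full transcendence of $E_w/K$ is used.
\item Hence $v$ lifts to a valuation $v'$ of $K'(x_0,\dots,x_r)$ whose restriction to $K'(x_0)$ is of type $2$, i.e.\ divisorial over $K'$.
\item Residue transcendence degrees do not drop under the lift. So $\widehat{K'(x_0,\dots,x_r)}$ is divisorial over $\widehat{K'(x_0)}$, hence over $K'$ by Lemma \ref{sub of fully trans}(2).
\item Therefore $\ttrd(F_v/K)\geq \ttrd(\widehat{K'(x_0,\dots,x_r)}/K')=r+1$.
\end{enumerate}
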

\begin{proof}
By assumption, $w$ is a point of type 2,3,4 on the Berkovich curve over $K$ corresponding to the extension $E/K$. 
If $w$ is of type 2, then it is divisorial, and thus $F_v/K$ is divisorial by Lemma \ref{sub of fully trans}(2). 
For general $w$, we hope to convert it to a divisorial valuation by a spherically complete base change. 

Denote $d=\trd(F/E)$. 
Take an element $x_0\in E$ transcendental over $K$, and take elements $x_1,\dots, x_d\in F$ algebraically independent over $E$. 
Replacing $E$ by $K(x_0)$ and $F$ by $K(x_0,\dots, x_d)$, and replacing $v$ and $w$ by the corresponding restrictions, we can assume that 
$E=K(x_0)$ and $F=K(x_0,\dots, x_d)$ are purely transcendental. 

Let $K'$ be a spherically complete and algebraically closed extension of $K$ with value group $\log |K'^\times|=\RR$. 
We refer to \cite[Thm. 1, Cor. 4, Prop. 9]{Poo93} for the existence of such a field.
Denote $E'=K'(x_0)$ and $F'=K'(x_0,\dots, x_d)$. 
View $F'/E'/K'$ as ``base change'' of $F/E/K$ by the extension $K'/K$. Moreover, all fields are subfields of $F'$. 

We claim that there is a valuation $v'$ of $F'=K'(x_0, \dots, x_d)$ extending the valuation of $K'$ and the valuation $v$ of $F=K(x_0,\dots, x_d)$ simultaneously. 
In fact, by induction, it suffices to prove that for any extension $L'/L$ of non-archimedean fields, the natural map $\CM(L'(t)/L')\to \CM(L(t)/L)$ is surjective. 
By \cite[Cor. 1.3.6]{Ber1}, we can prove that 
$\CM(\wh{\ol L}(t)/\wh{\ol L})\to \CM(L(t)/L)$ is surjective. 
Thus it suffices to prove that the natural map $\CM(\wh{\ol L'}(t)/\wh{\ol L'})\to \CM(\wh{\ol L}(t)/\wh{\ol L})$ is surjective. 
As in Example \ref{point types}, every
point of $\CM(\wh{\ol L}(t)/\wh{\ol L})$ is given by a decreasing sequence of discs in $\wh{\ol L}$ by \cite[\S1.4.4]{Ber1} (cf. \cite[\S1]{Bak}). Then we can lift the corresponding discs naturally from $\wh{\ol L}$ to $\wh{\ol L'}$. 

Now we have a valuation $v'$ of $F'$ extending the valuation of $K'$ and the valuation $v$ of $F$. 
Denote by $w'=v'|_{E'}$ the restriction to $E'=K'(x_0)$. 
Note that $w'$ is a point of type 2 over $K'$, since the other types can be excluded by the condition that $K'$ is spherically complete with a full value group $\log |K'^\times|=\RR$. 
It follows that $E'_{w'}/K'$ is divisorial. 

For the residue fields, we have
$$
\trd(\wt F'_{v'}/ \wt E'_{w'})
\geq \trd(\wt F_{v}/ \wt E_{w})
=d
=\trd(F'/E'). 
$$
It follows that $F'_{v'}/ E'_{w'}$ is divisorial. 
As $E'_{w'}/K'$ is divisorial, we see that $F'_{v'}/K'$ is also divisorial.

Therefore, we have 
$$
\ttrd(F_{v}/ K)
\geq \ttrd(F'_{v'}/K')
=d+1
=\trd(F/K). 
$$
This proves that 
$F_{v}/ K$ is fully transcendental. 
\end{proof}

The following result is a simple consequence by the previous results, but we state it here in order to clarify the difference between Theorem \ref{equi ft}
and Theorem \ref{equi ft2}.

\begin{lem}[almost equivalence] \label{equivalence}	
Let $F/k$ be a finitely generated field extension.
Let $v$ be a non-trivial valuation of $F$ whose restriction to $k$ is trivial. 
Consider the following two properties.
\begin{enumerate}[(a)]
\item
The valuation $v$ is fully transcendental over $k$.
\item
There is an intermediate field $K$ of $F/k$ with $\trd(K/k)=1$ such that $v|_K$ is non-trivial and $v$ is fully transcendental over $K$. 
\end{enumerate}
Then the following hold. 
\begin{enumerate}[(1)]
\item
We always have (a) implies (b). 
\item 
If furthermore the characteristic of $k$ is 0, then (b) implies (a).
\end{enumerate}
\end{lem}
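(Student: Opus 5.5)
For (1), I will assume (a), so that $v$ is fully transcendental over $k$ with $v|_k$ trivial. Since $v$ is non-trivial on $F$, I first need a subfield $K$ with $\trd(K/k)=1$ on which $v$ is non-trivial. If $v|_{K}$ were trivial for every such $K$, then $v$ would be trivial on every element of $F$, because each $x\in F$ transcendental over $k$ generates $k(x)$ of transcendence degree 1, and $v$ is automatically trivial on elements algebraic over $k$. This contradicts non-triviality, so I will pick $x\in F$ with $v(x)\neq 0$ (or, better, $|x|_v\neq 1$) and put $K=k(x)$. Then $x$ is transcendental over $k$, $\trd(K/k)=1$ and $v|_K$ is non-trivial. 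Next I apply Lemma \ref{sub of fully trans}(1) to the tower $F/K/k$, with the trivial valuation on $k$; its first statement gives that $F_v/K_{v|_K}$ is fully transcendental, which is (b).

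One subtlety has to be checked. Definition \ref{def fully tran} and Proposition \ref{prop of ttrd} are phrased for analytic fields, and $k$ with the trivial valuation is an analytic field in the paper's sense, since non-triviality is not required. So Proposition \ref{prop of ttrd}(2)(3) apply with base $k$ trivially valued, and the chain of inequalities in the proof of Lemma \ref{sub of fully trans}(1) goes through verbatim. I will add a remark that finiteness of $\ttrd$ holds because it is bounded by $\trd(F/k)$ via Proposition \ref{prop of ttrd}(2), using $F$ dense in $F_v$.

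For (2), I will assume $\mathrm{char}\,k=0$ and (b), and want $F_v/k$ fully transcendental. By the converse statement in Lemma \ref{sub of fully trans}(1), it suffices to have $F_v/K_w$ and $K_w/k$ both fully transcendental, provided the residue field $\wt F_v$ has characteristic 0. The residue field of the trivially valued $k$ is $k$ itself, of characteristic 0, and the residue characteristic condition in Proposition \ref{prop of ttrd}(3) is about the base field $K_{v_0}=k$. So the additivity $\ttrd(F_v/k)=\ttrd(F_v/K_w)+\ttrd(K_w/k)$ holds. The residue field $\wt F_v$ contains $\wt k=k$, so it also has characteristic 0.

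It remains to show that $K_w/k$ is fully transcendental, i.e. $\ttrd(K_w/k)=1$. Here $K$ is a function field of one variable over $k$ and $w$ is a non-trivial valuation, trivial on $k$. Such $w$ is discrete of rank one, coming from a closed point of the regular projective model curve, so its residue field is a finite extension of $k$. Then $\trd(\wt K_w/k)=0$, so divisoriality fails, and I must bound $\ttrd$ directly. I would argue that $\ttrd(K_w/k)\geq 1$, since otherwise $K_w$ would be topgebraic over $\wh{\ol k}=\ol k$ with trivial valuation. The closure of $\ol k$ inside the completion is $\ol k$ itself, discrete and trivially valued, so $K_w$ would be algebraic over $k$ and trivially valued, contradicting non-triviality. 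Combined with the upper bound $\ttrd(K_w/k)\le \trd(K/k)=1$ from Proposition \ref{prop of ttrd}(2), this gives equality. I expect this case to be the main obstacle, namely ensuring the definition of topgebraic degree over a trivially valued base behaves as claimed. Once it is settled, $\ttrd(F_v/k)=\trd(F/K)+1=\trd(F/k)$, which is (a).
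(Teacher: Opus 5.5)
Your proof is correct and follows the paper's route: for (1) you take $x\in F$ with $|x|_v\neq 1$, set $K=k(x)$, and apply Lemma \ref{sub of fully trans}(1); for (2) you use the additivity of $\ttrd$ from Proposition \ref{prop of ttrd}(3) over the trivially valued base $k$ of characteristic $0$. Your explicit check that $\ttrd(K_w/k)=1$, using $\widehat{\ol k}=\ol k$ for the trivial valuation together with the upper bound by $\trd(K/k)$, supplies a step that the paper's one-line proof of (2) leaves implicit.
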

\begin{proof}
To prove (1), assume that (a) holds, and take a nonzero $t\in F$ such that $|t|\neq 1$. Then $v$ is non-trivial over $K=k(t)$, which also implies that $K$ is not algebraic over $k$. Apply Lemma \ref{sub of fully trans}(1) to get (b).
Part (2) follows from Proposition \ref{prop of ttrd}(3).
\end{proof}

\subsubsection{Separability of completion}

Let $F$ be a field. 
A finitely generated extension $L$ of $F$ is \emph{separable} if it is finite separable over a purely transcendental extension of $F$. 
An extension $L$ of $F$ is \emph{separable} if every finitely generated subextension of $L$ is separable over $F$. 

An extension $L$ of $F$ is \emph{geometrically reduced} if, for every finite extension $F'/F$, the following equivalent conditions hold:
\begin{enumerate}[(1)]
\item the ring $F'\otimes_F L$ is reduced;
\item the ring $F'\otimes_F L$ is isomorphic to a direct product of finitely many fields.
\end{enumerate}
Note that these notions are automatic if $F$ has characteristic 0. 
If $F$ has characteristic $p>0$, it is known that an extension is {geometrically reduced} if and only if it is separable; and in the equivalent conditions, it suffices to take $F'=F^{1/p}$. 
See Stacks Project, Algebra, Lemma 10.44.2 and Definition 10.166.2.

Let $v$ be a non-archimedean valuation on a field $F$.
The extension $F_v/F$ is not necessarily separable. 
See \cite[Exam. 8.2.31]{Liu} for a counterexample, where $v$ is actually a discrete valuation. 
In this sense, the following result is surprising.

\begin{lem}\label{separable}
Let $F$ be a finitely generated extension over a perfect analytic field $K$.
Let $v$ be a fully transcendental valuation in $\CM(F/K)$. 
Then $F_v/F$ is a separable extension. 
\end{lem}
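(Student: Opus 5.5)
The plan is to reduce separability to a degree count for $p$-th roots. If $\mathrm{char}\,F=0$ there is nothing to prove, so assume $\mathrm{char}\,F=p>0$ and set $d=\trd(F/K)$. Since $K$ is perfect, $F/K$ is separable, so it has a separating transcendence basis $x_1,\dots,x_d$. This is also a $p$-basis, so $F^{1/p}=F(x_1^{1/p},\dots,x_d^{1/p})$ and $[F^{1/p}:F]=p^d$.

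By the criterion recalled above, $F_v/F$ is separable iff $F^{1/p}\otimes_F F_v$ is reduced. Since $F^{1/p}\otimes_F F_v$ has dimension $p^d$ over $F_v$ and surjects onto the field $F_v(x_1^{1/p},\dots,x_d^{1/p})$, it suffices to prove
$$[F_v(x_1^{1/p},\dots,x_d^{1/p}):F_v]=p^d,$$
i.e. that $x_1,\dots,x_d$ stay $p$-independent in $F_v$. The target field is finite over the complete field $F_v$, hence complete. It contains $F^{1/p}$, which is dense in $F_v^{1/p}$, so it equals $F_v^{1/p}$. Thus the goal becomes $[F_v^{1/p}:F_v]\ge p^d$.

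\textbf{Step 1: divisorial case.} Suppose $v$ is divisorial, so $\trd(\wt F_v/\wt K)=d$. The residue field of $K$ is perfect and $\wt F_v$ is a field of transcendence degree $d$ over it, so $[\wt F_v:\wt F_v^{\,p}]\ge p^d$. Frobenius maps $F_v$ onto $F_v^p$, so the residue field of $F_v^p$ is $\wt F_v^{\,p}$. By the fundamental inequality for the finite extension $F_v/F_v^p$,
$$[F_v:F_v^p]\ge[\wt F_v:\wt F_v^{\,p}]\ge p^d,$$
and $[F_v^{1/p}:F_v]=[F_v:F_v^p]$ via Frobenius. This settles the divisorial case.

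\textbf{Step 2: general fully transcendental case.} I would imitate the proof of Proposition \ref{extension by divisorial}. Take a spherically complete, algebraically closed extension $K'/K$ with $\log|K'^\times|=\RR$. Extend $v$ to a valuation $v'$ on a compositum $F'=K'F$, which is possible by the disc-lifting surjectivity argument used there. Then show that $v'$ is divisorial over $K'$. The input is that $\ttrd(F_v/K)=d$ must force $\trd(\wt F'_{v'}/\wt K')=d$: over such $K'$ there are no immediate (type-4-like) extensions and no value-group defect, so Temkin's results should identify topgebraic degree with residue transcendence degree. This needs $\ttrd$ not to drop under the base change, and it is the step I expect to be the main obstacle. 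I would try first to prove it by induction on $d$ via the tower $K\subset K(x_1)\subset\cdots$, using Lemma \ref{sub of fully trans}(1), which passes full transcendence to each piece of the tower, together with Proposition \ref{extension by divisorial} at each stage.

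\textbf{Step 3: descent.} Given Step 2, Step 1 applied to $F'_{v'}/K'$ (with $K'$ perfect and $x_1,\dots,x_d$ still a separating transcendence basis of $F'/K'$) gives $[F'_{v'}(x_i^{1/p}):F'_{v'}]=p^d$. Since $F_v\subset F'_{v'}$, any $p$-dependence of the $x_i$ over $F_v$ would persist over $F'_{v'}$. Therefore $[F_v(x_i^{1/p}):F_v]=p^d$, which proves that $F_v/F$ is separable.
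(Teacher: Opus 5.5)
Your reduction to $[F_v^{1/p}:F_v]\ge p^d$ is the same as the paper's, and Steps 1 and 3 are essentially fine. One repair is needed in Step 1: a field of transcendence degree $d$ over a perfect field need not satisfy $[k:k^p]\ge p^d$ (take a perfect closure of $\wt K(t)$). You must add that $\wt F_v$ is finitely generated over $\wt K$, which does hold in the equality case of Abhyankar's inequality.

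\textbf{The gap is Step 2, and for $d\ge 2$ its claim is false.} Spherical completeness of $K'$ rules out immediate extensions of $K'$ itself, but not of intermediate completed fields such as $\wh{K'(x_1)}$. Here is a counterexample.
\begin{itemize}
\item Let $K=k((t^{\RR}))$ be a Hahn field over an algebraically closed $k$ of characteristic $p$. It is perfect, algebraically closed and spherically complete with value group $\RR$, so $K'=K$ and $v'=v$ are allowed choices.
\item Let $E_w$ be the completion of $K(x_1)$ for the Gauss norm, and put $\Omega=\wh{\ol{E_w}}$.
\item $\Omega$ is not spherically complete. Take $a_N=\sum_{n\le N}\alpha_n t^{1-1/n}$, with $\alpha_n$ algebraic over $k(x_1)$ and with independent separable residues over $k(\tilde x_1)$. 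The discs $D(a_N,e^{-(1-1/(N+1))})$ are nested with radii tending to $e^{-1}$. A common point could be replaced by some $b\in\ol{E_w}$. Then the residue of $(b-a_N)/t^{1-1/(N+1)}$, namely $\tilde\alpha_{N+1}$, would lie in the residue field $\wt{E_w(b)}(\tilde\alpha_1,\dots,\tilde\alpha_N)$ for every $N$. This is impossible, since $\wt{E_w(b)}$ has bounded degree over $k(\tilde x_1)$.
\item A type~4 point $\zeta$ of the $x_2$-line over $\Omega$ therefore exists. It restricts to a valuation $v$ of $F=K(x_1,x_2)$ for which $x_2$ is not topgebraic over $E_w$, since $\zeta$ is not rigid. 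Extending $\{x_1\}$ to a topgebraic basis gives $\ttrd(F_v/K)=2$.
\item On the other hand $\wt F_v\subset\wt{\CH(\zeta)}=\wt\Omega$ has transcendence degree $1$ over $\wt K$.
\end{itemize}
So full transcendence does not force divisoriality over such a $K'$, and topgebraic degree cannot be identified with residue transcendence degree there.

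The induction you sketch does not rescue this. Proposition~\ref{extension by divisorial} goes the other way: it deduces full transcendence from divisoriality of the upper layer. Running the induction over $E_w$ would also require an algebraically closed extension of $E_w$. Such a field contains $x_1^{1/p}$, which destroys the $p$-independence that Step 3 is supposed to descend. Your method does work for $d=1$, where every valuation of $K'(x_1)$ is of type $2$; that is exactly the situation in Proposition~\ref{extension by divisorial}.

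\textbf{The missing idea} is an invariant that detects $p$-independence even in immediate (defect) layers, where residue fields and value groups, and hence any fundamental-inequality count, see nothing. The paper uses Temkin's topological K\"ahler differentials. Put $L=F_v^{1/p}$. By \cite[Lem.~4.1.8(ii)]{Tem}, full transcendence gives $\mathrm{top.dim}_{L}\wh\Omega_{L/K}=\mathrm{top.dim}_{F_v}\wh\Omega_{F_v/K}\ge d$. Since $\wh\Omega_{L/K}=\wh\Omega_{L/F_v}$ is a quotient of $\Omega_{L/F_v}$, and $[L:F_v]=p^{\dim_L\Omega_{L/F_v}}$, this gives $[L:F_v]\ge p^d$ directly, without passing through divisorial valuations.
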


\begin{proof}
If $K$ has characteristic $0$, there is nothing to prove. Assume that $K$ has characteristic $p>0$.
We need to prove that $F'\otimes_F F_v$ is reduced for $F'=F^{1/p}$.
Denote $d=\trd(F/K)$, which is equal to $\ttrd(F_v/K)$ by definition. 
The valuation $v$ induces a unique valuation $v'$ on $F'$. The completion $F'_{v'}$ is a field, and the extension $F_v\to F'_{v'}$ is isomorphic to the extension $F_v\to F_v^{1/p}$. 
Consider the natural map $F'\otimes_F F_v \to F'_{v'}$, which is surjective since the image is dense and closed. It suffices to prove that this surjection is an isomorphism, and it suffices to prove 
$\dim_{F_v}(F'\otimes_F F_v)\leq \dim_{F_v} F'_{v'}$. 
In the following, we denote $L=F'_{v'}=F_v^{1/p}$. 

As $K$ is perfect, $F$ is separable over $K$, and $[F':F]=p^d$. 
We have $\dim_{F_v}(F'\otimes_F F_v)=\dim_F F'=p^d$. 
It suffices to prove $[L:F_v]\geq p^d$, or equivalently $[F_{v}^{1/p}:F_v]\geq p^d$. 
By \cite[Lem. 4.1.8(ii)]{Tem}, 
$$\mathrm{top.dim}_{L} \wh\Omega_{L/K}
=\mathrm{top.dim}_{F_{v}} \wh\Omega_{F_{v}/K} \geq d.$$
Note that there is an isomorphism $\wh\Omega_{L/K} \to \wh\Omega_{L/F_v}$. 
We have 
$$\mathrm{top.dim}_{L} \wh\Omega_{L/F_v} \geq d.$$
By definition, the $L$-vector space $\wh\Omega_{L/F_v}$ is the completion of the $L$-vector space
$\Omega_{L/F_v}$ under the K\"ahler semi-norm. 
Note that $[L:F_v]\leq \dim_{F_v}(F'\otimes_F F_v)=p^d$ is finite, so the completion process gives a surjection $\Omega_{L/F_v} \to \wh\Omega_{L/F_v}$. The surjection could fail to be injective if the K\"ahler semi-norm is not a norm. 
For our purpose, we always have 
$$\dim_{L}  \Omega_{L/F_v}
\geq 
\mathrm{top.dim}_{L} \wh\Omega_{L/F_v}
\geq d.$$
Finally, we have 
$$[L:F_v]= p^{\dim_{L} \Omega_{L/F_v}} \geq p^d.$$
This finishes the proof. 
\end{proof}

\subsubsection{Berkovich spaces}

We can easily extend the above notions to Berkovich spaces associated to quasi-projective schemes. Here we do not restrict to varieties, due to the fact that the base change of a variety by a field extension could fail to be integral.

Let $X$ be a equi-dimensional quasi-projective scheme over an analytic field $K$. Then a point $x\in (X/K)^\an$ is \emph{fully transcendental} over $K$ (or in $(X/K)^\an$) if $\ttrd(\CH_x/K)=\dim X$. 
The point $x$ is called \emph{divisorial} over $K$ (or in $(X/K)^\an$) if $\trd(\wt\CH_x/\wt K)=\dim X$. 
Denote by $((X/K)^\an)^\ft$ or just $(X^\an)^\ft$  the subset of fully transcendental points in $(X/K)^\an$.

Note that
 $\CH_x$ is the completion of the residue field $\kappa_x$ of $\kappa(x)\in X$, we have
$$
\ttrd(\CH_x/K) \leq \trd(\kappa_x/K) \leq \dim X.
$$
Hence, if $x$ is fully transcendental, then both inequalities are equalities, and thus $\kappa(x)$ must be the generic point of an irreducible component $X'$ of $X$ with $\dim X'=\dim X$. 
This holds similarly for divisorial points, and we also see that divisorial points are fully transcendental points.

Let $R$ be a commutative Banach integral domain.
Let $X\to Y$ be a flat quasi-projective morphism over $R$. 
Then a point $x\in (X/R)^\an$ is called \emph{fully transcendental} (resp. \emph{divisorial}) over $Y$ if $x$ is \emph{fully transcendental} (resp. \emph{divisorial})  in 
$(X/Y)^\an_y=(X_y/\CH_y)^\an$ over $\CH_y$, where $y$ is the image of $x$ in $(Y/R)^\an$.

The following theorem (cf. \cite[Prop. A.5, Prop. A.9]{GM}) gives an equivalent definition of divisorial points, which is also commonly seen in the literature. It also explains the name ``divisorial point''.

\begin{thm} \label{div pt equiv}
Let $K$ be a non-archimedean field with a discrete valuation.
Let $X$ be a normal quasi-projective variety over $K$.
Let $x\in X^\an$ be a point.
Then the following are equivalent.
\begin{enumerate}[(1)]
\item The point $x$ is divisorial.
\item There is a normal projective integral model $\CX$ of $X$ over $O_K$, 
and an irreducible component $D$ of the special fiber $\CX_{\wt K}$ of $\CX$, such that $|\cdot |_x$ is equal to a power of the valuation 
$$
|\cdot|_D: K(X)\lra \RR, \quad f\longmapsto e^{-\ord_D(f)}.
$$
\end{enumerate}
\end{thm}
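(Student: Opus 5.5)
The proof establishes both implications separately, working with the function field $F=K(X)$. The discrete valuation on $K$ is normalized so that $\trd$ of residue fields can be compared. In both directions the key input is Lemma \ref{trd compare} together with the definition: $x$ is divisorial if and only if $\trd(\wt\CH_x/\wt K)=\dim X=\trd(F/K)$. As noted after the definition, such an $x$ has $\kappa(x)$ equal to the generic point, so $|\cdot|_x$ is a genuine valuation on $F$.

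\emph{(2) implies (1).} Let $\CX$ be a normal projective model and $D$ an irreducible component of $\CX_{\wt K}$. The local ring $\CO_{\CX,D}$ is a discrete valuation ring, since $\CX$ is normal and $D$ has codimension one. Its residue field is the function field $\wt K(D)$. Since $\CX$ is flat over $O_K$ with generic fiber of dimension $n=\dim X$, each component of the special fiber has dimension $n$. Hence $\trd(\wt K(D)/\wt K)=n$. The completed residue field of the valuation $|\cdot|_D$ has residue field $\wt K(D)$, because completion does not change the residue field. A power of the valuation has the same residue field. Therefore $\trd(\wt \CH_x/\wt K)=n$, and $x$ is divisorial.

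\emph{(1) implies (2).} This is the substantive direction.
\begin{enumerate}[(a)]
\item Let $v=|\cdot|_x$ on $F$, with valuation ring $O_v$. Pick $\bar f_1,\dots,\bar f_n\in\wt\CH_x$ that are algebraically independent over $\wt K$. Lift them to $f_i\in O_v\cap F$, which is possible since $F$ is dense in $\CH_x$.
\item Abhyankar's inequality gives $\trd(\wt F_v/\wt K)+\mathrm{rk}_\QQ(\Gamma_v/\Gamma_K)\le\trd(F/K)$. Hence the value group $\Gamma_v$ is a finite extension of the discrete group $\Gamma_K$, so $v$ is discrete of rank one. The same inequality shows that $\wt F_v$ is finitely generated over $\wt K$, as part of the Abhyankar equality case.
\item Take a projective model $\CX_0$ of $X$ over $O_K$. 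Pass to the closure of the graph of $(f_1,\dots,f_n)$ into $\CX_0\times\PP^n_{O_K}$, then normalize; the result is finite because $O_K$ is excellent (a complete DVR). Call this model $\CX$. By the valuative criterion, the center $\xi$ of $v$ on $\CX$ exists. It lies in the special fiber because $|\varpi|_x<1$.
\item The residue field $\kappa(\xi)$ contains the images of the $f_i$, which reduce to $\bar f_i$. So $\trd(\kappa(\xi)/\wt K)\ge n$. Since $\dim\CX_{\wt K}=n$, the point $\xi$ is the generic point of a component $D$.
\item Then $\CO_{\CX,\xi}$ is a DVR dominated by $O_v$. Both are valuation rings of $F$, so they coincide. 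Hence $|\cdot|_x$ is a power of $|\cdot|_D$.
\end{enumerate}

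The main obstacle is the finiteness of normalization and the choice of $\CX$ so that the center is codimension one. Excellence of $O_K$ handles normalization. The dimension count in step (d) forces codimension one, which is the crucial point. I expect the remaining care to be needed in step (b), proving that $v$ is discrete, which is needed for the powers statement. Alternatively, this follows directly once $O_v=\CO_{\CX,\xi}$ is established, since the latter is already a DVR.
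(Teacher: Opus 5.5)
Your direction (2)$\Rightarrow$(1) is fine, and so are steps (d) and (e) of (1)$\Rightarrow$(2). Once the $f_i$ are regular at the center $\xi$ of $v$ on a normal projective model, the dimension formula forces $\xi$ to be the generic point of a component $D$ of the special fiber. Then $\CO_{\CX,\xi}$ is a DVR dominated by $O_v$, hence equal to it. The paper itself gives no proof of this statement and only cites Gubler--Martin, who work with formal models; your valuation-theoretic route via centers would be a reasonable self-contained alternative.

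\textbf{The gap is in step (c).} The normalized closure of the graph of $(f_1,\dots,f_n)$ in $\CX_0\times_{O_K}\PP^n_{O_K}$ is in general not a model of $X$. Its generic fiber is the closure of the graph of the rational map $[1:f_1:\dots:f_n]\colon\CX_{0,K}\dashrightarrow\PP^n_K$. That is a modification of $\CX_{0,K}$ which is an isomorphism only over the domain of definition of the map, and for $\dim X\ge 2$ the indeterminacy locus can meet $X$.

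For instance, take $X=\PP^2_K$ with coordinates $[z:x:y]$ and $v$ the Gauss point. Then $f_1=x/z$ and $f_2=y/z+\varpi x/y$ are legitimate lifts of $\overline{x/z}$ and $\overline{y/z}$. But the map $[yz:xy:y^2+\varpi xz]$ has base points at $[1:0:0]$ and $[0:1:0]$, so $\CX_K$ is a nontrivial blow-up of $\PP^2_K$ and $X$ is not open in it. Your lifts are arbitrary, and for projective $X$ they cannot be chosen regular on $X$. So the construction as written proves the statement only for some modification of $X$, not for $X$ itself.

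\textbf{Repair: modify only the special fiber.}
\begin{enumerate}[(1)]
\item Let $\xi_0$ be the center of $v$ on a projective model $\CX_0$ of $X$, and let $W=\Spec B$ be an affine neighborhood of $\xi_0$. Write $f_i=a_i/b_i$ with $a_i,b_i\in B$, and choose $N$ with $|\varpi^N|_x<|b_i|_x$.
\item Blow up a coherent ideal sheaf containing $\varpi^N$ that equals $(a_i,b_i,\varpi^N)$ on $W$. Such an ideal exists: extend the corresponding closed subscheme of $\CX_0\otimes_{O_K}O_K/(\varpi^N)$ by closure.
\item This blow-up is an isomorphism on generic fibers. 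Since $|b_i|_x=\max\{|a_i|_x,|b_i|_x,|\varpi^N|_x\}$, the ring $B[a_i/b_i,\varpi^N/b_i]$ lies in $O_v$. Hence the new center lies in the chart where $f_i$ is regular.
\item Do this for $i=1,\dots,n$. Regularity at the center survives later blow-ups, because the new local ring dominates the old one.
\item Normalize. This is finite by excellence, and $X$ stays open in the generic fiber because it is normal.
\end{enumerate}
The result is a normal projective model of $X$ to which your steps (d) and (e) apply verbatim.

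A minor point on (b): Abhyankar's inequality alone only gives that $\Gamma_v/\Gamma_K$ is torsion, and finiteness needs the equality case. As you note, though, (e) makes (b) unnecessary.
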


The following lemma illustrates the compatibility between the two notions in the context of full transcendence.

\begin{lem} \label{prop of bijective map between ft points}
\kkk
If $k=\ZZ$, take $K=\QQ$ and take $v_0$ to be a non-archimedean place of $\QQ$. 
If $k$ is a field, let $K$ be a finitely generated field of {transcendence degree} 1 over $k$, and $v_0$ be a non-trivial valuation of $K$ over $k$.
Let $X$ be a  quasi-projective  variety over 
$K$ with a function field $F$.
Then there is a natural bijection
\[
\CM(F/k)_{v_0}^\ft 
\longrightarrow
 (X^\an_{v_0})^\ft.
\]
Moreover, the bijection induces a bijection between the subsets of divisorial points. 
\end{lem}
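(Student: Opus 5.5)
The plan is to build the map explicitly through restriction of semi-norms, check that it lands in fully transcendental points and inverts, and then compare the two notions of full transcendence.

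\emph{The map.} A valuation $v\in\CM(F/k)_{v_0}$ restricts on $K$ to $v_0$; for $k=\ZZ$ this restriction is the place $v_0$ of $\QQ$, and for $k$ a field it is the given valuation of $K$. Choose an affine open $\Spec A\subset X$ with $A\subset F$. Restricting $|\cdot|_v$ to $A$ gives a multiplicative semi-norm on $A$ whose restriction to $K$ is $|\cdot|_{v_0}$. This semi-norm is a point of $(\Spec A_{K_{v_0}}/K_{v_0})^\an\subset X^\an_{v_0}$. Since $A\to F$ is injective, its kernel is zero, so $\kappa(x)$ is the generic point of $X$ and $\CH_x=F_v$. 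The point is independent of the chosen affine open, because all of them share the generic point. Full transcendence transfers directly: $\ttrd(\CH_x/K_{v_0})=\ttrd(F_v/K_{v_0})$. Since $\dim X=\trd(F/K)$, the condition that $x$ is fully transcendental in $X_{v_0}^\an$ (over $K_{v_0}$) matches the condition that $v$ is fully transcendental over $K$. Conversely, if $x\in (X^\an_{v_0})^\ft$, then by the inequality $\ttrd(\CH_x/K)\le\trd(\kappa_x/K)\le\dim X$ noted above, $\kappa(x)$ maps to the generic point of $X$. (Here one must be careful: $X_{K_{v_0}}$ may be reducible, but its image point in $X$ is still generic.) Hence $|\cdot|_x$ is a norm on $A$, it extends uniquely to $F=\Frac A$, and it defines $v$. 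The two constructions are mutually inverse, which gives the bijection.

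\emph{Matching ``over $k$'' with ``over $K$''.} This is the step I expect to be the main obstacle. In the source, full transcendence is measured over $k$: $\ttrd(F_v/k'_{w})=\trd(F/k')$ with $w=v|_{k'}$. In the target it is measured over $K_{v_0}$ with $\dim X$.

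For $k=\ZZ$, we have $K=\QQ=k'$, so the two conditions agree literally.

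For $k$ a field, $w$ is trivial on $k$, and $\trd(F/k)=\trd(F/K)+1$. We need
\[
\ttrd(F_v/k)=\trd(F/k)\iff \ttrd(F_v/K_{v_0})=\trd(F/K).
\]
Here $k$ with the trivial valuation is complete.

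The forward direction follows from Lemma \ref{sub of fully trans}(1) applied to $F/K/k$.

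For the converse, apply Proposition \ref{extension by divisorial}? That proposition needs a nontrivially valued base. I would instead argue as follows. $K_{v_0}/k$ is divisorial: $v_0$ is a discrete valuation of a function field of one variable, so its residue field is finite over $k$ and $\trd(\wt K_{v_0}/k)=0$. Note that $\trd(K/k)=1$, so this particular divisoriality check is not the relevant one. Rather, observe directly that $\ttrd(K_{v_0}/k)=1$, since $K_{v_0}\cong\wt K_{v_0}((\varpi))$ is topgebraically generated by a uniformizer $\varpi$. Then use Proposition \ref{prop of ttrd}(3): it gives $\ttrd(F_v/k)\le\ttrd(F_v/K_{v_0})+1$, which is the wrong direction for this purpose.

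So for the converse I would go through a uniformizer explicitly. Let $S$ be a topgebraic generating set of $F_v$ over $k$; by part (1) it can be taken as a basis. Adjoining the uniformizer $\varpi$ gives a generating set over $K_{v_0}$ with a redundant element, and I would try to show, via Temkin's exchange property for topgebraic bases \cite[Thm. 3.2.3]{Tem}, that $S\cup\{\varpi\}$ contains a basis including $\varpi$. That would give $\ttrd(F_v/K_{v_0})\ge\ttrd(F_v/k)-1$, which is the inequality needed.

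If that fails, the fallback is: in characteristic $0$ use the additivity in Proposition \ref{prop of ttrd}(3); in positive characteristic reduce to Proposition \ref{extension by divisorial}-type base-change arguments.

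\emph{Divisorial points.} The divisorial case is simpler because residue transcendence degrees are additive. We have $\wt{F_v}=\wt\CH_x$, and $\wt K_{v_0}$ is finite over $k$ (respectively over $\FF_p$ when $k=\ZZ$). Hence $\trd(\wt F_v/\wt k)=\trd(\wt F_v/\wt K_{v_0})$, and since $\dim X=\trd(F/k)-1$ the two divisoriality conditions coincide. For $k$ a field this exactly matches the index shift above.
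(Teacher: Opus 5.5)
Your paragraph on ``the map'' is correct and is essentially the paper's proof. Under $v\mapsto x$ the contraction $\kappa(x)$ is the generic point and $\CH_x=F_v$. Conversely, a fully transcendental $x$ has $\kappa(x)$ lying over the generic point of $X$, so $|\cdot|_x$ is a norm on $A$ and extends to $F$.

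What goes wrong is the rest, which comes from reading ``$\ft$'' and ``divisorial'' on the left-hand side as relative to $k$. The lemma is meant, and proved in the paper, with both conditions relative to $K$, the field over which $X$ lives. See the notation $\CM(F/K)^\ft_{v_0}$ right after Definition \ref{def fully tran}, and the application in the proof of Theorem \ref{equi divisorial}. With that reading, $\CH_x=F_v$, $\wt\CH_x=\wt F_v$ and $\dim X=\trd(F/K)$ make both halves immediate, and there is no ``over $k$ versus over $K$'' step at all.

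Your reading cannot be the intended one. For $k$ a field, $\wt K_{v_0}$ is algebraic over $k$, because $v_0$ is non-trivial and $\trd(K/k)=1$. So by Lemma \ref{trd compare}, $\trd(\wt F_v/k)=\trd(\wt F_v/\wt K_{v_0})\le\trd(F/K)=\trd(F/k)-1$. Hence no point of $\CM(F/k)_{v_0}$ is divisorial over $k$ in the sense of Definition \ref{def fully tran}, whereas $X^\an_{v_0}$ has plenty of divisorial points. Your claim that the two divisoriality conditions ``coincide'' is therefore off by one. Also, for $k=\ZZ$ the formula $\dim X=\trd(F/k)-1$ is wrong, since $\trd(F/\ZZ)$ means $\trd(F/\QQ)=\dim X$.

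Under your reading, the step you flag as the main obstacle is a genuine gap that your plan cannot close. Its converse direction is exactly (b)$\Rightarrow$(a) of Lemma \ref{equivalence}, which the paper proves only in characteristic $0$ and explicitly leaves open otherwise.

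Your exchange argument needs $\{\varpi\}$ to extend to a topgebraic basis of $F_v/k$. That would give $\ttrd(F_v/K_{v_0})\le\ttrd(F_v/k)-1$; the inequality with $\ge$ that you wrote is just the subadditivity you already have. But extending arbitrary topgebraically independent sets to bases is not available in general. If it were, you could extend a basis $B_E$ of $E/K$ to a basis $B$ of $L/K$. Then $B\setminus B_E$ would topgebraically generate $L$ over $E$, making $\ttrd$ additive in every tower $K\subset E\subset L$. That contradicts the non-additivity in positive residue characteristic recorded in the Remark after Proposition \ref{prop of ttrd}.

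Your fallback via Proposition \ref{extension by divisorial} does not apply either: it needs a non-trivially valued base and a divisorial top layer. The fix is to drop your last two parts and compare both conditions over $K_{v_0}$, as your first paragraph already does.
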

\begin{proof}
The morphism $\Spec F\to X$ induces an injection
$\CM(F/k) \to (X/k)^\an$. 
Taking fibers above $v_0$, we have an injection 
$\CM(F/k)_{v_0} \to X_{v_0}^\an$. 
It is clear that it induces an injection 
$\CM(F/k)_{v_0}^\ft \to (X^\an_{v_0})^\ft$. 
To see that it is surjective, it suffices to note that 
for any $x \in (X^\an_{v_0})^\ft$, the contraction point $\kappa(x)$, corresponding to the prime ideal $\ker(|\cdot|_x)$, is a generic point of $X_{v_0}$.
Here $X_{v_0}$ might be reducible, and thus contains finitely many generic points.
Then $x$ gives a valuation on the residue field of this generic point, and its restriction to $F$ gives a point of $\CM(F/k)_{v_0}^\ft$. 
\end{proof}

\section{Preliminaries on analysis on Berkovich spaces} \label{NA preparation section}

In this section, we collect some preliminary results for analysis on Berkovich spaces which will be used in our proof of the main theorem.

\subsection{Projection formula for integration}

In this subsection, we prove a projection formula for {integration} on Berkovich spaces. It is a direct analogue of a similar result from complex analytic spaces. 

Let $K$ be a non-archimedean field. Denote by $O_K$ the valuation ring of $K$.
Following the exposition of \cite[\S3.6]{YZ26}, for any quasi-projective variety $X$ over $K$,
there is a theory of adelic line bundles and adelic divisors of $X/O_K$.
There are  canonical injective analytification maps
$$
\wh\Div(X/O_K) \lra \wh\Div(X^\an),\qquad
\wh\Pic(X/O_K) \lra \wh\Pic(X^\an).
$$
The goal of this subsection is to prove the following result.

\begin{prop}[projection formula] \label{projection formula}
	Let $K$ be a non-archimedean field.
	Let $\pi:X\to Y$ be a quasi-projective and flat morphism of quasi-projective varieties over $K$.
	Denote $d=\dim Y$ and $n=\dim X-\dim Y$.
	Let $\OL_1,\dots, \OL_n$ be integrable adelic line bundles on $X/O_K$, and let
	$\OM_1,\dots, \OM_d$ be integrable adelic line bundles on $Y/O_K$.
	Then for any $f\in C_c(X^\an)$,
	\begin{align*}
	&\ \int_{X^\an} f\, c_1(\OL_1)\cdots c_1(\OL_n) c_1(\pi^*\OM_1)\cdots c_1(\pi^*\OM_d)\\
	=&\ \int_{Y^\an} \pi_*\big(f\, c_1(\OL_1)\cdots c_1(\OL_n)\big) c_1(\OM_1)\cdots c_1(\OM_d).
	\end{align*}
	Here the push-forward
	$$\pi_*\big(f\, c_1(\OL_1)\cdots c_1(\OL_n)\big)
	=\int_{X^\an/Y^\an} f\, c_1(\OL_1)\cdots c_1(\OL_n)
	$$
	is the function on $Y^\an$ which sends
	$y\in Y^\an$ to
	$$
	\int_{X_y^\an} f\, c_1(\OL_1|_{X_y})\cdots c_1(\OL_n|_{X_y}),
	$$
		where $X_y=X\times_Y \Spec \CH_y$ is the base change to the valuation field $\CH_y$ of $y\in Y^\an$.
\end{prop}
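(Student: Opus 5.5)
We reduce the statement to the case of model metrics by a limiting argument, and in the model case compute both sides explicitly.

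\textbf{Step 1: reduction to model line bundles.} Both sides are multilinear in $\OL_1,\dots,\OL_n,\OM_1,\dots,\OM_d$, so we may assume every bundle is nef. By the definitions in \cite[\S3.6]{YZ26}, a nef adelic line bundle on $X/O_K$ is a limit, in the boundary topology, of nef model adelic line bundles on projective models of open subsets. The Chambert-Loir measures on $X^\an$ then converge weakly; see \cite[\S3.6.8]{YZ26}. For fixed $f\in C_c(X^\an)$ the left-hand side converges.

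On the right, the fiber measures $c_1(\OL_1|_{X_y})\cdots c_1(\OL_n|_{X_y})$ converge for each $y$. The key estimate is uniformity in $y$. Because $f$ has compact support, $\pi(\supp f)$ is compact, and on it the difference of approximants is bounded by $\epsilon$ times a fixed boundary bundle. Multilinearity and positivity of fiberwise mass then bound
\[
\left|\pi_*\big(f\,c_1(\OL^{(j)}_1)\cdots\big)-\pi_*\big(f\,c_1(\OL_1)\cdots\big)\right|
\]
uniformly by $\epsilon_j\|f\|_\infty$ times fiber degrees, which are bounded on a compact set. The same argument shows that $\pi_*(f\,\cdots)$ is continuous with compact support. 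The uniform convergence of integrands, combined with weak convergence of $c_1(\OM^{(j)}_1)\cdots c_1(\OM^{(j)}_d)$ on $Y^\an$, gives convergence of the right-hand side.

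Continuity of $y\mapsto\int_{X_y^\an}f\,\cdots$ needs its own argument. We would prove it first for model metrics, where the fiber measure is a finite combination of Dirac masses at Shilov points of the special fibers of $\CX_y$, varying continuously by flatness of $\pi$. It then passes to limits by the uniformity above.

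\textbf{Step 2: model case.} We may shrink $X$ and $Y$ to opens containing the relevant supports, using that $\supp f$ is compact, and choose a flat projective model $\Pi:\CX\to\CY$ over $O_K$ on which all bundles are defined. Both sides are additive in $f$, and model functions are dense by Gubler's density theorem \cite[Thm. 7.12]{Gub98}. Model measures have finite total mass on compacts, so it suffices to treat $f$ a model function, or even $f$ the restriction to $X^\an$ of a function on $\CX^\an$. By a further approximation we may take $f=-\log\|1\|_{\CO(E)}$ for a vertical Cartier divisor $E$.

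Both sides are then intersection numbers on the special fibers. On the left this is $E\cdot\CL_1\cdots\CL_n\cdot\Pi^*\CM_1\cdots\Pi^*\CM_d$. On the right, the measure $c_1(\OM_1)\cdots c_1(\OM_d)$ is $\sum_V (\CM_1\cdots\CM_d\cdot V)\,\delta_{\eta_V}$ over components $V$ of $\CY_{\wt K}$. The value of $\pi_*(\cdots)$ at the divisorial point $\eta_V$ is the fiberwise intersection of $E$ with the $\CL_i$ over the generic point of $V$. The equality is therefore the projection formula in intersection theory, namely $\Pi_*\big(E\cdot\CL_1\cdots\CL_n\big)\cdot\CM_1\cdots\CM_d$. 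Computing this pushforward requires that the fiber of $\CX$ over the generic point of $V$ be a model of $X_{\eta_V}$ over the valuation ring of $\CH_{\eta_V}$. That holds by flatness after base change, possibly after a finite extension of $K$ to make special fibers reduced.

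\textbf{Main obstacle.} The expected difficulty is the identification in Step 2 of the divisorial fiber $X_{\eta_V}$ with its model, with multiplicities matching. This is hardest when $\CY_{\wt K}$ is non-reduced or $K$ is not discretely valued, where one must pass through a reduced-fiber alteration or use the approximation by discretely valued subfields as in \cite{GM}. The uniformity in Step 1 is more routine.
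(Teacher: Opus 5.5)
Your Step 2 is the intersection-number counterpart of the paper's model case. The paper uses the projection formula for the Deligne pairing, $\langle\CN,\CL_1,\dots,\CL_n,\pi^*\CM_1,\dots,\pi^*\CM_d\rangle\cong\langle\pi_*\langle\CN,\CL_1,\dots,\CL_n\rangle,\CM_1,\dots,\CM_d\rangle$, and treats non-discrete $K$ via \cite[Prop. 3.7]{Xia}. Note, however, that in the model case the measure $c_1(\OM_1)\cdots c_1(\OM_d)$ charges only the finitely many divisorial points $\eta_V$. So Step 2 determines $\pi_*(f\,c_1(\OL_1)\cdots c_1(\OL_n))$ only at those points.

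The genuine gap is in Step 1. Because you approximate the $\OM_j$ as well, you must integrate the fiber-integral function against weakly converging measures on $Y^\an$. That requires the function to be continuous on all of $Y^\an$ and the approximants to converge uniformly in $y$, and neither is proved.
\begin{itemize}
\item \emph{Continuity.} The special fiber of $\CX\times_{\CY}\Spec O_{\CH_y}$ is the fiber of $\CX_{\wt K}\to\CY_{\wt K}$ over the reduction of $y$, base changed to $\wt\CH_y$. Its components and multiplicities jump as that reduction point moves. So ``the Shilov points vary continuously by flatness'' is an assertion, not an argument.
\item \emph{Uniformity.} Your estimate confuses closeness of metrics with closeness of measures. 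Knowing that the metrics of $\OL^{(j)}_i$ and $\OL_i$ differ by at most $\epsilon_j$ times a boundary Green function does not bound $|\int f\,(c_1(\OL^{(j)}_1|_{X_y})\cdots-c_1(\OL_1|_{X_y})\cdots)|$ by $\epsilon_j\|f\|_\infty$ times the fiber degree for continuous $f$. Monge--Amp\`ere measures of uniformly close metrics need not be close in total variation. You would need integration by parts against model functions, with fiberwise control of the resulting mixed measures uniformly in $y$.
\end{itemize}

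The missing idea is to use the relative Deligne pairing as an object on the base. For $f=-\log\|1\|_{\ON}$, the paper identifies $y\mapsto\int_{X_y^\an}f\,c_1(\OL_1|_{X_y})\cdots c_1(\OL_n|_{X_y})$ with $-\log\|1\|$ for $\langle\ON,\OL_1,\dots,\OL_n\rangle_{X/Y}$. This uses \cite[Thm. 4.6.2]{YZ26} and compatibility with base change along $\Spec O_{\CH_y}\to\CY$. It gives the correct value at \emph{every} $y$, not only at divisorial points, and continuity comes for free because the metric is integrable. The projection formula for the pairing then proves the projective case directly for adelic $\OL_i,\OM_j$, and density of such $f$ finishes it.

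For non-projective $\pi$, the paper approximates only $\OL$ and keeps $\OM$ fixed, so no uniformity in $y$ is required. It compactifies to $\pi_i:X_i\to Y$, which is flat over an open $U_i$ by generic flatness. It then uses \cite[Thm. 1.2]{Guo25} to see that $X_i^\an\setminus X_i'^\an$ and $Y^\an\setminus U_i^\an$ are null. Your shrinking step ``using that $\supp f$ is compact'' is not a substitute for this. A compact subset of $X^\an$ generally meets $Z^\an$ for a proper Zariski closed $Z$; what is needed is that such sets carry no mass.
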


\begin{proof}
We illustrate the proof by three cases: model case, projective case, and quasi-projective. The model case essentially follows from a projection formula of the Deligne pairing, and the other two cases are its limit cases.
	
\medskip\noindent \emph{Case 1. model case.} 	Assume that $\pi:X\to Y$ extends to a projective and flat morphism $\pi:\CX\to \CY$ of
	projective $O_K$-models $\CX, \CY$ of $X, Y$, such that the adelic line bundles
	$\OL_1,\dots, \OL_n$  on $X/O_K$ and
	$\OM_1,\dots, \OM_d$ on $Y/O_K$
	are induced by line bundles
	$\CL_1,\dots, \CL_n$  on $\CX$ and
	$\CM_1,\dots, \CM_d$ on $\CY$.
	Assume further that $f\in C_c(X^\an)$ is induced by a vertical line bundle $\CN$ on $\CX$. More precisely, $\CN$ is a line bundle on $\CX$ with generic fiber $\CN_K\simeq \CO_{\CX_K}$ on $\CX_K$, and
	$f=-\log \|1\|_\CN$ via the metric $\|\cdot\|_\CN$ on the trivial bundle $\CO_X$ induced by $\CN$.
	
	In this case, the projection formula of the Deligne pairing gives a canonical isomorphism
	\begin{align*}
	&\ (\pi_\CX)_* \big\langle \CN,  \CL_1,\dots, \CL_n, \pi^*\CM_1,\dots, \pi^*\CM_d\big\rangle
	\\
	\lra &\ (\pi_\CY)_* \big\langle  \pi_*\pair{\CN,  \CL_1,\dots, \CL_n}, \CM_1,\dots, \CM_d\big\rangle.
	\end{align*}
	Here $\pi_\CX: \CX\to \Spec O_K$ and $\pi_\CY: \CY\to \Spec O_K$
	denote the structure morphisms.
	If $O_K$ is a discrete valuation ring, the projection formula is a special case of
	\cite[Prop. 5.2.3.b]{MG}.
	If $O_K$ is not discrete, then it is not noetherian, but the result can still be deduced from the noetherian case by Xia \cite[Prop. 3.7]{Xia}.
	
	Now let us unravel the information in terms of integration.
	For convenience, we will write the projection formula as $\CN_1\to \CN_2$, where $\CN_1$ and $ \CN_2$ are line bundles over $\Spec O_K$ denoting the two sides of the formula.
	By the fixed trivialization $\CN_K\simeq \CO_X$, the base changes $\CN_{1,K}$ and $\CN_{2,K}$ are canonically isomorphic to the trivial line bundle on $\Spec K$. Therefore, they induce metrics $\|\cdot\|_{\CN_1}$ and $\|\cdot\|_{\CN_2}$ on $K$.
	By \cite[Thm. 4.6.2]{YZ26}, we can compute the metric $\|1\|_{\CN_1}$ in terms of the metric $\|1\|_{\CN}$. It gives
	$$
	-\log\|1\|_{\CN_1}=\int_{X^\an} (-\log \|1\|_\CN)  c_1(\OL_1)\cdots c_1(\OL_n) c_1(\pi^*\OM_1)\cdots c_1(\pi^*\OM_d)
	$$
	and
	$$
	-\log\|1\|_{\CN_2}=\int_{Y^\an} h\, c_1(\OM_1)\cdots c_1(\OM_d).
	$$
	Here the function $h=-\log\|1\|_{\CN_3}$ on $Y^\an$ is induced by the line bundle $\CN_3= \pi_*\pair{\CN,  \CL_1,\dots, \CL_n}$ on $\CY$.
		By the compatibility of the Deligne pairing with the base change by $\Spec O_{\CH_y}\to Y$, we see that $h=\pi_*\big(f\, c_1(\OL_1)\cdots c_1(\OL_n)\big)$ on $Y^\an$.
	Then the model case follows from the equality $\|\cdot\|_{\CN_1}=\|\cdot\|_{\CN_2}$.
	
\medskip\noindent \emph{Case 2. projective case.} 
We assume that $\pi:X\to Y$ is projective and flat in the theorem.
 In this case, let $\ON$ be an adelic divisor on $X/O_K$ with trivial underlying divisor $N=0$.
	Then the projection formula of  the Deligne pairing still gives a canonical isomorphism
	\begin{align*}
	&\ (\pi_X)_* \big\langle \ON,  \OL_1,\dots, \OL_n, \pi^*\OM_1,\dots, \pi^*\OM_d\big\rangle
	\\
	\lra &\ (\pi_Y)_* \big\langle  \pi_*\pair{\ON,  \OL_1,\dots, \OL_n}, \OM_1,\dots, \OM_d\big\rangle.
	\end{align*}
	Here $\pi_X: X\to \Spec K$ and $\pi_Y: Y\to \Spec K$
	denote the structure morphisms.
	Here the Deligne pairing is defined similarly to that in \cite[Thm. 4.1.3]{YZ26},
	and the projection formula is the analogue of \cite[Lem. 4.6.1(1)]{YZ26}.
The treatment of \cite{YZ26} is based on the assumption that $Y$ is normal, but this can be achieved by replacing $Y$ by an open subvariety.
	
	By \cite[Thm. 4.6.2]{YZ26}, the equality of the metrics gives
	\begin{align*}
	&\ \int_{X^\an} f\, c_1(\OL_1)\cdots c_1(\OL_n) c_1(\pi^*\OM_1)\cdots c_1(\pi^*\OM_d)\\
	=&\ \int_{Y^\an} \pi_*\big(f\, c_1(\OL_1)\cdots c_1(\OL_n)\big) c_1(\OM_1)\cdots c_1(\OM_d).
	\end{align*}
	Here $f=-\log\|1\|_\ON$ is the function on $X^\an$ induced by $\ON$.
	To see that such functions can uniformly approximate any function in $C_c(X^\an)$, it suffices to take a projective model $X'$ of $X$ over $K$, and assume that $\ON$ is induced by a vertical line bundle on a projective model of $X'$ over $O_K$.
	Such functions are dense in $C(X'^\an)$ by Gubler \cite[Thm. 7.12]{Gub98} (cf. \cite[p. 643]{Yua08}).

\medskip\noindent \emph{Case 3. quasi-projective case.} 
In this part, we deduce the general case from the projective case via a limiting process.
By linearity, we may assume that $\OL=\OL_1=\cdots=\OL_n$ and $\OM=\OM_1=\cdots=\OM_d$. Write
\[
\OL=\lim_{i\to\infty}(X_i,\OL_i,\ell_i)
\]
as a limit of adelic line bundles on projective models $X_i$. Here by blowing up, we may assume that there is a projective (but not necessarily flat) morphism $\pi_i:X_i\to Y$ extending $\pi:X\to Y$. Then
\[
\int_{X^\an} f\, c_1(\OL)^n c_1(\pi^*\OM)^d=\lim_{i\to\infty} \int_{X_i^\an} f\, c_1(\OL_i)^n c_1(\pi^*\OM)^d.
\]
By generic flatness, there is a Zariski open subset $U_i\subset Y$ such that $\pi_i:X'_i\to U_i$ is flat. Here $X_i'=X_i\times_Y U_i$. Then by the projective case, we have
\[
\int_{X_{i}'^\an} f\, c_1(\OL_i)^n c_1(\pi^*\OM)^d=
\int_{U_i^\an} \pi_{i,*}\big(f\, c_1(\OL_i)^n\big) c_1(\OM)^d.
\]
Then it suffices to prove that
\[
\int_{X_{i}'^\an} f\, c_1(\OL_i)^n c_1(\pi^*\OM)^d=\int_{X_i^\an} f\, c_1(\OL_i)^n c_1(\pi^*\OM)^d
\]
and
\[
\int_{U_i^\an} \pi_{i,*}\big(f\, c_1(\OL_i)^n\big) c_1(\OM)^d=\int_{Y  ^\an} \pi_*\big(f\, c_1(\OL_i)^n\big) c_1(\OM)^d.
\]
For these two equalities, it suffices to prove a general result as follows. Let $W$ be a variety over $K$ of dimension $m$, and let $\OH\in\HPic(W/O_K)_\intt$. Then for a Zariski open subset $V\subset W$ and a compactly supported function $g$ on $W^\an$, we have
\[
\int_{W^\an}gc_1(\OH)^m=\int_{V^\an}gc_1(\OH)^m.
\]
In fact, by multilinearity, we may first assume that $\OH$ is nef, and thus $c_1(\OH)^m$ is a positive measure.  The equality
\[
\int_{W^\an}c_1(\OH)^m=\int_{V^\an}c_1(\OH)^m=\TH^m,
\]
proved in \cite[Thm. 1.2]{Guo25}, shows that $W^\an\setminus V^\an$ has measure zero.  Since $g$ is bounded, the desired equality follows.  
\end{proof}

An easy consequence of the proposition is as follows.

\begin{cor} \label{projection formula Dirac}
	Let $K$ be a non-archimedean field.
	Let $\pi:X\to Y$ be a quasi-projective and flat morphism of quasi-projective varieties over $K$.
	Denote $d=\dim Y$ and $n=\dim X-\dim Y$.
		Let $\OL_1,\dots, \OL_n$ be {integrable} adelic line bundles on $X/O_K$, and let
		$\OM_1,\dots, \OM_d$ be {integrable} adelic line bundles on $Y/O_K$.
	Assume that the Chambert-Loir measure
	$$
	c_1(\OM_1)\cdots c_1(\OM_d)=\sum_{i=1}^r a_i\delta_{y_i}
	$$
	for $y_1, \dots, y_r\in Y^\an$ and $a_1,\dots, a_r\in \RR$.
	Then the  Chambert-Loir measure
	\begin{align*}
	&\  c_1(\OL_1)\cdots c_1(\OL_n) c_1(\pi^*\OM_1)\cdots c_1(\pi^*\OM_d)\\
	=&\
	\sum_{i=1}^r a_i\ \rho_{i,*}
		\big( c_1(\OL_1|_{X_{y_i}})\cdots c_1(\OL_n|_{X_{y_i}})\big).
	\end{align*}
	Here $\rho_i: X_{y_i}^\an\to X^\an$ denotes the natural injection.
\end{cor}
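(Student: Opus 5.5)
The corollary follows by testing both sides against an arbitrary $f\in C_c(X^\an)$ and applying Proposition \ref{projection formula}. Both sides are signed Radon measures on the locally compact Hausdorff space $X^\an$. The left side is a finite linear combination of positive measures, by multilinearity after writing each integrable bundle as a difference of nef ones. The right side is a finite sum of pushforwards of Chambert-Loir measures on the fibers. By the Riesz representation theorem it therefore suffices to show that
$$\int_{X^\an} f\, c_1(\OL_1)\cdots c_1(\OL_n) c_1(\pi^*\OM_1)\cdots c_1(\pi^*\OM_d)
=\sum_{i=1}^r a_i \int_{X_{y_i}^\an} (f\circ\rho_i)\, c_1(\OL_1|_{X_{y_i}})\cdots c_1(\OL_n|_{X_{y_i}})$$
for every $f\in C_c(X^\an)$.

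First, Proposition \ref{projection formula} rewrites the left side as $\int_{Y^\an} g\, c_1(\OM_1)\cdots c_1(\OM_d)$, where $g=\pi_*\big(f\, c_1(\OL_1)\cdots c_1(\OL_n)\big)$. Second, I substitute $c_1(\OM_1)\cdots c_1(\OM_d)=\sum_i a_i\delta_{y_i}$, which turns this integral into $\sum_i a_i\, g(y_i)$. Third, by the definition of the push-forward in Proposition \ref{projection formula}, $g(y_i)=\int_{X_{y_i}^\an} f\, c_1(\OL_1|_{X_{y_i}})\cdots c_1(\OL_n|_{X_{y_i}})$, where $f$ restricted to the fiber is exactly $f\circ\rho_i$ under the homeomorphism between $(X_{y_i}/\CH_{y_i})^\an$ and the fiber $(\pi^{\an})^{-1}(y_i)$. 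The right side of this identity is by definition $\int_{X^\an} f\, \rho_{i,*}(\cdots)$, which gives the claimed equality.

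The one point needing care is the middle step. Integrating $g$ against a sum of Dirac masses requires $g$ to be well defined at each $y_i$ and the integral $\int_{Y^\an} g\, c_1(\OM_1)\cdots c_1(\OM_d)$ to be meaningful. The function $g$ need not be compactly supported unless $\pi$ is proper, but $g$ is bounded, and Proposition \ref{projection formula} already asserts the equality with the right-hand integral being meaningful. Since the measure here is atomic, evaluating pointwise is legitimate. I would also note that $f\circ\rho_i$ is compactly supported on $X_{y_i}^\an$, because the fiber is closed in $X^\an$, so the fiber integrals are well defined.
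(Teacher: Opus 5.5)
Your proof is correct and is the argument the paper has in mind: the paper gives no separate proof and calls the corollary an easy consequence of Proposition \ref{projection formula}, which amounts to testing against $f\in C_c(X^\an)$, substituting the atomic measure $\sum_i a_i\delta_{y_i}$, and reading off $\pi_*\big(f\, c_1(\OL_1)\cdots c_1(\OL_n)\big)(y_i)$ as the fiber integral. Your remarks on the boundedness of the push-forward function and the compact support of $f\circ\rho_i$ on the closed fiber fill in details the paper leaves implicit.
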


\subsection{Continuity of fiberwise integration on the base}

In this subsection, we prove a continuity result for integrals on a family of Berkovich spaces. It is also a direct analogue of a similar result from complex analytic spaces.

We first recall the compatibility of two Deligne pairings proved in 
\cite[\S4.6.2]{YZ26}, and then prove a technical result on continuity of fiberwise integrals on Berkovich spaces.

Let $K$ be a non-archimedean field. Let $\pi:X\to Y$ be a projective and flat morphism of quasi-projective varieties over $K$. Denote $n=\dim X-\dim Y$.
Denote
$$
\widehat\Picc(X^\an)_\mathrm{mod}=\mathrm{Im}\left(\widehat\Picc(X/O_K)_\mathrm{mod}\longrightarrow\widehat\Picc(X^\an)\right),
$$
$$
\widehat\Picc(X^\an)_\intt=\mathrm{Im}\left(\widehat\Picc(X/O_K)_{\intt}\longrightarrow\widehat\Picc(X^\an)\right),
$$
where the arrows are the canonical injective analytification functors introduced in \cite[\S3.6]{YZ26}. Parallel to \cite[\S4.6.2]{YZ26}, we have two Deligne pairings
$$
\widehat\Picc(X/O_K)_{\intt}^{n+1}\longrightarrow\widehat\Picc(Y/O_K)_\intt
$$
and
$$
\widehat\Picc(X^\an)_\intt^{n+1}\longrightarrow\widehat\Picc(Y^\an)_\intt.
$$
The cited work actually introduces these two pairings only in the case that $Y=K$. However, the general case is similar and we sketch it in the following.

To define the first pairing, by taking limits, it suffices to construct a pairing
$$
\widehat\Picc(X/O_K)_\mathrm{mod}^{n+1}\longrightarrow\widehat\Picc(Y/O_K)_\mathrm{mod}.
$$
Let $\OL_1,\cdots,\OL_{n+1}\in\widehat\Picc(X/O_K)_\mathrm{mod}$ be model adelic line bundles, which are induced by line bundles $\CL_1,\cdots,\CL_{n+1}$ on a single projective model $\CX$ of $X$ over $O_K$. By blowing up $\CX$ along a center in the special fiber, we may extend $\pi$ to a projective and flat morphism $\pi:\CX\to\CY$, where $\CY$ is a projective model of $Y$.
Then we define the Deligne pairing
$\left\langle\OL_1,\cdots,\OL_{n+1}\right\rangle_{X/Y}$
to be 
the metrized line bundle induced by
$\left\langle\CL_1,\cdots,\CL_{n+1}\right\rangle_{\CX/\CY}.$

To define the second pairing, the quickest way is to transfer the first pairing to the Berkovich spaces via the analytification functor. 
We take this as the definition of the second pairing, and will consider the fiberwise metric of the pairing parallel to the complex case in \cite[\S4.2.2]{YZ26}. 

Let  $\OL_1,\cdots, \OL_{n+1}\in \widehat\Picc(X^\an)_\intt$ be integrable metrized line bundles. For a point $y\in Y^\an$, denote $\OL_{i,y}=\OL_i|_{X_y^\an}$. 
By \cite[\S4.6.2]{YZ26}, we already have the following Deligne pairing 
$$
\widehat\Picc(X_y^\an)_\intt^{n+1}\longrightarrow
\widehat\Picc(\CH_y^\an)_\intt,\quad
(\OL_{1,y},\cdots,\OL_{n+1,y})\longmapsto\left\langle\OL_{1,y},\cdots,\OL_{n+1,y}\right\rangle_{X_y^\an/\CH_y^\an}.
$$
Here by abuse of notation, we write $\widehat\Picc(\CH_y^\an)_\intt$ for $\widehat\Picc((\Spec \CH_y)^\an)_\intt$.
The metric of the Deligne pairing is defined in terms of fiberwise integration. 
For our purpose, we only need to know that for $L_1$  trivial, the metric of the meromorphic section $1$ at $y$ is given by the  integral
$$
\int_{X_y^\an}\left(-\log\|1\|_{\OL_1}\right)c_1(\OL_2)\cdots c_1(\OL_{n+1}).
$$

Now we claim that there is a canonical isometry 
$$
\left\langle\OL_{1,y},\cdots,\OL_{n+1,y}\right\rangle_{X_y/\CH_y}
\cong
\left.\left(\left\langle\OL_{1},\cdots,\OL_{n+1}\right\rangle_{X/Y}\right)\right|_{\CH_y}.
$$
To prove this, we convert it to the corresponding isomorphism for integrable adelic line bundles, and then we realize that it is given by the functoriality of the first Deligne pairing under the base change
$$
\Spec \CH_y\longrightarrow Y.
$$ 

Now we state the main lemma of this subsection.

\begin{lemma}\label{continuity lem}
	Let $K$ be a non-archimedean field.
	Let $\pi:X\to Y$ be a flat morphism of quasi-projective varieties over $K$.
	Assume $\dim Y=1$ and denote $n=\dim X-\dim Y$.
	Assume $\OL_1,\dots, \OL_n\in\HPic(X/O_K)_\intt$.
	Let $f$ be a continuous function on $X^\an$ which is either constant or compactly supported. Define a function
	$$
	I_f:Y^\an\longrightarrow\RR,\quad y\longmapsto
\int_{X_y^\an} f\, c_1(\OL_1|_{X_y})\cdots c_1(\OL_n|_{X_y}),
	$$
	where $X_y=X\times_Y \Spec \CH_y$ is the base change to the {completed residue field} $\CH_y$ of $y\in Y^\an$.
	Then the function
	$$
	I_f:Y^\an\longrightarrow\RR
	$$
	is continuous on $Y^\an$.
\end{lemma}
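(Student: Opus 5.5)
We will prove continuity of $I_f$ by reducing, through uniform limits, to the case where $I_f$ is the $-\log$ of a continuous metric on a line bundle over $Y^\an$. That metric will come from the Deligne pairing discussed just above the lemma.

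\emph{Step 1: reduction to the projective model case.} By multilinearity and polarization we may assume $\OL_1=\dots=\OL_n=\OL$, and by writing an integrable bundle as a difference of nef ones, that $\OL$ is nef. Write $\OL=\lim_i (X_i,\OL_i,\ell_i)$ as a limit of model adelic line bundles on projective models. Since $\dim Y=1$, we may shrink nothing and use flatness over the (regular, after normalization) curve: replace each $X_i$ by the scheme-theoretic closure of $X$, which is flat over the one-dimensional base $Y$ because it is torsion free. This gives projective flat morphisms $\pi_i:X_i\to Y$ extending $\pi$.

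For $f$ compactly supported on $X^\an$, fiberwise integrals against $c_1(\OL_i)^n$ converge to those against $c_1(\OL)^n$. We need this convergence to be uniform in $y$ on compact subsets of $Y^\an$, in fact on all of $Y^\an$. The standard estimate in the limit construction bounds
\[
\left|\int_{X_y^\an} f\,c_1(\OL_i)^n-\int_{X_y^\an} f\,c_1(\OL_j)^n\right|
\]
by $\|f\|_\infty$ times $\epsilon_{ij}$ times the fiber degree. Here $\epsilon_{ij}$ comes from the boundary divisor $\epsilon_{ij}\cdot \CD_0$. We get uniformity because the fiber degrees $\deg(\wt L|_{X_y})$ and $\deg(\wt D_0|_{X_y})$ are constant by flatness. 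Since a uniform limit of continuous functions is continuous, we reduce to $X\to Y$ projective flat with $\OL$ a model or integrable metrized bundle on $X^\an$.

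\emph{Step 2: the projective case.} For $f$ constant, $I_f(y)=f\cdot\deg(L|_{X_y})$ is locally constant, by flatness and properness. So assume $f$ is general continuous. By the density result of Gubler \cite[Thm. 7.12]{Gub98}, used as in Case 2 of Proposition \ref{projection formula}, we may uniformly approximate $f$ by functions $-\log\|1\|_\ON$ with $\ON$ a model metric on the trivial bundle. The error $|I_f-I_g|\le \|f-g\|_\infty\deg(L|_{X_y})$ is uniform in $y$, so it suffices to treat $f=-\log\|1\|_\ON$.

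In that case the isometry
\[
\langle\OL_{1,y},\dots\rangle_{X_y/\CH_y}\cong\langle\ON,\OL,\dots,\OL\rangle_{X/Y}|_{\CH_y}
\]
stated before the lemma shows that $I_f(y)=-\log\|1\|(y)$, where $\|\cdot\|$ is the metric of the Deligne pairing $\langle\ON,\OL,\dots,\OL\rangle_{X/Y}$ and $1$ is the section induced by $N=0$, an invertible section of a trivial bundle. That pairing is an integrable (model, then limit) metrized line bundle on $Y^\an$, so its metric is continuous. Hence $I_f$ is continuous.

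\emph{Main obstacle.} The delicate point is the uniformity in Step 1 when $X$ is only quasi-projective and $f$ has compact support. We must check that the limit of fiberwise measures is controlled uniformly in $y$ even near points of $Y^\an$ over which the boundary $X_i\setminus X$ meets the fiber. We will handle this with the bound via the boundary divisor above, together with the fact used at the end of Proposition \ref{projection formula} (from \cite{Guo25}) that boundaries carry no mass. This is applied fiberwise, so that integrals over $X_y^\an$ and over $X_{i,y}^\an$ agree for each $y$.
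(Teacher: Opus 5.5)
Your overall route is the paper's: approximations that are uniform in $y$ because the fiber masses $\TL|_{X_y}^n$ are constant (flatness plus \cite[Thm.~1.2]{Guo25}), density of model functions, and continuity of the metric of the relative Deligne pairing $\langle\ON,\OL,\dots,\OL\rangle_{X/Y}$. The gap is the estimate on which your Step~1 rests. You claim a bound
\[
\Bigl|\int_{X_y^\an} f\,c_1(\OL_i)^n-\int_{X_y^\an} f\,c_1(\OL_j)^n\Bigr|\leq \|f\|_\infty\cdot\epsilon_{ij}\cdot(\text{fiber degree})
\]
for arbitrary continuous $f$. This would say that uniform closeness of metrics controls the total variation of the difference of Monge--Amp\`ere measures. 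That is false: uniform convergence of metrics only gives weak convergence of the measures.

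A counterexample already lives on a single fiber $\PP^{1,\an}$ of $\PP^1\times Y\to Y$. Take the metrics on $\CO(1)$ with potentials $\log^+|z|$ and $\log^+|z/r|$, where $r<1$ is close to $1$ and lies in the divisible hull of $|K^\times|$, so both are $\QQ$-model metrics. They differ by at most $-\log r$. Their measures, however, are the Dirac masses at the distinct points $\zeta_{0,1}$ and $\zeta_{0,r}$ (the sup-norms on the discs of radius $1$ and $r$). A continuous $f$ with $\|f\|_\infty=1$, $f(\zeta_{0,1})=1$ and $f(\zeta_{0,r})=-1$ makes the left side equal to $2$. So the uniform convergence $I^{(i)}_f\to I_f$ that Step~1 needs for a general $f\in C_c(X^\an)$ is not established. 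The boundary, which your last paragraph worries about, is not the issue: for compactly supported $f$ extended by zero it contributes nothing.

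The repair is to reverse the order of your two approximations. The paper likewise brings in model approximations $f_k$ of $f$ before comparing $\OL$ with its models.
\begin{itemize}
\item First replace $f$ by a model function $g$ with $\supp(g)\subset\supp(f)$ (Theorem~\ref{density quasi-projective}). The error is at most $\|f-g\|_\infty\cdot\TL|_{X_y}^n$, which is uniform in $y$.
\item For this fixed $g$, let $\phi_{ij}$ be $-\log$ of the ratio of the metrics of $\OL_i$ and $\OL_j$. On $X^\an$ it is bounded by $\epsilon_{ij}$ times the Green function of $\OD_0$. Since $g$ vanishes near the boundary,
\[
\int_{X_y^\an} g\,\bigl(c_1(\OL_i)^n-c_1(\OL_j)^n\bigr)=\sum_{a+b=n-1}\int_{X_y^\an}\phi_{ij}\,\d'\d''g\wedge c_1(\OL_i)^a\wedge c_1(\OL_j)^b .
\]
\item Write $\d'\d''g=c_1(\overline{A}')-c_1(\overline{A})$ for two nef model metrics on a fixed ample line bundle. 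This bounds the total variation of $\d'\d''g\wedge c_1(\OL_i)^a\wedge c_1(\OL_j)^b$ by geometric intersection numbers on $X_y$, which are constant in $y$ by flatness.
\item Hence the right side is at most $C_g\,\epsilon_{ij}$ uniformly in $y$. Here $C_g$ also involves the supremum of the Green function of $\OD_0$ on the compact set $\supp(g)$.
\end{itemize}
A $g$-dependent constant is harmless in an $\epsilon/3$-argument. Your Step~2, the Deligne pairing for the model data on the projective flat $X_i\to Y$, then finishes the proof.
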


\begin{proof}
		By linearity, we may assume that $\OL=\OL_1=\cdots=\OL_n$ is strongly nef. Take a boundary divisor $(X_0,\OD_0)$. Write $\OL$ as a limit 
	\[
	\OL=\lim_{k\to\infty}(X_k,\OL_k,\ell_k),
	\]
	where 
	\[
	-\epsilon_i\OD_0 \leq \wh\div_{\OL_i\otimes\OL_j^{-1}}(1)\leq \epsilon_i\OD_0,\ \forall j>i\geq1.
	\]
	Here $\lim_{k\to\infty}\epsilon_k=0$. In addition, by blowing-up, we assume that there is a projective morphism $\pi_k:X_k\to Y$.
	Take a sequence of model functions $\{f_k\}_{k\geq1}$ on $X_0$ such that 
	\[
	\sup_{x\in X_0^\an}|f(x)-f_k(x)|<\epsilon_k.
	\]
	Take a bound $M$ for all $f$ and $f_k$. Then
	\[
	\left|
	\int_{X_y^\an} f\, c_1(\OL|_{X_y})^n-
	\int_{X_{k,y}^\an} f\, c_1(\OL_k|_{X_{k,y}})^n
	\right|
	\leq 
	\epsilon_k\cdot \OL_k|_{X_{k,y}}^n+M\left(\OL|_{X_{y}}^n-\OL_k|_{X_{k,y}}^n\right)
	\]
	
	Take the Zariski closure of $X$ in $\PP^{N}_Y$, which is denoted by $X'$. Denote the projection $\pi':X'\to Y$. Then $\pi'$ is projective and flat. Fix a projective model $X''$ of $X'$, which means $X''$ is a projective variety over $K$ containing $X'$ as a Zariski open subset. By \cite[Thm. 1.2]{Guo25}, 
	$$
	\int_{X_y^\an} c_1(\OL_1|_{X_y})\cdots c_1(\OL_n|_{X_y})=\TL_1|_{X_y}\cdots\TL_n|_{X_y}.
	$$
	Since $\pi$ is flat, the intersection numbers on the right hand side are equal for all $y\in Y^\an$. Take a sequence of model functions $\{f_n\}_{n\geq1}$ on $X''$ which approximates $f$ uniformly on $(X'')^\an$. Then the integration 
	$$
	\int_{X_y^\an} f_n\, c_1(\OL_1|_{X_y})\cdots c_1(\OL_n|_{X_y})\longrightarrow 
	\int_{X_y^\an} f\, c_1(\OL_1|_{X_y})\cdots c_1(\OL_n|_{X_y})
	$$
	uniformly for $y\in Y^\an$. It suffices to prove the continuity of $I$ assuming that $f$ is a model function on $(X'')^\an$.
	
	Take $\OL_0=(\CO_X,\|\cdot\|_f)\in\HPic(X^\an)$ defined by $\|1\|_f=\mathrm{e}^{-f}$. Then the metric of $\OL_0$ is a model metric. Denote
	$$
	\OM=(M,\|\cdot\|_\OM)=\left\langle\OL_{0},\cdots,\OL_{n}\right\rangle_{X^\an/Y^\an}.
	$$
	Then for the metric of the meromorphic section $1$ at $y$,
	$$
	\|1(y)\|_\OM=\int_{X_y^\an} f\, c_1(\OL_1|_{X_y})\cdots c_1(\OL_n|_{X_y}).
	$$
	Since the metric of $\left\langle\OL_{0},\cdots,\OL_{n}\right\rangle_{X^\an/Y^\an}$ on $Y^\an$ is integrable, hence continuous, we deduce the continuity of $I(y)$.
\end{proof}

\subsection{Strictly positive differential forms on curves}

In \cite{CLD}, Chambert-Loir and Ducros introduced a theory of smooth differential forms on Berkovich spaces over non-archimedean fields. 
This notion was generalized to weakly smooth differential forms, by Gubler--Jell--Rabinoff  \cite{GJR1}.
In \cite{GJR2}, Gubler--Jell--Rabinoff developed a theory of smooth forms on metric graphs, which can be transferred to Berkovich curves. The goal of this section is to review some results of 
\cite{GJR1,GJR2}, with an emphasis on integrals of differential forms.

\subsubsection{Differential forms on metric graphs}
 
Let $\Gamma$ be a metric graph (without {loops}) as in 
\cite[Def. 2.1.1]{GJR1}.
Then $\Gamma$ is a union of finitely many line segments, glued along their endpoints in some way.
Denote by $V(\Gamma)$ the set of vertices of $\Gamma$, and by $E(\Gamma)$ the set of (unoriented) edges of $\Gamma$.

In our paper, we always assume a metric graph has no weights everywhere, or we just think the weights are 1 for all edges. 
To connect to the notation of \cite{GJR2}, our metric graphs correspond to the unweighting of weighted metric graphs in \cite[Def. 2.1.2]{GJR2}.
 We do not consider boundaries as in \cite[Def. 2.1.4]{GJR2}, since they do not appear for smooth projective curves over non-archimedean fields.

We briefly recall the notions of smooth functions and smooth $(1,1)$-forms in this setting.
For every edge $e$ of $\Gamma$, denote by $l(e)$ the length of $e$. 
There is a \emph{coordinate function} (or \emph{parametrization}) $t_e$ on $e$, i.e., an isometry
$$
t_e : [0, l(e)] \longrightarrow e,
$$
There are exactly two choices of the coordinates, and each one is determined by a direction (or orientation) of $e$.
In the following, we will only specify the direction of $e$ if a statement depends on the choice of $t_e$.

A \emph{smooth function} (i.e. smooth $(0,0)$-form) on $\Gamma$ is a continuous function $f:\Gamma\to\RR$ which is smooth on every edge in the sense that $f_e=f\circ t_e:[0,l(e)]\to\RR$ is smooth, satisfying the following properties:
\begin{enumerate}[(a)]
	\item
	if $v\in \Gamma$ is a vertex of valency $1$ with outgoing edge $e$, then $f_e$ is constant in a neighborhood of $v$;
	\item
	if $v\in \Gamma$ is a vertex of valency $2$ with outgoing edges $e_1,e_2$, then for any $n\geq 0$,
	$$
	\left.\frac{\d^n f_{e_1}}{\d t_{e_1}^n}\right|_{t_{e_1}=0}
	=(-1)^n\left.\frac{\d^n f_{e_2}}{\d t_{e_2}^n}\right|_{t_{e_2}=0};
	$$	
	\item
	if $v\in \Gamma$ is a vertex of valency $r\geq3$ with outgoing edges $e_1,e_2,\cdots ,e_r$, then
	$$
		\sum_{i=1}^r\left.\frac{\d f_{e_i}}{\d t_{e_i}}\right|_{t_{e_i}=0}=0.
	$$
\end{enumerate}

A \emph{smooth $(1,1)$-form} on $\Gamma$ is a collection of data 
$\omega=(f_e\d' t_e\d'' t_e)_{e\in E(\Gamma)}$, where $f_e$ is a smooth function on $e$ for every edge $e$, satisfying the following properties:
\begin{enumerate}[(a)]
	\item
	if $v\in \Gamma$ is a vertex of valency $1$ with outgoing edge $e$, then $f_e=0$ in a neighborhood of $v$;
	\item
	if $v\in \Gamma$ is a vertex of valency $2$ with outgoing edges $e_1,e_2$, then for any $n\geq 0$,
	$$
\left.\frac{\d^n f_{e_1}}{\d t_{e_1}^n}\right|_{t_{e_1}=0}
	=(-1)^n\left.\frac{\d^n f_{e_2}}{\d t_{e_2}^n}\right|_{t_{e_2}=0}.
	$$
\end{enumerate}
For details, we refer to \cite[\S 2]{GJR2}.

For our purposes, for a collection of data $\omega=(f_e\d' t_e\d'' t_e)_{e\in E(\Gamma)}$ such that $f_e$ is smooth on $e$ for every edge $e$, we call it a \emph{piecewise smooth $(1,1)$-form} on $\Gamma$. For a piecewise smooth $(1,1)$-form $\omega$ satisfying condition $(b)$ above, we call it a \emph{piecewise smooth $(1,1)$-form with compatibility at vertices of valency $2$}. 

For a piecewise smooth $(1,1)$-form $\omega$, we say that $\omega$ is \emph{strictly positive}, if there exists a constant $c>0$ such that $f_e\geq c$ on every edge $e$. 

For a piecewise smooth $(1,1)$-form $\omega$, we define the \emph{integral} of $\omega$ as the sum of the integrals of $\omega$ on every edge. More precisely,
\[
\int_\Gamma\omega=\sum_{e\in E(\Gamma)}\int_e f_e\d' t_e\d'' t_e=\sum_{e\in E(\Gamma)}\int_{[0,l(e)]}f_e\d t.
\]
Here $\d t$ is the Lebesgue measure on $[0,l(e)]$.
This is the same as the integration in \cite{GJR2}.

\begin{remark}
	Note that because of condition $(a)$, a smooth $(1,1)$-form cannot be strictly positive if $\Gamma$ has a vertex of valency 1.
However, there is always a strictly positive piecewise smooth $(1,1)$-form on $\Gamma$.
\end{remark}

 \subsubsection{Differential forms on curves}

Let $K$ be a non-archimedean field.
Let $X$ be a smooth quasi-projective curve over $K$.
By an \emph{integral model} $\CX$ of $X$, we mean an integral scheme projective and flat over $O_K$ with an open immersion $X\to \CX_K$.
By a \emph{strictly semi-stable model} of $X$, we mean a regular integral model $\CX$ of $X$ whose special fiber has smooth irreducible components and has at worst ordinary double points as singularities.

Assume that $X$ has a strictly semi-stable model $\CX$, which can be done after replacing $K$ by its finite extension.
Denote by $S(\CX)$ the skeleton of $\CX$, which is the reduction graph of $\CX$.
By \cite[\S4]{Ber99} or \cite[\S5.3]{GJR2}, we have a canonical embedding $i_{\CX}:S(\CX)\to X^\an$ and a canonical contraction map $\tau_\CX: X^\an\to S(\CX)$. 
It is known that for each singular point $x\in\CX$, there is an open neighborhood $\CU$ of $x$, whose special fiber has exactly two irreducible components, and an \'etale morphism
\[
p:\CU\longrightarrow\Spec O_K[x,y]/(xy-a),\quad a\in O_K\setminus\{0\}.
\]
See \cite[Rmk. 2.2.9]{Thu} for this result. 
Then $S(\CX)$ can be endowed with a metric as follows. For an edge $e\in S(\CX)$, which corresponds to a singular point $\tilde x$ in the special fiber $\CX_s$, the length is defined as
$$
l(e)=\frac{1}{[\wt K(\tilde x):\wt K]}(-\log|a|).
$$

Suppose that $X$ is a smooth projective curve over $K$.
In \cite{GJR1}, Gubler--Jell--Rabinoff defined the set of \emph{weakly smooth $(p,q)$-forms} $A^{p,q}(X^\an)$ on $X^\an$, which refines the theory of smooth differential forms of Chambert-Loir--Ducros \cite{CLD}.
Denote by $A^{p,q}(S(\CX))$ the set of smooth $(p,q)$-forms on $S(\CX)$.
The following proposition is from \cite[\S 6.1]{GJR2}.

\begin{proposition}\label{pullback from skeleton}
	There is a natural pull-back $\tau_\CX^*:A^{p,q}(S(\CX))\to A^{p,q}(X^\an)$.
\end{proposition}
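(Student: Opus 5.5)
The plan is to define $\tau_\CX^*$ edge by edge, using the local structure of $X^\an$ over each edge, and then check that the local pieces glue and satisfy the conditions defining weakly smooth forms in \cite{GJR1}.

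First, recall from \cite[\S4]{Ber99} that $X^\an\setminus i_\CX(V(S(\CX)))$ decomposes as follows. There is a disjoint union of open annuli, one for each edge $e$, each retracting onto the interior of $e$. The remaining connected components are open discs, and each of these retracts onto a single vertex. Over the \'etale chart
\[
p:\CU\lra \Spec O_K[x,y]/(xy-a),
\]
the annulus attached to $e$ is identified, after a finite extension of the residue field of the singular point accounts for the factor $1/[\wt K(\tilde x):\wt K]$ in the length, with $\{-\log|a|< -\log|x|<0\}$. On this annulus the coordinate $t_e$ is an affine function of $-\log|x|$, and $\tau_\CX$ is the composition of $-\log|x|$ with that affine identification. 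Hence over the annulus $\tau_\CX$ is a tropicalization map $\mathrm{trop}:U\to\RR$ attached to the invertible function $x$.

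Second, define the pullback on each annulus as the composition of a form on an open interval of $\RR$ with the tropicalization. For a function $f$ this is $f_e\circ t_e^{-1}\circ\mathrm{trop}$. For a $(1,1)$-form $f_e\,\d't_e\wedge\d''t_e$ it is $\mathrm{trop}^*(f_e\,\d'u\wedge\d''u)$ in the sense of Chambert-Loir--Ducros \cite{CLD}. These are smooth forms on the annulus because they are pulled back from a tropical chart.

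Third, near a vertex $v$, the form is extended through the discs and through the type 2 point $i_\CX(v)$.
\begin{enumerate}[(1)]
\item On the discs retracting to $v$, take $f\circ\tau_\CX$, which is constant equal to $f(v)$. Since $(1,1)$-forms have degree $2$ in a $1$-dimensional tropical direction, take the zero $(1,1)$-form there.
\item At the point $i_\CX(v)$ itself, use the harmonicity and compatibility conditions (a)--(c) in the definition of smooth forms on $S(\CX)$ to show the glued object lies in $A^{p,q}(X^\an)$.
\end{enumerate}

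The second part of the third step is the main obstacle. Weakly smooth forms in \cite{GJR1} are defined by local tropical charts together with a harmonic-function condition. Check each valency separately:
\begin{enumerate}[(1)]
\item At a valency-$2$ vertex, the matching of all derivatives up to sign lets one choose a single tropical chart across the vertex. The pullback is then smooth there.
\item At a valency-$1$ vertex, $f$ is locally constant near the vertex, so its pullback is locally constant, and any $(1,1)$-form vanishes near the vertex by condition (a).
\item At a valency $\geq 3$ vertex, a function is weakly smooth if it is locally of the form "smooth pulled back from a tropical chart plus a harmonic piece". The balancing condition $\sum_i \d f_{e_i}/\d t_{e_i}=0$ is exactly the statement that the piecewise linear germ at $v$ is harmonic, so it is the restriction of $-\log|h|$ for a local invertible function $h$ near $i_\CX(v)$ (by the slope formula).
\end{enumerate}

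Finally, verify independence of the chosen charts and naturality: the construction is compatible with refinements of $\CX$ and commutes with $\d',\d''$ and wedge products. I would cite \cite[\S6.1]{GJR2} for the remaining compatibility with their definitions.
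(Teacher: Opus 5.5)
The paper gives no argument of its own here; it simply quotes \cite[\S6.1]{GJR2}. Your sketch is fine on the annuli, on the discs and at valency-$1$ vertices. The vertex step, which you correctly single out as the crux, rests on a false claim.

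You assert that a balanced piecewise linear germ at $i_\CX(v)$ is $-\log|h|$ for a local invertible analytic function $h$ ``by the slope formula''. At valency-$2$ vertices you also tacitly use $-\log|x|$ as a single chart across $v$. The slope formula does not give this. After normalizing $|h(i_\CX(v))|=1$, the outgoing slope of $-\log|h|$ in the direction of $P$ is $\ord_P(\tilde h)$. Here $\tilde h$ is the reduction of $h$, a rational function on the residue curve $C_v$ of the component attached to $v$. So the slopes must be integers. More seriously, let $P_1,\dots,P_r\in C_v$ be the nodes corresponding to the edges at $v$ and $s_i=\d f_{e_i}/\d t_{e_i}|_{t_{e_i}=0}$; then $\sum_i s_iP_i$ must be a principal divisor.

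Even allowing real combinations $\sum_j c_j\log|h_j|$, you need $\sum_i s_iP_i\in\mathrm{Prin}(C_v)\otimes_\ZZ\RR$. Equivalently, its image in $\Pic^0(C_v)\otimes_\ZZ\RR$ must vanish, whereas balancing only gives degree $0$. For a concrete failure, take residue field $\CC$ and $C_v$ an elliptic curve. Let $v$ have valency $2$ with $P_1-P_2$ non-torsion, and let $f$ have slopes $\pm1$ at $v$. Then $f\circ\tau_\CX$ cannot be written near $i_\CX(v)$ as $\phi(\log|h_1|,\dots,\log|h_m|)$ with $h_j$ invertible and $\phi$ smooth, although $f$ is smooth on $S(\CX)$. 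So with charts built from $\log|h|$, i.e.\ for Chambert-Loir--Ducros forms, the statement fails. This is exactly why \cite{GJR1} works with \emph{harmonic} tropicalizations.

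The missing idea is to use the harmonic functions themselves as chart coordinates. For a vertex $v$ of valency $r\ge3$, choose $u_1,\dots,u_r\in\RR^{r-1}$ with $u_1,\dots,u_{r-1}$ a basis and $\sum_iu_i=0$. Let $\lambda$ map the point at distance $t$ from $v$ on $e_i$ to $tu_i$. Each coordinate of $\lambda\circ\tau_\CX$ is affine on the annuli, constant on the discs and balanced at $i_\CX(v)$. Hence it is harmonic in the sense of Thuillier \cite{Thu} on $\tau_\CX^{-1}$ of a small open star of $v$, so $\lambda\circ\tau_\CX$ is a harmonic tropicalization in the sense of \cite{GJR1}.

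For $k\ge2$, the tensors $u_1^k,\dots,u_r^k$ are linearly independent in $\mathrm{Sym}^k\RR^{r-1}$. So condition (c) is the only constraint on the jets of $f$ at $v$. A Borel-lemma argument with a flat correction near each ray then produces a smooth $\phi$ on $\RR^{r-1}$ with $f=\phi\circ\lambda$ near $v$. Similarly, every smooth $(1,1)$-form near $v$ is the restriction of a Lagerberg form. For $r=2$ the same works in $\RR^1$ using condition (b). Your sketch also never treats $(1,0)$- and $(0,1)$-forms.
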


By the above proposition, for an element $\omega\in A^{1,1}(S(\CX))$, we may {define} an \emph{integral on $X^\an$} by
\[
\int_{X^\an}\tau_\CX^*\omega:=\int_{S(\CX)}\omega.
\]

\begin{proposition} \label{formula of laplace on graph}
	For any continuous function $f:S(\CX)\to\RR$ which is smooth on edges, denote by $\mathcal{O}_X(f)$ the trivial line bundle $\mathcal{O}_X$ endowed with the metric over $X^\an$ given by $\|1\|=e^{-\tau_\CX^* f}$. Then
	$$
	c_1\left(\mathcal{O}_X(f)\right)=-i_{\CX,*}(\Delta f)
	$$
	as measures over $X^\an$. Here
	$$
		\Delta f=-(f_e''\d' t_e\d'' t_e)_{e\in E(S(\CX))}-\sum_{v \in V(S(\CX))} \left(\sum_{e\textnormal{ outgoing} \atop\textnormal{edge of $v$}} \left.\frac{\d f}{\d t_e}\right|_{t_e=0}\right)\delta_v.
	$$
\end{proposition}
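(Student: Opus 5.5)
We prove Proposition \ref{formula of laplace on graph} by reducing it to the case of model functions, where the curvature is an explicit combination of Dirac masses at divisorial points, and then passing to the limit.

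\textbf{Step 1: piecewise linear case.} First assume $f$ is piecewise linear on $S(\CX)$ with rational slopes, with breakpoints at rational points. After subdividing $S(\CX)$ at the breakpoints (which amounts to replacing $\CX$ by a strictly semistable model $\CX'$ obtained by blowing up nodes, possibly after a finite base change), $f$ becomes affine on every edge of $S(\CX')$. Then $\tau_{\CX'}^*f=\tau_\CX^*f$ is the model function attached to a vertical Cartier divisor $\CD=\sum_v f(v)\,C_v$ on $\CX'$ (up to the normalization of multiplicities), where $C_v$ are the components of the special fiber indexed by the vertices. The Chambert-Loir measure $c_1(\CO_X(f))$ is then $\sum_v \deg(\CD|_{C_v})\,\delta_v$. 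For a vertex $v$ with neighbors $w$ along edges $e$,
$$\deg(\CD|_{C_v})=\sum_{e}\frac{f(w)-f(v)}{l(e)}=\sum_{e\ \text{outgoing}}\left.\frac{\d f}{\d t_e}\right|_{t_e=0}.$$
This uses $C_v\cdot C_v=-\sum_e C_v\cdot C_w$ together with the fact that a node with local equation $xy=a$ contributes multiplicity $-\log|a|$, matching the length normalization (including the residue degree factor). Since $f_e''=0$ on every edge, this is exactly $-i_{\CX,*}\Delta f$.

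\textbf{Step 2: general $f$.} Approximate $f$ by piecewise linear functions $f_k$ interpolating $f$ on finer and finer rational subdivisions, so that $f_k\to f$ uniformly. Both sides of the identity are continuous in the appropriate weak sense:
\begin{itemize}
\item the left side, because $\int g\,c_1(\CO_X(f))$ can be tested against smooth $g$ pulled back from the skeleton and, by symmetry of the pairing (integration by parts/Deligne pairing), equals $\int f\,c_1(\CO_X(g))$;
\item the right side, because the discrete Laplacian of the interpolants converges: on each edge the vertex masses $\frac{f(t_{i+1})-2f(t_i)+f(t_{i-1})}{h}$ converge weakly to $f_e''\,\d t$, and the masses at the original vertices converge to the outgoing-derivative sums.
\end{itemize}
Taking the limit in Step 1 yields the formula.

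\textbf{Main obstacle.} The delicate step is justifying the weak convergence of $c_1(\CO_X(f_k))$ to $c_1(\CO_X(f))$, since uniform convergence of metrics alone gives convergence of measures only for semipositive or integrable limits. The cleanest route is to write $f$ as a difference of two convex functions on each edge (possible because $f$ is smooth on edges), add a large multiple of a fixed function so that both metrics become semipositive, and then invoke continuity of Monge--Amp\`ere measures under uniform limits of semipositive metrics. Alternatively, one may appeal directly to \cite[\S5--6]{GJR2}, where this formula is established for weakly smooth forms.
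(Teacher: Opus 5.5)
The paper does not prove this statement itself; it calls it standard and cites \cite[Prop.~A.4]{Yua21}. Your outline is the natural self-contained route: compute the measure for model functions via degrees on components of the special fibre, then pass to a limit. Step~1 is fine as a sketch. The limiting step, which you rightly single out as the crux, does not go through as you describe it, for two concrete reasons.

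\textbf{Semipositivity needs an ample twist.} No added function on $S(\CX)$ can make the two pieces semipositive metrics on $\CO_X$. A semipositive metric on a degree-$0$ line bundle on a projective curve has $c_1$ a positive measure of total mass $0$, hence $c_1=0$. Equivalently, the Laplacian of any function on the compact graph has total mass $0$. You have to twist by an ample bundle, as follows.
\begin{itemize}
\item Take an ample $L$ with a model metric $\|\cdot\|_0$ whose measure $\sum_v b_v\delta_v$ has $b_v>0$ at every vertex.
\item Write $f=f_1-f_2$ with $f_i$ continuous and strictly convex on edges, e.g.\ $f_2=C\rho$ with $\rho_e(t)=t(t-l(e))$.
\item Put $\OL_i=(L^{\otimes m},\|\cdot\|_0^{m}e^{-\tau_\CX^*f_i})$, with $m$ so large that $mb_v$ exceeds the absolute value of the outgoing slope sums of $f_i$ and of all its approximants.
\end{itemize}
Then $\CO_X(f)=\OL_1\otimes\OL_2^{-1}$, and $c_1(\OL_i)$ is by definition the weak limit of the measures of semipositive model approximants. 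This is also what gives the left-hand side a meaning.

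\textbf{Your approximants are not covered by Step~1.} Interpolating $f$ at rational points produces real slopes, while Step~1 needs rational slopes. When $\val(K^\times)$ has $\QQ$-rank one this is harmless by $\RR$-linearity. The paper, however, needs value groups of higher rank, e.g.\ after the base change to $E_{v_0}$ at a type-$3$ point in the reduction for Theorem~\ref{approximation high}. In that setting a piecewise linear function with real slopes need not lie in the $\RR$-span of model functions. Suppose a cycle of $S(\CX)$ has two edges at a vertex $v$ with $\QQ$-independent lengths $l_1,l_2$. Choose a $\QQ$-linear $\theta$ on $\val(K^\times)\otimes\QQ$ with $\theta(l_1)=1$ and $\theta(l_2)=0$. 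The sum of $(\text{slope})\cdot\theta(\text{length})$ over the pieces of the cycle equals $\theta$ of the net change, which is $0$, for every rational-slope function with rational breakpoints. For the hat function at $v$ the same sum is $\pm 1/l_1\neq 0$.

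So you must build approximants with rational slopes and rational breakpoints. One way is to prescribe values in $\val(K^\times)\otimes\QQ$ at the nodes within $O(h^3)$ of $f_i$. On each subinterval, use a rational slope plus one extra rational kink of size $O(h^2)$ and positive sign. Then the extra masses total $O(h)$, and the approximants of the strictly convex $f_i$ stay semipositive. Alternatively, invoke Thuillier's potential theory, where $dd^c$ of the pull-back of a piecewise linear function is the graph Laplacian by construction and is compatible with Chambert-Loir measures.

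\textbf{Your fallbacks do not close the gap.} \cite{GJR2} treats weakly smooth functions. But $\tau_\CX^*f$ is not weakly smooth as soon as the vertex terms of $\Delta f$ are nonzero, which is precisely the case used in Theorem~\ref{MA for measure on skeleton}. The symmetry argument presupposes that $\CO_X(f)$ is integrable, which is what the twisted decomposition provides. It also only sees $\tau_{\CX,*}$ of the measures unless you separately show that the limit is supported on $i_\CX(S(\CX))$.
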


\begin{proof}
It is a standard result. For a proof, we refer to \cite[Prop. A.4]{Yua21}.
\end{proof}

In the theorem, the piecewise smooth $(1,1)$-form $(f_e\d' t_e\d'' t_e)_{e\in E(S(\CX))}$ is viewed as a measure on $S(\CX)$ by integration as above.

 \subsubsection{Monge--Amp\`ere equations on curves}

The first goal of this subsection is to prove the following result for a simple Monge--Amp\`ere equation. 

\begin{thm} \label{MA for measure on skeleton}
Let $K$ be a non-archimedean field. 
	Let $X$ be a smooth projective curve over $K$ with a strictly semi-stable model $\CX$.
	Let $L$ be a line bundle on $X$ with $\deg(L)>0$.
	Then for any smooth $(1,1)$-form $\mu$ on $S(\CX)$ satisfying
	$$
	\int_{S(\CX)}\mu=\deg(L),
	$$	
	there exists a weakly smooth metric $\|\cdot\|$ on $L$ such that
	$$
	c_1(L,\|\cdot\|)=i_{\CX,*}\mu.
	$$
	In addition, this metric is unique up to a multiplicative constant.
\end{thm}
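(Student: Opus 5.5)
We reduce the statement to solving a Poisson equation on the metric graph $S(\CX)$ and then transfer the solution to $X^\an$ by Proposition \ref{formula of laplace on graph}.

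\textbf{Step 1: a reference metric.} Since $\deg(L)>0$, some positive power of $L$ extends to a line bundle on $\CX$, possibly after a finite extension of $K$; alternatively we work with $\QQ$-line bundles. Choose any line bundle $\CL$ on $\CX$ extending $L$, with induced model metric $\|\cdot\|_0$. By the standard computation (slope formula / Chambert-Loir measure of a model metric on a strictly semistable model),
$$
c_1(L,\|\cdot\|_0)=\sum_{v\in V(S(\CX))}\deg(\CL|_{C_v})\,\delta_{v},
$$
where $C_v$ is the irreducible component of $\CX_{\wt K}$ corresponding to the vertex $v$. This is a discrete measure $\nu_0$ on $S(\CX)$ of total mass $\deg(L)$, pushed forward by $i_\CX$.

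\textbf{Step 2: solve the graph Laplace equation.} We look for a continuous function $f$ on $S(\CX)$ that is smooth on edges and satisfies
$$
-\Delta f=\mu-\nu_0
$$
as measures on $S(\CX)$. Then $\|\cdot\|=\|\cdot\|_0 e^{-\tau_\CX^*f}$ has $c_1=\nu_0+(\mu-\nu_0)=\mu$ by Proposition \ref{formula of laplace on graph}. Such an $f$ exists because the right-hand side has total mass zero and $S(\CX)$ is connected. The construction is explicit:
\begin{itemize}
\item[] on each edge $e$, the equation fixes $f_e''$ as the density of $\mu$ (up to sign), so $f_e$ is determined up to an affine function;
\item[] the remaining affine parameters must satisfy continuity at vertices and the prescribed outgoing-slope sums $\deg(\CL|_{C_v})$ plus the correction coming from integrating $\mu$ along the edges.
\end{itemize}
This is a linear system of the form "discrete Laplacian $=$ given vector of total sum zero," whose kernel is the constants. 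It is therefore solvable, with solution unique up to an additive constant; this is the classical theory of harmonic functions / Green's functions on metric graphs.

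\textbf{Step 3: weak smoothness.} We must check that $f$ is a smooth function on the graph in the sense of \cite{GJR2}, so that $\tau_\CX^*f$ gives a weakly smooth metric. The requirements are:
\begin{itemize}
\item[] \emph{valency-$2$ vertices:} all derivatives must match. This follows because $\mu$ is a smooth $(1,1)$-form, so its densities match to all orders, and the slope jump there is $\deg(\CL|_{C_v})$.
\item[] \emph{valency-$\ge 3$ vertices:} the slope sum must vanish.
\end{itemize}
The difficulty is that $\nu_0$ has atoms at vertices, which produce kinks, and this is the step I expect to be the main obstacle. To fix it, we replace the reference metric $\|\cdot\|_0$ by a metric whose curvature is already a smooth form. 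Concretely, we pick a smooth $(1,1)$-form $\mu_0$ on $S(\CX)$ of mass $\deg L$ and solve $-\Delta g=\mu_0-\nu_0$; in the same step we choose $f$ to absorb the vertex atoms. This is possible since the kinks are exactly the atoms of $\nu_0$, and a total-mass-zero atomic-minus-smooth measure still admits a solution. Smoothness is then checked locally: near each vertex the combined measure $\mu-\nu_0+(\text{atoms})$ is smooth, and the vertex conditions (b), (c) hold precisely because there is no net atom. Hence the combined metric $\|\cdot\|_0e^{-\tau^*f}$ is weakly smooth, and its curvature is $i_{\CX,*}\mu$.

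\textbf{Step 4: uniqueness.} If two weakly smooth metrics have the same curvature, their ratio is $e^{-\phi}$ with $\phi$ weakly smooth and $dd^c\phi=0$ on $X^\an$. We show that $\phi$ factors through $\tau_\CX$, or argue directly: a continuous function on the compact space $X^\an$ that is harmonic in this sense is constant, by the maximum principle for harmonic functions on Berkovich curves. Thus the metric is unique up to a multiplicative constant.
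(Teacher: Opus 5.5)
Your Steps 1, 2 and 4 follow the paper's argument. You use a model reference metric whose curvature $\nu_0$ is atomic on the vertices, a correction $e^{-\tau_\CX^*f}$ with $-\Delta f=\mu-\nu_0$, and uniqueness from harmonicity of the ratio plus the maximum principle. The paper first picks $f$ with $f_e''=\mu_e$ and then a model in $\Pic(\CX)\otimes\RR$ with the matching vertex atoms. You instead fix the model and solve the graph Poisson equation, which is the same linear algebra. Two small points: $\deg(L)>0$ plays no role in extending $L$ to $\CX$, and no extension of $K$ is needed.

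The gap is Step 3. You test smoothness on $f$ alone, find kinks at the vertices, and propose passing to a reference metric $\|\cdot\|_0e^{-\tau_\CX^*g}$ with $-\Delta g=\mu_0-\nu_0$. This is circular. The function $g$ has exactly the same vertex kinks as $f$. Moreover, the weak smoothness of $\|\cdot\|_0e^{-\tau_\CX^*g}$ is the very statement under proof, with $\mu_0$ in place of $\mu$. Your closing remark about ``no net atom'' points in the right direction, but nothing in the proposal justifies it.

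The missing idea is that weak smoothness is a condition on the local potentials $-\log\|s\|$ for local frames $s$. It is not a condition on $\|\cdot\|_0$ and $\tau_\CX^*f$ separately, and indeed neither of those is smooth. The metric $\|\cdot\|_0e^{-\tau_\CX^*f}$ from Step 2 is already weakly smooth, with no modification needed:
\begin{enumerate}[(1)]
\item For a local frame $s$ of $\CL$, the potential is $-\log\|s\|_0+\tau_\CX^*f$. By Proposition \ref{pullback from skeleton}, it suffices that $h=-\log\|s\|_0+f$ be smooth on the local skeleton.
\item Since $c_1(L,\|\cdot\|_0)=\nu_0$ is supported on the vertices, $-\log\|s\|_0$ is piecewise linear there. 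It is affine along edges, and its outgoing slope sums at the vertices equal the masses of $\nu_0$.
\item Hence $-\Delta h=\nu_0+(\mu-\nu_0)=\mu$ has no atoms. This gives condition (c) and the first-order part of (b).
\item The model part contributes nothing in orders $\geq 2$, so $h_e^{(k)}=f_e^{(k)}=\mu_e^{(k-2)}$ for $k\geq2$. The higher-order matching at valency-$2$ vertices therefore follows from the smoothness of $\mu$.
\item Near a valency-$1$ vertex, $\mu_e=0$ and the slope of $h$ vanishes, so $h$ is locally constant.
\end{enumerate}
This is exactly how the paper verifies weak smoothness.
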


\begin{proof}
	Denote $\Gamma=S(\CX)$, which is a metrized graph. Denote by $i=i_{\CX}:S(\CX) \to X^\an$ the injection.
	
	Firstly, we prove the existence. Write $\mu=(\mu_e\d' t_e\d'' t_e)_{e\in E(\Gamma)}$.	
	Take a continuous function $f$ on $S(\CX)$ such that
	
	\begin{enumerate}[(1)]
		\item
		$f$ is smooth on every edge $e\in E(\Gamma)$;
		
		\item
		$f$ is constant in a neighborhood of every vertex of valency $1$ in $\Gamma$;
		
		\item
		$f_e''=\mu_e$ for each $e\in E(\Gamma)$.
		
	\end{enumerate}
	This can be achieved by defining $f$ inductively on every edge. Since there is no condition on $f_e'$, we may modify $f_e$ by adding a polynomial of degree at most one so that the resulting functions glue together continuously.

	By Proposition \ref{formula of laplace on graph},
	$$
	c_1(\CO_X(f))=-i_*(\Delta f)=i_*\left(\mu+\sum_{v\in V(\Gamma)}c_v\delta_v\right),
	$$
	where  $c_v$ are constants. Note that for a vertex $v'$ of valency $1$ in $\Gamma'$, $c_{v'}=0$. In addition,
	$$
	\sum_{v\in V(\Gamma)}c_v=-\int_{S(\CX)}\mu=-\deg(L).
	$$
	By the local Hodge index theorem \cite[Thm. 2.1]{YZ17}, there is a  model $\CL_0\in\Pic(\CX)\otimes_\ZZ\RR$ on $\CX$ of $L$ inducing a model metric $\|\cdot\|_0$ of $L$ such that
	$$
	c_1(L,\|\cdot\|_0)=i_*\left(\sum_{v\in V(\Gamma)}-c_v\delta_v\right).
	$$
		To be specific, we can take an arbitrary model $\CL_0'$ of {$L$} and then modify the metric by a model function. 
		
	By twisting with $f$, the metric defined by $\|\cdot\|=\|\cdot\|_0e^{-\tau_\CX^*f}$ is a continuous metric of $L$. Then
	$$
	c_1(L,\|\cdot\|)=i_*\mu.
	$$
	In the following, we prove that the metric $\|\cdot\|$ of $L$ is weakly smooth.
	
		For an arbitrary point of $X^\an$, take an open affine neighborhood $\CU$ of its reduction in $\CX$ trivializing $\CL_0$. Denote by $U$ the generic fiber of $\CU$. Then $\CU$ is a strictly semistable model of $U^\an$ in the sense of \cite[Def. 5.2.3]{GJR2}. Take a local frame $s$ of $\CL_0$ on $\CU$. We still denote by $i$ the inclusion $S(\CU)\emb U^\an$. It suffices to prove that $-\log\|s\|_0+f\circ\tau_\CU$ is weakly smooth on $U^\an$. By Proposition \ref{pullback from skeleton}, it suffices to prove that $h:=-\log\|s\|_0+f|_{S(\CU)}$ is smooth on $S(\CU)$. Now we check conditions of smoothness. Firstly, $h$ is smooth on every edge. For a vertex of valency $1$ with outgoing edge $e$, $f$ is constant in a neighborhood of $v$. Since
	$$
		\left.\frac{\d (-\log\|s\|_0)}{\d t_e}\right|_{t_e=0}=c_v=0,
	$$
		the function $-\log\|s\|_0$ is also constant near $v$. Thus so is $h$. For each vertex of valency at least $2$ with outgoing edges $e_1,\cdots,e_r$, since
	$$
-\Delta(h)=\mu|_{S(\CU)}
	$$
	is a smooth $(1,1)$-form, we have
	$$
	\sum_{i=1}^r \left.\frac{d h}{d t_{e_i}}\right|_{t_{e_i}=0}=0,
	$$
		which implies condition $(c)$ of the smoothness. Finally, we only need to check the compatibility of $h_e^{(k)}$ at a vertex of valency $2$ for $k\geq2$. However, $-\log\|s\|_0$ is piecewise linear. Thus $h_e^{(k)}=f_e^{(k)}$ for $k\geq2$. The compatibility arises from the smoothness of $\mu=(f_e''\d' t_e\d'' t_e)_{e\in E(\Gamma')}$. This completes the proof of existence.
	
	For the uniqueness, suppose there are two weakly smooth metrics $\|\cdot\|_1$ and $\|\cdot\|_2$ satisfying the condition. Denote
	$$
	\overline{\CO}_X=(\CO_X,\|\cdot\|_\mathrm{con})=(L,\|\cdot\|_1)\otimes(L,\|\cdot\|_2)^\vee.
	$$
	Then
	$$
	c_1(\overline{\CO}_X)=\d'\d''(-\log\|1\|_\mathrm{con})=0.
	$$
	By \cite[Prop. 10.17]{GJR1},
	$$
	h_0:=-\log\|1\|_\mathrm{con}: X^\an\to\RR
	$$
		is a harmonic function on $X^\an$. 
Since $X^\an$ is compact and connected, the maximum principle for harmonic functions shows that $h_0$ is constant.
\end{proof}

A quick consequence of the theorem is the following positivity result, which will be used in our proof of the equidistribution theorem. 

\begin{corollary} \label{strictly positive measure on skeleton}
Let $K$ be a non-archimedean field. 
	Let $X$ be a smooth projective curve over $K$ with a strictly semi-stable model $\CX$ over $O_K$.
	Let $L$ be a line bundle on $X$ with $\deg(L)>0$. 
Then for any strictly positive piecewise smooth $(1,1)$-form $\mu$ on $S(\CX)$ satisfying
	\[
	\int_{S(\CX)}\mu\leq \deg(L),
	\]
	there exists a weakly smooth metric $\|\cdot\|$ of $L$ on $X^\an$ such that $c_1(L,\|\cdot\|)-i_{\CX,*}\mu$ is a positive measure on $X^\an$. 
\end{corollary}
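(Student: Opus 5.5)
The plan is to reduce to Theorem \ref{MA for measure on skeleton} by finding a smooth $(1,1)$-form $\mu'$ on $\Gamma=S(\CX)$ with
$$
\mu'\leq \mu \quad\text{edgewise},\qquad \int_{S(\CX)}\mu'=\deg(L).
$$
Write $\mu=(\mu_e\d' t_e\d'' t_e)_{e\in E(\Gamma)}$ with $\mu_e\geq c>0$ on every edge. Since the mass of $\mu$ is only bounded by $\deg(L)$, I would split the target as
$$
i_{\CX,*}\mu'+\nu,
$$
where $\nu$ is a positive measure of mass $\deg(L)-\int\mu'$. If $\mu'$ is smooth, Theorem \ref{MA for measure on skeleton} produces a weakly smooth metric whose curvature is exactly $i_{\CX,*}\mu'$. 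The difference $c_1(L,\|\cdot\|)-i_{\CX,*}\mu$ is then $i_{\CX,*}(\mu'-\mu)$, and this is not positive unless $\mu'\geq\mu$. So the requirement is really the opposite inequality: $\mu'$ should satisfy $\mu'\geq\mu$ edgewise, be smooth, and have total mass exactly $\deg(L)$.

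The key construction is such a smooth $\mu'=(g_e\d' t_e\d'' t_e)_e$ with $g_e\geq\mu_e$ on every edge. Piecewise smoothness of $\mu$ gives continuity of each $\mu_e$ on the closed segment, so each is bounded; let $M$ be a common bound. The two smoothness conditions on $(1,1)$-forms are what need care.

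1. At valency-$1$ vertices, $g_e$ must vanish near the vertex. This clashes with $g_e\geq\mu_e\geq c$, so pointwise domination fails there.
2. At valency-$2$ vertices, the derivative jets must match. Each $g_e$ can be chosen as a smooth function equal to a constant $C\geq M$ near both endpoints of every edge, which gives jet compatibility trivially.

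Condition 1 is the main obstacle. To handle it, I would first replace $\CX$ by a semistable model with no valency-$1$ vertices, or subdivide, since leaves of the reduction graph correspond to non-essential components. Alternatively, I would note that the conclusion only needs positivity as measures on $X^\an$. Near a leaf one can let $g_e$ drop to $0$ on a tiny interval of length $\delta$. The mass deficit there, at most $M\delta$, is recovered by putting an atom at the leaf vertex $v$. Atoms at vertices are allowed by the model-metric part in the proof of Theorem \ref{MA for measure on skeleton}, via the local Hodge index theorem \cite[Thm. 2.1]{YZ17}. This gives a measure of the form $i_*(\mu''+\sum_v a_v\delta_v)$ with all $a_v\geq0$. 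The weak smoothness argument then needs $c_v=0$ at leaves, so on reflection the cleaner route is to avoid leaves. I will therefore work with $\mu''$ plus nonnegative vertex atoms only at vertices of valency at least $2$, and place the leaf-region deficit as an atom at the neighbouring vertex of valency at least $2$, which is where the leaf edge attaches.

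The mass must be matched as well. Forcing $g_e\geq\mu_e$ may overshoot the budget $\deg(L)$. So instead of using Theorem \ref{MA for measure on skeleton} with the upper dominant, I would use its proof directly. The steps are:

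1. Solve $f_e''=\mu_e$ edgewise, with $f$ constant near leaves after modifying $\mu$ near leaves as above. This is possible because only continuity of $f$ is needed, together with the atoms from Proposition \ref{formula of laplace on graph}.
2. Add a model metric whose curvature is $\sum_v a_v\delta_v$ with all $a_v\geq0$, where the atom masses total $\deg(L)-\int\mu$ plus the leaf deficit. The local Hodge index theorem \cite[Thm. 2.1]{YZ17} allows prescribing arbitrary vertex masses with the correct total.
3. The resulting metric has curvature $i_*\mu$ plus a positive atomic part.

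The step I expect to be hardest is the weak smoothness near vertices, which is the issue in step 2. It may be that the metric is only continuous and piecewise smooth rather than weakly smooth. If weak smoothness is genuinely needed, I would smooth $\mu$ near vertices by a smooth form $\mu'\geq\mu-\epsilon$. I would then rescale using strict positivity, replacing $\mu$ by $(1+\eta)\mu$ on most of the graph, to absorb the $\epsilon$ loss. Finally I would apply Theorem \ref{MA for measure on skeleton} to $\mu'+\sigma$, where $\sigma$ is a smooth bump form supported in edge interiors that carries the remaining mass.
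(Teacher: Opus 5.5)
Your reduction is the right one: you need a smooth $(1,1)$-form $\mu'$ with $\mu'\geq\mu$ and total mass $\deg(L)$, and then you apply Theorem \ref{MA for measure on skeleton}. The gap is that you look for $\mu'$ on the fixed skeleton $\Gamma=S(\CX)$, where it does not exist in general. A smooth $(1,1)$-form on $\Gamma$ vanishes near every vertex of valency $1$, while $\mu\geq c>0$ there. Moreover, if $\int_\Gamma\mu=\deg(L)$, which the hypothesis allows, the only candidate is $\mu$ itself, and $\mu$ need not be smooth.

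None of your workarounds repairs this:
\begin{enumerate}
\item \emph{Removing leaves.} A model without valency-$1$ vertices need not exist: for $X=\PP^1_K$ every skeleton is a finite tree, and a tree with an edge has leaves. Leaf components may have positive genus, so they need not be contractible. Contracting them would also delete part of the support of $\mu$. Subdividing never removes leaves.
\item \emph{Letting $g_e$ drop to $0$ near a leaf.} This makes $c_1(L,\|\cdot\|)-i_{\CX,*}\mu$ negative on that small interval. No atom, whether at the leaf or at the neighbouring vertex, can compensate an absolutely continuous deficit on an interval.
\item \emph{Adding a model metric with vertex atoms $a_v>0$.} The resulting curvature has atoms, which violate the balancing condition at those vertices. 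So the metric is not weakly smooth, as you suspected.
\item \emph{The fallback $\mu'\geq\mu-\epsilon$ with rescaling to $(1+\eta)\mu$.} This still forces $\mu'=0$ near leaves. It also needs extra mass $\eta\int\mu$, which is unavailable when $\int\mu=\deg(L)$.
\end{enumerate}

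The missing idea is to change the model by \emph{adding} edges rather than removing them. For every vertex of $\Gamma$ of valency $1$ or $2$, blow up $\CX$ at two distinct closed points of the corresponding component that do not lie on other components. Two points are needed: with one, a former leaf would become a valency-$2$ vertex, where the value $\mu_e(v)\geq c$ would have to match the value $0$ on the new edge. The new skeleton $\Gamma'=S(\CX')$ contains $\Gamma$, each such vertex now has valency $3$ or $4$, and the new edges are leaves. Extend $\mu$ by zero to the new edges. The extension vanishes identically near the new leaves, and smooth $(1,1)$-forms are subject to no condition at vertices of valency at least $3$. Hence the extension is a smooth $(1,1)$-form on $\Gamma'$. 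Now add a nonnegative smooth bump $\sigma$, supported in the interior of one edge, of mass $\deg(L)-\int\mu$. Applying Theorem \ref{MA for measure on skeleton} on $\CX'$ gives a weakly smooth metric with $c_1(L,\|\cdot\|)-i_{\CX,*}\mu=i_{\CX',*}\sigma\geq0$.

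This is the paper's argument. The underlying point is that weak smoothness on $X^\an$ is intrinsic, so the vanishing-at-leaves constraint is only an artifact of the particular skeleton $S(\CX)$.
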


\begin{proof}
If $S(\CX)$ consists of a single point,  the result is immediate.  We henceforth assume that $S(\CX)$ has an edge.

If the valency of every vertex of $S(\CX)$ is at least 3, then $\mu$ is 
automatically smooth on $S(\CX)$.
Otherwise, we take a blowing-up as follows. 

For every vertex $v$ of valency 1 or 2 in $S(\CX)$, corresponding to an irreducible component $C$ of the special fiber of $\CX$, 
	we take {two} distinct closed points of $C$ which do not lie on any other irreducible component. Take $\CX'\to\CX$ to be the blowing-up of $\CX$ along the set of these closed points. 
Denote the reduction graph $\Gamma=S(\CX)$ (resp. $\Gamma'=S(\CX')$). Denote $i=i_\CX$ (resp. $i'=i_\CX'$), and $\tau=\tau_\CX$ (resp. $\tau'=\tau_\CX'$). 
There are a canonical inclusion $i_0:\Gamma\emb \Gamma'$ and a canonical deformation-retraction $\tau_0:\Gamma'\to \Gamma$, which are compatible with $i,i',\tau,\tau'$. 

View $\Gamma$ as a subgraph of $\Gamma'$ by $i_0$.
View the push-forward $i_{0,*}\mu$ {as} a piecewise smooth $(1,1)$-form on $\Gamma'$, obtained from $\mu$ on $\Gamma$ by extension by zero to the extra edges of $\Gamma'$. By the construction of $\Gamma'$,  we see that $i_{0,*}\mu$ is a smooth $(1,1)$-form on $\Gamma'$. 

We can find a smooth $(1,1)$-form 
$\mu'$ on $\Gamma'$ such that 
$\mu'-i_{0,*}\mu$ is a positive measure and
	$$
	\int_{\Gamma'}\mu'=\deg(L).
	$$
	By Theorem \ref{MA for measure on skeleton}, there is a weakly smooth metric $\|\cdot\|$ of $L$ such that
	$$
	c_1(L,\|\cdot\|)=i'_{*}\mu'.
	$$
	This metric is the desired one.
\end{proof}

\subsubsection{Morphisms between curves} \label{subsec morphism}
Finally, we introduce a projection formula. The following paragraph is due to \cite[Def. 3.2.5, Lem 3.2.12, Lem. 3.6.1, Lem. 5.6.4, Prop. 5.6.8]{GJR2}.

In this subsubsection, as in the later application, we assume that $K$ is an algebraically closed non-archimedean field.

Let $\varphi:X'\to X$ be a morphism of smooth projective curves over $K$ of degree $\deg(\varphi)$. Suppose that $\varphi$ can be extended to a morphism $\varphi:\CX'\to\CX$ of strictly semi-stable models. 
By \cite[Lem. 5.6.4]{GJR2}, the  map $\varphi: X'^\an\to X^\an$ induces a map $\varphi_{\rm sk}:S(\CX')\to S(\CX)$. 
The map is \emph{piecewise linear} in the sense that there are subdivisions of 
$S(\CX')^*$ and $S(\CX)^*$ of $S(\CX')$ and $S(\CX)$, the induced map 
$\varphi:S(\CX')^*\to S(\CX)^*$ is linear on every edge.

As in \cite[Def. 3.2.1]{GJR2}, for any $e'\in E(S(\CX')^*)$ with 
$e=\varphi(e')\in E(S(\CX)^*)$, the \emph{expansion factor} of $\varphi$ at $e'\in E(S(\CX')^*)$ is 
$$
d_{e'}(\varphi)= \frac{l(e)}{l(e')} =\left| \frac{\d t_{e}}{\d t_{e'}} \right|.
$$
For a line segment $\ell'$ in $e'$ of positive length, we denote 
$$
d_{\ell'}(\varphi)=d_{e'}(\varphi)= \frac{l(\varphi_\sk(\ell'))}{l(\ell')}.
$$

The expansion factor satisfies the following global equality. 
\begin{lem}\label{total degree}
For a fixed edge $e$ of $S(\CX)^*$, 
\[
\deg(\varphi)=\sum_{e'\mapsto e}d_{e'}(\varphi).
\]
\end{lem}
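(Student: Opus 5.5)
The plan is to reduce the equality to a local statement over a single point in the interior of $e$, and then compute the local degree there using the semistable structure. Fix an interior point $y$ of $e$, chosen not to be the image of any vertex of $S(\CX')^*$; this is possible because there are only finitely many vertices. Since $\varphi_{\rm sk}$ is linear on each edge of $S(\CX')^*$, the preimage $\varphi_{\rm sk}^{-1}(y)$ consists of exactly one interior point $y_{e'}$ on each edge $e'$ with $\varphi(e')=e$. I also need that no point of $\varphi_{\rm sk}^{-1}(y)$ lies off the skeleton in a way that matters. For this I use the compatibility $\tau_\CX\circ\varphi=\varphi_{\rm sk}\circ\tau_{\CX'}$ from \cite[Lem. 5.6.4]{GJR2}, together with the fact that $\varphi^{-1}(S(\CX))\subset S(\CX')$ when $\varphi$ extends to the semistable models \cite[Prop. 5.6.8]{GJR2}. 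These give $\varphi^{-1}(i_\CX(y))=\{i_{\CX'}(y_{e'})\}_{e'\mapsto e}$.

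Next I invoke the local degree formula for finite morphisms of Berkovich curves. For any $x\in X^\an$ one has $\deg(\varphi)=\sum_{x'\mapsto x}\deg_{x'}(\varphi)$, where $\deg_{x'}(\varphi)=[\CH_{x'}:\CH_x]$ is the local degree, since $\varphi$ is finite flat of degree $\deg(\varphi)$. Applying this at $x=i_\CX(y)$ reduces the lemma to the identity
\[
\deg_{y_{e'}}(\varphi)=d_{e'}(\varphi)
\]
for a type 2 or type 3 point lying in the interior of an edge.

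To prove this identity, I work on annuli. The open edge $e$ corresponds to an open annulus $A\subset X^\an$ of modulus $l(e)$ whose skeleton is $e$, given étale locally by $xy=a$. Similarly, the open edge $e'$ corresponds to an annulus $A'$ with $\varphi(A')\subset A$. In standard coordinates $T'$ on $A'$ and $T$ on $A$, the map $\varphi|_{A'}$ is finite and étale over the skeleton. By the Weierstrass-type description, $T\circ\varphi=c\,T'^{m}\cdot(\text{unit with constant absolute value})$ for some integer $m$. Then $-\log|T|=m\cdot(-\log|T'|)+\mathrm{const}$ along the skeleton, so $|m|=d_{e'}(\varphi)$. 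On the other hand, $A'\to A$ is a finite morphism of annuli of degree $|m|$, and at any skeleton point its local degree equals that degree \cite[Lem. 3.6.1, Prop. 5.6.8]{GJR2}. This gives the identity and completes the proof.

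The main obstacle is justifying that $\varphi_{\rm sk}^{-1}(e)$ consists exactly of edges of $S(\CX')^*$, with the right multiplicities, and that the annulus restriction really has degree equal to the slope. I would handle both points by citing \cite[Def. 3.2.5, Lem. 3.2.12]{GJR2} as in the source paragraph. Those results say precisely that $\varphi_{\rm sk}$ is a finite harmonic morphism of metric graphs whose degree equals $\deg(\varphi)$. The lemma is then exactly the harmonicity condition that the sum of the expansion factors over the preimages of an edge is independent of the edge and equals the degree.
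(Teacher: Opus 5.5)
Your proposal is correct. Its closing fallback, harmonicity of $\varphi_{\rm sk}$ with degree $\deg(\varphi)$ by \cite[Prop. 5.6.8]{GJR2}, is exactly the paper's proof: a bare citation, plus the remark that the statement also follows from Lemma~\ref{total expansion}.

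The direct argument you give first is a more hands-on route. You take $y$ generic in the open edge. You use $\varphi^{-1}(S(\CX))\subset S(\CX')$ and $\tau_\CX\circ\varphi=\varphi_{\rm sk}\circ\tau_{\CX'}$ to identify the fiber over $y$ with $\{y_{e'}\}_{e'\mapsto e}$. You then read off each local degree as the degree $|m_{e'}|$ of the annulus map, from the dominant monomial $c\,(T')^{m}$ of the unit $T\circ\varphi$. This is essentially the type-3 half of the paper's proof of Lemma~\ref{total expansion}, with your monomial computation replacing the value-group index $[\log|K^\times|+\ZZ\delta':\log|K^\times|+\ZZ\delta]$. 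Because $y$ avoids vertices, you never need Thuillier's identification of expansion factors with ramification indices at nodes. What the paper's route gives in addition is the pointwise identity $[\CH(u'):\CH(u)]=\sum_j d_{e'_j}(\varphi)$ at every $u'$, vertices included. That pointwise version is what the transport plans in \S\ref{sec proof distance} actually use.

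Three small corrections:
\begin{enumerate}
\item The identity $\deg(\varphi)=\sum_{x'\mapsto x}[\CH_{x'}:\CH_x]$ does not hold for every $x$: it ignores ramification at type-1 points. In characteristic $p$, finite flatness only gives it weighted by the lengths of a possibly non-reduced fiber, so at type 2, 3 points you need the reducedness from Lemma~\ref{separable}. You can bypass this entirely. Since $\varphi^{-1}(\tau_\CX^{-1}(e^\circ))=\bigsqcup_{e'\mapsto e}\tau_{\CX'}^{-1}((e')^\circ)$, counting preimages of a classical point with multiplicity gives $\deg(\varphi)=\sum_{e'\mapsto e}|m_{e'}|$ directly.
\item $\varphi_{\rm sk}$ need not be finite: edges can be contracted, as with the exceptional edges of \S\ref{sec proof distance}. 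This is harmless, since contracted edges map to vertices of $S(\CX)^*$ and never meet the interior of $e$.
\item The word ``\'etale'' should be dropped. It fails in characteristic $p$ (e.g.\ $T'\mapsto (T')^p$), and the argument never uses it.
\end{enumerate}
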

\begin{proof}
See \cite[Prop. 5.6.8]{GJR2} and the paragraph after it, where the relevant degrees are introduced in \cite[Def. 3.2.2, Def. 3.2.5]{GJR2}.
\end{proof}

The map $\varphi$ induces a pull-back
\[
\varphi^*:A^{p,q}(S(\CX))\longrightarrow A^{p,q}(S(\CX')).
\]
We only present the case $p=q=1$, and we will pass the computation to the subdivisions $S(\CX')^*$ and $S(\CX)^*$ above.  
Then the pull-back is simply given by
\[
\varphi^*(f_e\d' t_e\d'' t_e)_{e\in E(S(\CX)^*)}
:=
(d_{e'}(\varphi)^2 (f_e\circ\varphi_{\rm sk})\d' t_{e'}\d'' t_{e'})_{e'\in E(S(\CX')^*)}.
\]

Now we have the following nice result, which also implies Lemma \ref{total degree}. 

\begin{lem}\label{total expansion}
Let $u'\in S(\CX')^*$ be a point, and denote $u=\varphi_\sk(u')$.
Let $e\in S(\CX)^*$ be an edge containing $u$.
Let $e_1',\dots, e_r'$ be all the edges of $S(\CX')^*$ containing $u'$ and mapped bijectively to $e$ via $\varphi_\sk$. 
Then 
$$
[\CH(u'):\CH(u)]=\sum_{j=1}^r d_{e_j'}(\varphi)
=\sum_{j=1}^r \frac{l(e)}{l(e_j')}.
$$
Here $\CH(u')$ and $\CH(u)$ are the completed residue fields in the Berkovich spaces.
\end{lem}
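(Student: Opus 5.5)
The plan is to reduce the identity to the local structure of finite morphisms of Berkovich curves over skeletons, using the degree theory of \cite{GJR2}. Fix the edge $e$ of $S(\CX)^*$ containing $u$ and choose a point $u_0$ in the interior of $e$. First I would treat the case where $u$ lies in the interior of $e$; the case where $u$ is a vertex will then follow by a limiting or continuity argument along $e$.

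For an interior point, I would use the local description of $\varphi$ near $u'$. Since $\varphi_\sk$ is linear on each $e_j'$ and maps it bijectively onto $e$, the preimage of a small open annulus around $u$ in $X^\an$ has a connected component containing $u'$. By \cite[Lem. 3.2.12, Def. 3.2.5]{GJR2}, the local degree of $\varphi$ at $u'$ equals $[\CH(u'):\CH(u)]$. Here $K$ is algebraically closed and $\CH(u')/\CH(u)$ is the extension of completed residue fields, which is finite; this uses \cite[Lem. 5.6.4]{GJR2} identifying skeleton maps with the analytic map. The open annulus above $u$ (the preimage under $\tau_\CX$ of a small open interval in $e$) pulls back to a disjoint union of open annuli, corresponding to the edges $e_j'$ through $u'$ together with components over other preimages of $u$. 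A finite morphism between open annuli $\{r<|z|<s\}$ is, after coordinate change, of the form $z\mapsto c z^{m}(1+\text{small})$. Its degree is $m$, and it multiplies moduli (lengths in the skeleton metric $-\log$) by $m$. Hence the degree of $\varphi$ restricted to the annulus over $e_j'$ equals $d_{e_j'}(\varphi)=l(e)/l(e_j')$. Only the annuli whose skeleton passes through $u'$ contribute to the local degree at $u'$, so summing over them gives $[\CH(u'):\CH(u)]=\sum_j d_{e_j'}(\varphi)$. When $u$ is interior, $r=1$ (if $u'$ is also interior) or $u'$ is a vertex, and the same count applies.

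For $u$ a vertex of $S(\CX)^*$, the points $u'$ and $u$ are of type 2 or at least of types 2 and 3. I would apply the standard formula from \cite[Def. 3.2.2, Prop. 5.6.8]{GJR2}: for each tangent direction $\vec v$ at $u$, the local degree at $u'$ equals the sum of outgoing slopes $d_{\vec v'}(\varphi)$ over the tangent directions $\vec v'$ at $u'$ mapping to $\vec v$. This is the harmonicity, or balancing, of finite morphisms of curves. Taking $\vec v$ to be the direction of $e$ at $u$, the directions at $u'$ over $\vec v$ lying in the skeleton are exactly those of $e_1',\dots,e_r'$. These directions therefore account for the full sum. The remaining point is to check that no direction at $u'$ outside $S(\CX')^*$ maps onto $\vec v$. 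This holds because $\varphi^{-1}(S(\CX))\subset S(\CX')$ once $\CX'$ dominates the pullback, which is part of the hypothesis that $\varphi$ extends to a morphism of semistable models (\cite[Lem. 5.6.4]{GJR2}).

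I expect the main obstacle to be this last point, namely the inclusion of skeleta preimages together with the identification of the local degree $\deg_{u'}\varphi$ with $[\CH(u'):\CH(u)]$. I would handle it by citing \cite[Lem. 3.6.1]{GJR2}, which gives the local degree as the degree of the residue field extension for finite morphisms of good analytic curves over algebraically closed $K$. Lemma~\ref{total degree} then follows by summing over all $u'\mapsto u$ and using $\sum_{u'}[\CH(u'):\CH(u)]=\deg(\varphi)$.
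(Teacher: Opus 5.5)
Your proof is correct in outline but takes a different route from the paper. The paper splits by whether $u$ is $R$-rational on $e$, where $R=\val(K^\times)\otimes\QQ$. In the rational case it base-changes and blows up so that $u,u'$ become vertices, i.e.\ generic points of components with a finite map $C'\to C$. It then uses Thuillier's identification of $d_{e_j'}(\varphi)$ with the ramification index of $C'\to C$ at the node $P_j'$, sums these to $\deg(C'/C)=[\wt\CH(u'):\wt\CH(u)]$, and uses that $\CH(u')/\CH(u)$ is unramified. In the irrational (type 3) case it computes $[\CH(u'):\CH(u)]$ directly as an index of value groups.

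You instead route everything through the local degree $\deg_{u'}\varphi=[\CH(u'):\CH(u)]$. At interior points, the monomial form $z\mapsto cz^m(1+h)$ of a finite map of open annuli treats type 2 and type 3 points uniformly, with no base change or blow-up. At vertices you quote the balancing formula for finite morphisms of Berkovich curves. This is shorter and more uniform. However, it is essentially a citation of the harmonicity of $\varphi_\sk$, whose proof rests on the same Thuillier-type input that the paper uses explicitly.

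Three points need repair.
\begin{enumerate}
\item The identity $\deg_{u'}\varphi=[\CH(u'):\CH(u)]$ for type 2 and 3 points is an analytic statement: it uses stability, i.e.\ defectlessness, of $\CH(u)$ over algebraically closed $K$. The items of \cite{GJR2} you cite (Def.~3.2.5, Lem.~3.2.12) are used in the paper for degrees of maps of metric graphs. Cite a statement that actually gives the analytic identity.

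\item The inclusion $\varphi^{-1}(S(\CX))\subseteq S(\CX')$ does not follow from ``$\CX'$ dominates the pullback''; give the direct argument. A point of $X'^\an\setminus S(\CX')$ lies in the residue disc $D'$ of a smooth closed point $\tilde y'$ of the special fiber of $\CX'$.
\begin{itemize}
\item If $\varphi_s(\tilde y')$ is a smooth point, then $\varphi(D')$ lies in a residue disc of $\CX$, which is disjoint from $S(\CX)$.
\item If $\varphi_s(\tilde y')$ is a node with local equation $xy=a$, then $\varphi^*x$ has no zeros on the open disc $D'$, so $|\varphi^*x|$ is constant on $D'$. But $\varphi(D')$ is open, since finite morphisms of curves are open. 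Any open set meeting the skeleton of the annulus contains points with different values of $|x|$, a contradiction.
\end{itemize}
The paper's step $\sum_j e(P_j'/P)=\deg(C'/C)$ tacitly needs this same fact.

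\item Your aside that for interior $u$ the point $u'$ could be a vertex ``and the same count applies'' is wrong. In that case no edge through $u'$ maps bijectively onto $e$, and the formula would read $[\CH(u'):\CH(u)]=0$. The case is excluded only because the subdivisions are chosen so that $\varphi_\sk$ sends vertices to vertices, and you should state this convention.
\end{enumerate}
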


\begin{proof}
Consider the additive valuation $\val: K^\times \to\RR$ given by $\val(a)=-\log|a|$. 
The value group $\val(K^\times) \subset \RR$ and the $\QQ$-vector space 
$R=\val(K^\times)\otimes_\ZZ\QQ \subset \RR$.  
By \cite[Lem. 5.6.4]{GJR2}, the lengths of all edges of $S(\CX')^*$ and $S(\CX)^*$ belong to $R$. 

We first assume that $u$ is $R$-rational on $e$ in the sense that the two line segments from $u$ to the two endpoints of $e$ belong to $R$. 
Then $u$ is $R$-rational on every $e_j'$. 
By \cite[Lem. 5.6.4]{GJR2}, up to replacing $K$ by a finite extension, and blowing up the corresponding base changes of $\CX$ and $\CX'$, we can assume that $\varphi_\sk: S(\CX')\to S(\CX)$ is already linear on every edge, and that 
$u'$ and $u$ are vertices of $S(\CX')^*$ and $S(\CX)^*$. 
We further assume that all nodes of $\CX'$ and $\CX$ are rational points over the residue field $\wt K$. 

Assume that $u$ corresponds to the irreducible component $C$ of the special fiber $\CX_{\wt K}$ of $\CX$,  and the edge $e$ corresponds to a node $P$ of $\CX_{\wt K}$ contained in $C$.
Assume that $u'$ corresponds to the irreducible component $C'$ of the special fiber $\CX'_{\wt K}$ of $\CX'$, and the edge $e_j'$ corresponds to a node $P_j'$ of $\CX_{\wt K}'$ contained in $C'$.  
By \cite[Rmk. 5.6.7]{GJR2}, which is originally due to \cite[Prop. 2.2.27(iv)]{Thu}, the expansion factor 
$d_{e_j'}(\varphi)$ is equal to the ramification index $e(P_j'/P)$ of $P_j'$ above $P$ under the finite morphism $C'\to C$. 
It follows that 
$$
\sum_{j=1}^r d_{e_j'}(\varphi)
=\sum_{P_j'} e(P_j'/P)
=\deg(C'/C)
=[\wt\CH(u'):\wt\CH(u)]
=[\CH(u'):\CH(u)].
$$
Here the third equality holds as the residue fields $\wt\CH(u')$ and $\wt\CH(u)$ are exactly the function fields of $C'$ and $C$, and the fourth equality holds as the extension $\CH(u')/\CH(u)$ is unramified. 
This proves the lemma in the case where 
$u$ is  $R$-rational on $e$.

It remains to treat the case where $u$ is not  $R$-rational  on $e$. 
Then $u$ and $u'$ are points of type 3 on the Berkovich spaces, and they are not vertices. 
Hence $e$ is the only edge containing $u$,   and $e'=e_1'$ (with $r=1$) is the
only edge containing $u'$. 
Then $e$ and $e'$ correspond to nodes $P\in \CX_{\wt K}$ and $P'\in \CX'_{\wt K}$. 
Assume that the equation of $\CX$ at $P$ (resp. $\CX'$ at $P'$) is \'etale locally given by $t_1t_2=a$ with $a\in O_K$ (resp. $t_1't_2'=a'$ with $a'\in O_K$). 
By the definition of the skeletons in \cite[\S5.3, (5.3.1)]{GJR2}, we have lengths 
$l(e)=\val(a)$ and $l(e')=\val(a')$.
Moreover, by the loc. cit., the norm $|\cdot |_u$ corresponding to $u\in e$ is given by 
the norm 
$$
\left| \sum a_n t_1^n\right|_u=\max |a_n| e^{-n \delta},
$$
where $\delta\notin R$ is the length of the line segment from $u$ to the endpoint of $e$ corresponding to the divisor $t_2=0$. 
We have a similar description for $|\cdot |_{u'}$. 
Then the value group $\log|\CH(u)^\times|$ is generated by $\log  |K^\times|$ and 
$\ZZ\delta$ in $\RR$. 
Note that the residue fields $\wt\CH(u')=\wt K$ and $\wt\CH(u)=\wt K$ for points of type 3. 
Then we have 
\begin{align*}
[\CH(u'):\CH(u)]
&=[|\CH(u')^\times|:|\CH(u)^\times|]\\
&=[\log|K^\times|+\ZZ\delta':\log|K^\times|+\ZZ\delta]\\
&=\delta/\delta'=l(e)/l(e')=d_{e_j'}(\varphi).
\end{align*}
This proves the result in the case where $u$ is not $R$-rational on $e$.
\end{proof}

\subsection{Lipschitz properties for effective 0-cycles}
\label{sec lipschitz}

In this subsection, we list some notions and results on Lipschitz properties of functions on effective 0-cycles. 
The setting of this section is independent of the global setting of this paper, and it will be used only in \S\ref{sec proof holder} and \S\ref{sec proof distance}. 

\subsubsection{Distances, effective 0-cycles  and transport plans}
\label{subsec transport plan}

Let $(\Gamma, \dist)$ be a \emph{metric space}, i.e. a set $\Gamma$ with a function 
$\dist:\Gamma^2\to \RR_{\geq0}$ satisfying the following properties. 
\begin{enumerate}[(1)]
\item (positivity)
$\dist(x,y)\geq 0$ for any $x,y\in \Gamma$, and the equality holds if and only if 
$x=y.$

\item (symmetry)
$\dist(x,y)=\dist(y,x)$ for any $x,y\in \Gamma$. 

\item (triangle inequality)
$\dist(x,z)\le \dist(x,y)+\dist(y,z)$ for any $x,y,z\in \Gamma$. 
\end{enumerate}
The function $\dist:\Gamma^2\to \RR$ is called a \emph{distance function}. 
 
By an \emph{effective 0-cycle} on $\Gamma$, we mean a formal linear combination 
$$
\alpha=\sum_{i=1}^r a_i x_i, \quad x_i\in \Gamma, \ a_i\in \RR_{\geq 0}.
$$
The \emph{degree} of $\alpha$ is defined by 
$$
\deg(\alpha):=\sum_{i=1}^r a_i.
$$
For any function $f:\Gamma\to \RR$, we define 
$$
f(\alpha):=\sum_{i=1}^r a_i f(x_i).
$$

Let $\beta=\sum_{j=1}^s b_j y_j$ be another effective 0-cycle on $\Gamma$ with $\deg(\alpha)=\deg(\beta)$.
A \emph{transport plan}  from $\alpha$ to $\beta$ is an $r\times s$ matrix $M=(m_{i,j})$ with entries $m_{i,j}\geq 0$ and with relations
$$
\sum_{j=1}^s m_{i,j}=a_i, \quad \sum_{i=1}^r m_{i,j}=b_j. 
$$ 
In terms of matrix multiplication, the relations can be written as
$$
M\cdot (1,\dots, 1)_s^T=(a_1,\dots, a_r)^T, \quad 
(1,\dots, 1)_r \cdot M =(b_1,\dots, b_s).
$$
Here $(1,\dots, 1)_s$ denotes the row vector of $s$ components, and $(1,\dots, 1)_s^T$ denotes the transpose of the row vector into a column vector.  

The \emph{$M$-distance} between $\alpha$ and $\beta$ is defined as 
$$
\dist(\alpha, \beta, M)
:= \sum_{i,j} m_{i,j} \dist(x_i,y_j). 
$$
The \emph{distance} between $\alpha$ and $\beta$ is defined as 
$$
\dist(\alpha, \beta)
:=\inf_{M}  \dist(\alpha, \beta, M),
$$
where the infimum is over all {transport plans} $M$.
If $\alpha$ and $\beta$ are points of $\Gamma$, this recovers the original distance function. 

The above setting gives a natural finite-dimensional linear programming problem.  Namely,
suppose that a mass \(a_i\) is located at the point
\(x_i\) (for every \(i=1,\dots,r\)), and the goal is to redistribute the mass so that a mass \(b_j\) is
located at the point \(y_j\)  (for every \(j=1,\dots,s\)).  A transport plan \(M=(m_{i,j})\) aims to carry mass \(m_{i,j}\) from \(x_i\) to \(y_j\), so it is a collection of paths for the transportation.
The $M$-distance $\dist(\alpha, \beta, M)$
is the total transportation cost of this plan. 
 The distance
\(\dist(\alpha,\beta)\) is the minimal total transportation cost from
\(\alpha\) to \(\beta\).  With this interpretation, many results introduced later are intuitive.

We can also define the push-forward of effective 0-cycles. 
Namely, let $\pi:\Gamma\to \Gamma'$ be a map of metric spaces.  
Let $\alpha=\sum_{i=1}^r a_i x_i$ be an effective 0-cycle on $\Gamma$.
Then the \emph{push-forward} of $\alpha$ is
$$
\pi_*\alpha = \sum_{i=1}^r a_i \pi(x_i),
$$ 
viewed as an effective 0-cycle on $\Gamma'$.
It is easy to have $\deg(\pi_*\alpha)=\deg(\alpha)$. 

Let $\beta$ be another effective 0-cycle on $\Gamma$ with 
$\deg(\beta)=\deg(\alpha)$.  
Let $M=(m_{i,j})$ be a transport plan from $\alpha$ to $\beta$. 
Then $M=(m_{i,j})$ induces a natural transport plan from $\pi_*\alpha$ to $\pi_*\beta$, which we denote by $M$ by abuse of notation.

\subsubsection{Triangle inequalities}
\label{subsec triangle inequality}

Let $\alpha, \beta,\gamma$ be effective 0-cycles of equal degrees on a metric space $\Gamma$. 
Let
\(M=(m_{ij})\) be a transport plan from \(\alpha\) to \(\beta\), and let
\(N=(n_{jk})\) be a transport plan from \(\beta\) to \(\gamma\).  
There is a canonical composition $P=M\circ N$, which is a transport plan $P=(p_{ik})$ from $\alpha$ to $\gamma$ given by
\[
p_{ik}=\sum_{j}\frac{m_{ij}n_{jk}}{b_j}.
\]
Here we write
\[
\alpha=\sum_{i=1}^r a_i x_i,\qquad
\beta=\sum_{j=1}^s b_j y_j,\qquad
\gamma=\sum_{k=1}^t c_k z_k.
\]
And we have assumed that every $b_j>0$, by removing the terms with $b_j=0$. 
In term of matrix multiplication, we have 
$$
P=M\cdot  \mathrm{diag}(b_1^{-1},\dots, b_s^{-1})\cdot N,
$$
where the matrix in the middle is the diagonal matrix. 
By matrix multiplication, it is easy to check that $P$ is a transport plan from $\alpha$ to $\gamma$. 
Now we have the following triangle inequalities.

\begin{lem}[triangle inequality]
\label{triangle inequality}
We have
$$
\dist(\alpha, \gamma, M\circ N)
\leq
\dist(\alpha, \beta, M)
+
\dist(\beta, \gamma, N)
$$
and
$$
\dist(\alpha, \gamma)
\leq
\dist(\alpha, \beta)
+
\dist(\beta, \gamma).
$$
\end{lem}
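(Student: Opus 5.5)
The first inequality is a direct computation with the composed plan $P=M\circ N$, and the second follows from it by taking infima. As in the paper's convention, I first discard any terms of $\beta$ with $b_j=0$. This changes neither the set of transport plans from $\alpha$ to $\beta$ or from $\beta$ to $\gamma$, nor the costs, since the corresponding rows or columns vanish. So I may assume every $b_j>0$, and then $P=(p_{ik})$ with $p_{ik}=\sum_j m_{ij}n_{jk}/b_j$ is a transport plan from $\alpha$ to $\gamma$.

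For the first inequality, I expand
$$
\dist(\alpha,\gamma,P)=\sum_{i,k}\sum_j \frac{m_{ij}n_{jk}}{b_j}\dist(x_i,z_k).
$$
Applying the triangle inequality $\dist(x_i,z_k)\le \dist(x_i,y_j)+\dist(y_j,z_k)$ termwise, with nonnegative weights $m_{ij}n_{jk}/b_j$, bounds this by
$$
\sum_{i,j}\frac{m_{ij}\dist(x_i,y_j)}{b_j}\sum_k n_{jk}\;+\;\sum_{j,k}\frac{n_{jk}\dist(y_j,z_k)}{b_j}\sum_i m_{ij}.
$$
The marginal relations $\sum_k n_{jk}=b_j$ and $\sum_i m_{ij}=b_j$ cancel the factors $1/b_j$. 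The bound therefore equals $\dist(\alpha,\beta,M)+\dist(\beta,\gamma,N)$.

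For the second inequality, I take arbitrary transport plans $M$ and $N$. The first inequality gives
$$
\dist(\alpha,\gamma)\le \dist(\alpha,\gamma,M\circ N)\le \dist(\alpha,\beta,M)+\dist(\beta,\gamma,N).
$$
Taking the infimum over $M$ and $N$ independently then yields the claim.

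There is no real obstacle here; the only point needing care is the reduction to the case $b_j>0$, so that the composition $M\circ N$ is defined.
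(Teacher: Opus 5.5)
Your proposal is correct and follows the paper's proof essentially verbatim: you expand $\dist(\alpha,\gamma,M\circ N)$, apply the pointwise triangle inequality with the nonnegative weights $m_{ij}n_{jk}/b_j$, cancel the $1/b_j$ factors using the marginals, and then take the infimum over $M$ and $N$. Your explicit justification that discarding the atoms with $b_j=0$ changes neither the sets of transport plans nor their costs is a small point the paper only states in passing.
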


\begin{proof}
The first inequality {follows} from
\[
\begin{aligned}
\sum_{i,k}p_{ik}\dist(x_i,z_k)
&=
\sum_{i,j,k}\frac{m_{ij}n_{jk}}{b_j}
\dist(x_i,z_k) \\
&\le
\sum_{i,j,k}\frac{m_{ij}n_{jk}}{b_j}
\bigl(\dist(x_i,y_j)+\dist(y_j,z_k)\bigr) \\
&=
\sum_{i,j}m_{ij}\dist(x_i,y_j)
+
\sum_{j,k}n_{jk}\dist(y_j,z_k).
\end{aligned}
\]
Taking the infimum over all transport plans \(M\) and \(N\), the first inequality implies the second inequality.
\end{proof}

\subsubsection{Lipschitz property}

Now we have some easy results on Lipschitz properties. 
The following one concerns Lipschitz property for effective 0-cycles. 

\begin{lem}[Lipschitz property: cycles] \label{lipschitz1}
Let $\Gamma$ be a metric space. 
Let $f:\Gamma\to \RR$ be a function satisfying the {Lipschitz property} in the sense that there is a constant $C$ such that 
$$
|f(x)-f(y)|\le C\cdot\dist(x,y),
 \quad \forall x,y\in \Gamma. 
$$
Then for any effective \(0\)-cycles \(\alpha,\beta\)
of the same degree, 
\[
 |f(\alpha)-f(\beta)|
 \le C\cdot \dist(\alpha,\beta). 
\]
\end{lem}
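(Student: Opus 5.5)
The plan is to fix an arbitrary transport plan, use it to rewrite the difference $f(\alpha)-f(\beta)$ as a single double sum, and then bound that sum term by term with the pointwise Lipschitz inequality. Write $\alpha=\sum_{i=1}^r a_i x_i$ and $\beta=\sum_{j=1}^s b_j y_j$ with $\deg(\alpha)=\deg(\beta)$. Let $M=(m_{i,j})$ be any transport plan from $\alpha$ to $\beta$; one exists, for instance $m_{i,j}=a_ib_j/\deg(\alpha)$ when the degree is positive, and the statement is trivial when the degree is $0$.

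First, the row-sum relations $\sum_j m_{i,j}=a_i$ and the column-sum relations $\sum_i m_{i,j}=b_j$ give
$$
f(\alpha)=\sum_{i}a_if(x_i)=\sum_{i,j}m_{i,j}f(x_i),\qquad
f(\beta)=\sum_j b_jf(y_j)=\sum_{i,j}m_{i,j}f(y_j).
$$
Subtracting, we obtain $f(\alpha)-f(\beta)=\sum_{i,j}m_{i,j}\bigl(f(x_i)-f(y_j)\bigr)$.

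Second, since every $m_{i,j}\geq 0$, the triangle inequality for absolute values and the Lipschitz hypothesis give
$$
|f(\alpha)-f(\beta)|\le \sum_{i,j}m_{i,j}\,|f(x_i)-f(y_j)|\le C\sum_{i,j}m_{i,j}\dist(x_i,y_j)=C\cdot\dist(\alpha,\beta,M).
$$

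Finally, the left-hand side does not depend on $M$, so taking the infimum over all transport plans $M$ yields $|f(\alpha)-f(\beta)|\le C\cdot \dist(\alpha,\beta)$.

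I expect no real obstacle. The only points that need care are the rewriting of $f(\alpha)$ and $f(\beta)$ through the marginal relations of $M$, and the use of the nonnegativity of the entries $m_{i,j}$ when pulling the absolute value inside the sum.
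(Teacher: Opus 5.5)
Your proposal is correct and follows essentially the same argument as the paper: rewrite $f(\alpha)-f(\beta)$ as $\sum_{i,j}m_{i,j}\bigl(f(x_i)-f(y_j)\bigr)$ using the marginal relations, bound it termwise by $C\cdot\dist(\alpha,\beta,M)$, and take the infimum over $M$. Your remark that a transport plan exists (the product plan, or the zero plan in degree $0$) is a small point the paper leaves implicit.
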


\begin{proof}
Write
\[
\alpha=\sum_i a_i x_i,
\qquad
\beta=\sum_j b_j y_j.
\]
Let $M=(m_{i,j})$ be a transport plan from $\alpha$ to $\beta$. 
By  definition, we have
\[
\begin{aligned}
f(\alpha)-f(\beta)
&=\sum_i a_i f(x_i)-\sum_j b_j f(y_j) \\
&=\sum_{i,j}m_{ij}f(x_i)-\sum_{i,j}m_{ij}f(y_j) \\
&=\sum_{i,j}m_{ij}\bigl(f(x_i)-f(y_j)\bigr).
\end{aligned}
\]
It follows that
\[
\begin{aligned}
|f(\alpha)-f(\beta)|
\le \sum_{i,j}m_{ij}|f(x_i)-f(y_j)| 
\le C\sum_{i,j}m_{ij}\dist(x_i,y_j)
=C\cdot\dist(\alpha,\beta, M).
\end{aligned}
\]
Taking the infimum over all transport plans \(M\), we obtain
the result. 
\end{proof}

\subsubsection{Product space}

Let $(\Gamma_1,\dist_1), \dots, (\Gamma_N,\dist_N)$ be metric spaces. 
Endow the product $X=\Gamma_1\times\cdots\times\Gamma_N$ 
with the \emph{box distance} given by 
$$
\dist((x_1,\cdots,x_N), (y_1,\cdots,y_N)):=\dist_1(x_1,y_1)+\cdots+\dist_N(x_N,y_N).
$$

Let $\alpha, \beta$ be effective 0-cycles of equal degrees on $X$. 
The box distance induces a \emph{distance} 
$$
\dist(\alpha, \beta)
=\inf_{M}  \dist(\alpha, \beta, M)
=\inf_{M} 
\sum_{i=1}^N \dist_i(p_{i,*}\alpha, p_{i,*}\beta,M),
$$
where the infimum is over all {transport plans} $M$ 
from $\alpha$ to $\beta$. 
Here $p_i:X\to \Gamma_i$ is the $i$-th projection, and $M$ is also viewed as a transport plan from $p_{i,*}\alpha$ to $p_{i,*}\beta$; the corresponding transportation cost is denoted by $\dist_i(p_{i,*}\alpha,p_{i,*}\beta,M)$.

We have the following quick Lipschitz property for product functions.

\begin{lem}[Lipschitz property: product] \label{lipschitz2}
For $i=1,\dots, N$, let $f_i$ be a bounded function on $\Gamma_i$ satisfying the {Lipschitz property}. 
Define a function 
\[
f:\Gamma_1\times\cdots\times\Gamma_N\lra \RR, \quad
(x_1,\dots,x_N)\longmapsto f_1(x_1)\cdots f_N(x_N).
\]
Then $f$ satisfies the {Lipschitz property} on $\Gamma_1\times\cdots\times\Gamma_N$ under the box distance. 
\end{lem}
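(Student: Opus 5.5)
The idea is to telescope the product one coordinate at a time, so that each step changes only a single factor; the Lipschitz property of that factor, together with the boundedness of the others, then controls the step. Let $B\geq 1$ be a common bound with $|f_i|\leq B$ on $\Gamma_i$ for all $i$. Let $C\geq 0$ be a common Lipschitz constant, so that $|f_i(a)-f_i(b)|\leq C\cdot \dist_i(a,b)$ for all $i$ and all $a,b\in\Gamma_i$. Both constants exist because there are only finitely many factors.

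Take two points $x=(x_1,\dots,x_N)$ and $y=(y_1,\dots,y_N)$ of $\Gamma_1\times\cdots\times\Gamma_N$. For $0\leq k\leq N$, consider the interpolating points
$$
z^{(k)}=(y_1,\dots,y_k,x_{k+1},\dots,x_N),
$$
so that $z^{(0)}=x$ and $z^{(N)}=y$. Then
$$
f(x)-f(y)=\sum_{k=1}^N \big(f(z^{(k-1)})-f(z^{(k)})\big).
$$
The two points $z^{(k-1)}$ and $z^{(k)}$ differ only in the $k$-th coordinate. Hence the $k$-th summand equals
$$
\Big(\prod_{i<k} f_i(y_i)\Big)\big(f_k(x_k)-f_k(y_k)\big)\Big(\prod_{i>k} f_i(x_i)\Big).
$$
Its absolute value is at most $B^{N-1}C\cdot \dist_k(x_k,y_k)$.

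Summing over $k$ gives
$$
|f(x)-f(y)|\leq B^{N-1}C\sum_{k=1}^N \dist_k(x_k,y_k)=B^{N-1}C\cdot \dist(x,y),
$$
which is exactly the Lipschitz property for the box distance. Boundedness is essential here: without it, the product of two Lipschitz functions need not be Lipschitz. This is the only point that needs care, and there is no real obstacle.

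Combining this with Lemma \ref{lipschitz1} yields the corresponding cycle-level estimate for $f$ on effective 0-cycles with the box distance. That estimate is presumably how the lemma will be used later.
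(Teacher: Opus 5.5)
Your proposal is correct and is essentially the paper's proof: the paper writes the same telescoping identity (with the roles of $x$ and $y$ in the prefix and suffix products swapped relative to yours) and bounds each term using the boundedness of the untouched factors and the Lipschitz property of the changed one. Your explicit constant $B^{N-1}C$ simply fills in the estimate the paper leaves implicit.
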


\begin{proof}
This is a consequence of the identity
$$
\begin{aligned}
f(x)-f(y)
&=\prod_{i=1}^N f_i(x_i)-\prod_{i=1}^N f_i(y_i) \\
&=\sum_{i=1}^N
\Big(\prod_{j<i} f_j(x_j)
\Big)
\Bigl(f_i(x_i)-f_i(y_i)\Bigr)
\Big(\prod_{j>i} f_j(y_j)
\Big).
\end{aligned}
$$
\end{proof}

\section[Equidistribution at fully transcendental valuations]
{Equidistribution at fully transcendental valuations} \label{equi on NS fibers section}

The goal of this section is to prove Theorem \ref{equi ft}.
The idea of proof has already been sketched in \S\ref{sec idea}, and can be also seen in the title of each subsection in the following.

\subsection{Step 1: Equidistribution at divisorial valuations}

In this subsection, we prove the equidistribution theorem at divisorial valuations. 
The remaining subsections are to extend the equidistribution theorem from divisorial valuations to all fully transcendental valuations by approximation.

\subsubsection{Density lemma} \label{section density lemma 1}

To prove the equidistribution theorem at divisorial valuations, we will need an easy density lemma on continuous functions on Berkovich spaces.
We will start with the classical Stone--Weierstrass theorem. 

Let $M$ be a locally compact Hausdorff topological space. Denote by $C(M)$ the ring of real-valued continuous functions on $M$. Denote by $C_c(M)$ the subset of $C(M)$ consisting of the compactly supported functions. The \emph{uniform topology} on $C(M)$ is the metric topology induced by the supremum norm.

Let $A$ be a \emph{subalgebra} of $C(M)$, i.e., a $\QQ$-vector subspace closed under multiplication. 
We say that $A$ \emph{vanishes nowhere} if for every $x$ in $M$, there is some $f\in A$ such that $f(x) \neq 0$.
We say that $A$ \emph{separates points} if for every two different points $x,y\in M$, there is some $f\in A$ such that $f(x)\neq f(y)$.
We have the following version of Stone--Weierstrass theorem.

\begin{theorem}[Stone--Weierstrass] \label{S-W for compactly support}
Suppose that $M$ is a locally compact Hausdorff space and that $A$ is a $\QQ$-subalgebra of $C_c(M)$. Then $A$ is dense in $C_c(M)$ under the uniform topology if and only if it vanishes nowhere and separates points.
\end{theorem}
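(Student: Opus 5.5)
The plan is to reduce to the classical Stone--Weierstrass theorem for $C_0$ of a locally compact Hausdorff space, or equivalently for $C(M^+)$ on the one-point compactification $M^+=M\cup\{\infty\}$. The ``only if'' direction is immediate. If $A$ is dense and some $x$ has $f(x)=0$ for all $f\in A$, then every uniform limit also vanishes at $x$. But Urysohn's lemma for locally compact Hausdorff spaces gives some $g\in C_c(M)$ with $g(x)=1$, a contradiction. Similarly, if $f(x)=f(y)$ for all $f\in A$, then the same holds for all limits, while Urysohn gives $g\in C_c(M)$ with $g(x)\neq g(y)$.

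For the ``if'' direction, I will pass to $M^+$. Extend each $f\in A$ by $f(\infty)=0$, and let $B=A+\QQ\cdot 1\subset C(M^+)$. Then $B$ is a $\QQ$-subalgebra containing the constants $\QQ$, and it separates points of $M^+$. Indeed, two points of $M$ are separated by hypothesis. A point $x\in M$ and $\infty$ are separated because some $f\in A$ has $f(x)\neq 0=f(\infty)$, which is exactly where ``vanishes nowhere'' is used.

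The only subtlety is that $A$ is a $\QQ$-algebra rather than an $\RR$-algebra. I will handle this by noting that the uniform closure $\ol B$ is closed under multiplication and addition, and is closed under multiplication by $\RR$, because a real scalar is a limit of rationals and each $f$ is bounded. So $\ol B$ is an $\RR$-subalgebra containing the constants that separates points. By the classical Stone--Weierstrass theorem on the compact Hausdorff space $M^+$, we get $\ol B=C(M^+)$.

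Now take $g\in C_c(M)\subset C(M^+)$ with $g(\infty)=0$, and choose $b_n=a_n+q_n\in B$ with $b_n\to g$ uniformly. Evaluating at $\infty$ gives $q_n\to 0$, so $a_n\to g$ uniformly, with $a_n\in A$. Hence $A$ is dense in $C_c(M)$.

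There is no serious obstacle here. The one place to be careful is the evaluation at $\infty$, which removes the constant part and ensures the approximants actually lie in $A$ rather than in $A+\QQ$. It also uses that uniform convergence on $M^+$ implies uniform convergence on $M$.
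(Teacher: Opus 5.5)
Your proof is correct. The paper gives no proof of this statement: it quotes it as a version of the classical Stone--Weierstrass theorem and then applies it in Lemma \ref{density fiber} and Theorem \ref{test function}.

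Your argument is the standard derivation of this version from the compact, real-coefficient theorem:
\begin{enumerate}[(1)]
\item pass to the one-point compactification $M^+$ and adjoin $\QQ\cdot 1$;
\item note that the uniform closure of a $\QQ$-subalgebra is an $\RR$-subalgebra;
\item evaluate at $\infty$ to remove the constant part, so the approximants lie in $A$ itself.
\end{enumerate}
This handles exactly the two ways the paper's statement differs from the textbook one: $\QQ$-coefficients, and $C_c(M)$ in place of $C_0(M)$. The case of compact $M$, where $\infty$ is an isolated point of $M^+$, causes no difficulty.
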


Now we are ready to prove the following density lemma.

\begin{lemma}\label{density fiber}
	Let $K$ be a non-archimedean field. Let $\pi: X\to Y$ be a flat morphism of quasi-projective varieties over $K$. Let $y\in Y^\an$ be any point. Then the restriction map 
	$$
	C_c(X^\an) \lra C_c(X_y^\an), \quad f \longmapsto f|_{X_y^\an}
	$$
has a dense image in $C_c(X_y^\an)$. 
\end{lemma}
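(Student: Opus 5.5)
The plan is to apply the Stone--Weierstrass theorem (Theorem \ref{S-W for compactly support}) on the locally compact Hausdorff space $M=X_y^\an$. The subset $A\subset C_c(X_y^\an)$ of restrictions is the image of a $\QQ$-subalgebra under the ring homomorphism $f\mapsto f|_{X_y^\an}$, so it is a $\QQ$-subalgebra. Two points need checking first. The fiber $X_y^\an=(\pi^\an)^{-1}(y)$ is closed in $X^\an$, because $Y^\an$ is Hausdorff. Hence the restriction of a compactly supported function has compact support, and $A\subset C_c(X_y^\an)$. Moreover, the topology of $X_y^\an$ as a subspace of $X^\an$ agrees with its Berkovich topology as $(X_{\CH_y}/\CH_y)^\an$, as recalled in the functoriality paragraph. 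It then remains to verify that $A$ vanishes nowhere and separates points.

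\textbf{Vanishing nowhere.} Given $x\in X_y^\an$, I would use that $X^\an$ is locally compact Hausdorff, since $X$ is separated of finite type over $K$. Urysohn's lemma then gives $f\in C_c(X^\an)$ with $f(x)=1$, and its restriction does not vanish at $x$.

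\textbf{Separating points.} This is the main step. Take distinct $x_1,x_2\in X_y^\an$. Because $X_y^\an$ is a subspace of $X^\an$, the points $x_1,x_2$ are also distinct in $X^\an$. Since $X^\an$ is Hausdorff and locally compact, Urysohn's lemma for locally compact Hausdorff spaces produces $f\in C_c(X^\an)$ with $f(x_1)=1$ and $f(x_2)=0$. Its restriction separates $x_1$ and $x_2$.

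The only delicate point is the identification of $X_y^\an$ with the subspace $(\pi^\an)^{-1}(y)$. Set-theoretically this is clear: a point of $X^\an$ over $y$ amounts to a multiplicative seminorm extending $|\cdot|_y$, which extends uniquely to $\CH_y\otimes$. The topologies agree because the functions $|g|$ for $g\in\CO(U)\otimes\CH_y$ are uniform limits of functions $|g_0|$ with $g_0$ coming from $\CO(U)\otimes\kappa(\pi(U))$ and scalars. This is the standard fact \cite[\S1]{Ber2} quoted earlier, so I would simply cite it. Flatness of $\pi$ is not really needed for this argument. With these facts in place, Theorem \ref{S-W for compactly support} gives that $A$ is dense in $C_c(X_y^\an)$ under the uniform topology.
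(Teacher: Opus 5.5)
Your proposal is correct and follows essentially the same route as the paper. The paper also applies the Stone--Weierstrass theorem on $X_y^\an$, separating $x_1,x_2$ by a compactly supported bump function on a neighbourhood of $x_1$ in $X^\an$ that avoids $x_2$, extended by zero, which is exactly your use of Urysohn's lemma. Your extra checks are points the paper leaves implicit: that restrictions stay compactly supported because the fiber is closed, and that the subspace topology agrees with the Berkovich topology of $(X_{\CH_y}/\CH_y)^\an$. You are also right that flatness plays no role.
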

\begin{proof}
Denote by $A$ the image of the restriction map. 
It is obvious that $A$ is a subalgebra of $C_c(X_y^\an)$ and that $A$ vanishes nowhere. By Theorem \ref{S-W for compactly support}, it suffices to check that $A$ separates points. Let $x_1,x_2\in X_y^\an$ be distinct. Then we can view $x_1,x_2$ as points in $X^\an$ naturally. Take an open analytic domain $U$ of $x_1$ in $X^\an$ such that $x_2\notin U$. Take a compactly supported function $h:U\to\RR$ such that $h(x_1)=1$. Then we extend $h$ to a function $h:X^\an\to\RR$ by setting $h(z)=0$ for all $z\notin U$. Then $h|_{X_y^\an}\in A$ satisfies $h(x_1)=1$ and $h(x_2)=0$. This completes the proof.
\end{proof}

\subsubsection{Lifting model divisors}

We also need the following basic result on adelic divisors. 

\begin{lemma}\label{mod surj lem}
Let $F$ be a finitely generated field over $\ZZ$. Then the functorial map 
\[
\wh\Pic(F/\ZZ)_\mathrm{mod}\longrightarrow\wt\Pic(F/\QQ)_\mathrm{mod}
\]
is surjective.
\end{lemma}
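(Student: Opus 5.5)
We have to show that every model adelic line bundle on $F/\QQ$, i.e. one in $\wt\Pic(F/\QQ)_\mathrm{mod}$, is the image of a model adelic line bundle on $F/\ZZ$. Recall from \cite{YZ26} that $\wt\Pic(F/\QQ)_\mathrm{mod}$ is the direct limit of $\Pic(U)_\QQ$ over quasi-projective models $U$ of $F$ over $\QQ$. More precisely, its elements are represented by pairs consisting of a projective model $X$ of $F$ over $\QQ$ and a $\QQ$-line bundle $L$ on $X$. Similarly, $\wh\Pic(F/\ZZ)_\mathrm{mod}$ is the direct limit of $\wh\Pic(\CX)_\QQ$ over projective arithmetic models $\CX$ over $\ZZ$, up to restriction to open subsets. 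The functorial map sends a hermitian line bundle $(\CL,\|\cdot\|)$ on $\CX$ to its generic fiber $\CL_\QQ$ on $\CX_\QQ$, viewed over $F$. So the plan is a direct lifting: start from $(X,L)$, extend $X$ and $L$ over $\ZZ$, and put any hermitian metric at the archimedean place.

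The steps are as follows.

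First, given a class in $\wt\Pic(F/\QQ)_\mathrm{mod}$, choose a representative $(X,L)$. Here $X$ is a projective variety over $\QQ$ with function field $F$, and $L\in\Pic(X)_\QQ$. Clearing denominators by multilinearity, we may assume that $L$ is an honest line bundle.

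Second, embed $X\emb \PP^N_\QQ$ and take the Zariski closure $\CX$ of $X$ in $\PP^N_\ZZ$. This is an integral scheme, projective and flat over $\ZZ$, with generic fiber $X$.

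Third, extend $L$ to a line bundle $\CL$ on $\CX$. To do this, write $L\simeq\CO(D_1-D_2)$ with $D_1,D_2$ very ample Cartier divisors, or alternatively $L=A_1\otimes A_2^{-1}$ with $A_1,A_2$ very ample. The Zariski closures of the Cartier divisors $D_i$ need not be Cartier on a possibly singular $\CX$. I would handle this in one of two ways. One option is to extend each $A_i$ by pulling back $\CO(1)$ under the closure of the embedding it defines into $\PP^{N_i}_\ZZ$. The resulting schemes differ, but we can pass to the closure of $X$ in the product $\PP^N_\ZZ\times\PP^{N_1}_\ZZ\times\PP^{N_2}_\ZZ$ and pull back the three $\CO(1)$'s. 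This makes all extensions honest line bundles on a single projective model. The other option is to note that the coherent extension of $L$ (or its double dual) becomes invertible after blowing up a suitable ideal, by Raynaud--Gruson flattening; but the first option is more elementary and I will use it.

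Fourth, equip $\CL_\CC$ on $\CX(\CC)$ with any smooth hermitian metric invariant under complex conjugation, for instance the metric pulled back from Fubini--Study metrics on the $\CO(1)$'s. The resulting $\ol\CL\in\wh\Pic(\CX)$ defines an element of $\wh\Pic(F/\ZZ)_\mathrm{mod}$, and its image in $\wt\Pic(F/\QQ)_\mathrm{mod}$ is the class of $(\CX_\QQ,\CL_\QQ)=(X,L)$.

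The only genuinely non-formal step is the third, extending a line bundle from the generic fiber to an integral model. The product-of-projective-spaces trick should settle it, since pullbacks of $\CO(1)$ are line bundles by construction and the closure is still a projective model of $X$. It also remains to check that the direct-limit descriptions, including restrictions to open subsets of models, are compatible under this map; this should be immediate from the definitions in \cite{YZ26}.
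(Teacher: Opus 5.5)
Your argument is correct. Its overall shape matches the paper's: represent the class by a line bundle on a projective model over $\QQ$, extend it to a projective model over $\ZZ$, and attach an arbitrary archimedean metric. The difference lies in how you solve the extension problem isolated in your third step.

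The paper works with divisors. By linearity it only needs to lift an effective Cartier divisor $D_1$ on a projective model $Y$ of $F$ over $\QQ$. It starts from an arbitrary projective model $\CY_1$ of $Y$ over $\ZZ$ and blows it up along the Zariski closure $\CD_1$ of $D_1$. Blowing up commutes with the flat base change to $\QQ$, and $D_1$ is already Cartier on $Y$, so the generic fiber is unchanged. The exceptional divisor is then a Cartier divisor extending $D_1$, and adding any Green function gives the lift.

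So the paper cures exactly the defect you point out (Zariski closures of Cartier divisors need not be Cartier) by a single blow-up that is an isomorphism over $\QQ$. You instead sidestep it by writing $L=A_1\otimes A_2^{-1}$ with $A_i$ very ample and closing up $X$ in a product of projective spaces over $\ZZ$.

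Your route needs no blow-ups. It also produces a lift that is a difference of pullbacks of $\CO(1)$, so Fubini--Study metrics come for free and the lift is visibly a difference of semipositive hermitian line bundles. The paper's route starts from an arbitrary integral model, modifies it only over $\QQ$-trivially, and fits directly with the divisor-based definitions of Yuan--Zhang. Incidentally, the factor $\PP^N_\ZZ$ in your product is not needed: $\PP^{N_1}_\ZZ\times\PP^{N_2}_\ZZ$ already suffices.
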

\begin{proof}
It suffices to prove that the functorial map
$$
\wh\Div(F/\ZZ)_\mathrm{mod}\longrightarrow\wt\Div(F/\QQ)_\mathrm{mod}
$$
is surjective.
Fix an element $\TD\in \Div(F/\QQ)_\mathrm{mod}$, and we want to lift it to 
$\wh\Div(F/\ZZ)_\mathrm{mod}$.
By definition, there exist a projective variety $Y$ over $\QQ$ with function field $F$ and a Cartier divisor $D_1$ on $Y$ representing $\TD$.
By linearity, it suffices to prove the existence of a preimage when $D_1$ is effective.

Let $\CY_1$ be a projective model of $Y$ over $\Spec \ZZ$.
Let $\CD_1$
be the Zariski closure of $D_1$ in $\CY_1$, viewed as a subscheme of $\CY_1$.
Let $\CY_2$ be the blowing-up of $\CY_1$ along $\CD_1$, and let $\CD_2$ denote the exceptional divisor, which is a Cartier divisor on $\CY_2$.
By base change, $\CY_{2,\QQ}$ is the blowing-up of $\CY_{1, \QQ}$ along 
$\CD_{1,\QQ}$. 
Since $\CD_{1,\QQ}=D_1$ is a Cartier divisor on $\CY_{1, \QQ}=Y$, we have 
$\CY_{2,\QQ}= Y$ and $\CD_{2,\QQ}=D_1$.
In other words, the pair $(\CY_2, \CD_2)$ extends the pair $(Y,D_1)$. 
Extend $\CD_2$ to an arithmetic divisor $\CDD_2$ on $\CY_2$. The pair 
$(\CY_2, \CDD_2)$ gives a  preimage of $\TD$.
\end{proof}

\subsubsection{Equidistribution at divisorial valuations}

Now we are ready to prove the equidistribution theorem (Theorem \ref{equi ft}) in the case that $v\in\CM(F/k)^\ft$ is a divisorial valuation 
(cf. Definition \ref{def fully tran}). For the sake of readers, we state the result here in a slightly different setting for convenience of the proof.

\begin{theorem}[equidistribution at divisorial valuations] 
\label{equi divisorial}
	\kkk
	Let $F$ be a finitely generated field over $k$.
	Let $X$ be a geometrically integral quasi-projective variety over $F$.
	Let $\overline L$ be a nef adelic line bundle on $X/k$ such that $\deg_{\wt L}(X/F)>0$.
	Let $\{x_m\}_m$ be a generic sequence  in $X(\overline F)$ which is numerically small with respect to $\OL$.
	Then for every divisorial valuation $v\in\CM(F/k)^\ft$,
	the Galois orbit of $\{x_m\}_m$ is equidistributed in $X_{v}^\an$ for $\d\mu_{\overline L,v}$.
\end{theorem}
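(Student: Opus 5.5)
We will deduce the theorem from the relative equidistribution theorem (Theorem \ref{equi relative}), using a model polarization $\OH$ whose Monge--Amp\`ere measure is a finite combination of Dirac masses at divisorial points, one of which is $v$. The argument has three stages: set up the relative situation, build the polarization, and read off the fiber measure at $v$.

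\emph{Setting up the relative situation.} Take $K$ inside $F$: if $k=\ZZ$, let $K=\QQ$; if $k$ is a field, let $K=k(t)$ for some $t\in F$ with $v(t)$ non-trivial. Set $w=v|_K$, which is non-trivial in either case; in the arithmetic case it is a place of $\QQ$ that is non-archimedean, since $v$ is divisorial. Choose a normal quasi-projective variety $Y$ over $K$ with function field $F$, together with a flat quasi-projective model $\pi:\CX\to Y$ of $X$, shrinking $Y$ if necessary. By Lemma \ref{prop of bijective map between ft points}, $v$ is a divisorial point of $Y_w^\an$, and $X_v^\an$ is the fiber of $\CX_w^\an\to Y_w^\an$ above $v$. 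After passing to the completion $K_w$, Theorem \ref{div pt equiv} applies (possibly after a finite base change to make the valuation discrete). It gives a normal projective model $\CY$ of a compactification of $Y$ over $O_{K_w}$ and an irreducible component $D_v$ of its special fiber with $|\cdot|_v$ a power of $|\cdot|_{D_v}$.

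\emph{Constructing $\OH$.} We want $\OH\in\HPic(Y/k)_\nef$ satisfying the Moriwaki condition $\OH^d=0$, $\wt H^{d-1}>0$, and such that at $w$ the measure $c_1(\OH)_w^{d-1}$ on $Y_w^\an$ equals $\sum_i a_i\delta_{y_i}$, with $y_i$ divisorial and $a_v>0$. We begin with an ample model line bundle on $\CY$ that is globally defined over $k$; Lemma \ref{mod surj lem} and its function-field analogue let us lift the generic part. By Chambert-Loir's computation, a model metric on a normal model has Monge--Amp\`ere measure $\sum_D \deg(\CH|_D)\,\delta_{y_D}$, summed over the components $D$ of the special fiber. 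Ampleness makes the coefficient at $D_v$ strictly positive. The remaining places carry model metrics, and we then subtract a constant multiple of the trivial bundle with constant metric, i.e.\ $\OH\mapsto\OH-c\,\pi_K^*\ol\CO$, which arranges $\OH^d=0$ without changing curvature forms or nefness. Because this subtraction is made at places other than $w$ (or via an arithmetic constant), the measure at $w$ is unaffected.

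\emph{Applying the relative theorem.} Since $\{x_m\}$ is numerically small, it is $h_\OL^\OH$-small for this $\OH$. Theorem \ref{equi relative} then gives weak convergence on $\CX_w^\an$. By Corollary \ref{projection formula Dirac}, both sides decompose as $\sum_i a_i\rho_{i,*}(\cdot)$ over the fibers $X_{y_i}^\an$. For the left side we need the projection formula applied to the curve $\triangle(x_m)$; this identifies its contribution at $y_i$ with $\frac{1}{\deg x_m}\delta_{x'_{m,y_i}}$, i.e.\ with $\mu_{x_m,y_i}$.

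To isolate $v$, take $f\in C_c(X_v^\an)$ and extend it to $C_c(\CX_w^\an)$ using Lemma \ref{density fiber} together with Urysohn's lemma. Choose the extension to vanish on the finitely many other fibers $X_{y_i}^\an$, $y_i\neq v$; this is possible because these fibers are disjoint closed sets. The weak convergence then yields $a_v\int f\mu_{x_m,v}\to a_v\int f\,\d\mu_{\OL,v}$, and dividing by $a_v>0$ proves the claim.

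I expect the main obstacle to be the construction of $\OH$. One must simultaneously achieve global nefness over $k$, the Moriwaki condition, and a pure-Dirac local measure with positive mass at $v$. A subsidiary difficulty is justifying the Dirac identification on the left side when $\triangle(x_m)$ is not flat over $Y$ or is singular; I would handle this by pulling back to its normalization before applying Corollary \ref{projection formula Dirac}.
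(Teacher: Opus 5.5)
Your overall strategy is the paper's: apply Theorem \ref{equi relative} with a polarization $\OH$ on the base whose measure at $w$ is a finite sum of Dirac masses at divisorial points with positive weight at $v$. You then split both sides by Corollary \ref{projection formula Dirac} and isolate the fiber over $v$.

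The gap is in the construction of $\OH$. Twisting an ample model adelic line bundle by a constant (subtracting a multiple of a trivial bundle with constant metric, or of a fiber of $B$ in the geometric case) leaves the curvature measures unchanged. It does \emph{not} preserve nefness, however. It lowers the normalized height of every point of $Y$ by $c$, while a nef adelic line bundle has nonnegative height on all points. To reach $(\OH')^{d}=0$ you must take $c=\OH^d/(d\,\wt H^{d-1})=h_{\OH}(Y)$. Nefness of $\OH'$ then requires $\inf_x h_{\OH}(x)\ge h_{\OH}(Y)$, and by Zhang's theorem on successive minima this forces all successive minima of $\OH$ to equal $h_{\OH}(Y)$. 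This fails for a general ample model. For example, on $\PP^1_\ZZ$ with the Fubini--Study metric the point $[1:0]$ has height $0$, whereas the normalized height of $\PP^1$ is positive. So the Moriwaki condition is a rigid, canonical-height-type condition that a constant shift cannot reach, and your $\OH$ does not satisfy the hypotheses of Theorem \ref{equi relative}.

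The missing idea is to obtain $\OH$ from a dynamical system with good reduction.
\begin{enumerate}[(1)]
\item The paper takes a \emph{global} normal projective model $\CY$ over $B$ ($\Spec\ZZ$ or $\PP^1_k$) of a normal projective model of $F$ over $K$, in which $v$ is a component $D$ of the fiber over $v_0$. A model over $O_{K_w}$ alone would not produce a global adelic line bundle.
\item It blows up away from that fiber to get a generically finite $\varphi:\CY\to\PP^d_B$ that is finite along $D$.
\item It sets $\OH=\varphi^*\overline{\CO}(1)$, where $\overline{\CO}(1)$ is the invariant adelic line bundle of the square map on $\PP^d_B$.
\end{enumerate}
Invariance gives nefness and vanishing top self-intersection, hence the Moriwaki condition. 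At the non-archimedean place $v_0$ the invariant metric is the model metric of the standard model, whose measure is the Dirac mass at the Gauss point. Hence $c_1(\OH)^d_{v_0}=\sum_i a_i\delta_{v_i}$ with $v_1=v$, and $a_1>0$ because $D$ maps finitely onto the special fiber of $\PP^d_B$. With this $\OH$ the rest of your argument goes through.

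Lemma \ref{mod surj lem} plays no role in building $\OH$. The paper uses it only to reduce the case $k=\ZZ$ with $v|_\QQ$ trivial to the case $k=\QQ$.
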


\begin{proof}
If $k=\ZZ$, take $K=\QQ$ and $B=\Spec \ZZ$. 
Then $v$ is a divisorial point of $\CM(F/K)_{v_0}$ for the restriction $v_0=v|_K$.
We first assume that the restriction $v_0=v|_K$ is non-trivial, and will treat the trivial case in the end.

If $k$ is a field, as $v$ is divisorial, there is an intermediate field $K$ of 
$F/k$ with $\trd(K/k)=1$, such that the restriction $v_0=v|_K$ is non-trivial on $K$, and $v$ is a fully transcendental point of $\CM(F/K)_{v_0}$.
We can further assume that $K$ is purely transcendental over $k$, which can be obtained by replacing $K$ by $k(t)$ for some $t\in K$ transcendental over $k$. 
Take $B=\PP^1_k$ to be a projective line with function field $K$. 

In both cases, let $\SY$ be a normal projective variety over $K$ with function field $F$. Denote $d=\dim \SY=\trd(F/K)$. 
Take a projective model $\SX$ of $X$ over $K$ such that there is a morphism $\pi:\SX\to \SY$ of projective varieties over $K$ with generic fiber $X\to \Spec {F}$.

By Lemma \ref{prop of bijective map between ft points}, 
$v$ is given by a divisorial point on $\SY^\an_{v_0}$, which we still denote by $v$ by abuse of notations. 
By Theorem \ref{div pt equiv}, we can find a normal (projective) integral model $\CY$ of $\SY$ over $B$ such that $v$ corresponds to an irreducible component $D$ of $\CY_{v_0}$. Here $\CY_{v_0}$ is the fiber of $\CY$ over the closed point $v_0$ of $B$ representing the place $v_0$ of $K$. 

By possibly blowing-up $\CY$ along a center disjoint with $\CY_{v_0}$, we can find a generically finite morphism 
$\varphi:\CY\to\PP^d_B$ which is finite along $D$. 
To construct such a morphism, take a very ample line bundle $\CA$ on 
$\CY$, take global sections $s_1,\dots, s_{d+1}$ of $\CA$ successively such that 
$\div(s_1)\cap\cdots \cap \div(s_{d+1})\cap \CY_{v_0}$ is empty. 
Then these sections define a rational map $\CY\dashrightarrow \PP^d_B$, which is a morphism in a neighborhood of $\CY_{v_0}$.
Blowing-up $\CY$ along $\div(s_1)\cap\cdots \cap \div(s_{d+1})$, we obtain the desired morphism.

Denote by $h:\PP_B^d\to\PP_B^d$ the square map on the coordinates. By Tate's limiting argument, there is an invariant adelic line bundle $\overline{\CO}(1)\in\HPic(\PP^d_B/k)_\nef$ such that
	$$
	h^*\overline{\CO}(1)=2\cdot\overline{\CO}(1).
	$$
Moreover, $\overline{\CO}(1)$ satisfies the Moriwaki condition. Denote $\OH=\varphi^*\overline{\CO}(1)$. Then $\OH$ also satisfies the Moriwaki condition. Note that
	$$
c_1(\OH)_{v_0}^d=\sum_{i=1}^r a_{i}\delta_{v_i},
	$$
		where $v=v_1, v_2,\dots,v_r$ are the divisorial points in the support of this measure, and every coefficient $a_i>0$. In particular, $a_1>0$ since $\varphi$ is finite along $D$.

Now we are ready to apply Theorem \ref{equi relative}.
As a consequence, on $\SX_{v_0}^\an$, we have the weak convergence 
	$$
		\frac{1}{\deg(x_m)} \delta_{{\triangle(x_m),v_0}}\, c_1(\pi^*\OH)_{v_0}^{d}
		\lra \frac{1}{\deg_{\wt L}(X/F)} c_1(\OL)_{v_0}^{n}c_1(\pi^*\OH)_{v_0}^{d}.
	$$
	By Corollary \ref{projection formula Dirac}, we have the weak convergence on 
	$\SX^\an_{v_0}$,
	$$
		\sum_{i=1}^r a_i  \mu_{x_m,v_i}\,
	\lra \frac{1}{\deg_{\wt L}(X/F)} \sum_{i=1}^r a_i\ \rho_{i,*}
	\big( c_1(\OL|_{\SX_{v_i}})^n\big).
	$$
	Here $\rho_i: \SX_{v_i}^\an\to {\SX_{v_0}^\an}$ denotes the natural injection.
	Note that $\SX_{v_1}^\an=X_{v}^\an$.
	
To separate $v_1$ from the weak convergence, it suffices to prove that the space 
$$\{
f|_{\SX_{v_1}^{\an}}: f\in C_c(\SX_{v_0}^{\an}), \ f|_{\SX_{v_i}^{\an}}=0, \ \forall\, i\geq 2
\}$$
is uniformly dense in $C_c(\SX_{v_1}^\an)$.
	By multiplying functions of $C_c(\SX_{v_0}^{\an})$ by the pullback of a continuous function $g$ on $\SY^\an$
	with  $g(v_1)=1$ and $g(v_i)=0$ for each $i\geq 2$, 
	 it suffices to prove that the space 
$$\{
f|_{\SX_{v_1}^{\an}}: f\in C_c(\SX_{v_0}^{\an})\}$$
is uniformly dense in $C_c(\SX_{v_1}^\an)$.
This has been established in Lemma \ref{density fiber}.
Then the equidistribution on $X_{v}^\an$ follows.

It remains to treat the case that $k=\ZZ$ and $v_0=v|_K$ 
is trivial on $K=\QQ$. We will convert this case to the case $k=\QQ$ (with trivial valuation) of the theorem.
For this, it suffices to check the implication of the smallness condition in these two different settings. 
By assumption, we have 
$$
\frac{1}{\deg (x_m)}\OH_1\cdots\OH_{d}\cdot \OL \cdot {z_m} 
\lra \frac{1}{(n+1)\deg_{\wt L} (X)}\OH_1\cdots\OH_{d}\cdot\OL^{n+1}
$$
for any nef model adelic line bundles $\OH_1,\dots, \OH_d\in \wh\Pic(F/\ZZ)$. 
Here $z_m$ denotes the closed point of $X$ corresponding to $x_m$. 
Denote by $\TL_K$ the image of $\OL$ in $\wt\Pic(X/K)$, and denote by 
$\TH_{i,K}$ the image of $\OH_i$ in $\wt\Pic(F/K)$. 
Let $\ON$ be an element of $\wh\Pic(K/\ZZ)$ of degree 1, and 
take $\OH_d$ to be the image of $\ON$ under the pull-back map  
$\wh\Pic(K/\ZZ)\lra \wh\Pic(F/\ZZ)$.
Then the above convergence becomes 
$$
\frac{1}{\deg (x_m)}\TH_{1,K}\cdots\TH_{d-1,K}\cdot\TL_K \cdot {z_m} 
\lra \frac{1}{(n+1)\deg_{\wt L} (X)}\TH_{1,K}\cdots\TH_{d-1,K}\cdot\TL_K^{n+1}
$$
By Lemma \ref{mod surj lem}, this yields a smallness condition for all nef model line bundles
$\TH_{i,K}\in \wt\Pic(F/\QQ)_\mathrm{mod}$.
Therefore, we reduce the situation to the case $k=\QQ$, at the cost of replacing the original smallness condition by a weaker one.
However, in the above argument of the case $k=\QQ$, each $\TH_{i,K}$ is only regarded as an element of $\wt\Pic(F/\QQ)_\mathrm{mod}$.
This completes the proof.
\end{proof}

\subsection{Step 2: Reduce to an approximation theorem}

The goal of this subsection is to state an approximation theorem (cf. Theorem \ref{approximation1}), by which we will see that our equidistribution theorem at divisorial valuations (Theorem \ref{equi divisorial}) implies our main theorem (Theorem \ref{equi ft}) immediately.

\subsubsection{The approximation theorem}
To introduce the approximation theorem, we first establish some notation.

Let $K$ be a non-archimedean field. 
Let $F$ be a finitely generated field over $K$ of {transcendence degree} $d\geq1$. 
We will reduce general $d\geq1$ to $d=1$, but here we {set up} notation for general $d$ to be used in the reduction process. 

Let $X$ be a geometrically integral quasi-projective variety of dimension $n$ over $F$. Let $\ol L$ be a nef adelic line bundle on $X/O_K$ such that 
$\deg_{\wt L}(X/F)>0$.
Here the geometric part $\wt L$ is the image of $\OL$ in $\TPic(X/F)$. 
To avoid confusion, we will denote by 
$\TL_K$ {the} image of $\OL$ in $\TPic(X/K)$. 

As before, for a valuation $v\in\CM(F/K)$, denote by 
$X_v^\an$ the Berkovich space of the variety $X_v=X\times_{F} F_v$ over the completion $F_v$. 
Denote by $\OL|_{X_v}$ the image of $\OL$ under the functorial map
$$
\wh\Pic(X/O_K) \lra \wh\Pic(X_v/O_{F_v}). 
$$

To define the functorial map, it suffices to do it for model line bundles. Let $(\CX,\CL)$ be a projective model of $(X,L)$ over $O_K$.
Let $\CY$ be a projective model of $\Spec F$ over $O_K$. 
Up to blowing-up, we can assume that the morphism $X\to \Spec F$ extends to a morphism $\CX\to \CY$ over $O_K$.
By the valuative criterion, the composition $\Spec F_v\to \Spec F\to \CY$ over $\Spec O_K$ extends to a unique morphism $\Spec O_{F_v}\to \CY$.
Now we take the base change of $(\CX,\CL)$ by $\Spec O_{F_v}\to \CY$.
The resulting pair is a projective model of $(X_v, L|_{X_v})$ over $O_{F_v}$, which is the image of $(\CX,\CL)$ under the functorial map. 

Return to the adelic line bundle  $\OL|_{X_v}$ on $X_v/O_{F_v}$. 
Denote by $c_1(\OL)_v^n=c_1(\OL|_{X_v})^n$ the Chambert-Loir measure on $X_v^\an$. 
We still normalize it by 
$$\d\mu_{\OL,v}:=\frac{1}{\deg_{\wt L}(X/F)}c_1(\OL)_v^n.$$
It is a probability measure by \cite[Thm. 1.2]{Guo25}.

For a closed point $x$ of $X$, the base change $x_{F_v}=x\times_FF_v$ is a natural closed subscheme of $X_v$, but it is not necessarily reduced or irreducible. 
This causes lots of notational trouble, but does not bring any essential difficulty to the problem. Because of this, we will start with a series of definitions extending our equidistribution of closed points to effective 0-cycles. 

By a \emph{0-cycle} $\alpha$ of $X$, we mean a formal linear combination 
$$
\alpha=\sum_{i=1}^r a_i [x_i], \quad a_i\in \RR,
$$
where $x_1,\dots, x_r\in X$ are distinct closed points.
The \emph{support} $|\alpha|$ of $\alpha$ is the set of $x_i$ with $a_i\neq 0$. 
The \emph{degree} of $\alpha$ is just 
$$
\deg(\alpha)=\sum_{i=1}^r a_i\deg(x_i).
$$ 
We say that $\alpha$ is \emph{effective} if $r\geq1$ and every coefficient $a_i>0$.

Any $0$-dimensional closed subscheme $Z$ of $X$ defines an effective 0-cycle by
$$
[Z]:=\sum_{x\in Z} \mathrm{mult}_x(Z) [x],
$$
where the multiplicity $\mathrm{mult}_x(Z)$ is the length of the the local ring $\CO_{Z,x}$
as an $\CO_{Z,x}$-module. 

The definitions actually work for any variety over any field. 
For any closed point $z$ of $X_v$, the classical point associated to 
$z$ defines the measure $\deg(z)\delta_z$ on $X_v^\an$.
By linear combination, this definition extends to all 0-cycles on $X_v$. 

Let $\alpha$ be an effective 0-cycle of $X$ as above. 
Then the \emph{base change} $\alpha_{F_v}$ is an effective 0-cycle on $X_v=X_{F_v}$ defined by 
$$
\alpha_{F_v}=\sum_{i=1}^r a_i [x_{i,F_v}]. 
$$
Here $x_{i,F_v}=x_{i}\times_F F_v$ is viewed as a 0-dimensional closed subscheme of $X_{F_v}$ via the morphism $x_{i,F_v}\to X_{F_v}$. 
Then we have a probability measure on $X_v^\an$ by  
$$
\mu_{\alpha, v}:=\frac{1}{\deg(\alpha)}\delta_{\alpha,v}, 
$$
where the Dirac measure $\delta_{\alpha,v}=\delta_{\alpha_{F_v}}$
is defined by linear combination as above. 

If the transcendence degree $d=\trd(F/K)=1$,  
a sequence $\{\alpha_m\}_{m\geq1}$ of effective 0-cycles of $X$ is said to \emph{have bounded $\TL_K$-height over $K$} if  the height
$$
h_{\TL_K}(\alpha_m)
=
\frac{1}{\deg (\alpha_m)}\TL_K\cdot {\alpha_m}$$
is bounded as $m\to \infty.$

For general $d=\trd(F/K)\geq1$, a sequence $\{\alpha_m\}_{m\geq1}$ of effective 0-cycles of $X$ is said to \emph{have bounded $\TL_K$-height over $K$} if for some nef and big adelic line bundle
$$
\TH \in\widetilde{\Pic}(F/K)_{\mathrm{nef}},
$$
viewed as an adelic line bundle on $X/K$ by pull-back, the height
$$
h_{\TL_K}^{\TH}(\alpha_m)
=
\frac{1}{\deg (\alpha_m)}\TH^{d-1}\cdot\TL_K\cdot {\alpha_m}$$
is bounded as $m\to \infty.$
Here the geometric part $\TL_K$ is the image of $\OL$ in $\TPic(X/K)$, and the intersection number is defined by linearity. 
This implies that for all
$$
\TH_1,\cdots,\TH_{d-1}\in\widetilde{\Pic}(F/K)_{\mathrm{int}},
$$
the height
$$
h_{\TL_K}^{\TH_1,\dots,\TH_{d-1}}(\alpha_m)
=
\frac{1}{\deg (\alpha_m)}\TH_1\cdots\TH_{d-1}\cdot\TL_K\cdot {\alpha_m}$$
is bounded as $m\to \infty.$
In fact, by linearity, it suffices to assume that every $\TH_i$ is nef. 
By the arithmetic bigness theorem (cf. \cite[Thm. 5.2.2]{YZ26}), there is a positive integer $a$ such that $a\TH-\TH_i$ is big for every $i$.  
Replacing $\TH$ by $a\TH$, we can assume that $\TH-\TH_i$ is big for every $i$. 
Now the boundedness follows from the inequality 
$$
0\leq \TH_1\cdots\TH_{d-1}\cdot\TL_K\cdot {\alpha_m}
\leq \TH\cdot\TH_2\cdots\TH_{d-1}\cdot\TL_K\cdot {\alpha_m}
\leq \TH^2\cdot\TH_3\cdots\TH_{d-1}\cdot\TL_K\cdot {\alpha_m}
\leq \cdots
\leq \TH^{d-1}\cdot\TL_K\cdot {\alpha_m}.
$$

The sequence $\{\alpha_m\}_{m\geq1}$ is said to be \emph{generic} if every closed subvariety $Z\subsetneq X$ {meets the support $|\alpha_m|$ for only finitely many $m$}. 
There is a more relaxed definition of ``genericity'' in terms of probability, but we do not need it here. 

Finally, our approximation theorem is as follows. 

\begin{theorem}[approximation]
\label{approximation1}
Let $K$ be an algebraically closed non-archimedean field. 
Let $F$ be a finitely generated field extension over $K$ of transcendence degree $1$. 
Let $X$ be a geometrically integral quasi-projective variety of dimension $n$ over $F$. Let $\ol L$ be a nef adelic line bundle on $X/O_K$ such that $\deg_{\wt L}(X/F)>0$.
Let $\{\alpha_m\}_{m\geq1}$ be a generic sequence of effective 0-cycles of $X$ with bounded $\TL_K$-heights over $K$.  If the weak convergence
	$$\mu_{\alpha_m,v}\longrightarrow \d\mu_{\OL,v}$$
holds for all divisorial valuations $v\in \CM(F/K)$, then it holds for all fully transcendental valuations $v\in\CM(F/K)^\ft$.
\end{theorem}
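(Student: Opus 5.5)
We follow the strategy outlined in Steps 3--6 of \S\ref{sec idea}. Let $Y$ be the smooth projective curve over $K$ with function field $F$, and identify $\CM(F/K)^\ft$ with the type $2,3,4$ points of $Y^\an$ (Example \ref{point types}, Lemma \ref{prop of bijective map between ft points}). Fix a fully transcendental $v$ and a test function $f\in C_c(X_v^\an)$. We first reduce to special test functions. Shrink $X$ and choose an affine quasi-projective model $U\to Y_0$ of $X\to\Spec F$ over a dense open $Y_0\subset Y$; genericity lets us discard finitely many $\alpha_m$ and assume $|\alpha_m|\subset X$ lies in this open set. By Lemma \ref{density fiber}, together with a Stone--Weierstrass argument (Theorem \ref{S-W for compactly support}), restrictions to $X_v^\an$ of functions in $C^{\m}_{P,c}(U^\an)$ are uniformly dense in $C_c(X_v^\an)$. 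All the measures $\mu_{\alpha_m,v}$ and $\d\mu_{\OL,v}$ are probability measures (\cite[Thm. 1.2]{Guo25}). Hence it suffices to prove convergence of the integrals for pure products $g=(\psi_1^*f_1)\cdots(\psi_N^*f_N)$, where $\psi_i:U\to\PP^1$ and each $f_i$ is a model function.

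For such a $g$, consider the two functions on the type $2,3,4$ part of $Y^\an$
$$\Phi_m(w)=\int_{X_w^\an}g\,\mu_{\alpha_m,w},\qquad \Phi(w)=\int_{X_w^\an}g\,\d\mu_{\OL,w}.$$
By Lemma \ref{continuity lem}, $\Phi$ is continuous. By hypothesis, $\Phi_m(w)\to\Phi(w)$ for every divisorial $w$. The key claim is a uniform H\"older estimate: there are constants $A_0,A_1$ such that
$$|\Phi_m(v)-\Phi_m(w)|\le A_1\sqrt{h_{\TL_K}(\alpha_m)+A_0}\,\sqrt{\dist(v,w)}$$
for all $m$ and all $v,w$ of type $2,3,4$. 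Here $\dist$ is the skeleton metric, which is finite on such points by Baker--Payne--Rabinoff. Granting the claim, the height bound makes $\{\Phi_m\}$ equi-H\"older. Divisorial points are dense along segments, so we can pick a divisorial $w$ with $\dist(v,w)$ small. Then a standard $3\epsilon$ argument, combining the H\"older estimate, the convergence at $w$, and the continuity of $\Phi$, gives $\Phi_m(v)\to\Phi(v)$.

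To prove the H\"older claim, we pass to a finite extension of $K$. Since $K$ is algebraically closed, this is harmless, and we may take strictly semistable models $\CY$ of $Y$ and $\CP_i$ of $\PP^1$, with skeletons $\Gamma$ and $\Gamma_i$. We may also assume $v,w$ lie on a common segment of $\Gamma$ after enlarging the model. The model functions $f_i$ factor through $\tau_{\CP_i}$ up to a controlled error and are Lipschitz on $\Gamma_i$. By Lemmas \ref{lipschitz1} and \ref{lipschitz2} applied on $\prod\Gamma_i$ with the box distance, the claim reduces to the distance inequality
$$\frac{1}{\deg\alpha}\,\dist_{\Gamma_i}\bigl([\alpha_v]_{\Gamma_i},[\alpha_w]_{\Gamma_i}\bigr)\le A_2\sqrt{h_{\TL_K}(\alpha)+A_0}\,\sqrt{\dist_\Gamma(v,w)}$$
for closed points $\alpha$, uniformly. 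The left-hand side is the transport distance of \S\ref{subsec transport plan}.

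We expect this distance inequality to be the main obstacle. For a closed point $x$, let $Y'$ be the normalized multisection, with the induced maps $\varphi:Y'\to Y$ and $\psi':Y'\to\PP^1$, and take a semistable $\CY'$ dominating both. By the height inequality \cite[Thm. 5.3.7]{YZ26}, $h_{\psi_i^*\CO(1)}(x)\le A_3(h_{\TL_K}(x)+A_0)$. Write this height as $\frac1{\deg x}\int_{Y'^\an}\psi'^*c_1(\ol\CO(1))$ for a metric chosen via Corollary \ref{strictly positive measure on skeleton}, so that it dominates a strictly positive form $\omega_0$ on $\Gamma_i$. Pulling back gives $\sum_{\ell'}d_{\ell'}(\psi')^2 l(\ell')\lesssim\deg(x)(h+A_0)$, where $\ell'$ runs over the preimages of the segment $[v,w]$.

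Next we build a transport plan from $[x_v]$ to $[x_w]$ by following these preimage segments. The fiber masses along each $\ell'$ are governed by Lemma \ref{total expansion}, and the total $\varphi$-degree is accounted for by Lemma \ref{total degree}. The resulting cost is $\sum d_{\ell'}(\psi')\,l(\ell')$. By Cauchy--Schwarz,
$$\sum d_{\ell'}(\psi')\,l(\ell')\le\Bigl(\sum d_{\ell'}(\psi')^2 l(\ell')\Bigr)^{1/2}\Bigl(\sum l(\ell')\Bigr)^{1/2},$$
and $\sum l(\ell')\le\deg(x)\,\dist(v,w)$ by the expansion-factor identities. This yields the inequality. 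Extending from closed points to effective $0$-cycles is by linearity together with Lemma \ref{triangle inequality}.
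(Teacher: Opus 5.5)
Your outline follows the paper's proof: special test functions, an equi-H\"older reduction with a $3\epsilon$ argument, the height inequality, a strictly positive form on $\Gamma_i$, transport along preimage segments, and Cauchy--Schwarz. However, the central estimate is computed incorrectly.

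The cycle $[x_v]_{\Gamma'}$ carries weights $[\CH(v'):\CH(v)]$ summing to $\deg(x)$, and there may be far fewer preimage segments than that. For your plan to be a transport plan between these weighted cycles, it must carry mass $d_{\ell'}(\varphi)=l(\ell_j)/l(\ell')$ along each preimage segment $\ell'$ of a piece $\ell_j\subset[v,w]$. This is exactly what Lemma \ref{total expansion} checks at both endpoints. Hence the cost is
$$\sum_{\ell'}d_{\ell'}(\varphi)\,d_{\ell'}(\psi_i')\,l(\ell')=\sum_{\ell'}d_{\ell'}(\psi_i')\,l(\ell_j),$$
not $\sum_{\ell'}d_{\ell'}(\psi_i')\,l(\ell')$. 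The two differ by a factor of up to $\max d_{\ell'}(\varphi)$, which can equal $\deg(x)$. For example, if $\varphi$ is totally ramified over $\ell_j$, there is one preimage segment, of length $l(\ell_j)/\deg(x)$. So your Cauchy--Schwarz bounds a quantity that does not control the cost, and absorbing the missing factor crudely by $\deg(x)$ destroys uniformity in $x$.

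The fix, which is what the paper does, is to split each term using $d_{\ell'}(\varphi)\,l(\ell')=l(\ell_j)$:
$$\bigl(d_{\ell'}(\varphi)\,d_{\ell'}(\psi_i')\,l(\ell')\bigr)^2=\bigl(d_{\ell'}(\varphi)\,l(\ell_j)\bigr)\bigl(d_{\ell'}(\psi_i')^2\,l(\ell')\bigr).$$
Cauchy--Schwarz then bounds the squared cost by $\bigl(\sum d_{\ell'}(\varphi)\,l(\ell_j)\bigr)\bigl(\sum d_{\ell'}(\psi_i')^2\,l(\ell')\bigr)$. The first factor equals $\deg(x)\dist(v,w)$ exactly by Lemma \ref{total degree}, and the second is your partial-height bound. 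With this pairing you no longer need the inequality $\sum l(\ell')\le\deg(x)\dist(v,w)$.

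Two smaller repairs are also needed.

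First, bounding each $\dist_{\Gamma_i}([x_v]_{\Gamma_i},[x_w]_{\Gamma_i})$ separately does not bound the box distance on $\Gamma_1\times\cdots\times\Gamma_N$. The infimum over plans of a sum dominates the sum of the infima, rather than the reverse. Lemma \ref{lipschitz1}, applied to the product function, needs an upper bound on the box distance. You should state the estimate for the single plan on $\Gamma'$ pushed forward to every $\Gamma_i$; your construction does provide this.

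Second, your H\"older claim is asserted for type-4 points, but its proof places $v,w$ on a skeleton, and no skeleton contains a type-4 point. Prove the claim first for types 2 and 3, with constants independent of $\CY$ and $\CY'$. For this, fix $\CP_i$ on which $f_i$ is realized, so that $f_i$ factors through $\tau_{\CP_i}$ exactly rather than ``up to a controlled error''. Then pass to type-4 points by approximating along a segment in the canonical metric and using continuity of $w\mapsto\Phi_m(w)$.
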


In the following, we first extend the approximation theorem (for $\trd(F/K)=1$) to a version of it for $\trd(F/K)\geq 1$, and then prove that the latter implies the equidistribution theorem \ref{equi ft}.

\subsubsection{Lower the transcendence degree}

The goal here {is to extend} Theorem \ref{approximation1} to a similar statement for $\trd(F/K)\geq 1$ with some {minor revisions}. More precisely, we prove that the theorem implies the following general case.

\begin{theorem}[approximation: high degree]
 \label{approximation high}
Let $K$ be a complete non-archimedean field. 
Let $F$ be a finitely generated field over $K$ of transcendence degree $d\geq1$. 
Let $X$ be a geometrically integral quasi-projective variety of dimension $n$ over $F$. Let $\ol L$ be a nef adelic line bundle on $X/O_K$ such that $\deg_{\wt L}(X/F)>0$.
Let $\{\alpha_m\}_{m\geq1}$ be a generic sequence of effective 0-cycles of $X$ with bounded $\TL_K$-heights over $K$. If the weak convergence
	$$\mu_{\alpha_m,v_1}\longrightarrow \d\mu_{\OL,v_1}$$
holds for all divisorial valuations $v_1\in \CM(F_1/K)$ on all finite extensions $F_1$ of $F$, then it holds for all fully transcendental valuations $v_1\in\CM(F_1/K)^\ft$ on all finite extensions $F_1$ of $F$.
\end{theorem}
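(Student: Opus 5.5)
We prove Theorem \ref{approximation high} from Theorem \ref{approximation1} by fibering $F$ over a subfield of transcendence degree $d-1$ over $K$. Base change then lets us regard everything over a complete algebraically closed field, and $F$ becomes a function field of one variable.

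Fix a finite extension $F_1$ of $F$ and a fully transcendental $v_1\in\CM(F_1/K)^\ft$. Replacing $F$ by $F_1$, $X$ by $X_{F_1}$ and $\alpha_m$ by its pull-back, we may assume $F_1=F$. This is harmless: genericity and bounded $\TL_K$-height are preserved under finite base change, with heights unchanged after normalization by degree. Moreover the hypothesis at divisorial valuations on finite extensions of $F_1$ is a subset of the original hypothesis.

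If $d=1$, we pass from $K$ to $K'=\wh{\ol K}$. Let $F'$ be a component of $F\otimes_K K'$ carrying a valuation $v'$ that extends both $v_1$ and the valuation of $K'$; such a $v'$ exists as in the proof of Proposition \ref{extension by divisorial}. Then:
\begin{enumerate}[(a)]
\item $v'$ is fully transcendental over $K'$, since $\ttrd$ does not drop under completed algebraic base change;
\item each divisorial valuation of $F'/K'$ restricts to a divisorial valuation of some finite extension of $F$;
\item $X'_{v'}$ equals the base change of $X_{v_1}$ to the completion $F'_{v'}$, and pushing forward along $X'^\an_{v'}\to X^\an_{v_1}$ carries $\mu_{\alpha'_m,v'}$ and $\d\mu_{\OL,v'}$ to $\mu_{\alpha_m,v_1}$ and $\d\mu_{\OL,v_1}$.
\end{enumerate}
Granting (b), the divisorial hypothesis holds for $F'/K'$ after pulling back along the same push-forward, and Theorem \ref{approximation1} applies over $K'$. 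Weak convergence is preserved under push-forward by the proper map, which gives the claim at $v_1$.

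For $d>1$, the plan is as follows. Choose an intermediate field $E\subset F$ with $\trd(E/K)=d-1$ and $\trd(F/E)=1$. Set $w=v_1|_E$ and $K_1=\wh{\ol{E_w}}$. By Lemma \ref{sub of fully trans}(1), $E_w/K$ and $F_{v_1}/E_w$ are fully transcendental. Let $F'$ be a component of $F\otimes_E K_1$ with a valuation $v'$ extending $v_1$, so that $F'/K_1$ has transcendence degree one and $v'$ is fully transcendental over $K_1$.

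The new base field $K_1$ is non-archimedean even when $w$ is trivial on $E$ relative to $K$, because $E_w\supset K$. We apply Theorem \ref{approximation1} to $(K_1,F',X_{F'},\OL|)$, which leaves two hypotheses to check.
\begin{enumerate}[(i)]
\item \emph{Bounded height.} The $\TL_{K_1}$-heights must be bounded. We get this by choosing $\TH$ in the high-degree definition as a pull-back of a nef big class on $E$ plus an ample class. The height over $K_1$ of a point of $X_{F'}$ is then a specialization at $w$ of the vector-valued height, and is controlled by $h^{\TH}_{\TL_K}$. Making this precise requires relating intersection theory over $E_w$ to that over $E$, presumably via the Deligne pairing and Proposition \ref{projection formula}.
\item \emph{Divisorial input.} Equidistribution must hold at divisorial valuations $u$ of $F'/K_1$. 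Each such $u$ restricts to a valuation $u|_F$ on $F$ with $F_u/E_w$ divisorial and $E_w/K$ fully transcendental. By Proposition \ref{extension by divisorial} (applied with $E$ of transcendence degree one over $K$, after an induction on $d$), $u|_F$ is fully transcendental over $K$.
\end{enumerate}
For (ii) we therefore induct on $d$. The case $d-1$ already yields equidistribution at fully transcendental valuations from divisorial ones. So the order of proof is: first establish the statement for all fully transcendental valuations whose restriction to $E$ is divisorial, using Lemma \ref{sub of fully trans}(2) and Theorem \ref{approximation1} with base $E_w$. Then use these as the new ``divisorial'' input.

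The main obstacle is the bounded-height transfer in (i), together with ensuring that Proposition \ref{extension by divisorial} (which needs $\trd(E/K)=1$) suffices. We handle the latter by iterating one transcendence degree at a time: take a tower $K\subset E_1\subset\cdots\subset E_{d}=F$ with each step of transcendence degree one, and each time base change to the completed algebraic closure of the previous completed field, so that every step is a one-variable problem.
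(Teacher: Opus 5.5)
The reduction to $F_1=F$ and the $d=1$ step via $\wh{\ol K}$ match the paper's final reduction. (Passing from the finite levels $F_1$ up to $F'$ needs a Stone--Weierstrass argument, not just push-forward.)

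The genuine gap is the divisorial input (ii) for $d>1$, which is circular. In your fibration ($\trd(E/K)=d-1$, curves over $K_1=\wh{\ol{E_w}}$), every divisorial $u$ of $F'/K_1$ restricts on $E$ to the same $w=v_1|_E$. So $u|_F$ has residue transcendence degree $\trd(\wt E_w/\wt K)+1$, and equidistribution at $u|_F$ is a hypothesis only when $w$ is divisorial.

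Your Step A does not supply it otherwise. Step A only treats valuations whose restriction to $E$ is divisorial, whereas here the restriction is the fixed, generally non-divisorial $w$. The other escape routes also fail:
\begin{enumerate}[(1)]
\item Re-choosing $E$ for each $u$ fails for $d\geq 3$. Making $u|_{E'}$ divisorial over $K$ needs residue transcendence degree at least $d-1$, while $\trd(\wt E_w/\wt K)$ can be $0$.
\item The induction on $d$ has nothing to act on, since $X$ is never fibred over a complete field with fibres of transcendence degree $d-1$.
\item In your tower only the top step is a one-variable problem: over $\wh{\ol{E_{i-1,w}}}$ the field $F$ still has transcendence degree $d-i+1$.
\item Even the full transcendence of $u|_F$ would need Proposition \ref{extension by divisorial} with $\trd(E/K)=d-1$. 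Its proof is specific to curves (type $2,3,4$ points become type $2$ over a suitable spherically complete field), and additivity of $\ttrd$ is open in positive residue characteristic.
\end{enumerate}

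The missing idea is to reverse the fibration, as the paper does. Take $E$ with $\trd(E/K)=1$ and base change only to $K'=E_{v_0}$, where $v_0=v|_E$. Let $F'$ be the fraction field of the image of $F\otimes_E K'$ in $F_v$, so that $F'_{v'}=F_v$ and $\trd(F'/K')=d-1$. Then apply the theorem for $d-1$ over $K'$, in two passes:
\begin{enumerate}[(1)]
\item If $\trd(\wt F_v/\wt K)>0$, choose $E=K(t)$ with $t\in F\cap O_{F_v}$ having transcendental reduction. Then $v_0$ is divisorial. Every divisorial $w'$ of $F'/K'$ restricts to a valuation of $F$ with residue transcendence degree $(d-1)+1=d$, so it is divisorial and covered by the hypothesis.
\item For general $v$, any $E$ works. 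Each divisorial $w'$ restricts to some $w$ with $F_w/E_{v_0}$ divisorial and $E_{v_0}/K$ fully transcendental. So $w$ is fully transcendental by Proposition \ref{extension by divisorial}, which now applies since $\trd(E/K)=1$. Its residue transcendence degree is at least $d-1>0$, so pass (1) covers $w$.
\end{enumerate}

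Incidentally, (i) involves no metric at $w$, no Deligne pairing and no projection formula. The height over the new base field is a geometric intersection number, invariant up to a fixed constant under the base field extension. It equals $\TL_K\cdot\TH_1\cdots\TH_{d-1}\cdot\alpha_m/\deg(\alpha_m)$, with $\trd(E/K)$ of the $\TH_i$ pulled back from $E$ and intersecting in a point class. It is bounded because bounded $\TL_K$-height bounds these numbers for all integrable $\TH_i$.
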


Note that the weak convergence  
$$\mu_{\alpha_m,v_1}\longrightarrow \d\mu_{\OL,v_1}$$
is on the Berkovich space $X_{v_1}^\an=X_{F_{1,v_1}}^\an$ associated to the base change $X_{F_{1,v_1}}$ of $X$ to  $F_{1,v_1}$. 
Similarly, the measure $\mu_{\alpha_m,v_1}$ is associated to the cycle 
$\alpha_{m,F_{1,v_1}}$ on $X_{F_{1,v_1}}$, and the measure $\d\mu_{\OL,v_1}$ is associated to the adelic line bundle $\OL_{F_1}$ on $X_{F_1}$.

For application, we only need the case $F_1=F$, but to prove the theorem, we need to find a suitable $F_1$ to get a semistable model.

Note that Theorem \ref{approximation high} differs from Theorem \ref{approximation1} by three requirements.
Namely, Theorem \ref{approximation1} assumes $\trd(F/K)= 1$, assumes that $K$ is algebraically closed, and only considers equidistribution at $v\in \CM(F/K)$ (instead of $v_1\in \CM(F_1/K)$).

\paragraph*{Lower the transcendence degree.}
To reduce Theorem \ref{approximation high} to Theorem \ref{approximation1}, we first reduce Theorem \ref{approximation high} from $\trd(F/K)>1$ to  $\trd(F/K)=1$. 

Assume the setting of Theorem \ref{approximation high}. 
By the condition, we have the equidistribution (i.e. weak convergence) for all divisorial valuations $v_1\in \CM(F_1/K)$. We need to prove that the equidistribution holds for all fully transcendental valuations
$v_1\in \CM(F_1/K)^\ft$.
By base change, it suffices to prove the equidistribution for all fully transcendental valuations  
$v\in \CM(F/K)^\ft$.

Assume that $d=\trd(F/K)>1$. 
We assume that the {result holds} for $d-1$ in the following, and the goal is to deduce the result for $d$. 
	
Before the reduction process, we introduce a transition setting. 
Let $v\in \CM(F/K)^\ft$ be fully transcendental as above. 
Let $E$ be an intermediate field of $F/K$ with $\trd(E/K)=1$.  
Denote by $v_0=v|_E$ the restriction, and denote by $K'=E_{v_0}$ the completion. 
Denote by $F'$ the fraction field of the image of $F\otimes_E K' \to F_v$, and denote $v'=v|_{F'}$. 
As in the proof of Theorem \ref{equi ft2} by Theorem \ref{approximation high}, we have $F'_{v'}=F_v$ and that 
$F'$ is finitely generated over $K'$ with $\trd(F'/K')=\trd(F/E)=d-1$. 

Denote $X'=X\times_F F'$, and $\OL'=\OL|_{X'}$. 
We plan to obtain the equidistribution for $(K',F',X',v')$ by the induction hypothesis.
Denote by $\alpha_{m}'=\alpha_{m,F'}$ the base change, which is an effective 0-cycle on $X'$. 
Via $F'_{v'}=F_v$, we have 
$$X_{v'}'^\an=X_v^\an, \quad
\alpha_{m,v'}'=\alpha_{m,v}, \quad
\d\mu_{\overline L',v'}=\d\mu_{\overline L,v}.$$  
Note that $\alpha_{m}'$ has bounded $\TL'_{K'}$-height over $K'$, which can be proved as in the proof of Theorem \ref{equi ft2} by Theorem \ref{approximation high}. 
This finishes our description of the transition setting. 

Now we return to the proof of the equidistribution at every $v\in \CM(F/K)^\ft$. 
Denote by $\tilde{d}(v)=\trd(\wt F_v/\wt K)$ the extension between the residue fields. 
By Proposition \ref{prop of ttrd}, we have $\tilde{d}(v)\leq d$. 
If $\tilde{d}(v)=d$, then $v$ is a divisorial point, and the equidistribution at $v$ is our assumption. 

We first prove the equidistribution for the case $\tilde{d}(v)>0$.  
By assumption, there is an element $\tilde t\in \wt F_v$ transcendental over $\wt K$. Take $t\in F\cap O_{F_v}$ to be a preimage of $\tilde t$. 
Take $E=K(t)$ to be the intermediate field of $F/K$ to fit the transition setting. 
This gives a datum $(K',F',X',v')$. 
By the choice of $E$, we have $\trd(\wt K'/\wt K)=1$, and thus 
$v_0$ is divisorial in $\CM(E/K)$; moreover, $v'$ is fully transcendental in $\CM(F'/K')$ by Lemma~\ref{sub of fully trans}. 
To prove the equidistribution at $v'$, by the result for $d-1$, it suffices to check that the equidistribution holds at every divisorial $w'\in \CM(F'_1/K')$ on a finite extension $F'_1$ of $F'$. 
To ease the notation, we assume that $F_1'=F_1$; the general case follows a similar strategy.

Denote $w=w'|_F\in \CM(F/K)$. We have $F'_{w'}=F_w$ as before. 
Then 
$$\tilde d(w)=\trd(\wt F_w/\wt K)=\trd(\wt{F'_{w'}}/\wt K')+1=d.$$
Thus $w$ is divisorial in $\CM(F/K)$. 
By assumption, the equidistribution holds at 
$w\in\CM(F/K)$, and thus it holds at $w'\in \CM(F'/K')$. 
Therefore, the result for $d-1$ implies the equidistribution at $v\in\CM(F/K)$ with $\tilde{d}(v)>0$.  

Now we prove the equidistribution at all $v\in \CM(F/K)^\ft$. 
Take an arbitrary intermediate field $E$ of $F/K$ as in the transition setting.
This gives the datum $(K',F',X',v')$. 
We claim that $v'$ is fully transcendental in $\CM(F'/K')$, and $v_0=v|_E$ is fully transcendental in $\CM(E/K)$. In fact, by Proposition \ref{prop of ttrd}, we have
\begin{align*}
d=\ttrd(F'_{v'}/K)
\leq &\ \ttrd(F'_{v'}/K')+
\ttrd(K'/K) \\
\leq &\ \trd(F'/K')+
\trd(E/K)
=d.
\end{align*}
This forces the inequalities to be equalities, and thus $v'$ and $v_0$ are fully transcendental. 

Now the situation is similar to the  case $\tilde{d}(v)>0$. 
In fact, to prove the equidistribution at $v'$, by the result for $d-1$, it suffices to check that the equidistribution holds at every divisorial $w'\in \CM(F_1'/K')$. Assume $F_1'=F'$ again. 
As above, denote $w=w'|_F\in \CM(F/K)$. We have $F'_{w'}=F_w$ as before. 
Then 
$$\tilde d(w)=\trd(\wt F_w/\wt K)\geq \trd(\wt{F'_{w'}}/\wt K')=d-1>0.$$
By Proposition \ref{extension by divisorial}, $w$ is fully transcendental over $K$.
By the above case, the equidistribution holds at 
$w\in\CM(F/K)$. 
By the  result for $d-1$, we have the equidistribution at $v\in\CM(F/K)$.

\paragraph*{Take algebraic closure.}
In the above, we have already reduced Theorem \ref{approximation high} to the case 
$\trd(F/K)=1$. Now we further reduce it to $K$ being algebraically closed and $F_1=F$.  

Return to the setting of Theorem \ref{approximation high}. Denote by $K'=\wh{\ol K}$ the completion of the algebraic closure of $K$. Denote by $F'= FK'$ the compositum, which has a dense subfield $F \ol K$. 
Note that under the natural map $\CM(F'/K') \to \CM(F_1/K)$, the image of a fully transcendental point is fully transcendental, and a pre-image of a fully transcendental point is fully transcendental. A similar results holds for divisorial points.

We first claim that the following are equivalent:
\begin{enumerate}[(1)]
\item 
the equidistribution $\mu_{\alpha_m,v_1}\to \d\mu_{\OL,v_1}$ over 
$X_{F_{1,v_1}}^\an$
at all $v_1\in \CM(F_1/K)^\ft$ for all finite extensions $F_1$ of $F$;
\item
the equidistribution $\mu_{\alpha_m,v'}\to \d\mu_{\OL,v'}$ 
over $X_{F_{v'}'}^\an$
at all $v'\in \CM(F'/K')^\ft$. 
\end{enumerate}
The equivalence can be proved by the  method of the ``Equivalence'' in  \cite[p. 638]{Yua08}. 
In fact, (2) implies (1) by push-forward of the measures via the natural map $X_{F_{v'}'}^\an\to X_{F_{1,v_1}}^\an$ if $v'$ extends $v_1$.
To have that (1) implies (2), we need to apply the Stone--Weierstrass theorem. 

The above equivalence also holds for divisorial points. 
Therefore, the theorem for $(K, F, F_1)$ is implied by that for $(K', F', F')$.  
This reduces Theorem \ref{approximation high} to Theorem \ref{approximation1}.

\subsubsection{Equidistribution by approximation}

In the above, we have already proved that Theorem \ref{approximation1} implies Theorem \ref{approximation high}. Here we prove that Theorem \ref{approximation high} and Theorem \ref{equi divisorial} imply Theorem \ref{equi ft}. 
We actually prove the following variant, which implies Theorem \ref{equi ft}
by Lemma \ref{equivalence}.

\begin{thm}[equidistribution] \label{equi ft2}
	\kkk
	Let $F$ be a finitely generated field over $k$.
	Let $X$ be a geometrically integral quasi-projective variety over $F$.
	Let $\overline L$ be a nef adelic line bundle on $X/k$ such that 
	$\deg_{\wt L}(X/F)>0$.
	Let $\{x_m\}_m$ be a generic sequence  in $X(\overline F)$  which is numerically small with respect to $\OL$.
	
	If $k=\ZZ$, for every fully transcendental valuation $v\in\CM(F/k)$, the Galois orbit of $\{x_m\}_m$ is equidistributed in $X_v^\an$ for $\d\mu_{\overline L,v}$.
	
        If $k$ is a field, for every valuation $v\in\CM(F/k)$ which is non-trivial and fully transcendental over an intermediate field $K$ of $F/k$ with $\trd(K/k)=1$, the Galois orbit of $\{x_m\}_m$ is equidistributed in $X_v^\an$ for $\d\mu_{\overline L,v}$.
\end{thm}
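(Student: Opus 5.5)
We plan to deduce Theorem \ref{equi ft2} from Theorem \ref{equi divisorial} and Theorem \ref{approximation high}, applied over the completion of a transcendence-degree-one subfield. First fix the base. If $k=\ZZ$, put $K=\QQ$ and $v_0=v|_\QQ$. If $v_0$ is trivial, we reduce to the case $k=\QQ$ with the trivial valuation, exactly as at the end of the proof of Theorem \ref{equi divisorial}, using Lemma \ref{mod surj lem} to transfer the smallness condition. If $v_0$ is non-trivial, it is a $p$-adic place, because an archimedean $v$ is never fully transcendental in the non-archimedean sense. If $k$ is a field, take the intermediate field $K$ given in the statement, with $v_0=v|_K$ non-trivial.

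In both cases set $K'=K_{v_0}$. Let $F'$ be the fraction field of the image of $F\otimes_K K'\to F_v$, and put $v'=v|_{F'}$. Then $F'$ is dense in $F_v$, so $F'_{v'}=F_v$. The field $F'$ is finitely generated over $K'$, and $\trd(F'/K')\le\trd(F/K)$. Full transcendence of $v$ gives
\[
\ttrd(F_v/K')=\trd(F/K),
\]
and Proposition \ref{prop of ttrd}(2) gives $\ttrd(F_v/K')\le\trd(F'/K')$. Together these force $\trd(F'/K')=\trd(F/K)=:d$ and show that $v'$ is fully transcendental in $\CM(F'/K')$. Now set $X'=X_{F'}$, let $\OL'$ be the image of $\OL$ in $\wh\Pic(X'/O_{K'})$, and let $\alpha_m$ be the base change to $F'$ of the closed point of $x_m$. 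Then $X'^\an_{v'}=X_v^\an$, $\mu_{\alpha_m,v'}=\mu_{x_m,v}$ and $\d\mu_{\OL',v'}=\d\mu_{\OL,v}$. Genericity of $\{\alpha_m\}$ holds because every proper closed subvariety of $X'$ lies over a proper closed subvariety of $X_{F''}$ for a finitely generated $F''$; we use its Galois and norm descent to a proper closed subset of $X$.

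Next we verify the two hypotheses of Theorem \ref{approximation high}. For bounded $\TL'_{K'}$-heights, choose a nef and big model $\TH\in\TPic(F/K)$ and lift it to some $\OH\in\wh\Pic(F/k)_\nef$ by Lemma \ref{mod surj lem} or its analogue over function fields. Take the last factor of $\OH^{d}$ to be the pullback of a degree-one adelic bundle on $K/k$. The smallness of $h^{\OH}_{\OL}(x_m)$ then bounds $\frac{1}{\deg x_m}\TH^{d-1}\cdot\TL_K\cdot x_m$. This geometric intersection number is invariant under the base change $K\to K'$, so it bounds the $\TL'_{K'}$-heights. For divisorial valuations $w'$ of a finite extension $F'_1$ of $F'$, we restrict to a suitable finite extension $F_1$ of $F$. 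By Lemma \ref{sub of fully trans}(2) and the same completion argument, $w=w'|_{F_1}$ is divisorial over $K$, and hence over $k$, with $F_{1,w}=F'_{1,w'}$. Since $x_m$ remains generic and numerically small over $F_1$, Theorem \ref{equi divisorial} gives equidistribution at $w$, which is exactly equidistribution at $w'$. Theorem \ref{approximation high} then yields equidistribution at the fully transcendental $v'$, which is the claim.

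The main obstacle is the bounded-height transfer, and in particular the compatibility of the arithmetic intersection $\OH^{d-1}\cdot\OL\cdot x_m$ with the geometric one over $K$ and then over $K'$. I would handle it first for model bundles via Deligne pairings and pass to limits by boundedness.
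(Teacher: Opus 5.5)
Your proposal is correct and follows essentially the same route as the paper. You pass to $K'=K_{v_0}$ and $F'=\Frac\bigl(\Im(F\otimes_K K'\to F_v)\bigr)$, take the divisorial hypothesis from Theorem~\ref{equi divisorial}, obtain bounded $\TL'_{K'}$-heights from numerical smallness using one factor that is a degree-one bundle pulled back from $K/k$, and conclude by Theorem~\ref{approximation high}.

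Two of your local choices improve on the paper, with one caveat.
Deducing $\trd(F'/K')=\trd(F/K)$ and $v'\in\CM(F'/K')^{\ft}$ from Proposition~\ref{prop of ttrd}(2) uses full transcendence exactly where it is needed. The paper's ``direct check'' does not use it, and the equality fails, for instance, at type-1 points.
You also address finite extensions $F'_1$, which the paper skips. There, however, you should only claim smallness over $F_1$ for polarizations pulled back from $F$. That is enough for the proof of Theorem~\ref{equi divisorial} once the map to $\PP^d_B$ is chosen through a model of $F$, whereas full numerical smallness over $F_1$ is not automatic.
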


\begin{proof}[Proof of Theorem \ref{equi ft2} by Theorem \ref{approximation high}]
We start with the setting of Theorem \ref{equi ft2}. 

If $k=\ZZ$, take $K=\QQ$, and denote $v_0=v|_K$. 
If $v_0$ is trivial in this case, then as in the proof of Theorem \ref{equi divisorial}, we can convert the problem to the case $k=\QQ$.
In the following, we assume that $v_0$ is non-trivial on $K$ in this case. 

Hence, in both cases of $k$, the extension
$F/K$ is finitely generated, and the restriction $v_0=v|_K$ is a non-trivial non-archimedean valuation on $K$. 
By assumption, $v\in \CM(F/K)_{v_0}^\ft$ is fully transcendental.

Denote $K'=K_{v_0}$. We will consider the base change of our data via $K\to K_{v_0}$ to transfer them to the local setting over $K'$ to fit the setting of Theorem \ref{approximation high}. 
A complication is that $F\otimes_K K_{v_0}$ is not necessarily a field; however, the image of $F\otimes_K K_{v_0}\to F_v$ is an integral domain $R$, and the fraction field $F'$ of $R$ is a subfield of $F_v$. As $R$ is a finitely generated ring over $K_{v_0}$, its fraction field $F'$ is a finitely generated field over $K_{v_0}$.  
Moreover, $F'$ is endowed with the valuation $v'=v|_{F'}$, and the completion $F'_{v'}=F_v$ as $F'$ contains $F$. 

We claim that $\trd(F'/K')=\trd(F/K)$. 
In fact, $F$ contains a subfield $F_0$ such that $F/F_0$ is a finite extension and $F_0/K$ is a purely transcendental extension. 
Denote $w_0=v|_{F_0}$. 
Then $R=\Im(F\otimes_K K_{v_0}\to F_v)$ is finite over $R_0=\Im(F_0\otimes_K K_{v_0}\to (F_0)_{w_0})$, and thus 
 $F'=\Frac(R)$ is finite over $F_0'=\Frac(R_0)$. 
It suffices to prove $\trd(F_0'/K')=\trd(F_0/K)$. 
This can be checked directly by writing $F_0=K(t_1,\dots, t_d)$.

Denote by $X'=X\times_F F'$ the base change.
Denote by $\OL'=\OL|_{X'}$ the image of $\OL$ under the composition 
$$\HPic(X/k)\lra \HPic(X\times_K K_{v_0}/O_{K_{v_0}})\lra \HPic(X'/O_{K'}).$$  
The compositions are constructed via fiber products of integral models as before, and we omit them. 

Denote by $z_m$ the closed point of $X$ corresponding to $x_m$. 
Denote by $\alpha_m'$ the 0-cycle on $X'$ associated to the base change $z_{m,F'}$, which is still generic. 

Finally, $(K', F', X', \OL', \alpha_m')$ fits the setting of Theorem \ref{approximation high}. 
By Theorem \ref{equi divisorial}, the weak convergence
$$\mu_{\alpha_m',v'}\longrightarrow \d\mu_{\OL',v'}$$
holds for all divisorial valuations $v'\in \CM(F'/K')$.
To prove Theorem \ref{equi ft2} for the fully transcendental $v\in \CM(F'/K')$, by Theorem \ref{approximation high}, 
it suffices to prove that $\{\alpha_m'\}_m$ has bounded $\TL'_{K'}$-heights over $K'$.
Here $\TL'_{K'}$ is the image of $\OL'$ in $\TPic(X'/K')$. 

Denote by $d=\trd(F/K)$. 
By the definition of small sequence, for each $\OH\in \HPic(F/k)_{\nef}$, we have
\[
h_\OL^{\OH}(x_m)=\frac{1}{\deg(x_m)}\OL\cdot\OH^d\cdot z_m \longrightarrow h^{\OH}_\OL(X),\quad m\to\infty.
\]
By the elementary polynomial identity
\[
{d!\cdot t_1\cdots t_d=\sum\limits_{I\subset\{1,2,\cdots,d\}}(-1)^{d-\# I}\left(\sum\limits_{i\in I}t_i\right)^d},
\]
we have that for any  $\OH_1,\cdots,\OH_{d}\in \HPic(F/k)_{\nef}$,
\[
\frac{1}{\deg(x_m)}\OL\cdot\OH_1\cdots\OH_{d}\cdot z_m
\longrightarrow 
\frac{1}{(n+1)\deg_{\wt L}(X/F)}\OL^{\dim X+1}\cdot\OH_1\cdots\OH_d.
\]
In particular, the left-hand side is bounded. 

Take $\OH_d$ to be an adelic line bundle on $K/k$ with $\deg(\OH_d)=1$, and we also view it as an adelic line bundle on $F/k$ via pull-back. 
Then we have 
$$
\frac{1}{\deg(x_m)}\OL\cdot\OH_1\cdots\OH_{d}\cdot z_m
=\frac{1}{\deg(x_m)}\TL_K\cdot\TH_1\cdots\TH_{d-1}\cdot z_m,
$$
where
$\TL_K$ and $\TH_i$ are the images of $\OL$ and $\OH_i$ in 
$\TPic(X/K)$ and $\TPic(F/K)$, respectively. 
This identity can be checked for model adelic line bundles and then take an approximation to get the general case. 

Consider the functorial map 
$$
\TPic(F/K) \lra \TPic(F'/K').
$$
Denote by $\TH_i'$ the image of $\OH_i$ in 
$\TPic(F'/K')$. 
Note that $\TL_{K'}'$ defined above is the image of $\TL_K$ in 
$\TPic(F'/K')$. 
The property $\trd(F'/K')=\trd(F/K)$
implies that projective models of $F'/K'$ have the same dimension as those of $F/K$. 
As a consequence, there is a constant $c>0$ such that 
$$
\frac{1}{\deg(x_m)}\TL_K\cdot\TH_1\cdots\TH_{d-1}\cdot z_m
=c\cdot \frac{1}{\deg(\alpha_m')}\TL_{K'}'\cdot\TH_1'\cdots\TH_{d-1}'\cdot \alpha_m'.
$$
In particular, the right-hand side is bounded. 
Moreover, if we take the original $\OH_1,\cdots, \OH_{d-1}$ to be equal and big on $F/k$, then 
$\TH_1,\dots, \TH_{d-1}$ are equal and big on $F/K$, and thus 
$\TH_1',\dots, \TH_{d-1}'$ are equal and big on $F'/K'$.
This proves that the sequence has bounded $\TL_{K'}'$-height. 
\end{proof}

\subsection{Step 3: Choose special test functions}

In the above, we have reduced our main theorem (Theorem \ref{equi ft}) to the approximation theorem (Theorem \ref{approximation1}). 
In this subsection, we prove that in the weak convergence in Theorem \ref{approximation1}, we only need to consider some special test functions obtained by pull-back from $\PP_K^{1,\an}$. 

Let $K$ be a non-archimedean field. 
Let $X$ be a projective variety over $K$. 
Recall that a \emph{model function} $f:X^\an\to \RR$ is a function of the form 
$-a\log \|1\|_\CL: X^\an \to \RR$, where $a$ is a rational number, and
$\|\cdot\|_\CL$ is the metric of $\CO_X$ on $X^\an$ induced by a projective model 
$(\CX, \CL)$ of $(X, \CO_X)$ over $O_K$. 
Note that model functions are continuous. 

Let $U$ be a quasi-projective variety over $K$. Recall that $C(U^\an)$ denotes the ring of continuous $\RR$-valued functions, and 
 $C_c(U^\an)$ denotes the ring of continuous and compactly supported $\RR$-valued functions. 
 
A \emph{model function} on $U^\an$ is a function of the form $\iota^*f =f\circ \iota$ for an open immersion $\iota:U\hookrightarrow X$ into a projective compactification $X$ over $K$ and a model function $f$ on $X^\an$. 
Denote by
$C^{\m}(U^\an)$ the $\QQ$-vector space of model functions on $U^\an$, 
and denote by
$C_c^{\m}(U^\an)$ the $\QQ$-vector space of compactly supported model functions on $U^\an$.

Denote by 
$C_{P}^{\m}(U^\an)_{\rm pure}$
the set of functions on $U^\an$ of the form
$$
(\psi_1^*f_1)(\psi_2^*f_2)\cdots (\psi_N^*f_N),
$$
where $N\geq 1$ is an integer, every $\psi_i: U\to\PP^1$ is a $K$-morphism, and every $f_i: \PP^{1,\an}\to \RR$ is a model function. 
Denote 
$$C_{P,c}^{\m}(U^\an)_{\rm pure}= C_{P}^{\m}(U^\an)_{\rm pure} \cap C_c(U^\an).
$$
Denote by $C_{P,c}^{\m}(U^\an)$ the $\QQ$-vector subspace of $C(U^\an)$ generated by $C_{P,c}^{\m}(U^\an)_{\rm pure}$.
Then  $C_{P,c}^{\m}(U^\an)$ is actually a $\QQ$-subalgebra of $C(U^\an)$. 

Note that we require $\psi_i: U\to\PP^1$ to be an algebraic $K$-morphism, and if $U$ is projective, there might be few non-constant choices of $\psi_i$. However, 
if $U$ is affine, there are plenty of choices.  
The main result of this subsection is the following density theorem. 

\begin{thm}[special test function] \label{test function}
Let $U$ be an affine variety over a non-archimedean field $K$. Then 
$C_{P,c}^{\m}(U^\an)$ is dense in $C_c(U^\an)$ under the uniform topology.
\end{thm}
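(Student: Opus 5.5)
We apply the Stone--Weierstrass theorem in the form of Theorem \ref{S-W for compactly support}, with $M=U^\an$ and $A=C_{P,c}^{\m}(U^\an)$. Since $U$ is separated of finite type over $K$, $U^\an$ is locally compact Hausdorff. A product of two pure elements is again pure, so $A$ is a $\QQ$-subalgebra of $C_c(U^\an)$. It remains to show that $A$ vanishes nowhere and separates points.

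\emph{Step 1: functions on $\PP^{1,\an}$ pulled back along coordinates.} Fix a closed embedding $U\hookrightarrow \AA^N_K$ with coordinates $t_1,\dots,t_N\in \CO(U)$, and let $\psi_i:U\to \AA^1\subset \PP^1$ be the morphism given by $t_i$. For any $g\in \CO(U)$ we likewise get $\psi_g:U\to\PP^1$. On $\PP^{1,\an}$ we use the following supply of model functions. For $a\in K$ and $r\in |K^\times|_\QQ$, the functions $\max\{\log|T-a|,\log r\}$ truncated above at some $\log R$ are model functions on $\PP^{1,\an}$: they are of the form $-\log\|1\|$ for vertical Cartier divisors on suitable blow-ups of $\PP^1_{O_K}$. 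Let $\chi$ be a model function on $\PP^{1,\an}$ that equals $1$ on the closed disc $\{|T|\le R\}$ and has support in a slightly larger closed disc $\{|T|\le R'\}$. A piecewise-linear combination in $\log|T|$ of this type is a model function by Gubler's density \cite[Thm. 7.12]{Gub98}, or directly by the explicit toric models of $\PP^1$. Then $\prod_{i=1}^N \psi_i^*\chi$ is a pure element of $A$. It is compactly supported, since its support lies in $\{|t_i|\le R'\ \forall i\}$, which is compact because $U$ is closed in $\AA^N$. It equals $1$ on $\{|t_i|\le R\ \forall i\}$. Letting $R$ grow shows that $A$ vanishes nowhere.

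\emph{Step 2: separating points.} Take distinct $x\neq y$ in $U^\an=\CM(\CO(U)/K)$. Then there is some $g\in\CO(U)$ with $|g(x)|\neq|g(y)|$. Choose a rational value $\log r$ strictly between $\log|g(x)|$ and $\log|g(y)|$, or just below the larger value if one of the two is $0$. Let $f=\max\{\log|T|,\log r\}$, truncated above at a large $\log R$. This is a model function on $\PP^{1,\an}$ with $f(\psi_g(x))\ne f(\psi_g(y))$. Now multiply by the cutoff $\Phi=\prod_i\psi_i^*\chi$ from Step 1, with $R$ chosen large enough that $\Phi(x)=\Phi(y)=1$. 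Then $(\psi_g^*f)\cdot\Phi$ is a pure element of $A$ with different values at $x$ and $y$. Stone--Weierstrass then gives the density.

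\emph{Main obstacle.} The step needing care is verifying that the specific functions on $\PP^{1,\an}$ used above are genuinely model functions in the sense of the paper. These are the compactly supported cutoffs $\chi$ and the truncated functions $\max\{\log|T-a|,\log r\}$, and the issue is that they must come from projective $O_K$-models of $(\PP^1,\CO)$ with rational coefficient. If $r$ is not in the value group, I would take $r$ with $\log r\in\QQ\cdot\log|K^\times|$, which is dense in $\RR$ since $K$ is non-trivially valued, and use rational multiples, which the definition allows. Alternatively, I would avoid explicit models altogether: by Gubler's theorem, model functions are dense in $C(\PP^{1,\an})$, so continuous compactly supported separating functions on $\PP^{1,\an}$ can be uniformly approximated by model functions. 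Approximate pointwise separation together with a model cutoff that vanishes outside a compact set is then enough. For the cutoff, I would approximate $\chi$ by a model function, compose with a clamp so that it remains supported in a compact disc, and check that the support stays compact. I would first try the explicit toric construction, since it keeps the supports exactly controlled.
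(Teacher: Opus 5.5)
Your argument is correct, but it is organized differently from the paper's.

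\textbf{The paper's route.} The paper does not run Stone--Weierstrass on $C_{P,c}^{\m}(U^\an)$ itself. It first passes to the larger algebra $C_{A,c}(U^\an)$, spanned by products $(\psi_1^*f_1)\cdots(\psi_N^*f_N)$ with $\psi_i:U\to\AA^1$ and \emph{arbitrary} $f_i\in C_c(\AA^{1,\an})$. It verifies the two Stone--Weierstrass conditions there, using general continuous functions on $\AA^{1,\an}$ and the properness of $\tau:\AA^{N,\an}\to(\AA^{1,\an})^N$ (Lemma \ref{properness}). It then returns to model functions through Theorem \ref{density quasi-projective}, i.e.\ Gubler's density theorem combined with the truncation $\max\{f_m-2\epsilon_m,0\}$, which keeps supports inside $\supp(f)$.

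\textbf{Your route.} You exhibit explicit piecewise $\log$-linear model functions on $\PP^{1,\an}$ that already vanish nowhere and separate points, so no second approximation step is needed. Compactness of supports is immediate from your cutoff $\prod_i\psi_i^*\chi$, which is supported in $U^\an\cap\{|t_i|\le R'\ \forall i\}$; this is exactly the content of Lemma \ref{properness}. The paper's route is softer and never has to decide whether a specific function is a model function. Yours is more concrete and makes the support control transparent, at the price of tracking the value group.

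\textbf{Two small corrections on the value-group side.}

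First, Gubler's density theorem cannot certify that a particular function is a model function, so drop that citation. Use the identity
$\min\{\max\{\log|T|,\log r\},\log R\}=\log\max\{|T|,r\}-\log\max\{|T|,R\}+\log R$
for $r<R$ in $|K^\times|^{\QQ}$. Here each $\log\max\{|T|,\rho\}-\log\max\{|T|,1\}$ with $\rho^n=|c|$, $c\in K^\times$, is $\frac1n$ times the difference of the model metrics on $\CO(n)$ given by $\max\{|z_1^n|,|cz_0^n|\}$ and $\max\{|z_1|,|z_0|\}^n$, hence a model function.

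Second, a model function equal to exactly $1$ on a disc need not exist. Its value at the Gauss point of a disc $\{|T|\le\rho\}$ with $\rho\in|K^\times|$ lies in $\QQ\cdot\log|K^\times|$, which may not contain $1$ (e.g.\ $K=\QQ_p$ with $|p|=p^{-1}$). Instead take $\chi=\log R'-\min\{\max\{\log|T|,\log R\},\log R'\}$ with $R<R'$ in $|K^\times|^{\QQ}$. It equals the positive constant $\log R'-\log R$ on $\{|T|\le R\}$ and vanishes for $|T|\ge R'$. Then $\Phi(x)=\Phi(y)\neq 0$, which is all that your Steps 1 and 2 actually use.
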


\subsubsection{Preliminary results}

We list two preliminary results before proving Theorem \ref{test function}. 

Let $X$ be a projective variety over a non-archimedean field. By the theorem of Gubler \cite[Thm. 7.12]{Gub98} (cf. \cite[p. 643]{Yua08}),
 $C^{\m}(X^\an)$ is dense in $C(X^\an)$ under the uniform topology.
The following result extends it to the quasi-projective setting. 

\begin{thm} \label{density quasi-projective}
Let $U$ be a quasi-projective variety over a non-archimedean field $K$. 
Then any $f\in C_c(U^\an)$ is a uniform limit of a sequence $\{f_m\}_m$
in $C^{\m}(U^\an)$ with $\supp(f_m)\subset \supp(f)$.  
As a consequence, $C^{\m}_c(U^\an)$ is dense in $C_c(U^\an)$ under the uniform topology.
\end{thm}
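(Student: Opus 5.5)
We prove Theorem \ref{density quasi-projective} by reducing to Gubler's density theorem on a projective compactification and then truncating the approximants so that their supports stay inside $\supp(f)$. Fix an open immersion $\iota:U\hookrightarrow X$ into a projective variety $X$ over $K$. Since $f\in C_c(U^\an)$ has compact support in the open subset $U^\an\subset X^\an$, extending $f$ by zero gives a continuous function $\bar f\in C(X^\an)$. By Gubler \cite[Thm. 7.12]{Gub98}, there are model functions $g_m\in C^{\m}(X^\an)$ with $\sup_{X^\an}|\bar f-g_m|<1/m$. The restrictions $\iota^*g_m$ converge uniformly to $f$, but their supports need not lie in $\supp(f)$, so a cut-off is required.

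For the cut-off we use the lattice structure of model functions. The space $C^{\m}(X^\an)$ is a $\QQ$-vector space closed under $\max$ and $\min$. This holds because a model function is $-\log\|1\|$ for a vertical $\QQ$-line bundle, and the maximum of two such functions is again of this form after blowing up along the ideal generated by the two local generators. This is standard; see Gubler's proof, which uses exactly this lattice property for Stone--Weierstrass. It also contains the constants $\QQ$. We then set
$$
f_m=\max\{\min\{g_m,\,0\}+\min\{\max\{g_m,\,0\},\ \ldots\}\}
$$
More simply, we work with $f^+$ and $f^-$ separately and define
$$
f_m^{\pm}=\max\{g_m^{\pm}-1/m,\;0\},
$$
where $g_m^{\pm}$ are model approximations of $\bar f^{\pm}=\max\{\pm\bar f,0\}$ within $1/m$. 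Wherever $\bar f^{\pm}=0$ we have $g_m^{\pm}<1/m$, so $f_m^\pm=0$ there. Hence $\supp(f_m^\pm)\subset\supp(f^\pm)\subset\supp(f)$, and $|f_m^\pm-\bar f^\pm|\le 2/m$. Setting $f_m=\iota^*(f_m^+-f_m^-)$ gives an element of $C^{\m}(U^\an)$ with $\supp(f_m)\subset\supp(f)$ and $\|f_m-f\|_\infty\le 4/m$.

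Because $\supp(f)$ is compact in $U^\an$, each $f_m$ is compactly supported, so $f_m\in C^{\m}_c(U^\an)$. This yields the density consequence.

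The main obstacle is justifying closure of $C^{\m}(X^\an)$ under $\max$, together with the small point that the support of $\max\{g-\epsilon,0\}$ is a closure. For the latter, $\{g>\epsilon\}$ has closure contained in $\{g\ge\epsilon\}$, and $\{g\ge\epsilon\}\subset\{\bar f>0\}\subset\supp(f)$. For the former, given vertical models $\CL_1,\CL_2$ on a common model $\CX$ with local equations $\ell_1,\ell_2$ for the trivializing section, blow up the ideal $(\ell_1,\ell_2)$. On the blow-up the ideal is invertible and defines a vertical line bundle whose metric function is $\min$ of the two corresponding norms, so $\max$ of the functions is a model function. After clearing the rational coefficients, this reduces to integral multiples.
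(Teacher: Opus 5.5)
Your proposal is correct and is essentially the paper's proof: treat $f^{\pm}$ separately, extend by zero to a projective compactification, approximate by Gubler's theorem, and truncate via $\max\{g-\epsilon,0\}$ using closure of model functions under $\max$ (the paper cites \cite[Lem. 7.8]{Gub98} where you sketch the blow-up), with the same support argument. One small fix: the constant model functions are $\QQ\cdot\log|K^\times|$ rather than $\QQ$, so replace $1/m$ by some $\epsilon_m\in\QQ\cdot\log|K^\times|\cap(0,1/m)$, which exists by density (the paper's choice of $\epsilon_m$ tacitly needs this too); otherwise $\max\{g_m^{\pm}-1/m,0\}$ need not be a model function.
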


\begin{proof}
Fix $f\in C_c(U^\an)$. The goal is to approximate it by elements of $C^{\m}(U^\an)$ with smaller supports. Write  
$$f= f_+-f_-, \quad f_+=\frac{1}{2}(|f|+f), \quad f_-=\frac{1}{2}(|f|-f).$$
It suffices to approximate $f_+$ and $f_-$ separately. In other words, we can assume that $f\geq 0$ everywhere. 

Let $X$ be a projective compactification of $U$ over $K$. View $U^\an$ as an open subspace of $X^\an$. 
View $f$ as a continuous function on $X^\an$ by extension by 0.  
We will prove that $f$ can be approximated by elements of 
$C^{\m}(X^\an)\cap C_c(U^\an)$, where the intersection is taken in $C(U^\an)$. 

By Gubler's theorem in the projective case, there is a sequence $\{f_m\}_m$ in $C^{\m}(X^\an)$ approximating $f$ uniformly. 
Then there is a sequence $\{\epsilon_m\}_m$ of positive real numbers converging to 0 such that $|f-f_m|< \epsilon_m$ everywhere. 
By \cite[Lem. 7.8]{Gub98}, 
$$f_m'=\max\{f_m -2\epsilon_m,\ 0 \}$$
is still a model function on $X^\an$. 
Moreover, $f_m'$ converges uniformly to $\max\{f,0\}=f$ by the assumption $f\geq 0$. 
It remains to check the support of $f'_m$. 
In fact, if $x\in \supp(f_m')$, then $f_m(x) \geq2\epsilon_m$, and thus $f(x) \geq\epsilon_m$. 
It follows that $\supp(f_m')$ is contained in $\supp(f)$. This finishes the proof. 
\end{proof}

In the following, we introduce some notation and results on product spaces. 
Let $K$ be a non-archimedean field. 
For every positive integer $N$, let $\AA^{N,\an}$ (resp. $\PP^{1,\an}$, 
$(\PP^1)^{N,\an}$) be the Berkovich spaces of $\AA^N_K$ 
(resp. $\PP^1_K$, $(\PP^1_K)^N$) over $K$. 
Let $(\AA^{1,\an})^N$ (resp. $(\PP^{1,\an})^N$) be the $N$-th power of $\AA^{1,\an}$ (resp. $\PP^{1,\an}$) as topological spaces, which are not necessarily Berkovich spaces. Note that the $i$-th projection $p_i: {\AA^N}\to \AA^1$ induces a morphism 
$$
p_i:\AA^{N,\an} \lra \AA^{1,\an}.
$$
The product gives a \emph{contraction map} 
$$
\tau=(p_1,\dots, p_N):\AA^{N,\an} \lra (\AA^{1,\an})^N.
$$
The map is continuous and surjective, but  it is usually not injective for $N>1$. 
A similar result holds for $(\PP^{1,\an})^N$. 

\begin{lem}\label{properness}
The canonical map $\tau:\AA^{N,\an} \to (\AA^{1,\an})^N$ is proper in the sense that preimages of compact subsets are compact.  
\end{lem}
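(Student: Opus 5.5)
The plan is to show that any compact subset of $(\AA^{1,\an})^N$ lies in a product of closed polydiscs, that the preimage under $\tau$ of such a product is a compact polydisc in $\AA^{N,\an}$, and then to conclude using closedness. Let $C\subset(\AA^{1,\an})^N$ be compact. Since $\tau$ is continuous and $\AA^{N,\an}$ is Hausdorff, $\tau^{-1}(C)$ is closed in $\AA^{N,\an}$. It therefore suffices to show that $\tau^{-1}(C)$ is contained in some compact subset of $\AA^{N,\an}$.

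First, each projection $q_i(C)\subset\AA^{1,\an}$ is compact. The function $x\mapsto |T(x)|$ is continuous on $\AA^{1,\an}$, where $T$ is the coordinate, so it is bounded on $q_i(C)$, say by $r_i>0$. Hence
$$C\subset \prod_{i=1}^N \{x\in\AA^{1,\an}: |T(x)|\le r_i\}.$$

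Next, a point $y\in\AA^{N,\an}$ with $\tau(y)$ in this product satisfies $|T_i(y)|=|T_i(p_i(y))|\le r_i$ for every $i$, because $p_i$ is induced by $K[T]\to K[T_1,\dots,T_N]$, $T\mapsto T_i$. So $\tau^{-1}(C)$ is contained in the closed polydisc
$$D(r)=\{y\in\AA^{N,\an}: |T_i(y)|\le r_i,\ i=1,\dots,N\}.$$
The remaining step, which is the only one requiring justification, is that $D(r)$ is compact. I would identify $D(r)$ with the Berkovich spectrum $\CM(K\{r^{-1}T\})$ of the generalized Tate algebra, which is compact by Berkovich \cite[Thm. 1.2.1]{Ber1}. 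Alternatively, $D(r)$ is closed in $\AA^{N,\an}\subset\PP^{N,\an}$. To see this, note that on the standard chart $D(r)$ is defined by the closed conditions $|T_i|\le r_i$, and it avoids the hyperplane at infinity because the same bounds hold near any point where some $T_i$ is large. Since $\PP^{N,\an}$ is compact (projective case), $D(r)$ is compact.

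Finally, $\tau^{-1}(C)$ is a closed subset of the compact set $D(r)$, so it is compact. The statement for $(\PP^{1,\an})^N$ follows from the same argument, and is even more direct since $(\PP^1)^{N,\an}$ is itself compact.
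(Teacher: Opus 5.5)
Your proof is correct and follows the same route as the paper: you trap the compact set in a product of closed discs, observe that its preimage under $\tau$ is the compact polydisc $\CM(K\{r^{-1}T_1,\dots,r^{-1}T_N\})$, and conclude because a closed subset of a compact set is compact; your continuity argument for $|T|$ on the projections makes explicit the paper's ``we can assume $S\subset (D_1(r))^N$''. One small correction: $\tau^{-1}(C)$ is closed because $C$ is closed in the Hausdorff space $(\AA^{1,\an})^N$ and $\tau$ is continuous, not because $\AA^{N,\an}$ is Hausdorff.
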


\begin{proof}
Denote by $x_1,\dots, x_N$ the standard coordinate functions of $\AA^{N}$. 
Denote by 
$$D_N(r)=\CM(K\{r^{-1}T_1,\cdots, r^{-1}T_N\})$$
 the standard closed ball of radius $r>0$ in $\AA^{N,\an}$. 
Note that $\AA^{N,\an}$ is covered by $D_N(r)$, and $(\AA^{1,\an})^N$  
is covered by $(D_1(r))^N$, as $r$ varies. 

Let $S$ be a compact subset of $(\AA^{1,\an})^N$, and we need to prove that 
$\tau^{-1}(S)$ is compact in $\AA^{N,\an}$.
We can assume that $S$ is contained in $(D_1(r))^N$ for some fixed $r$.
As a consequence, $\tau^{-1}(S)$ is contained in $\tau^{-1}((D_1(r))^N)=D_N(r)$. 
As $S$ is closed, $\tau^{-1}(S)$ is closed in $D_N(r)$. 
As $D_N(r)$ is compact, {$\tau^{-1}(S)$} is compact.  
 \end{proof}

\subsubsection{Proof of the density theorem}

Now we are ready to prove the density theorem. 

\begin{proof}[Proof of Theorem \ref{test function}]
We first pass to an affine version of $C_{P,c}^{\m}(U^\an)$. 
Denote by 
$C_{A}(U^\an)_{\rm pure}$
the set of functions on $U^\an$ of the form
$$
(\psi_1^*f_1)\cdots (\psi_N^*f_N),
$$
where $N\geq 1$ is an integer, every $\psi_i: U\to\AA^1$ is a $K$-morphism, and every $f_i\in C_c(\AA^{1,\an})$ is continuous and compactly supported. 
Denote 
$$
C_{A,c}(U^\an)_{\rm pure}= C_{A}(U^\an)_{\rm pure} \cap C_c(U^\an).
$$
Denote by $C_{A,c}(U^\an)$ the $\QQ$-vector subspace of $C_c(U^\an)$ generated by $C_{A,c}(U^\an)_{\rm pure}$, which is actually a $\QQ$-subalgebra of $C_c(U^\an)$. 

By Theorem \ref{density quasi-projective}, every element of $C_c(\AA^{1,\an})$ can be approximated uniformly by elements of $C^{\m}_c(\AA^{1,\an})$ of smaller supports. 
It follows that every element of $C_{A,c}(U^\an)_{\rm pure}$ can be approximated  
uniformly by elements of $C_{P,c}^{\m}(U^\an)_{\rm pure}$. 
Therefore, it suffices to prove that $C_{A,c}(U^\an)$ is dense in $C_c(U^\an)$ under the uniform topology.
It suffices to check the conditions in the Stone--Weierstrass theorem in Theorem \ref{S-W for compactly support}.
	
First, $C_{A,c}(U^\an)$ vanishes nowhere; i.e., for any $x\in U^\an$, there exists $g\in C_{A,c}(U^\an)$ such that $g(x)\neq 0$. 
Take a closed immersion $\iota:U\emb\AA^N$, which induces a closed immersion $\iota:U^\an\emb\AA_K^{N,\an}$. Compose it with the contraction map 
$\tau: \AA^{N,\an}\to (\AA^{1,\an})^N.$
There exist functions 
$$f=(p_1^* f_1)\cdots (p_N^*f_N) \in C_c((\AA^{1,\an})^N), \quad 
f_i\in {C_c(\AA^{1,\an})},$$
such that $f((\tau\circ \iota)(x))\neq 0$. 
Then $g=(\tau\circ \iota)^* f$ satisfies the requirement. 
In fact, by Lemma \ref{properness}, $\tau$ is proper, and thus $\tau\circ \iota$ is also proper. It follows that $(\tau\circ \iota)^* f$ is still compactly supported, and thus lies in $C_{A,c}(U^\an)$.

Second, $C_{A,c}(U^\an)$ separates points. 
Let $x, y$ be two different points in $U^\an$, and we need to find $g\in C_{A,c}(U^\an)$ with $g(x)\neq g(y)$. 
Note that $U=\Spec R$ is affine. 
By definition, there exists $h\in R$ such that $|h|(x)\neq |h|(y)$. 
Take a closed immersion $\iota:U\emb\AA^N$. By taking the product with 
$h:U\to \AA^1$ (and replacing $N$ by $N+1$), we can assume that the composition of $\iota:U\emb\AA^N$ with the first projection $p_1:\AA^N\to \AA^1$
is equal to $h:U\to \AA^1$. 

As above, consider the composition of $\iota:U^\an\emb\AA_K^{N,\an}$ with the contraction map 
$\tau: \AA^{N,\an}\to (\AA^{1,\an})^N.$
We claim that there exist functions 
$$f=(p_1^* f_1)\cdots (p_N^*f_N) \in C_c((\AA^{1,\an})^N), \quad 
f_i\in {C_c(\AA^{1,\an})},$$
such that $f((\tau\circ \iota)(x))\neq f((\tau\circ \iota)(y))$. 
In fact, denote by $x_i, y_i$ the images of $x,y$ under the composition 
$$U^\an\stackrel{\iota}\lra \AA^{N,\an} \stackrel{\tau}\lra (\AA^{1,\an})^N \stackrel{p_i}\lra \AA^{1,\an}.$$
The condition $|h|(x)\neq |h|(y)$ implies that $x_1\neq y_1$ in $\AA^{1,\an}$. 
Then we can make $f_1(x_1)\neq f_1(y_1)$, and make
$f_i(x_i)= f_i(y_i)=1$ for every $i\neq 1$. 
Then $g=(\tau\circ \iota)^* f$ satisfies the requirement. 
\end{proof}

\subsection{Step 4: Reduce to a uniform H\"older property}

Previously, we have reduced our main theorem (Theorem \ref{equi ft}) to an approximation theorem (Theorem \ref{approximation1}).
In this subsection, we introduce a uniform H\"older property, and see how it implies the approximation theorem. 

\subsubsection{Distance function}

To start with, we first introduce a canonical \emph{distance function} on a Berkovich curve.
Let $\Gamma$ be a metrized graph. The distance $\dist_\Gamma(x,y)$ of two points $x,y\in\Gamma$ is the infimum of the lengths of paths from $x$ to $y$. 

\begin{definition} \label{defn distance}
Let $X$ be a smooth projective curve over an algebraically closed non-archimedean field $K$. For two points $x,y\in X^\an$, we define their \emph{canonical distance}
as
\[
\dist(x,y)=\sup_{\CX}\dist_{S(\CX)}(\tau(x),\tau(y)).
\]
Here $\CX$ runs through all strictly semi-stable models of $X$ over $O_K$, and $\tau:X^\an\to S(\CX)$ is the retraction map.
\end{definition}
It is easy to see that if $x,y\in S(\CX)$ for some $\CX$, then
\[
\dist(x,y)=\dist_{S(\CX)}(x,y).
\]
The following theorem concerns the finiteness of the canonical distance function.

\begin{thm} \label{finite distance}
\begin{enumerate}[(1)]
\item If $x,y$ are points of types $2,3,4$, then $\dist(x,y)<\infty$.
\item
If $x$ is a point of type $2,3,4$, and $\{x_m\}_m$ is a sequence of points of type $2,3,4$ convergent to $y$, then 
\[
\lim_{m\to\infty}\dist(x_m,x)=0.
\]
\end{enumerate}

\end{thm}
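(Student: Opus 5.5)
The plan is to describe the canonical distance through skeleta that grow with the model. The key observation is that for a fixed strictly semi-stable model $\CX$ and any refinement $\CX'\to\CX$, the retraction $\tau_{\CX}$ factors as $\tau_{\CX}=\tau_0\circ\tau_{\CX'}$. Here $\tau_0:S(\CX')\to S(\CX)$ is a deformation retraction that collapses the trees attached to $S(\CX)$ and is an isometry on $S(\CX)\subset S(\CX')$. It follows that $\dist_{S(\CX)}(\tau(x),\tau(y))$ is nondecreasing along refinements, so the supremum in Definition \ref{defn distance} is a directed limit. I will also use that $X^\an$ is the inverse limit of the skeleta $S(\CX)$, so that $X^\an$ is an $\RR$-tree away from the skeleta.

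For (1), the plan is to fix one strictly semi-stable model $\CX_0$ and to write, for every refinement $\CX$,
\[
\dist_{S(\CX)}(\tau(x),\tau(y))\leq \dist_{S(\CX)}(\tau_\CX(x),\tau_{\CX_0}(x))+\dist_{S(\CX_0)}(\tau_{\CX_0}(x),\tau_{\CX_0}(y))+\dist_{S(\CX)}(\tau_{\CX_0}(y),\tau_\CX(y)).
\]
The middle term is finite. The first term is the length of the path in the tree $\tau_0^{-1}(\tau_{\CX_0}(x))$ from its root to $\tau_\CX(x)$, and this is bounded by the length of the geodesic segment $[\tau_{\CX_0}(x),x]$ in $X^\an$. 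That segment lies inside a residue disc (or annulus) of $\CX_0$, namely the fiber of $\tau_{\CX_0}$ over $\tau_{\CX_0}(x)$. Such a fiber is isomorphic to an open disc, so the length is computed in the standard way on $\PP^{1,\an}$. For a point of type 2 or 3 it equals $\log(R/r)$ with $r>0$. For a point of type 4, given by a nested sequence of discs with radii decreasing to $r_\infty>0$, it equals $\lim\log(R/r_k)=\log(R/r_\infty)<\infty$. By contrast, a type 1 point has radius $0$, which gives infinite length. This reproduces the Baker--Payne--Rabinoff statement \cite[Cor. 5.7]{BPR}, and it shows that $\dist(x,y)$ is bounded by the sum of these three finite quantities.

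For (2), I take $y=x$, which is the intended reading since the limit is $0$. The plan is to fix $\varepsilon>0$ and choose a model $\CX$ with $\tau_{\CX}(x)$ within distance $\varepsilon$ of $x$ along the segment. For type 2 or 3 points this is possible with distance $0$, by putting $x$ on the skeleton after a finite base change and blow-up \cite[Lem. 5.6.4]{GJR2}. For type 4 points, which have radius $r_\infty$, I take a disc of radius $r_k$ with $\log(r_k/r_\infty)<\varepsilon$. Then I choose a small open neighborhood $V$ of $x$, namely the preimage under $\tau_\CX$ of a small star around $\tau_\CX(x)$, intersected with the open disc around $x$ of radius slightly larger than $r_\infty$. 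For every $x_m\in V$ of types 2, 3, 4, the geodesic from $x_m$ to $x$ stays inside the tree part, and its length is at most $O(\varepsilon)$. The reason is that $x_m$ is the sup norm of a disc whose radius is close to that of the smallest disc containing $x$, and whose branch point with $x$ lies within $\varepsilon$ of $x$. Since $x_m\to x$, eventually $x_m\in V$, and then $\dist(x_m,x)\leq C\varepsilon$.

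The main obstacle is the type 4 case of (2). In the weak (Berkovich) topology, convergence does not a priori control the path metric: a path-metric ball is generally not open in the weak topology near branch points of type 2. At a type 4 point, however, there is only one tangent direction, and this is exactly what makes the required neighborhood exist. I will argue this through the disc description in Example \ref{point types}, carried out in local coordinates on the residue disc of $\CX$.
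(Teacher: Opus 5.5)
Your part (1) is a reasonable hands-on version of what the paper simply quotes from \cite[Cor. 5.7]{BPR}. The paper's whole proof of both parts is that one citation. One small correction to your (1): the fiber of $\tau_{\CX_0}$ over a skeleton point is that point together with a union of open residue discs, and $(\tau_{\CX_0}(x),x]$ lies in one of them.

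The genuine gap is in (2), at the step ``for every $x_m\in V$ of types 2, 3, 4 the geodesic from $x_m$ to $x$ has length $O(\varepsilon)$''. This is false. Lying in an open disc $D^-(b,R)$ bounds the radius of a point from above but not from below, and $\log(\text{radius})$ changes at unit speed along paths.

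Here is a concrete counterexample at a type 4 point. Let $x$ be given, inside a residue disc, by nested discs $D(a_k,r_k)$ with $r_k\downarrow r_\infty>0$ and empty intersection. Put $x_k=\zeta_{a_k,\rho_k}$ with $\rho_k\in|K^\times|$ and $\rho_k\to 0$.
\begin{itemize}
\item Any open neighborhood of $x$ contains an open disc $D^-(b,R)\ni x$. Since $\max(|a_k-b|,r_k)\downarrow |T-b|_x<R$, this disc contains $D(a_k,r_k)$, and hence $x_k$, for $k\gg 0$. So $x_k\to x$ in the Berkovich topology.
\item On the other hand, $\dist(x_k,x)\geq \log(r_\infty/\rho_k)\to\infty$.
\end{itemize}
So (2), read with Berkovich convergence and $y=x$, is false, not merely unproven. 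The single tangent direction at a type 4 point does not help. The obstruction is that every Berkovich neighborhood of any point of $\mathbf H(X^\an)$ contains type 2 points arbitrarily deep in sub-discs. The same thing happens at type 2 and type 3 points, so no path-metric ball of finite radius contains a Berkovich neighborhood. For the same reason, the citation of Baker--Payne--Rabinoff cannot give this reading: their metric topology on $\mathbf H(X^\an)$ is strictly finer than the Berkovich topology.

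What is true is a density statement combined with the implication in the other direction, and this is what the proof of Theorem \ref{approximation1} actually needs.
\begin{itemize}
\item For $x$ of type 2, 3, 4 there are divisorial points $x_j$ with $\dist(x_j,x)\to 0$:
  \begin{itemize}
  \item for type 2, take $x_j=x$;
  \item for type 3, take type 2 points on a skeleton edge through $x$;
  \item for type 4, take $\zeta_{a_j,r_j'}$ with $r_j'\in|K^\times|$, $r_j'\geq r_j$ and $r_j'\to r_\infty$. Your own disc computation gives $\dist=\log(r_j'/r_\infty)$.
  \end{itemize}
\item Such $x_j$ also converge to $x$ in the Berkovich topology, because metric convergence implies Berkovich convergence.
\end{itemize}
Both properties are used there: Berkovich convergence for Lemma \ref{continuity lem}, and metric convergence for Theorem \ref{holder}. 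You should replace (2) by this statement rather than try to derive metric convergence from Berkovich convergence. Note that the paper's Step 4 needs the same adjustment, since it picks an arbitrary Berkovich-convergent divisorial sequence $v_j\to v$ and then invokes (2).
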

\begin{proof}
By Baker--Payne--Rabinoff \cite[Cor. 5.7]{BPR}, the distance function extends to a finite-valued metric on $\mathbf H(X^{\an})=X^{\an}\setminus X(K)$, the set of points of type $2,3,4$. 
\end{proof}

\subsubsection{Uniform H\"older property}

To prove Theorem \ref{approximation1}, the key is the following H\"older property. We will first state the theorem, and then explain the notation. 

\begin{thm}[uniform H\"older property]\label{holder}
Let $K$ be an algebraically closed non-archimedean field. 
Let $Y$ be a smooth projective curve over $K$. 
Let $\pi:U\to Y$ be a flat $K$-morphism from a quasi-projective variety $U$ over $K$. 
Denote by $F$ the function field of $Y$, and denote by $X\to \Spec F$ the generic fiber of $\pi$, which is assumed to be geometrically integral. 
Let $\TL\in \wt\Pic(U/K)$ be a big adelic line bundle on $U/K$.
Let $g\in C_{P,c}^\m(U^\an)$ be a test function on $U^\an$. 
Then there exist a Zariski open subset $X'$ of $X$ and positive constants $A_0, A_1$ such that for any closed point $x\in X'$ and any two points $v, w\in Y^\an$ of type $2,3,4$, we have
	$$
	\left|\frac{1}{\deg(x)} g(x_{v})-\frac{1}{\deg(x)} g(x_{w})\right|
	\leq
	A_1\cdot\sqrt{h_{\TL}(x)+A_0}\cdot\sqrt{\dist(v,w)}.
	$$
\end{thm}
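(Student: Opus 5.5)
Following the outline in \S\ref{sec idea}, we reduce the uniform H\"older property to the distance inequality on skeletons (Step 5) and then prove that inequality via heights (Step 6). By linearity we may assume $g=(\psi_1^*f_1)\cdots(\psi_N^*f_N)\in C_{P,c}^\m(U^\an)_{\rm pure}$. First we handle the point sets. Since $x_v$ and $x_w$ are finite sets of classical points of $U^\an$ lying over $v$ and $w$, we work with the 0-cycles $x_v, x_w$ of degree $\deg(x)$. We choose strictly semistable models $\CY$ of $Y$ and $\CP_i$ of $\PP^1$, with skeletons $\Gamma=S(\CY)$ and $\Gamma_i=S(\CP_i)$. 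Each model function $f_i$ factors through the retraction $\tau_i:\PP^{1,\an}\to\Gamma_i$ after refining $\CP_i$, and it is then piecewise linear, hence bounded and Lipschitz on $\Gamma_i$. We also enlarge $\Gamma$ so that it contains $v$ and $w$. For type $4$ points we take a limit over models, using Definition \ref{defn distance} and Theorem \ref{finite distance}. By Lemma \ref{lipschitz2}, the product $f_1\cdots f_N$ is Lipschitz on $\Gamma_1\times\cdots\times\Gamma_N$ for the box distance. By Lemma \ref{lipschitz1}, $|g(x_v)-g(x_w)|$ is then bounded by $C\cdot\dist(\tau_*x_v,\tau_*x_w)$. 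This in turn is at most $C\sum_i\dist_{\Gamma_i}([x_v]_{\Gamma_i},[x_w]_{\Gamma_i},M)$ for a single transport plan $M$ common to all $i$. So it suffices to prove the distance inequality $\frac{1}{\deg(x)}\dist_{\Gamma_i}(\ldots,M)\le A_2\sqrt{h_{\TL}(x)+A_0}\sqrt{\dist_\Gamma(v,w)}$ for each $i$.

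To construct $M$, let $Y'$ be the normalization of the closure of $x$ in $U$, which is a smooth projective curve finite over $Y$ of degree $\deg(x)$, with induced $\psi_i':Y'\to\PP^1$. After a finite extension (harmless, since $K$ is algebraically closed) we take a strictly semistable model $\CY'$ dominating $\CY$ and $\CP_i$, so that $\varphi:Y'\to Y$ and $\psi_i'$ extend and the induced maps on skeletons are piecewise linear (\S\ref{subsec morphism}). We take a geodesic $\ell\subset\Gamma$ from $v$ to $w$ and lift it to $\Gamma'=S(\CY')$. By Lemma \ref{total expansion}, the fiber cycle $x_u$ for $u\in\ell$ equals $\sum_{u'\mapsto u}[\CH(u'):\CH(u)]u'$, and this cycle moves along the edges $e'$ over $\ell$ with weights $d_{e'}(\varphi)$. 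These paths define the transport plan $M$. Along them, the cost in $\Gamma_i$ is $\sum_{e'}\int_{\ell'\cap e'} d_{e'}(\varphi)\,d_{e'}(\psi_i')\,\d t$ with $\d t$ measured on $\ell$. Since $l(\ell'\cap e')=l(\ell\cap e)/d_{e'}(\varphi)$, this equals $\sum_{e'} d_{e'}(\psi'_i)\,l(\ell'\cap e')\,d_{e'}(\varphi)$.

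We then apply Cauchy--Schwarz to this sum, with the weight $d_{e'}(\varphi)$ factored into the first term. This bounds the cost by $\bigl(\sum d_{e'}(\psi_i')^2 l(\ell'\cap e')\bigr)^{1/2}\bigl(\sum d_{e'}(\varphi)^2 l(\ell'\cap e')\bigr)^{1/2}$. The second factor equals $\bigl(\sum_{e'} d_{e'}(\varphi)\,l(\ell\cap e)\bigr)^{1/2}=\sqrt{\deg(x)\,\dist_\Gamma(v,w)}$ by Lemma \ref{total degree}. The first factor is at most $I(\omega_0)^{1/2}$ with $\omega_0=\d'\bar t\,\d''\bar t$ on $\Gamma_i$. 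By Corollary \ref{strictly positive measure on skeleton}, we choose a weakly smooth metric on $\CO(1)$ with $c_1\ge \epsilon\,i_*\omega_0$; pulling back gives $I(\omega_0)\le\epsilon^{-1}\deg(x)\,h_{\psi_i^*\CO(1)}(x)$ up to a constant from comparing the metric with a fixed model metric. Finally we apply the Yuan--Zhang height inequality \cite[Thm. 5.3.7]{YZ26}, which gives $h_{\psi_i^*\CO(1)}(x)\le A_3(h_{\TL}(x)+A_0)$ on a Zariski open $X'$; this uses that $\TL$ is big. Dividing by $\deg(x)$ yields the claim.

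The main obstacle is making the transport-plan construction rigorous when $x$ is not rational: the fibers over $\ell$ must be tracked through vertices of $\Gamma'$, where branches split or merge, and type 3/4 points must be handled, so that the weights stay consistent (Lemma \ref{total expansion}). A second delicate point is that the lower bound for the metric must hold in the form of positive measures against pullbacks, which requires the pulled-back form calculus of \cite{GJR2}. We would first treat $x$ rational and $\ell$ inside one edge, then glue piecewise using Lemma \ref{triangle inequality}.
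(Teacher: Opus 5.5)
Your proposal is correct and takes essentially the same route as the paper (Steps 5--6). The shared steps are:
\begin{enumerate}
\item a Lipschitz reduction to the distance inequality on $\Gamma_1\times\cdots\times\Gamma_N$ with one common transport plan;
\item a plan built by lifting a geodesic of $\Gamma$ to $\Gamma'$ with masses $d_{e'}(\varphi)$, made consistent by Lemma \ref{total expansion};
\item a partial-height bound on $\int_{\Gamma'}\psi_i'^*\omega_0$ via Corollary \ref{strictly positive measure on skeleton} and the Yuan--Zhang height inequality;
\item the same Cauchy--Schwarz split.
\end{enumerate}

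Minor points:
\begin{itemize}
\item The weights $[\CH(v'):\CH(v)]$ in $[x_v]$ rely on $x_v$ being reduced (Lemma \ref{separable}).
\item The type-$4$ limit needs continuity of $v\mapsto g(x_v)$.
\item Your cost integrand $d_{e'}(\varphi)d_{e'}(\psi_i')$ should be integrated against length on $\ell'$, not $\ell$; your final expression is nevertheless right.
\item No extra constant enters the height comparison, since $\int_{(Y')^{\an}}\psi_i'^*c_1(\overline{\CO(1)})=\deg(x)\,h_{\psi_i^*\CO(1)}(x)$ exactly.
\end{itemize}
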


We explain the notation as follows. 
As $F$ is a function field of one variable over $K$, we have a height function 
$$
h_\TL: |X|_0\lra \RR, \quad x\longmapsto \frac{1}{\deg(x)} \wh\deg(\TL|_x),
$$
where $|X|_0$ denotes the set of closed points of $X$. 

Now we explain the term $g(x_{v})$.  
For any closed point $x\in X$ and any valuation $v\in \CM(F/K)$, the base change $x_v=x_{F_v}=x\times_F F_v$ is a closed subscheme of $X_{F_v}$, and thus defines an effective $0$-cycle $[x_v]$ on $X_{F_v}$. It is also viewed as a linear combination of classical points of $X_v^\an$. 
If $[x_v]=\sum_{i}a_i y_i$ for closed points $y_i\in X_{F_v}$, then
$$
g(x_{v})=\sum_{i}a_i \deg(y_i)\, g(y_i).  
$$
By the notation of Theorem \ref{approximation high}, we simply have 
$$
\frac{1}{\deg(x)} g(x_{v})=\int_{X_v^\an} g \mu_{x,v}.  
$$

Note that the statement of the theorem also implies that $h_{\TL}(x)+A_0\geq 0$
for $x\in X'$. 
The theorem implies that the function 
$$
v\longmapsto \frac{1}{\deg(x)} g(x_{v})
$$
of $v\in Y^\an$ is H\"older continuous if $v$ is of type $2,3,4$. Moreover, 
if $h_{\TL}(x)$ is bounded, then the function is uniformly continuous.

Now let us see how Theorem \ref{holder} implies Theorem \ref{approximation1}. 

\begin{proof}[Proof of Theorem \ref{approximation1} by Theorem \ref{holder}]
Let $(K, F, X, \OL, \TL, \alpha_m)$ be as in Theorem \ref{approximation1}. 
Assume that the weak convergence
	$$\mu_{\alpha_m,v}\longrightarrow \d\mu_{\OL,v}$$
holds for all divisorial valuations $v\in \CM(F/K)$, and we need to prove that it holds for all fully transcendental valuations $v\in\CM(F/K)^\ft$.

By a partition of unity, we may replace $X$ by {an open} subvariety, and thus assume that $X$ is {affine}. In this process, we might need to remove the parts of $\alpha_m$ that are supported at the complement of the subvariety.
This does not affect the results since $\alpha_m$ is generic. 
Hence, in what follows, we assume that $X$ is an affine variety over $F$. Moreover, it suffices to verify the equidistribution condition for functions in $C_c(X_v^\an)$.

We may take an affine variety $U$ over $K$ and a flat morphism $\pi:U\to Y$ with generic fiber $X\to \Spec F$. By shrinking $U$ if necessary, we may assume that $\OL\in\HPic(U/O_K)$ and thus the geometric part $\TL\in \wh\Pic(U/K)$.

Fix $v\in\CM(F/K)^\ft$. We need to prove  
$$
\lim_{m\to\infty}\frac{1}{\deg(\alpha_m)} f(\alpha_{m,v})= \int_{X_{v}^\an}f\, \d\mu_{\OL, v}
$$
for every $f\in C_c(X_v^\an)$.
Note that $X_v^\an=U_v^\an$ is the fiber of $U^\an\to Y^\an$ above $v\in Y^\an$. 
By Lemma \ref{density fiber}, we can assume that the test function $f$ can be extended to a test function $\wt f\in C_c(U^\an)$.
By Theorem \ref{test function}, it suffices to check that for all test functions $g\in C_{P,c}^\m(U^\an)$, we have
$$
\lim_{m\to\infty}\frac{1}{\deg(\alpha_m)} g(\alpha_{m,v})= \int_{X_{v}^\an}g\, \d\mu_{\OL, v}.
$$
	
To approximate our point, we need the density of divisorial points in Berkovich spaces. For this well-known fact, we refer to \cite[Cor. 2.4]{BFJ15} or \cite[Cor. 4.5]{Poi13} (see also \cite[Prop. A.5]{GM}). The case of Berkovich curves can be seen from the skeleton. 
	
By assumption, $v$ is fully transcendental, so $v$ is a point of type $2,3,4$. There is a sequence of divisorial points in $Y^\an$, such that $v_j\to v$ when $j\to\infty$. 
By Lemma \ref{finite distance}, we have 
\[
\lim_{j\to\infty}\dist(v_j,v)=0.
\]
Note that this is the reason why our proof does not work for type $1$ points. 

By the equidistribution at $v_j$, we have
$$
\lim_{m\to\infty}\frac{1}{\deg(\alpha_m)}g(\alpha_{m,v_j})= \int_{X_{v_j}^\an}g\, \d\mu_{\OL, v_j}. 
$$
	Now set $j\to\infty$. By Lemma \ref{continuity lem}, we have 
$$
\int_{U_{v_j}^\an} g c_1(\OL)^n_{v_j}\longrightarrow \int_{U_{v}^\an} g c_1(\OL)^n_{v}$$
and thus
$$
 \int_{X_{v_j}^\an}g\, \d\mu_{\OL, v_j} \lra  \int_{X_{v}^\an}g\, \d\mu_{\OL, v}. 
	$$
Thus it suffices to prove that
	$$
	\lim_{j\to\infty}\frac{1}{\deg(\alpha_m)}g(\alpha_{m,v_j})=
	\frac{1}{\deg(\alpha_m)}g(\alpha_{m,v})
	$$
uniformly on $m$. 
	
By the H\"older property in Theorem \ref{holder},
 there exists a constant $A_1$ such that
for any closed point $x\in X'$,  
and  for any two points $v$ and $w$ of type $2,3,4$,
	$$
	\left|\frac{1}{\deg(x)}g(x_{v})-\frac{1}{\deg(x)}g(x_{w})\right|
	\leq
	A_1\cdot\sqrt{h_{\TL}(x)+A_0}\cdot\sqrt{\dist(v,w)}.
	$$
Here $X'$ is an open subvariety of $X$, but we can assume that $X'=X$ by shrinking $X$. 

If every $\alpha_m$ is a closed point of $X'$, then the H\"older property implies the uniform convergence by the assumption of the boundedness of the $\TL$-heights. 

In general, it suffices to generalize the H\"older property to effective 0-cycles $\alpha$ on $X$. In fact, write $\alpha=\sum_{i=1}^r a_i x_i$ with closed points $x_i\in X$. We have
	\begin{align*}
	&\ \ \ \ \frac{1}{\deg(\alpha)}\left|g(\alpha_{v})-g(\alpha_{w})\right|\\
	&\leq \sum_{i=1}^r\frac{a_i \deg(x_i)}{\deg(\alpha)}\left|\frac{1}{\deg(x_i)}g(x_{i,v})-\frac{1}{\deg(x_i)}g(x_{i,w})\right|\\
	&\leq A_1\sqrt{\dist(v,w)}\cdot \sum_{i=1}^r\frac{a_i \deg(x_i)}{\deg(\alpha)}\sqrt{h_{\TL}(x_i)+A_0}\\
	&\leq A_1\sqrt{\dist(v,w)}\cdot\left(\sum_{i=1}^r\frac{a_i\deg(x_i)}{\deg(\alpha)}\right)^{\frac{1}{2}}\left(\sum_{i=1}^r\frac{a_i \deg(x_i)}{\deg(\alpha)}(h_{\TL}(x_i)+A_0)\right)^{\frac{1}{2}}\\
	&=A_1\sqrt{\dist(v,w)}\cdot\sqrt{h_\TL(\alpha)+A_0}.
		\end{align*}
This proves the H\"older properties for effective 0-cycles. 
Hence, we have proved Theorem \ref{approximation1} by Theorem \ref{holder}. 
\end{proof}

\subsection{Step 5: Reduce to a distance inequality}
\label{sec proof holder}

In the above, we have reduced our main theorem (Theorem \ref{equi ft}) to the H\"older property (Theorem \ref{holder}). 
In this subsection, we reduce the H\"older property
to a distance inequality (Theorem \ref{distance}).

\subsubsection{Distance inequality}
To state the distance inequality, we start with a series of notation. 

Let $K$ be an algebraically closed non-archimedean field. 
Let $Y$ be a smooth projective curve over $K$. 
Let $\pi:U\to Y$ be a flat $K$-morphism from a projective variety $U$ over $K$. 
Denote by $F$ the function field of $Y$, and denote by $X\to \Spec F$ the generic fiber of $\pi$, which is assumed to be geometrically integral. 
Let $\TL\in \wt\Pic(U/K)$ be a big adelic line bundle on $U/K$.
For $i=1,\dots,N$, let $\psi_i:U\to\PP^1$ be surjective $K$-morphisms. 

This notation is the same as that in Theorem \ref{holder}, except that we require $U$ to be projective for convenience, which can be obtained by a compactification and a blowing-up. 
Note that we do not need the test function $g$ here, but we need to choose integral models and work on skeletons in the following. 

Let $x\in X$ be a closed point, which will vary later. 
Denote by $Y'$ the normalization of the Zariski closure $\triangle(x)$ of $x$ in $U$, which is a smooth projective curve over $K$. Denote by $\pi':Y'\to Y$ the composition $Y'\to \triangle(x)\to U \to Y$, which is a finite morphism of degree $\deg(\pi')=\deg(x)$.

Let $\CP_i$ be a strictly semi-stable model  of $\PP^1_K$ over $O_K$.
Let $\CY$ be a strictly semi-stable model of $Y$ over $O_K$.
Take a strictly semi-stable model $\CY'$ of $Y'$ over $O_K$ such that $\psi_i':Y'\to\PP^1$ extends to a morphism $\psi_i':\CY'\to\CP_i$ and $\pi':Y'\to Y$ extends to a morphism $\pi':\CY'\to\CY$.

Denote the skeletons
$$\Gamma'=S(\CY'),\quad
\Gamma=S(\CY), \quad
\Gamma_i=S(\CP_i), \quad
\Gamma^{(N)}=\Gamma_1\times \cdots \times \Gamma_N.$$
 By \cite[Lem. 5.6.4]{GJR2},  the induced maps 
 \[
 \pi_\sk':\Gamma'\longrightarrow\Gamma, \quad
\psi_{i,\sk}':\Gamma'\longrightarrow\Gamma_i
\]
are piecewise linear on every edge.

Let $v \in \Gamma$ be a point, also viewed as a point of $Y^\an$. 
The normalization map $Y'\to \triangle(x)$ is an isomorphism outside finitely many closed points, and thus the induced map $Y'^\an\to\triangle(x)^\an$ is an isomorphism outside finitely many points of type 1. In particular, it is an isomorphism above $v\in Y^\an$, since $v$ is of type $2,3$. 
Similarly, outside finitely many closed points, the finite morphism $Y'\to Y$ is the composition of an unramified morphism with a relative Frobenius morphism, and $v$ is different from classical points related to the images of these finitely many closed points.  

Recall the notion of effective 0-cycles in \S\ref{sec lipschitz}. 
Denote by $F'$ the function field of $Y'$. 
Recall that $x_v=x\times_F F_v=\Spec(F'\otimes_F F_v)$ defines an effective $0$-cycle $[x_v]$ on $X_v^\an=U_v^\an$. 
We can also {view} it as an effective $0$-cycle on 
$\triangle(x)^\an$ or $Y'^\an$, as its support lies in $\triangle(x)^\an$.  
By Lemma \ref{separable}, $x_v=\Spec(F'\otimes_F F_v)$ is always a reduced scheme, and thus the multiplicity of every point of $x_v$ is 1. 
Therefore, we can write 
$$
[x_v]=\sum_{v'\in \triangle(x)^\an,\ \pi(v')=v} v'=\sum_{v'\in Y'^\an,\ \pi'(v')=v} v'.
$$

By push-forward via $\tau_{Y'}: Y'^\an \to \Gamma'$, 
we define an effective $0$-cycle on $\Gamma'$ as the push-forward
$$
[x_v]_{\Gamma'}= \sum_{v'\in Y'^\an,\ \pi'(v')=v} [\CH(v'):\CH(v)]\ \tau_{Y'}(v').
$$
Note that we have put the degree $[\CH(v'):\CH(v)]$ as the weight before $v'$.
This gives
$$
\deg([x_v]_{\Gamma'})
= \sum_{v'\in Y'^\an,\ \pi'(v')=v} [\CH(v'):\CH(v)]
=\deg(\pi':Y'\to Y)
=\deg(x). 
$$
By further push-forward via $\psi_{i}': Y' \to \PP^1$, 
we define an effective $0$-cycle on $\Gamma_i$ as 
$$
[x_v]_{\Gamma_i}= \sum_{v'\in Y'^\an,\ \pi'(v')=v} [\CH(v'):\CH(v)]\ {\psi'_{i,\sk}(\tau_{Y'}(v'))}.
$$
More precisely, the final point in this formula is $\tau_{\CP_i}(\psi_i'(v'))=\psi'_{i,\sk}(\tau_{Y'}(v'))\in\Gamma_i$.  We also define the effective cycle on the product skeleton by
\[
[x_v]_{\Gamma^{(N)}}=
\sum_{v'\in Y'^\an,\ \pi'(v')=v}[\CH(v'):\CH(v)]
\big(\psi'_{1,\sk}(\tau_{Y'}(v')),\ldots,
\psi'_{N,\sk}(\tau_{Y'}(v'))\big).
\]

Note that $\Gamma, \Gamma', \Gamma_i$ are metrized graphs, and thus induce canonical distance functions. Then $\Gamma^{(N)}$ has the box distance induced from $\Gamma_i$. Recall the notion of distance between two effective 0-cycles in \S\ref{sec lipschitz}. 
Finally, we have the following distance inequality. 

\begin{thm}[distance inequality] \label{distance}
There exists a Zariski open subset $X'$ of $X$ and positive constants $A_0, A_2$ such that for any closed point $x\in X'$, and for any two points $v, w\in \Gamma$, 
we have
$$
\frac{1}{\deg(x)} \dist_{\Gamma^{(N)}}([x_v]_{\Gamma^{(N)}},[x_w]_{\Gamma^{(N)}})
\leq A_2\cdot\sqrt{h_{\TL}(x)+A_0}\cdot \sqrt{\dist_\Gamma(v,w)}.
$$
Moreover, the datum $(X', A_0, A_2)$ is independent of the choices of the integral models $(\CY, \CY')$ of $(Y, Y')$.
\end{thm}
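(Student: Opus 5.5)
The plan is to follow the strategy of Step 6 of \S\ref{sec idea}. I bound the transport cost between $[x_v]_{\Gamma^{(N)}}$ and $[x_w]_{\Gamma^{(N)}}$ by an explicit transport plan that moves mass along the lifts to $\Gamma'$ of a geodesic from $v$ to $w$ in $\Gamma$. I then control that cost by the integral over $\Gamma'$ of pulled-back $(1,1)$-forms, and bound the integral by a height.

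\emph{Step A: the integral bound.} Fix $i$. On $\Gamma_i=S(\CP_i)$ take the strictly positive piecewise smooth form $\omega_i=(\d't_e\d''t_e)_e$, scaled so that $\int_{\Gamma_i}\omega_i\le 1=\deg\CO(1)$. By Corollary \ref{strictly positive measure on skeleton} there is a weakly smooth metric on $\CO(1)$ on $\PP^{1,\an}$ with $c_1(\ol\CO(1))-i_{\CP_i,*}\omega_i\ge0$. Pulling back along $\psi_i':Y'\to\PP^1$ and using the pull-back formula of \S\ref{subsec morphism}, which multiplies by $d_{e'}(\psi_i')^2$ on edges, I get
$$
I_i(x):=\sum_{e'}\int_{e'} d_{e'}(\psi_i')^2\,\d t'\;\le\;\int_{Y'^\an}\psi_i'^*c_1(\ol\CO(1))=\deg(\psi_i'^*\CO(1))=\deg(x)\,h_{\psi_i^*\CO(1)}(x).
$$
Since $\TL$ is big, the height inequality \cite[Thm. 5.3.7]{YZ26} gives a Zariski open $X'\subset X$ and constants $A_0,A_3$ with $h_{\psi_i^*\CO(1)}(x)\le A_3(h_\TL(x)+A_0)$ for all closed $x\in X'$. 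Hence $I_i(x)\le A_3\deg(x)(h_\TL(x)+A_0)$. None of these constants involve $\CY$ or $\CY'$, which gives the independence claim, because $\CP_i$ and $\omega_i$ are fixed once and for all.

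\emph{Step B: the transport plan.} Take a geodesic path in $\Gamma$ from $v$ to $w$ of length $D=\dist_\Gamma(v,w)$. Subdivide it into segments $\ell$ such that every preimage segment $\ell'\subset\Gamma'$ maps linearly onto $\ell$ under $\pi'_\sk$ and linearly onto its image under every $\psi'_{i,\sk}$; this is possible by piecewise linearity \cite[Lem. 5.6.4]{GJR2}. By Lemma \ref{triangle inequality} it suffices to treat one segment $\ell$ with endpoints $u_0,u_1$ and then sum. By Lemma \ref{total expansion}, at each endpoint $u_k$ the cycle $[x_{u_k}]_{\Gamma'}$ assigns to each $u'$ over $u_k$ the weight $[\CH(u'):\CH(u_k)]=\sum_{\ell'\ni u'}d_{\ell'}(\pi')$. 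So moving mass $d_{\ell'}(\pi')$ along each lift $\ell'$ defines a single transport plan $M$ from $[x_{u_0}]$ to $[x_{u_1}]$, and I use this same $M$ in every factor $\Gamma_i$. Its cost in $\Gamma_i$ is
$$\sum_{\ell'}d_{\ell'}(\pi')\,d_{\ell'}(\psi_i')\,l(\ell')=\sum_{\ell'}d_{\ell'}(\psi'_i)\,l(\ell),$$
using $d_{\ell'}(\pi')\,l(\ell')=l(\ell)$. By Cauchy--Schwarz this is at most
$$\Big(\sum_{\ell'}d_{\ell'}(\psi_i')^2l(\ell')\Big)^{1/2}\Big(\sum_{\ell'}\frac{l(\ell)^2}{l(\ell')}\Big)^{1/2}=I_i(\ell)^{1/2}\big(\deg(x)\,l(\ell)\big)^{1/2},$$
where $I_i(\ell)$ is the part of $I_i(x)$ over $\ell$, and $\sum_{\ell'}d_{\ell'}(\pi')=\deg(\pi')=\deg(x)$ by Lemma \ref{total degree}.

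\emph{Step C: summing.} Summing over segments and applying Cauchy--Schwarz once more gives a cost in $\Gamma_i$ of at most $I_i(x)^{1/2}(\deg(x)D)^{1/2}$, since distinct segments of a geodesic have disjoint lifts. Summing over $i$ (box distance) and inserting Step A yields
$$\frac{1}{\deg(x)}\dist_{\Gamma^{(N)}}\le N\sqrt{A_3}\sqrt{h_\TL(x)+A_0}\sqrt{D},$$
which is the statement with $A_2=N\sqrt{A_3}$.

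I expect the main obstacle to be Step B: checking carefully that the mass flow along lifted segments is genuinely a transport plan at every breakpoint. This includes vertices of $\Gamma'$ where several lifts branch, and type 3 points, where Lemma \ref{total expansion} is used in its non-$R$-rational case. It also requires that the weighted fibre cycle agrees with $[x_u]_{\Gamma'}$, which relies on Lemma \ref{separable} for reducedness. I would first handle segments interior to edges of the subdivided graphs and then treat the vertices via Lemma \ref{total expansion}.
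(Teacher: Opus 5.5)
Your proposal is correct and is essentially the paper's proof: the same height inequality from \cite[Thm. 5.3.7]{YZ26} combined with Corollary~\ref{strictly positive measure on skeleton} to bound $\int_{\Gamma'}\psi_i'^*\omega_0$, the same transport plan carrying mass $d_{\ell'}(\pi')$ along each lift (justified by Lemma~\ref{total expansion}), and the same Cauchy--Schwarz step, which you merely split into a per-segment application followed by a cross-segment one. One harmless slip is that you scaled $\omega_i$ by a factor $c$ with $c\sum_{\bar e}l(\bar e)\le 1$, so Step A really gives $c\,I_i(x)\le \deg(x)\,h_{\psi_i^*\CO(1)}(x)$ and hence $A_2=N\sqrt{A_3/c}$ (this is the paper's $\epsilon^{-1/2}$), which still depends only on $\CP_i$ and not on $(\CY,\CY')$.
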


\subsubsection{From the distance inequality to the H\"older property}

It is easy to prove the H\"older property by the distance inequality. 

\begin{proof}[Proof of Theorem \ref{holder} by Theorem \ref{distance}]
The conclusion of Theorem \ref{holder}, which we need to prove here,  is the inequality
	$$
	\left|\frac{1}{\deg(x)} g(x_{v})-\frac{1}{\deg(x)} g(x_{w})\right|
	\leq
	A_1\cdot\sqrt{h_{\TL}(x)+A_0}\cdot\sqrt{\dist(v,w)}.
	$$
By linear combination, we can assume $g\in C_{P,c}^\m(U^\an)_{\rm pure}$, and thus we can write
$$g=(\psi_1^* f_1) \cdots (\psi_N^* f_N),$$
 where
$\psi_i: U \to \PP^1_K$ is a $K$-morphism and $f_i$ is a model function on 
$\PP^{1,\an}$. 
Assume that $U$ is projective by compactification and blowing-up. 

As in Theorem \ref{distance}, 
take $Y'$ to be the normalization of $\triangle(x)$, 
take integral models $\CY, \CY', \CP_i$ , and take 
the skeletons $\Gamma, \Gamma', \Gamma_i, \Gamma^{(N)}$. 

We further assume that the model function $f_i$ of $\PP^{1,\an}$ is realized on the integral model $\CP_i$ by choosing $\CP_i$ more carefully. 
We will fix $\CP_i$, but we will vary $\CY$ and $\CY'$. 
View $f_i$ as a function on $\Gamma_i=S(\CP_i)$ by restriction. This gives a function 
$$f=(p_1^*f_1) \cdots (p_N^* f_N)$$ 
on $\Gamma^{(N)}=\Gamma_1\times\cdots \times \Gamma_N$. 

Let $v,w\in \Gamma$ be two points,  viewed as points of $Y^\an$. 
By definition, we have
$$
g(x_{v})=f([x_v]_{\Gamma^{(N)}}), \quad g(x_{w})=f([x_w]_{\Gamma^{(N)}}).
$$
Note that $f_i$ is piecewise linear on $\Gamma_i$, and thus is bounded and satisfies the Lipschitz property. 
By Lemma \ref{lipschitz2} and Lemma \ref{lipschitz1}, $f$ satisfies the Lipschitz property on $\Gamma^{(N)}$ and the resulting evaluation on effective cycles of fixed degree satisfies the corresponding transport-distance bound. 
Then there exists $C$ such that
$$
|f([x_v]_{\Gamma^{(N)}})-f([x_w]_{\Gamma^{(N)}})| \leq C\cdot 
\dist_{\Gamma^{(N)}}([x_v]_{\Gamma^{(N)}},[x_w]_{\Gamma^{(N)}}). 
$$
It follows that Theorem \ref{distance} implies Theorem \ref{holder}
for any $v, w\in \Gamma$.

It is easy to extend Theorem \ref{holder}
for all $v, w\in \Gamma$ to all $v, w\in Y^\an$ of types 2,3,4.
In fact,  in the above proved case for all $v, w\in \Gamma$, the  constants $(A_0, A_1)$ are independent of the choices of 
$(\CY, \CY', v,w)$. 
Since any pair of points of types 2,3 can lie in the skeleton for some $\CY$, we see that the theorem with the same constants $(A_0, A_1)$  actually holds for all $v, w\in Y^\an$ of types 2,3.
Note that $g(x_v)$ is continuous in $v\in Y^\an$, which can be proved directly or viewed as a special case of Theorem \ref{continuity lem} of relative dimension 0. 
Approximating type-$4$ points by type-$2$ points in the canonical distance and using continuity, the theorem extends to all $v, w\in Y^\an$ of types 2,3,4.
\end{proof}

\subsection{Step 6: Prove the distance inequality}
\label{sec proof distance}

In the above, we have reduced our main theorem (Theorem \ref{equi ft}) to a distance inequality (Theorem \ref{distance}). 
In this subsection, we prove the distance inequality. 
The source of the distance inequality comes from a height inequality of Yuan--Zhang \cite{YZ26} and an estimate of partial heights of Xie--Yuan \cite{XY}. 
Note the situation is much easier to explain if $\deg(x)=1$, so we will first treat this case to illustrate the idea.

\subsubsection{Height inequality}
Assume the setting of Theorem \ref{distance}.

Note that $\TL$ is big on the generic fiber $X\to\Spec F$. Apply the height inequality of \cite[Thm. 5.3.7]{YZ26}. Then there is a Zariski dense open subset $X'$ of $X$, together with constants $A_0, A_3>0$, 
 such that for all closed points $x\in X'$,
$$
h_{\psi_i^*\CO(1)}(x)\leq A_3(h_{\TL}(x)+A_0),\quad \forall\, 1\leq i\leq N.
$$
Note that this is the \emph{only part} we need to pass to a Zariski open subset $X'$ of $X$.

For edges $e'\in E(\Gamma')$, $e\in E(\Gamma)$, and $\bar e\in E(\Gamma_i)$, we will use $t_{e'}$, $t_e$, and $t_{\bar e}$ to denote  their coordinate functions respectively.

Take the differential form 
$$\omega_0:=(\d'  t_{\bar e}\wedge\d'' t_{\bar e})_{\bar e\in E(\Gamma_i)}
=\sum_{\bar e\in E(\Gamma_i)} \d' t_{\bar e}\wedge\d''t_{\bar e}$$ 
on $\Gamma_i$. It corresponds to the Lebesgue measure on every edge.
It is piecewise smooth and strictly positive. 
By Corollary \ref{strictly positive measure on skeleton}, there exists a weakly smooth metric $\|\cdot\|$ of $\CO(1)$ on $\PP^1_K$ such that 
$c_1(\overline{\CO(1)})-\epsilon\omega_0$ is strictly positive on $\Gamma_i$ for some $\epsilon>0$. 
Then
$$
\epsilon \int_{\Gamma'}\psi_i'^*\omega_0
\leq \int_{\Gamma'}\psi_i'^*c_1(\overline{\CO(1)})
\leq \int_{(Y')^\an}\psi_i'^*c_1(\overline{\CO(1)})
=\deg_{Y'}(\psi_i'^*\CO(1))
=\deg(x)\cdot h_{\psi_i^*\CO(1)}(x).
$$
Note that the map $\Gamma'\to \Gamma_{i}$ is linear on edges, and the pull-back $\psi_i'^*\omega_0$ is defined on every edge as recalled in \S\ref{subsec morphism}. Then the first inequality holds by checking at every edge.

Combining with the height inequality, we have a height inequality
$$
\frac{1}{\deg(x)} \int_{\Gamma'}\psi_i'^*\omega_0
\leq \epsilon^{-1}A_3(h_{\TL}(x)+A_0)
$$
for all {closed points} $x \in X'$.

\subsubsection{Special case: degree 1}

To illustrate the idea, we first consider the case $\deg(x)=1$. 
Then $Y'\to Y$ is an isomorphism, and $Y'$ is a section of $\pi:U\to Y$.
We also assume that $\CY'\to \CY$ is also an isomorphism, which can be achieved by replacing $\CY$ by $\CY'$. 
Then $\pi'_\sk:\Gamma'\to \Gamma$ is an isomorphism. By subdivision, we can assume that $\psi'_{i,\sk}:\Gamma'\to \Gamma_i$ is linear on every edge of $\Gamma'$. 

In this case, $[x_v]_{\Gamma'}$ and $[x_w]_{\Gamma'}$ are just points of 
$\Gamma'$.
Thus we simply have 
$$
\dist([x_v]_{\Gamma^{(N)}},[x_w]_{\Gamma^{(N)}})= \sum_{i=1}^N
\dist([x_v]_{\Gamma_i},[x_w]_{\Gamma_i}). 
$$
It suffices to bound each $\dist([x_v]_{\Gamma_i},[x_w]_{\Gamma_i})$.
 
Let $\ell$ be a path from $v$ to $w$ in $\Gamma$ with length $l(\ell)=\dist(v,w)$.  
Assume that the path $\ell$, together with the direction, is given by a union of 
$\ell_1, \cdots, \ell_m$, where $\ell_j$ is a directed closed line segment contained in an edge $e_j$ of $\Gamma$, and $\ell_j\cap \ell_{j+1}$ is a vertex of $e_j$. 
This gives 
$$
\dist(v,w)=l(\ell)=\sum_{j=1}^m l(\ell_j).
$$

Denote by $\ell_j'$ the preimage of $\ell_j$ in $\Gamma'$.  
Then $\ell'=\ell_1'\cup\cdots \cup \ell_m'$ is a path from $[x_v]_{\Gamma'}$ to $[x_w]_{\Gamma'}$. 
This gives 
$$
\dist([x_v]_{\Gamma'},[x_w]_{\Gamma'})=l(\ell')=\sum_{j=1}^m l(\ell_j').
$$
Each term is equal to the corresponding term in  the above expression for $\dist(v,w)$.

Denote by $\bar\ell_j$ the image of $\ell_j'$ in $\Gamma_i$.
This gives a path $\bar\ell=\bar\ell_1\cup\cdots \cup \bar\ell_m$
from $[x_v]_{\Gamma_i}$ to $[x_w]_{\Gamma_i}$. 
Each $\bar\ell_j$ is contained in an edge $\bar e_j$ of $\Gamma_i$. 
Then we have 
$$
\dist([x_v]_{\Gamma_i},[x_w]_{\Gamma_i})
\leq l(\bar \ell)=\sum_{j=1}^m l(\bar \ell_j)=\sum_{j=1}^m d_{e_j'}(\psi_i') l(\ell_j').
$$
Here the degree 
$$d_{e_j'}(\psi_i')=\frac{l(\bar e_j)}{l(e_j')}=\frac{l(\bar \ell_j)}{l(\ell_j')}
=\left|\frac{\d t_{\bar e_j}}{\d t_{e_j'}}\right|$$
 is the expansion factor of the map $e_j'\to \bar e_j$, as recalled in \S\ref{subsec morphism}.

Recall that the differential form 
$$\omega_0=\sum_{\bar e\in E(\Gamma_i)} \d' t_{\bar e}\wedge\d''t_{\bar e}.$$ 
As described in \S\ref{subsec morphism}, the pull-back 
$$\psi_i'^*\omega_0=\sum_{e'\in E(\Gamma')}
d_{e'}(\psi_i')^2 \d't_{e'}\wedge\d'' t_{e'}.$$ 
For a line segment $\rho'$ in an edge $e'$ of $\Gamma'$, we have
$$
\int_{\rho'} \psi_i'^*\omega_0= \int_{\rho'} d_{e'}(\psi_i')^2 \d' t_{e'}\wedge\d'' t_{e'}= 
d_{e'}(\psi_i')^2 l({\rho'}). 
$$
Then we have 
 $$
 \int_{\Gamma'}\psi_i'^*\omega_0
 \geq  \int_{\ell'}\psi_i'^*\omega_0
= \sum_{j=1}^m   \int_{\ell_j'}\psi_i'^*\omega_0
= \sum_{j=1}^m   d_{e_j'}(\psi_i')^2l(\ell_j').$$

As we have seen above, the integral $\displaystyle\int_{Y'^\an} c_1(\overline\CO(1))$ gives 
the height of $x$ (up to the degree), which integrates over the whole space $Y'^\an$.
The integral $\displaystyle\int_{\ell'}\psi_i'^*\omega_0$ integrates on a substantially smaller subset $\ell'$ of $Y'^\an$, and it is a \emph{partial height} of $x$ from the philosophy of Xie--Yuan \cite{XY}. 

Combining the integral inequality and the height inequality, we have
 $$
\sum_{j=1}^m   d_{e_j'}(\psi_i')^2l(\ell_j')
\leq \epsilon^{-1}A_3(h_{\TL}(x)+A_0).
$$
Recall that we have  
$$
\dist(v,w)=\sum_{j=1}^m l(\ell_j'),
$$
and
$$
\dist([x_v]_{\Gamma_i},[x_w]_{\Gamma_i})
\leq \sum_{j=1}^m d_{e_j'}(\psi_i') l(\ell_j').
$$
Then the Cauchy--Schwarz inequality gives 
$$
\dist([x_v]_{\Gamma_i},[x_w]_{\Gamma_i})^2
\leq \left(\sum_{j=1}^m  l(\ell_j') \right)
\left( \sum_{j=1}^m d_{e_j'}(\psi_i')^2 l(\ell_j') \right)
\leq \dist(v,w)\cdot \epsilon^{-1}A_3(h_{\TL}(x)+A_0).
$$
This proves Theorem \ref{distance} in the case $\deg(x)=1$.

\subsubsection{General case}

Now we prove the general case $\deg(x)\geq1$.
We need the notions of distances and transport plans between effective 0-cycles in \S\ref{sec lipschitz}, as well as compositions of transport plans in \S\ref{subsec transport plan} and push-forward of transport plans in \S\ref{subsec triangle inequality}.

We will choose a canonical transport plan $M$ from $[x_v]_{\Gamma'}$ to 
$[x_w]_{\Gamma'}$, which  induces a transport plan $M$ from $[x_v]_{\Gamma_i}$ to $[x_w]_{\Gamma_i}$ by push-forward. 
From \S\ref{sec lipschitz},  we have
$$
\dist([x_v]_{\Gamma^{(N)}},[x_w]_{\Gamma^{(N)}})
\leq \dist([x_v]_{\Gamma^{(N)}},[x_w]_{\Gamma^{(N)}},M)
= \sum_{i=1}^N
\dist([x_v]_{\Gamma_i},[x_w]_{\Gamma_i}, M). 
$$
Then it is reduced to prove 
$$
\frac{1}{\deg(x)}
\dist([x_v]_{\Gamma_i},[x_w]_{\Gamma_i}, M) 
\leq A_2\cdot\sqrt{h_{\TL}(x)+A_0}\cdot \sqrt{\dist(v,w)}.
$$

In the following proof, most efforts are devoted to clarify the transport plan $M$
from $[x_v]_{\Gamma'}$ to  $[x_w]_{\Gamma'}$.
Once we have the transport plan, the other parts of the proof are very similar to the case of $\deg(x)=1$.

Let $\ell$ be a (directed) path from $v$ to $w$ in $\Gamma$ with length $l(\ell)=\dist(v,w)$.
Recall that $\pi_{\sk}':\Gamma'\to \Gamma$ is linear on every edge. 
Then we can write the path $\ell$  as a union of 
$\ell_1, \cdots, \ell_m$, where $\ell_j$ is a directed closed line segment with positive length contained in an edge $e_j$ of $\Gamma$, and $\ell_j\cap \ell_{j+1}$ is a point of $e_j$,
such that no points of $\pi_{\sk}'(V(\Gamma))$ lie in the interior of $\ell_j$. 

An edge of $\Gamma'$ is \emph{exceptional} if its image in $\Gamma$ is a point. 
{Removing the interiors of all exceptional edges of $\Gamma'$ and all isolated vertices of the remaining graph, we obtain} a subgraph $\Gamma''$ of $\Gamma'$ and endow it with the induced metric.
The induced map $\pi_{\sk}'':\Gamma''\to \Gamma$ has no exceptional edges. 
We will see that $\Gamma''$ contains all the information from $\Gamma'$ for our purpose. 

By the choices, we can write the preimage  (for every $j$) 
$$\pi_{\sk}''^{-1}(\ell_j)=\cup_{k=1}^{n_j} \ell_{j,k}',$$ 
where every $\ell_{j,k}'$ is a closed line segment of an edge of $\Gamma''$ and is mapped bijectively to $\ell_j$.
The interiors of different $\ell_{j,k}'$ do not intersect with each other.  
Then $\ell_{j,k}'$ has a direction induced from that of $\ell_j$. 

For a directed line segment $\beta$ in a graph, denote by $\beta^-$ (resp. $\beta^+$) the left endpoint (resp. right endpoint) of $\beta$. 

We introduce two effective 0-cycles on $\Gamma'$ by 
$$
\alpha_{j}^-=\sum_{k=1}^{n_j} [\CH((\ell_{j,k}')^-):\CH(\ell_j^-)]\cdot (\ell_{j,k}')^-, \qquad 
\alpha_{j}^+=\sum_{k=1}^{n_j} [\CH((\ell_{j,k}')^+):\CH(\ell_j^+)]\cdot (\ell_{j,k}')^+. 
$$
Here $[\CH((\ell_{j,k}')^-):\CH(\ell_j^-)]$ and $[\CH((\ell_{j,k}')^+):\CH(\ell_j^+)]$ are the degrees between the valuation fields via $Y'^\an\to Y^\an$.
In the terminology introduced right before Theorem \ref{distance}, we  have 
$$\alpha_{j}^-=[x_{\ell_j^-}]_{\Gamma'}, \qquad
\alpha_{j}^+=[x_{\ell_j^+}]_{\Gamma'}.$$
In particular, we simply have 
$$
\alpha_{1}^-=[x_v]_{\Gamma'}, \quad 
\alpha_{m}^+=[x_w]_{\Gamma'}, \quad
\alpha_{j}^+=\alpha_{j+1}^-, \quad j=1,\dots, m-1. 
$$

The preimage $\pi_{\sk}''^{-1}(\ell_j)$, which is the union of $\ell_{j,k}'$ with $k=1,\dots, n_j$, induces a transport plan $M_j$ from $\alpha_{j}^-$ to $\alpha_{j}^+$. 
In fact, via the path $\ell_{j,k}'$, the transport plan carries mass 
$$d_{\ell_{j,k}'}(\pi')=l(\ell_{j})/l(\ell_{j,k}')$$ 
from $(\ell_{j,k}')^-$
to $(\ell_{j,k}')^+$. 
In other words, $M_j$ is an $n_j\times n_j$ diagonal matrix with diagonal entries $d_{\ell_{j,k}'}(\pi')$. Here repeated endpoints are temporarily kept as separate atoms; combining equal atoms gives a transport plan in the earlier convention that the support points are distinct.

To see that this indeed gives a transport plan $M_j$ from $\alpha_{j}^-$ to $\alpha_{j}^+$, we need to check  
$$
\sum_{k=1}^{n_j} d_{\ell_{j,k}'}(\pi') \cdot (\ell_{j,k}')^-
=\sum_{k=1}^{n_j} [\CH((\ell_{j,k}')^-):\CH(\ell_j^-)] \cdot (\ell_{j,k}')^-
$$
and a similar equality for the right endpoints.
This is equivalent to, for a fixed point $u=(\ell_{j,k}')^-$ with a fixed $j$, 
$$
\sum_{ (\ell_{j,k}')^-=u} d_{\ell_{j,k}'}(\pi') 
= [\CH(u):\CH(\ell_j^-)],
$$
where the left-hand side is
a summation through all $k$ such that  
$\ell_{j,k}'$ contains $u$ as a left endpoint. 
This is a special case of Lemma \ref{total expansion}.

Therefore, we have a canonical transport plan $M_j$ from $\alpha_{j}^-$ to $\alpha_{j}^+$. 
By composition, we have a canonical  
transport plan $M=M_1\circ \cdots \circ M_{m}$ from $[x_v]_{\Gamma'}=\alpha_{1}^-$ to $[x_w]_{\Gamma'}=\alpha_{m}^+$.
By push-forward via $\psi_{i, \sk}':\Gamma'\to \Gamma_i$, we have a transport plan
$M_j$ from $\bar\alpha_{j}^-=\psi_{i, \sk}'(\alpha_{j}^-)$ to $\bar\alpha_{j}^+=\psi_{i, \sk}'(\alpha_{j}^+)$, and a transport plan  
$M=M_1\circ \cdots \circ M_{m}$ from $[x_v]_{\Gamma_i}$ to $[x_w]_{\Gamma_i}$.
The transport plan on $\Gamma_i$ corresponds to a union of paths
$\bar \ell_{j,k}=\psi_{i, \sk}'(\ell_{j,k}')$ carrying mass $d_{\ell_{j,k}'}(\pi')$ for $j=1,\dots, m$ and $k=1,\dots, n_j$. 

With the transport plan, we can finally estimate the distances. 
By the triangle inequality in Lemma \ref{triangle inequality}, we have 
$$\begin{aligned}
\dist([x_v]_{\Gamma_i}, [x_w]_{\Gamma_i}, M)
\leq & \sum_{j=1}^{m} \dist(\bar\alpha_{j}^-, \bar\alpha_{j}^+, M_j)
\\
= & \sum_{j=1}^{m}  \sum_{k=1}^{n_j}
d_{\ell_{j,k}'}(\pi')\cdot \dist(\bar \ell_{j,k}^-, \bar \ell_{j,k}^+)
\\
\leq& \sum_{j=1}^{m} \sum_{k=1}^{n_j} d_{\ell_{j,k}'}(\pi')\cdot  l(\bar \ell_{j,k}).\end{aligned}
$$
We need to bound the right-hand side. 
As in the case of $\deg(x)=1$, this is given by two other relations via the Cauchy--Schwarz inequality. 

The first relation is   
 $$ \frac{1}{\deg(x)}
\sum_{j=1}^{m} \sum_{k=1}^{n_j}   d_{\ell_{j,k}'}(\pi')\cdot l(\ell_j)
 = \dist(v,w).$$
Here we have used the total degree 
$$
\sum_{k=1}^{n_j}   d_{\ell_{j,k}'}(\pi') 
 = \deg(\pi') = \deg(x)
$$
from Lemma \ref{total degree}.

The second relation is
 $$
\frac{1}{\deg(x)}
\sum_{j=1}^{m} \sum_{k=1}^{n_j}   d_{\ell_{j,k}'}(\psi_i')^2 l(\ell_{j,k}')
\leq \epsilon^{-1}A_3(h_{\TL}(x)+A_0).
$$
In fact, as in the case of $\deg(x)=1$, the height inequality gives
 $$
 \int_{\Gamma'}\psi_i'^*\omega_0
 \geq \sum_{j=1}^{m} \sum_{k=1}^{n_j} \int_{\ell_{j,k}'}\psi_i'^*\omega_0
= \sum_{j=1}^{m} \sum_{k=1}^{n_j}   d_{\ell_{j,k}'}(\psi_i')^2 l(\ell_{j,k}').$$

With these two relations, the Cauchy--Schwarz inequality gives 
$$
\begin{aligned}
\frac{1}{\deg(x)^2}
\dist([x_v]_{\Gamma_i},[x_w]_{\Gamma_i}, M)^2
\leq&\ \frac{1}{\deg(x)^2}
 \left(\sum_{j,k} d_{\ell_{j,k}'}(\pi')\cdot  l(\bar \ell_{j,k}) \right)^2\\
\leq &\ \frac{1}{\deg(x)^2}
 \left(\sum_{j,k}  d_{\ell_{j,k}'}(\pi')\cdot l(\ell_j) \right)
\left( \sum_{j,k} d_{\ell_{j,k}'}(\psi_i')^2\cdot l(\ell_{j,k}')\right) \\
\leq &\ \dist(v,w)\cdot \epsilon^{-1}A_3(h_{\TL}(x)+A_0).
\end{aligned}
$$
Here we have used the simple equality
$$
\left( d_{\ell_{j,k}'}(\pi')\cdot  l(\bar \ell_{j,k}) \right)^2
= \left(  d_{\ell_{j,k}'}(\pi')\cdot l(\ell_j) \right)
\left( d_{\ell_{j,k}'}(\psi_i')^2\cdot l(\ell_{j,k}')\right)
$$
obtained from 
$$
d_{\ell_{j,k}'}(\pi')= \frac{l(\ell_j)}{l(\ell_{j,k}')},\quad
d_{\ell_{j,k}'}(\psi_i')=\frac{l(\bar\ell_{j,k})}{l(\ell_{j,k}')}.
$$
This proves Theorem \ref{distance} for general $\deg(x)$. 
Then the proof of Theorem \ref{equi ft} is complete.

\section{Counterexamples for fiberwise equidistribution}\label{counterexample about fiberwise equi}

The goal of this section is to construct the counterexample to Conjecture \ref{equi conj} stated in Theorem \ref{thm counterexample super intro}.

\begin{theorem}[Theorem \ref{thm counterexample super intro}] 
 \label{thm counterexample super}
Denote $k = \mathbb{Z}$, $F = \mathbb{Q}(t)$, and $X = \mathbb{P}^1_F$.
Let $f:X\to X$ be the square map.
Let $L$ be the tautological line bundle $\mathcal{O}(1)$ on $X$, and let $\overline{L}$ be the $f$-invariant adelic line bundle on $X/\ZZ$ extending $L$. 
Then there exists a generic sequence $\{x_m\}_m$ of $X(\overline{F})$, numerically small with respect to $\overline{L}$, such that for any valuation $v$ of $F$ satisfying the condition that $F_v$ is a finite extension of $\QQ_w$ for the restriction $w=v|_\QQ$,  
 the Galois orbit of $\{x_m\}_m$ is not equidistributed in $X_v^{\mathrm{an}}$ for any probability measure on $X_v^{\mathrm{an}}$. 
\end{theorem}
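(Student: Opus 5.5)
We follow the construction sketched in the introduction: interleave two subsequences, $x_{2m}=[1:2^{1/m}]$ and $x_{2m-1}=[1:p_m^{1/n_m}]$, with polynomials $p_m\in\QQ[t]$ and integers $n_m$ chosen so that both subsequences are numerically small but their Galois-orbit measures at $v$ converge to different limits. In the dynamical situation $h^{\OH}_{\OL}(X)=0$. By the remark after Theorem \ref{thm counterexample super intro}, which uses the arithmetic Siu inequality, it therefore suffices to check $h^{\OH}_{\OL}(x_m)\to 0$ for one nef and big $\OH$. We fix $\OH$ to be the invariant (Tate-limit) adelic line bundle on $F/\ZZ$ attached to $\PP^1_\QQ\supset\Spec F$ and $\CO(1)$. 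Genericity is automatic: the points are pairwise distinct and $X$ is a curve.

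\textbf{Step 1 (heights).} The invariant $\OL$ gives the Weil height. For $x=[1:a]$ we get $h^{\OH}_{\OL}(x)=\frac{1}{\deg x}\sum\log^+|a|$ over conjugates and over the places of the global field $F(x)$ polarized by $\OH$. For $a^n=p$ this equals $\frac1n h^{\OH}(p)$, where $h^{\OH}(p)$ is the $\OH$-height of $p\in F$. This height grows like $\deg p+h(\text{coefficients of }p)$. So $x_{2m}$ has height $\frac{\log 2}{m}\to0$, and $x_{2m-1}$ has height $O\big((\deg p_m+h(p_m))/n_m\big)$. We therefore choose $n_m\gg \deg p_m+h(p_m)$.

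\textbf{Step 2 (local measures).} For $v$ with $F_v$ finite over $\QQ_w$, the image $c=p_m(t)\in F_v$ is an element of a fixed locally compact field. The conjugates of $c^{1/n}$ are $\zeta\,c^{1/n}$ over $\zeta\in\mu_n$, all of absolute value $|c|_v^{1/n}$. For $x_{2m}$ the absolute value is $|2|_v^{1/m}\to1$, and the roots spread over $\mu_m\cdot 2^{1/m}$. The limit is the Haar measure on the unit circle (archimedean $v$) or $\delta_{\mathrm{Gauss}}$ (non-archimedean $v$, by the usual argument with irreducible $T^m-2$ type equations and the fact that $\mu_m$ equidistributes to the Gauss point). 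For the odd subsequence we want $|p_m(t)|_v^{1/n_m}\to 0$ or $\infty$ for \emph{every} such $v$ simultaneously, which pushes the orbit to $0$ or $\infty$.

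\textbf{Step 3 (choice of $p_m$).} The set of relevant $v$ is uncountable, so we cannot diagonalize over all of them. Instead we exploit that $t$ has image $\tau\in F_v$ algebraic-or-not but lying in a fixed locally compact field. I would take
\[
p_m=\prod_{j}(t-a_j)^{e_j}
\]
with $a_j$ running over a finite set of rationals. This set becomes dense in $\QQ_w$ for every completion $w$ of $\QQ$ up to scale $1/m$ (for example all $a/b$ with $|a|,|b|\le N_m$). The exponents are chosen so that $|p_m(\tau)|_v$ is tiny: some factor $|\tau-a_j|_v$ is small, while the others are controlled. We need $\log|p_m(\tau)|_v\le -C_m n_m$ with $C_m\to\infty$ and $n_m\gg\deg p_m+h(p_m)$ at the same time. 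This balance is the main obstacle. Since $\deg p_m$ and $h(p_m)$ are roughly $N_m^2\log N_m$, a single near factor only gains $\log(1/\mathrm{dist})\sim\log N_m$. I would first try instead a polynomial of the form $p_m=q_m^{e}$ with huge $e$ compensated by $n_m=e\cdot r_m$. Alternatively, I would aim for a Galois-average version: choose $n_m$ only moderately large so that $\frac1{n_m}\log|p_m(\tau)|_v$ tends to a nonzero constant $c_v$ depending on $v$. This is a potential-theoretic quantity, like a Green function evaluated at $\tau$, that is nonzero for points $\tau$ where $F_v$ is finite over $\QQ_w$. A nonzero limit of $\frac1n\log|\cdot|$ already forces the measures onto the circle of radius $e^{c_v}\neq1$, which differs from the limit of the even subsequence. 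Finally, I would check that for each fixed $v$ the two subsequential limits differ, so $\mu_{x_m,v}$ has no limit at all.
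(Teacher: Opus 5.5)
\textbf{There is a genuine gap: your Step 3, the construction of the $p_m$, is missing, and none of the candidates you list works.}

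Your outer framework matches the paper's. You interleave $x_{2m}=[1:2^{1/m}]$ with $x_{2m-1}=[1:p_m^{1/n_m}]$. You bound $h^{\OH}_{\OL}(x_{2m-1})$ by $\frac{1}{n_m}\bigl(\deg(p_m)\,h^{\OH}_{\OL}(x_t)+C_{\OH}\sum_w|g_{p_m,w}|_{\sup}\bigr)$. You use that all conjugates of $p_m(\tau)^{1/n_m}$ have absolute value $|p_m(\tau)|_v^{1/n_m}$, where $\tau\in F_v$ is the image of $t$.

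The target must be $0$, not $\infty$. Since $\log|p(\tau)|_v\le|g_{p,w}|_{\sup}+\deg(p)\log^+|\tau|_v$, the odd orbits can never be pushed to $\infty$. So you need $-\log|p_m(\tau)|_v$ to be large compared with $\deg(p_m)+\sum_w|g_{p_m,w}|_{\sup}$.

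\emph{Products of linear factors.} Products $\prod_j(t-a_j)^{e_j}$ over rationals provably fail. Besides the loss you noticed, the hypothesis allows $\tau\notin\QQ_w$ (e.g.\ $\tau\in\CC\setminus\RR$). Then $|\tau-a_j|_v\ge\delta_\tau:=\inf_{a\in\QQ_w}|\tau-a|_v>0$ for every $j$. Hence $\frac1n\log|p(\tau)|_v\ge\frac{\deg p}{n}\log\delta_\tau\to0$, because smallness forces $n/\deg p\to\infty$.

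\emph{Powers.} Replacing $(p,n)$ by $(q^e,en)$ changes nothing. The quantities $\log|p(\tau)|_v$, $\deg p$ and the Green-function norms all scale by $e$, so the relevant ratio is unchanged.

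\emph{The ``Galois-average'' variant.} This assumes exactly what must be proved. A normalized Green function $\frac1n g_{p,w}(\tau)$ is at most $\frac1n\sum_w|g_{p,w}|_{\sup}\to0$. So a nonzero limit of $\frac1{n_m}\log|p_m(\tau)|_v$ requires $|p_m(\tau)|_v$ to be abnormally small. That is a Diophantine approximation statement you do not supply, and nothing you say explains why finiteness of $F_v/\QQ_w$ would produce it.

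\emph{A smaller slip.} The Tate-limit $\overline{\CO(1)}$ on $\PP^1_\QQ$ has $\OH^2=0$, so it is nef but not big. The Siu reduction needs a big $\OH$, e.g.\ this one twisted by a positive constant. Alternatively, bound all $\OH$ at once as the paper does.

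\textbf{The missing idea.} The paper combines a Dirichlet box principle for lacunary polynomials with compactness.

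\emph{Pigeonhole.} Let $P_{r,s}=\{\sum_{i=1}^r a_it^{2^i-1}: a_i\in\ZZ,\ |a_i|\le s\}$. Pigeonhole the $(s+1)^r$ values $p(y)$ with $0\le a_i\le s$: by an area count in $\CC$, or by residue classes modulo powers of a uniformizer of $\QQ_w(y)$. This gives a nonzero $p\in P_{r,s}$ with $|p(y)|\le C_{r,y}\,s^{1-r/2}$, resp.\ $|p(y)|_w\le C_{r,y}\,s^{-r/d}$ where $d=[\QQ_w(y):\QQ_w]$ (Lemmas \ref{sm_poly} and \ref{sm_poly-nonarch1}).

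\emph{Height control.} The lacunary exponents give $\sum_w|g_{p,w}|_{\sup}\le\log 8+3\log s+2\log r$, independently of $\deg p$ (Lemma \ref{poly_bd}). Meanwhile $\deg p\le 2^r-1$ stays fixed as $s\to\infty$. So the ratio of gain to cost is about $r/6$, resp.\ $r/(3d)$, which is unbounded in $r$. Take $r\approx 8N^2$ (resp.\ $3D_NN^2$ with $D_N=[K_1\cdots K_N:\QQ_w]$) and $n\approx 3N\log s$. This gives $|p(y)|^{1/n}<2^{-N}$ with height $O(1/N)$.

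\emph{Uncountably many $v$.} This is handled by openness, not by diagonalizing over $v$. The condition $|p(y)|<2^{-Nn}$ is open in $y$. So a finite set $Q_N$ of pairs covers the compact set $\overline{D(0,N)}\subset\CC$, resp.\ the ball of radius $N$ in $K_1\cdots K_N$. Here $K_1,K_2,\dots$ enumerate the countably many finite extensions of $\QQ_w$ in $\ol\QQ_w$.

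\emph{Conclusion.} Listing $Q_1,Q_2,\dots$ does not give convergence to $\delta_0$ for all $v$ simultaneously, and you do not need it. For each $v$ it gives a \emph{subsequence} of odd terms whose orbit measures tend to $\delta_0$. The even terms tend to the Haar measure, resp.\ $\delta_{\mathrm{Gauss}}$, so this already rules out any limit. Finally, the constructions for the different places $w$ of $\QQ$ are merged blockwise, as in the proof of Theorem \ref{thm counterexample supersuper}.
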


Throughout this section, we further take $Y=\PP^1_\QQ$ and $\CX=\PP^1_{Y}$, and view the natural map $\pi:\CX\to Y$ as a projective model of $X\to \Spec F$ over $\QQ$. 
The square map $f:X\to X$ extends to the relative square map $f:\CX\to \CX$ over $Y$ by base change. The adelic line bundle $\OL$ on $X/\ZZ$ extends to an adelic line bundle on $\CX/\ZZ$ with $f^*\OL\simeq 2\OL$, which we still denote by $\OL$.

\subsection{Degeneracy on algebraic fibers}

In the above, we have taken $v$ as a valuation of $F$, which can be viewed {as a point} of $Y_w^\an$ by the injection $\CM(F/\QQ_w)\to Y_w^\an$. 
Here $\CX_y^\an$ is the fiber of $\CX^\an_w\to Y^\an_w$ over $y$.

In this subsection, as a short {digression}, we consider a rather special and independent situation, which concerns equidistribution of the Galois orbit of $x_m$ on $\CX_y^\an$ for a closed point $y$ of $Y_w=Y_{\QQ_w}$ for a place $w$ of $\QQ$. 
It turns out that in this case, the reduction $\triangle(x_m)_y$ can fail to be generic on the fiber $\CX_y$, where we denote by $\triangle(x_m)$ the multi-section of $\CX\to Y$ corresponding to $x_m$. 
In fact, we have the following example due to Junyi Xie.

\begin{example}[Xie]
Fix a closed point $y_0\in Y$. Take a section $x_0\in \CX(Y)$ which is not $0$ or $\infty$ on the generic fiber $X$, but is $0$ on the special fiber $\CX_{y_0}$. Then $f^{-n}(x_0)|_\eta$ is a generic small sequence in $X(\ol F)$ with respect to $\OL$. However, the specialization of $f^{-n}(x_0)$ at $y_0$ is constantly $0$, thus it is not generic.
As a consequence, the sequence has no equidistribution on the fiber above $y_0$. 
\end{example}

\subsection{Construct the counterexample}\label{con_arc}

In this subsection, we prove Theorem \ref{thm counterexample super} by {giving} an explicit construction. 
We only need to consider $v$ such that $w=v|_\QQ$ is the standard absolute value.  We will prove the following theorem, which is slightly stronger by taking into account the points $v$ outside the image of $\CM(F/\QQ_w)\to Y_w^\an$.
We still resume the setting of Theorem \ref{thm counterexample super} and the paragraph right after it. 

\begin{theorem} \label{thm counterexample supersuper}
There exists a generic sequence $\{x_m\}_m$ of $X(\overline{F})$, numerically small with respect to $\overline{L}$, such that for every place $w$ of $\QQ$ and every point $y\in Y^\an_w$ which is given by a point in $Y(\ol\QQ_w)$, the Galois orbit of $\{x_m\}_m$ is not equidistributed in $\CX_y^{\mathrm{an}}$ with respect to any measure.
\end{theorem}

We will first prove the following two special cases of the above theorem, and then merge the special cases to prove the above theorem.

\begin{theorem}[archimedean case] \label{thm counterexample for arch fibers}
Let $w=\infty$ be the archimedean place of $\QQ$. Then there exists a generic sequence $\{x_m\}_m$ in $X(\overline{F})$, numerically small with respect to $\OL$, such that for every point $y\in Y(\CC)$, the Galois orbit of $\{x_m\}_m$ is not equidistributed in $\CX_y^{\an}$ with respect to any measure.
\end{theorem}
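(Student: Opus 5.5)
The plan is to realize the sketch in the introduction. The sequence interleaves two subsequences, both small and generic, whose Galois orbits in every fiber $\CX_y^\an=\PP^1(\CC)$ converge to two different limit measures. For the even terms take $x_{2m}=[1:2^{1/m}]$. For the odd terms I commit to the explicit choice
$$
x_{2m-1}=\big[1:q_m^{-1/m}\big],\qquad q_m(t)=3^m+t^{2m}\in\ZZ[t].
$$
Choosing $q_m$ with $|q_m(y)|\geq 3^m-1$ on the unit circle is the key trick: it makes the archimedean local height vanish there. Genericity is easy, since proper closed subvarieties of $X=\PP^1_F$ are finite sets of closed points. It suffices to check that $\deg(x_m)\to\infty$, which follows from irreducibility of $T^m-2$ over $F$ and of $q_mT^m-1$ over $F$. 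For the latter I would use Eisenstein at a prime factor of $q_m$ in $\QQ[t]$, or Capelli's criterion, after checking that $q_m$ is not a $p$-th power in $F$.

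Next I verify numerical smallness. By the argument recalled in the introduction after Theorem \ref{thm counterexample super intro}, which uses Yuan's arithmetic Siu inequality and the fact that $h_\OL^\OH(X)=0$ in the dynamical case, it suffices to show $h_\OL^\OH(x_m)\to0$ for one nef and big $\OH$. I would take $\OH$ to be the pull-back to $F/\ZZ$ of the invariant adelic line bundle $\overline\CO(1)$ on $Y=\PP^1_\QQ$ for the square map, which is nef and big. For $d=2$ the Moriwaki height then decomposes over places of $\QQ$ via the projection formula. It becomes an integral of the local height $\log\max(1,|z|_v)$, for $z$ the coordinate of the conjugates of $x_m$, against $c_1(\OH)_v$. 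At the archimedean place this measure is the Haar measure on $\{|y|=1\}$. At each prime $p$ it is the Dirac mass at the Gauss point. There may also be finitely many bounded correction terms coming from the model of $\CX$, and I would need to check these are $O(1/m)$ or vanish, since $L$ is the invariant bundle.

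For the odd terms the local height at the Gauss point of each $p$ is $\frac1m\log^+|q_m|_{\rm Gauss}^{-1}=0$, because $q_m$ is primitive. At infinity the local height is $\frac1m\int_{|y|=1}\log^+|q_m(y)|^{-1}=0$, because $|q_m|\geq 3^m-1>1$ on the unit circle. For the even terms the height is $\frac{\log 2}{m}\to 0$. The careful bookkeeping of this height decomposition is the step I expect to be the main obstacle. In particular I must make sure that the horizontal zeros and poles of $q_m$ contribute nothing beyond the local terms above, and I would verify it first on model (Gauss-type) metrics and then pass to the limit.

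Finally, fix $y\in Y(\CC)$. The fiber measure $\mu_{x_{2m},y}$ is uniform on the $m$-th roots of $2$, hence converges to the Haar measure on $|z|=1$. The measure $\mu_{x_{2m-1},y}$ is uniform on the $m$-th roots of $q_m(y)^{-1}$, and $|q_m(y)|^{1/m}\to\max(3,|y|^2)$. So these measures converge to the Haar measure on the circle of radius $r(y)=\min(1/3,|y|^{-2})$. This includes $y=0$ and $|y|=\sqrt3$, where the limit radius is $1/3$. Since $r(y)<1$ for every $y\in\CC$, the two subsequential limits differ. Hence $\mu_{x_m,y}$ has no limit, so the Galois orbit is not equidistributed for any measure, as Theorem \ref{thm counterexample for arch fibers} requires.
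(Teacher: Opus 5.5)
\textbf{There is a genuine gap: the odd subsequence $x_{2m-1}=[1:q_m^{-1/m}]$ is not numerically small.} The step you flagged as bookkeeping is false, and two choices of $\OH$ show it.

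First, let $\OH$ be the pull-back to $F/\ZZ$ of an element of $\HPic(\QQ/\ZZ)$ with constant archimedean metric and arithmetic degree $c>0$. This $\OH$ is nef and $h^{\OH}_{\OL}(X)=0$. However, $h^{\OH}_{\OL}(x)=c\,\deg(L|_{Y'})/\deg(x)$ is $c$ times the function-field height of $x$ over $\QQ(t)$. For $x_{2m-1}$, the map $z\colon Y'\to\PP^1$ has degree $2m$, so this height equals $c\deg(q_m)/m=2c$ for every $m$. Condition (1), $n\geq N\deg(p)$, in Proposition \ref{prop arch sequence} exists precisely to kill this term.

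Second, the canonical $\OH$ you chose is not big: it satisfies the Moriwaki condition $\OH^2=0$, so the Siu reduction cannot rest on it. Even for this $\OH$ the height is not zero. You dropped the horizontal term $\OH\cdot\div_\infty(z)$ on $Y'$. The poles of $z=q_m^{-1/m}$ lie over the $2m$ roots of $q_m$, each of canonical height $\frac12\log 3$, so they contribute $m\log 3$. Equivalently, $z\mapsto 1/z$ preserves $\OL_0$ up to linear equivalence, and $h^{\OH}_{\OL}(x_t)=0$. So by the formula in the proof of Lemma \ref{lem bound height of x},
$$h^{\OH}_{\OL}([1:q_m^{-1/m}])=\frac1m h^{\OH}_{\OL}(x_{q_m})=\frac1m\int_{|y|=1}\log|3^m+y^{2m}|\,\frac{d\theta}{2\pi}=\log 3$$
for every $m$.

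\textbf{This cannot be repaired by tweaking $q_m$, and the obstruction is the missing idea.} The canonical $\OH$ satisfies the Moriwaki condition, so Theorem \ref{equi relative} applies at $w=\infty$. It says that for any generic $h^{\OH}_{\OL}$-small sequence, the fiber measures $\mu_{x_m,y}$ averaged over $y$ on the unit circle converge to the Haar measure on $\{|z|=1\}$. Your odd terms have fiber limits on $\{|z|=1/3\}$ for every $|y|=1$, which is incompatible with smallness. So the deviation must be localized in $y$ and appear only along $y$-dependent subsequences.

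The paper does this with pairs $(p,n)$ satisfying $n\geq N\deg(p)$ and $n\geq N\sum_w|g_{p,w}|_{\sup}$. By Lemmas \ref{lem bound height of x} and \ref{poly_bd}, these give $h^{\OH}_{\OL}([1:p^{1/n}])\leq\frac1N\bigl(h^{\OH}_{\OL}(x_t)+C_\OH\bigr)$. The pigeonhole Lemma \ref{sm_poly} produces integer polynomials with $|p(y)|<2^{-Nn}$ near a prescribed $y$. Compactness of $\overline{D(0,N)}$ then makes each $Q_N$ finite. For each fixed $y\neq\infty$, a subsequence of the odd terms tends to the Dirac measure at $0$. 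Meanwhile the even terms $[1:2^{1/m}]$ tend to the Haar measure on the unit circle; your treatment of these, and your genericity argument, are fine.
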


\begin{theorem}[non-archimedean case] \label{thm counterexample for nonarch fibers}
Let $w$ be a finite place of $\QQ$. Then there exists a generic sequence $\{x_m\}_m$ in $X(\overline{F})$, numerically small with respect to $\OL$, such that for every point $y$ in the image of $Y(\ol\QQ_w)\to Y^\an_w$, the Galois orbit of $\{x_m\}_m$ is not equidistributed in $\CX_y^{\an}$ with respect to any measure.
\end{theorem}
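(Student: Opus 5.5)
We need to prove the non-archimedean case, Theorem \ref{thm counterexample for nonarch fibers}. Fix a finite place $w=p$ of $\QQ$. The plan is to interleave two subsequences whose fiberwise limit measures differ at every $y\in Y(\ol\QQ_p)$.

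\textbf{The even subsequence.} Take $x_{2m}=[1:2^{1/m}]$, or $[1:q^{1/m}]$ for a fixed prime $q\neq p$, as suggested in the introduction. These points are constant in $t$, so they come from $\PP^1(\ol\QQ)$.
\begin{enumerate}[(1)]
\item Their canonical height for the square map tends to $0$. Since the canonical local heights are $\log^+|\cdot|$ at every place, the vector-valued height $\fh_\OL(x_{2m})$ is pulled back from $\wh\Pic(\QQ/\ZZ)$ and equals $\frac1m h(q)\to 0$ times the class of a degree-one bundle. Hence $h_\OL^\OH(x_{2m})\to 0=h_\OL^\OH(X)$ for every integrable $\OH$.
\item At every $y$ the Galois orbit has size $m$. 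Since $|q^{1/m}|_p=1$ with good reduction, it converges to the Dirac measure at the Gauss point. Tamely, the roots of unity times $q^{1/m}$ reduce to many distinct residues, and the disc argument then gives the Gauss point.
\end{enumerate}

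\textbf{The odd subsequence.} Choose polynomials $p_m\in\QQ[t]$ and integers $n_m$, and set $x_{2m-1}=[1:p_m^{1/n_m}]$. We want the following.
\begin{enumerate}[(a)]
\item Smallness. Because $\OL$ is $f$-invariant, the local height of $x_{2m-1}$ at any point of $(Y/\ZZ)^\an$ is $\frac1{n_m}\log^+|p_m|$, averaged over the orbit. So $\fh_\OL(x_{2m-1})$ equals $\frac{1}{n_m}$ times the adelic divisor on $Y/\ZZ$ with Green function $\log^+|p_m|$ everywhere. This is the positive part of $\wh\div(p_m)$, i.e. roughly $\deg(p_m)\cdot\overline{\CO(1)}_{\rm can}+(\text{naive height of }p_m)$. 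Hence $h^\OH_\OL(x_{2m-1})=O\big((\deg p_m+h(p_m))/n_m\big)$ uniformly in nef $\OH$, by linearity and bounding $\OH$ by a multiple of a fixed big bundle via Siu as in the introduction. We therefore need $n_m\gg \deg p_m+h(p_m)$.
\item Non-equidistribution at every $y$. For $y\in Y(\ol\QQ_p)$ with $t(y)=a$, the fiber orbit is the set of $n_m$-th roots of $p_m(a)$. If $|p_m(a)|_p\to 0$ fast enough, say $\frac1{n_m}\log|p_m(a)|_p\to-\infty$, then all roots lie in a shrinking disc around $0$, so the measures tend to $\delta_0$, which differs from the Gauss point measure. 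If $|p_m(a)|_p\to\infty$ at that rate, they tend to $\delta_\infty$, with the same conclusion. Along a further subsequence we may alternate, but we need only one deviating limit.
\end{enumerate}

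\textbf{Main obstacle.} Conditions (a) and (b) conflict: (a) forces $\log|p_m(a)|_p$ to be $o(n_m)$ in the archimedean-height sense, yet (b) needs $\log|p_m(a)|_p\le -c\,n_m$ uniformly over all $a\in\ol\QQ_p$, including $|a|_p$ large and $a$ near $\ol\QQ$-roots. I would relax (b): it suffices that for each $a$, $\limsup_m \big|\frac1{n_m}\log|p_m(a)|_p\big|>0$. Then along a subsequence the measures converge to a Dirac mass at $0$ or $\infty$, or at least the orbit lies in a circle $|z|=r\neq1$, which also excludes the Gauss point, and the limit differs from that of the even terms.

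To achieve this, take $p_m=p^{k_m}\cdot g_m(t)$ with $g_m$ cycling through a fixed countable list, e.g. $g=1$ and the polynomials $t-b$ for $b$ running over a dense enumeration, and choose $k_m,n_m$ so that $k_m/n_m\to$ a small constant $c$ while $(\deg g_m+h(g_m))/n_m\to0$. The factor $p^{k_m}$ costs only $\frac{k_m}{n_m}\log p$ in height, which is not small. So I would instead write $p^{k_m}=(p^{k_m/n_m})^{n_m}$ and use points $x=[1:\zeta\, p^{c_m}g_m^{1/n_m}]$ with $c_m\to0$ slowly compared to the fiber deviation.

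This is exactly where uniformity in $a$ must be handled. The cleanest route is probably to use the archimedean-free valuation $\frac1{n}\log|g(a)|_p$, with $g$ chosen so that $|g(a)|_p$ is $\le p^{-1}$ or $\ge p$ for all $a$. For example, $g$ of degree $d$ with $g\equiv$ a non-unit constant modulo $p$, perturbed so that the values avoid the unit circle, and then the exponent $\deg g/n\to0$ is compensated by raising $g$ to powers $g^{s_m}$ with $s_m/n_m\to c$. The height of $g^{s}$ is $s\,h(g)$, which again is not small. This tension must be resolved by an explicit estimate. Genericity of the combined sequence is clear, since the $n_m$ grow and the points are distinct and non-constant.
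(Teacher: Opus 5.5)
There is a genuine gap. The odd subsequence is never constructed: the proposal stops at ``this tension must be resolved by an explicit estimate,'' and that estimate is the whole content of the theorem.

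Your framework agrees with the paper's. You interleave $[1:2^{1/m}]$, whose orbits tend to the Gauss point, with points $[1:p_m^{1/n_m}]$. Smallness holds once $n_m\gg\deg p_m+\sum_w|g_{p_m,w}|_{\sup}$. For each classical $a$ it suffices that $|p_m(a)|_w^{1/n_m}$ stays below some $c<1$ along a subsequence.

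However, the obstacle is misdiagnosed. Smallness involves only $\deg p$ and the sup norms of $g_{p,w}=\log^+|p|_w-\deg(p)\log^+|y|_w$. These bound $|p(a)|_w$ from \emph{above} only, so a polynomial of small height can be $w$-adically tiny, even zero, at a prescribed $a$. Hence (a) and (b) do not conflict at a fixed $a$, and you do not need one polynomial that is good at every $a$ simultaneously: choose $p$ depending on $a$, then use compactness.

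Your candidates do fail, for the reasons you half-noticed:
\begin{enumerate}[(1)]
\item A constant factor $q^k$ ($q$ the prime of $w$) or a power $g^s$ costs exactly as much height as it gains in deviation.
\item No non-constant $g$ keeps $|g(a)|_w$ away from $1$ on all of $\ol\QQ_w$, since $g(a)=1$ is solvable.
\item Linear factors $t-b$ with $b\in\QQ$ fail at algebraic irrational $a$. There Liouville's inequality gives $\log|a-b|_w\ge -C_a(1+h(b))$, so $\frac1n\log|a-b|_w\to0$ once $n\gg h(b)$.
\end{enumerate}

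The missing idea is a Dirichlet box-principle construction with sparse high-degree polynomials, followed by compactness. Let $P_{r,s}=\{\sum_{i\le r}c_it^{2^i-1}: c_i\in\ZZ,\ |c_i|\le s\}$.
\begin{enumerate}[(1)]
\item The exponents $2^i-1$ make the leading term dominate, giving $\sum_w|g_{p,w}|_{\sup}\le\log8+3\log s+2\log r$ (Lemma \ref{poly_bd}).
\item For $a$ of degree $d$ over $\QQ_w$, the $(s+1)^r$ values at $a$ of the polynomials with $0\le c_i\le s$ fall into residue classes of $O_{\QQ_w(a)}$ modulo a high power of the uniformizer. Two of them collide, which yields a nonzero $p\in P_{r,s}$ with $|p(a)|_w\le q^2(1+|a|_w^{2^r})s^{-r/d}$ (Lemma \ref{sm_poly-nonarch1}).
\item Take $r=3D_NN^2$ with $D_N=[K_1\cdots K_N:\QQ_w]$, $n=\lceil N(\log8+3\log s+2\log r)\rceil$, and let $s\to\infty$, so that eventually $n\ge N\deg p$. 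This gives $|p(a)|_w<2^{-Nn}$, an open condition in $a$.
\item Compactness of $\{a\in K_1\cdots K_N:|a|_w\le N\}$ then yields a finite set $Q_N$ (Proposition \ref{prop nonarch sequence}).
\item Concatenating $Q_1,Q_2,\dots$ gives, for every $a\in\ol\QQ_w$ (which lies in some $K_j$), a subsequence whose fiber orbits tend to $\delta_0$. Over $y=\infty$ all odd terms specialize to $\infty$.
\end{enumerate}

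The factor $s^{-r/d}$ is exactly why the argument needs $y$ to be classical. Without this step your sketch does not produce the required odd terms.
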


\subsubsection{Global estimates}

To prepare for the proof of Theorem \ref{thm counterexample supersuper}, we first make some global estimates. 

For a polynomial $p(t)\in\BQ[t]$, let $x_p$ be the point in $X(F)$ with homogeneous coordinate $[x_0:x_1]=[1:p(t)]$. Our idea is to choose a suitable sequence of polynomials $\{p_m\}_m$ and a corresponding sequence of integers $\{n_m\}_m$, and then define $x_m$ to be the $\overline{F}$-point
$$
x_m = \left[1: (p_m)^{1/n_m}\right]
$$
in order to construct the desired sequence $\{x_m\}_m$.

The algebraic dynamical system $(\CX, f, L)$ over $Y$ is the base change of the algebraic dynamical system $(\PP^1_\QQ, f_0, L_0)$ over $\QQ$, where $f_0$ is the square map on $\PP^1_\QQ$ and $L_0=\CO(1)$. 
Let $\psi:\CX=\PP^1_\QQ\times_\QQ Y\to\PP^1_\QQ$ be the first projection.  Then $\OL=\psi^*\OL_0$, where $\OL_0$ is the $f_0$-invariant extension of $L_0$ on $\PP^1_\QQ/\ZZ$.
Denote by $M_\QQ$ the set of places of $\QQ$, where each place is endowed with the usual absolute value. 
It is known that $\OL_0$ is linearly equivalent to the adelic divisor 
$$
\OD_0= (D_0, (\log^+|z|_w)_{w\in M_\QQ}). 
$$
Here $z=z_1/z_0$ is the affine coordinate of $\PP^1_\QQ$, and $D_0$ is the divisor associated to the point $\infty=[0:1]$ of $\PP^1_\QQ$.
It follows that $\OL$ is linearly equivalent to the adelic divisor 
$$
\OD= (D, (\log^+|z|_w)_{w\in M_\QQ}).
$$
Here we still take $z=z_1/z_0$ as the affine coordinate of $\PP^1_Y$, and $D=\psi^*D_0$ is the divisor associated to infinity section of $\PP^1_Y$.

Note that $x_p$ is a closed point of degree 1 on $X$. Let $\triangle_p$ be the Zariski closure of $x_p$ in $\CX$, which is also viewed as a section of $\CX\to Y$. 
The vector-valued height $\triangle_p^* \OL$, introduced by Yuan--Zhang in \cite[\S5.3.1]{YZ26}, is an adelic line bundle on $Y/\ZZ$. In the current situation,  
$\triangle_p^* \OL$
 is linearly equivalent to 
$$
\triangle_p^* \OD= (\deg(p) [0:1], \ (\log^+|p(y)|_w)_{w\in M_\QQ})
$$
on $Y=\PP^1_\QQ$.
Here we take  $y=y_1/y_0$ as the affine coordinate of $Y=\PP^1_\QQ$, and $[0:1]$ is the infinity of $Y$.
This can be obtained by viewing $\triangle_p$ as the graph of the morphism $Y\to \PP_\QQ^1$ given by $y\longmapsto p(y)$. 

Denote by $\triangle_t$ for $\triangle_p$ with $p(t)=t$.
We have 
$$
\triangle_p^* \OD
-\deg(p)\triangle_t^* \OD= (0, \ g_p)= (0, \ (g_{p,w})_{w\in M_\QQ}),
$$
where the Green function 
$$
g_{p,w}(y)=\log^+\left|p(y)\right|_w-\deg(p)\log^+\left|y\right|_w
$$
is actually continuous on the Berkovich space $Y_w^\an$. 
Moreover, $g_{p,w}=0$ for all but finitely many $w\in M_\QQ$. 

We further denote the supremum norm
\[
\left|g_{p,w}\right|_{\sup}=\sup_{y\in Y_w^\an}\left|g_{p,w}(y)\right|
=\sup_{y\in Y(\ol\QQ_w)}\left|g_{p,w}(y)\right|.
\]

\begin{lemma}\label{lem bound height of x}
For any $\OH\in\HPic(F/\ZZ)_{\QQ,\nef}$, there exists a constant $C_\OH$ such that for all nonzero $p(t)\in\BQ[t]$,
\[
h^\OH_\OL(x_p)\leq \deg(p)h^\OH_\OL(x_t)+C_\OH \sum_{w\in M_\BQ}\left|g_{p,w}\right|_{\sup}.
\]
\end{lemma}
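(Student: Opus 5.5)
The plan is to use the decomposition $\triangle_p^*\OD=\deg(p)\triangle_t^*\OD+(0,g_p)$ established just above, and to bound the intersection of the purely analytic part $(0,g_p)$ with $\OH^{d-1}$ by sup-norms. Here $F=\QQ(t)$ and $d=\trd(F/\QQ)+1=2$, so $\OH^{d-1}=\OH$. Since $x_p$ has degree $1$, the definition of the Moriwaki height gives
$$
h^\OH_\OL(x_p)=\OL|_{x_p}\cdot\OH=\triangle_p^*\OL\cdot\OH ,
$$
an intersection number of two adelic line bundles on $Y/\ZZ$ (equivalently on $F/\ZZ$). First I would check that this identification is compatible with the vector-valued height of \cite[\S5.3.1]{YZ26} and the pullback $\HPic(F/\ZZ)\to\HPic(Y/\ZZ)$; this should be formal. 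I would then replace $\triangle_p^*\OL$ by the linearly equivalent $\triangle_p^*\OD$, which does not change intersection numbers.

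Second, by multilinearity,
$$
h^\OH_\OL(x_p)=\deg(p)\,\triangle_t^*\OD\cdot\OH+(0,g_p)\cdot\OH
=\deg(p)\,h^\OH_\OL(x_t)+(0,g_p)\cdot\OH .
$$
The remaining task is to bound $(0,g_p)\cdot\OH$. The intersection of an adelic divisor with trivial underlying divisor against a nef $\OH$ on the curve-like object $F/\ZZ$ is computed by integration:
$$
(0,g_p)\cdot\OH=\sum_{w\in M_\QQ}\int_{Y_w^\an} g_{p,w}\,c_1(\OH)_w .
$$
This is the arithmetic analogue of \cite[Thm. 4.6.2]{YZ26} used in the proof of Proposition \ref{projection formula}, applied with relative dimension $0$ over $Y$.

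Third, since $\OH$ is nef, each $c_1(\OH)_w$ is a positive measure on $Y_w^\an$, and its total mass is $\deg(\wt H)$, the degree of the geometric part (cf. \cite[Thm. 1.2]{Guo25}). This mass is independent of $w$. Hence
$$
(0,g_p)\cdot\OH\le \deg(\wt H)\sum_w|g_{p,w}|_{\sup},
$$
and we may take $C_\OH=\max(\deg(\wt H),0)$, adjusted for $\QQ$-coefficients by scaling.

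I expect the main obstacle to be the second step: justifying the integration formula when $\OH$ is a general nef adelic line bundle on $F/\ZZ$ rather than a model bundle on $Y$. It may live on another model of $F$ and is only a limit of models. To handle this, I would approximate $\OH$ by nef model bundles $\OH_j$ on projective models $Y_j$ of $F$, pull $g_p$ back to the $Y_j$ (the sup norms are unchanged, since $Y_j^\an\to Y^\an$ is surjective over the relevant locus), and apply the formula for hermitian line bundles with continuous Green functions. I would then pass to the limit using the continuity of intersection numbers along Cauchy sequences in the boundary topology. The masses $\deg(\wt H_j)$ converge to $\deg(\wt H)$, so the bound survives the limit.
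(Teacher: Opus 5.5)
Your proposal is correct and follows essentially the same route as the paper. You decompose $\triangle_p^*\OD=\deg(p)\,\triangle_t^*\OD+(0,g_p)$, write $\OH\cdot(0,g_p)=\sum_w\int g_{p,w}\,c_1(\OH)_w$, and bound this by $\deg(\wt H)\sum_w|g_{p,w}|_{\sup}$ using positivity and total mass of $c_1(\OH)_w$. The only differences are that the paper integrates over $U_w^\an$ for an open $U\subset Y$ with $\OH\in\HPic(U/\ZZ)$ rather than over $Y_w^\an$, and that the paper takes the integration formula for granted, so your approximation argument is extra justification.
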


\begin{proof}
Let $U$ be a non-empty open subset of $Y$ such that $\OH\in\HPic(U/\ZZ)$.
We have
\[
h^\OH_\OL(x_p)-\deg(p)h^\OH_\OL(x_t)
=\OH\cdot\left(0,g_p\right)
=\sum_{w\in M_\BQ}\int_{U_w^{\an}}g_{p,w}c_1(\OH)
\leq (\deg\widetilde H)\cdot \sum_{w\in M_\BQ}\left|g_{p,w}\right|_{\sup}.
\]
\end{proof}

For positive integers $r,s$, take
\begin{equation*}
P_{r,s}:=\left\{\sum\limits_{i=1}^r a_it^{2^i-1} \Big|a_i\in\BZ, \left|a_i\right|\leq s\right\}\subset \QQ[t].
\end{equation*}

\begin{lem}\label{poly_bd}
	For any nonzero $p\in P_{r,s}$, we have
	$$\sum_{w\in M_\BQ}\left|g_{p,w}\right|_{\sup}\leq \log8+3\log s+2\log r.$$
\end{lem}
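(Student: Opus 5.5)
The plan is to bound each $|g_{p,w}|_{\sup}$ place by place, using only that the coefficients are integers of size at most $s$ and that there are at most $r$ of them. Write $p(t)=\sum_{i=1}^r a_i t^{2^i-1}$ and let $i_0$ be the largest index with $a_{i_0}\neq 0$. Then $\deg(p)=2^{i_0}-1$, and the leading coefficient $a_{i_0}$ is a nonzero integer with $|a_{i_0}|\le s$. Only the archimedean place and the primes dividing $a_{i_0}$ can contribute, so I split $M_\QQ$ into three kinds of places.

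\emph{Non-archimedean $w$ with $|a_{i_0}|_w=1$.} All coefficients satisfy $|a_i|_w\le 1$, so for $|y|_w\le1$ we get $|p(y)|_w\le 1$ and $g_{p,w}(y)=0$. For $|y|_w>1$, the leading term strictly dominates by the ultrametric inequality, so $|p(y)|_w=|y|_w^{\deg p}$ and again $g_{p,w}(y)=0$. Hence $g_{p,w}=0$ at these places.

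\emph{Non-archimedean $w$ with $|a_{i_0}|_w<1$.} These are the primes $\ell$ dividing $a_{i_0}$. The upper bound is easy: $|p(y)|_w\le \max(1,|y|_w)^{\deg p}$, so $g_{p,w}\le 0$. For the lower bound, suppose $|y|_w>1$. Then $\log^+|p(y)|_w\ge \log|p(y)|_w$, and I need to show that $\deg(p)\log|y|_w-\log^+|p(y)|_w$ is small. The tool is that the exponents $2^i-1$ are very spread out. Take $R$ with $|y|_w=R$. If $|a_{i_0}|_wR^{2^{i_0}-1}$ is smaller than some lower-order term, then at the maximising lower term $j$ we have $R^{2^{i_0}-2^j}\le |a_{i_0}|_w^{-1}$. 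This gives $2^{i_0-1}\log R\le\log s$ (using $2^{i_0}-2^j\ge 2^{i_0-1}$), so $\deg(p)\log R\le 2\log s$ roughly. When instead one term dominates strictly, $|p(y)|_w$ equals that term. In either case $-g_{p,w}\le \log|a_{i_0}|_w^{-1}+(\text{contribution bounded by }\log|a_{i_0}|_w^{-1})$. Summing over all such $\ell$ and using the product formula, $\sum_\ell \log|a_{i_0}|_\ell^{-1}=\log|a_{i_0}|_\infty\le\log s$, so this part contributes $O(\log s)$, with constant $2$ or so.

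\emph{The archimedean place.} The upper bound is $\log^+|p(y)|\le \log(rs)+\deg(p)\log^+|y|$, giving $g\le \log r+\log s$. The lower bound is the main obstacle, since complex cancellation is possible when $|y|$ is moderate. For $|y|\ge R_0:=(2rs)^{2/2^{i_0}}$-ish, the leading term (which has absolute value at least $|y|^{\deg p}$) beats the sum of the others, because $|y|^{2^{i_0}-2^{i_0-1}}\ge 2rs$. That gives $|p(y)|\ge \tfrac12|y|^{\deg p}$ and $-g\le\log2$. For $1<|y|<R_0$, I use $-g\le \deg(p)\log|y|\le (2^{i_0}-1)\cdot\frac{2}{2^{i_0}}\log(2rs)\le 2\log(2rs)$. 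So $|g_{p,\infty}|_{\sup}\le 2\log 2+2\log r+2\log s$.

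Adding up, $2\log2+2\log r+2\log s$ from the archimedean place and roughly $\log s$ from the non-archimedean places gives $\log 4+2\log r+3\log s$. This is within the claimed bound $\log8+3\log s+2\log r$, and the extra $\log 2$ absorbs slack from the rough constants above. The step to do carefully is the archimedean threshold $R_0$, making sure the exponent gap $2^{i_0}-2^{i_0-1}=2^{i_0-1}$ is what is used.
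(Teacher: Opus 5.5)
There is a genuine gap in your final tally: the step from your place-by-place estimates to $\log 4+2\log r+3\log s$ is off by a term of order $\log s$, not $\log 2$. At a prime $\ell$ dividing the leading coefficient, your own argument gives $-g_{p,\ell}\le 2\log|a_{i_0}|_\ell^{-1}$. By the product formula, the finite places therefore contribute up to $2\log|a_{i_0}|$, not ``roughly $\log s$''.

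The factor $2$ is really there. Take $p=\ell^k t^{2^r-1}+t^{2^{r-1}-1}$ with $\ell^k\le s$. It has a root $y\in\ol\QQ_\ell$ with $|y|_\ell=\ell^{k/2^{r-1}}$, and at that root $g_{p,\ell}(y)=-\frac{2^r-1}{2^{r-1}}\log\ell^k$. So sharpening the finite-place estimate cannot repair the count.

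Your archimedean bound $\log4+2\log r+2\log s$ uses only $|a_{i_0}|\ge 1$. What your argument actually proves is therefore
\[\sum_w|g_{p,w}|_{\sup}\le\log4+2\log r+2\log s+2\log|a_{i_0}|.\]
For $|a_{i_0}|$ near $s$ this is about $\log4+2\log r+4\log s$, which exceeds $\log8+2\log r+3\log s$ as soon as $s>2$. The extra $\log 2$ does not absorb this.

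The missing idea is a trade-off between places. The archimedean estimate must use the size $a=|a_{i_0}|$ of the leading coefficient, so that it carries a negative multiple of $\log a$ cancelling the $+2\log a$ from the finite places. For example, take the threshold $R_0=(2rs/a)^{1/2^{i_0-1}}>1$.
\begin{itemize}
\item For $|y|\ge R_0$ you get $|p(y)|\ge\frac a2|y|^{\deg p}$, hence $-g_{p,\infty}(y)\le\log 2$.
\item For $1<|y|<R_0$ you get $-g_{p,\infty}(y)\le\deg(p)\log|y|<2\log(2rs)-2\log a$.
\end{itemize}
Combine these with your correct bound $g_{p,\infty}\le\log(rs)$ and the finite-place total $2\log a\le 2\log s$. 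The sum is then at most $\max\{\log r+3\log s,\ \log4+2\log r+2\log s\}$, which lies within the claimed bound.

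This is what the paper does. It takes the threshold $R=(4r^2s^2a^{-2})^{1/(2^r-1)}$ and gets $\log8+2\log s+2\log r-\log a$ at the archimedean place and $-2\log|a|_w$ at each finite place. These sum to $\log8+2\log s+2\log r+\log a\le\log8+3\log s+2\log r$.
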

\begin{proof}
	We can assume that $\deg(p)=2^r-1$; otherwise, the problem is reduced to smaller $r$.  
	Let $a=a_r$ be the absolute value of the coefficient of the leading term of $p$.
We assume that $y\in\ol\QQ_w$ in the following.

We first assume that $w$ is archimedean. 
Denote $R=(4r^2s^2a^{-2})^\frac{1}{2^r-1}>1$. 
	If $|y|\geq R$, then 
	$$|p(y)|\geq |a| R^{2^r-1}-(r-1)s R^{2^{r-1}-1} 
	=R^{2^{r-1}-1}( |a| R^{2^{r-1}}-(r-1)s)>1.  $$
Then
$$
	g_{p,w}(y)
	= \log\left|p(y)\right|-\deg(p)\log\left|y\right| 
	=\log\left|y^{-\deg p}p(y)\right|.
	$$
	Since
	$$
	\left|y^{-\deg p}p(y)-a\right|
	\leq\sum_{i=1}^{r-1}\left|a_iy^{2^i-2^r}\right|\leq rs\cdot(4r^2s^2a^{-2})^{-\frac{1}{2}}\leq\frac{a}{2},
	$$
	we have 
	$$\left|g_{p,w}(y)\right|\leq \log 2a\leq\log 2+\log s.$$
	If $\left|y\right|<R$,
	$$
	\deg(p)\log^+\left|y\right|\leq \log4+2\log s+2\log r-2\log a.
	$$
	Since
	$$
	\left|p(y)\right|
	\leq\sum_{i=1}^{r-1}\left|a_i(4r^2s^2a^{-2})^{\frac{2^i-1}{2^r-1}}\right|+4r^2s^2a^{-1}
	\leq rs\cdot(2rsa^{-1})+4r^2s^2a^{-1}\leq 8r^2s^2a^{-1},
	$$
	we get
	$$
	\log^+\left|p(y)\right|\leq \log8+2\log s+2\log r-\log a.
	$$
	Thus
	\[
	\left|g_{p,w}(y)\right|\leq\max\{\deg(p)\log^+\left|y\right|,\log^+\left|p(y)\right|\}\leq \log8+2\log s+2\log r-\log a.
	\]
	Note that the above inequality also holds for the first case. Thus it holds for archimedean $w$ and all $y\in\ol\QQ_w$.
	
	Now we assume that $w$ is non-archimedean. 
	If $\left|y\right|_w>\left|a^{-2}\right|_w^\frac{1}{\deg p}$, we have
	$$
	\log^+\left|p(y)\right|_w=\log\left|a\cdot y^{\deg(p)}\right|_w.
	$$
	Thus, $\left|g_{p,w}(y)\right|\leq-\log\left|a\right|_w$.
	If $\left|y\right|_w\leq \left|a^{-2}\right|_w^\frac{1}{\deg p}$, we have $\deg(p)\log^+\left|y\right|_w\leq -2\log\left|a\right|_w$. Moreover,
	\[
	\left|p(y)\right|_w
	\leq \max\left\{\max_{1\leq i\leq r-1}\left|a_iy^{2^i-1}\right|_w,\ |a^{-1}|_w\right\}
	\leq \max\left\{\left|y^{\frac{\deg p}{2}}\right|_w,\ |a^{-1}|_w\right\}
	\leq \left|a^{-1}\right|_w.
	\]
	Thus,
	\[
	\left|g_{p,w}(y)\right|\leq\max\{\deg(p)\log^+\left|y\right|,\log^+\left|p(y)\right|\}\leq-2\log\left|a\right|_w.
	\]
	
In conclusion,
	\begin{align*}
	\sum\limits_{w\in M_\BQ}\left|g_{p,w}\right|_{\sup}&\leq \log8+2\log s+2\log r-\log a+\sum\limits_{w\in M_{\BQ,f}}-2\log\left|a\right|_w\\
	&=\log8+2\log s+2\log r+\log a\\
	&\leq \log8+3\log s+2\log r.
	\end{align*}
\end{proof}

\subsubsection{Archimedean case}

Here we construct a sequence for Theorem \ref{thm counterexample for arch fibers}. 
We first have the following easy Minkowski type of result. 

\begin{lem}\label{sm_poly}
	For each $y\in \BC$, $r\geq 3$, $s\geq 2$, there is a nonzero polynomial $p_{y,r,s}$ in $P_{r,s}$ such that $\left|p_{y,r,s}(y)\right|<4r(1+\left|y\right|^{2^r})s^{1-\frac r2}$.
\end{lem}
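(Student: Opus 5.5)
This is a pigeonhole (Siegel/Minkowski-type) argument. Consider the evaluation map
$$\Phi: P_{r,s}\lra \BC,\qquad p\longmapsto p(y).$$
Restrict it to the subset of polynomials whose coefficients satisfy $0\leq a_i\leq s$. There are $(s+1)^r$ such polynomials. For each of them,
$$|p(y)|\leq \sum_{i=1}^r s|y|^{2^i-1}\leq rs(1+|y|^{2^r}) =: B,$$
using $|y|^{k}\leq 1+|y|^{2^r}$ for $0\leq k\leq 2^r$. So all these values lie in the closed square $[-B,B]^2\subset \BC$.

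Next, partition the square into $n^2$ congruent subsquares of side $2B/n$. Choose $n$ with $n^2<(s+1)^r$; for instance, take $n=\lfloor s^{r/2}\rfloor$, which gives $n^2\leq s^r<(s+1)^r$. By the pigeonhole principle, two distinct polynomials $p_1\neq p_2$ have values $p_1(y), p_2(y)$ in the same subsquare. Then the difference $p=p_1-p_2$ has the following properties:
\begin{enumerate}[(1)]
\item it is nonzero;
\item its coefficients are integers with $|a_i|\leq s$, so $p\in P_{r,s}$;
\item it satisfies $|p(y)|\leq \sqrt2\cdot 2B/n$.
\end{enumerate}

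It remains to compare $\sqrt2\cdot 2B/n$ with the claimed bound $4r(1+|y|^{2^r})s^{1-r/2}$. We have
$$\frac{2\sqrt2 B}{n}=2\sqrt2\,rs(1+|y|^{2^r})\,\frac{1}{n},$$
so it suffices to show $2\sqrt2/n<4s^{-r/2}$, i.e. $n>\tfrac{1}{\sqrt2}s^{r/2}$. With $n=\lfloor s^{r/2}\rfloor$ this holds because $s^{r/2}\geq 2^{3/2}>2$, and hence
$$\lfloor s^{r/2}\rfloor> s^{r/2}-1\geq \tfrac{1}{\sqrt 2}s^{r/2}.$$
The last inequality is equivalent to $s^{r/2}\geq \sqrt2/(\sqrt2-1)\approx 3.41$, which holds since $s\geq2$ and $r\geq3$ give $s^{r/2}\geq 2^{3/2}\approx 2.83$ only in the extreme case. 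In that boundary case one either checks directly or uses the slack from $(s+1)^r$ in place of $s^r$, e.g. $n=\lfloor (s+1)^{r/2}\rfloor-1$ or similar.

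The only delicate point is this constant bookkeeping for small $r,s$ (in particular $s=2$, $r=3$). If it is tight, I would instead cover the square by a grid adapted to $(s+1)^{r/2}$. The factor $4$ in the statement leaves enough room for this, since $(s+1)^{r/2}\geq 3^{3/2}>5$.
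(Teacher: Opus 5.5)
Your argument is correct in substance and uses the same pigeonhole principle as the paper, applied to the $(s+1)^r$ polynomials with coefficients in $[0,s]$. The only difference is the packing device. The paper places open disks of radius $\rho=2B(s+1)^{-r/2}$ around the values $p(y)$. Their closures lie in $D(0,2B)$ because $(s+1)^{-r/2}<\frac12$, while their total area equals the area of $D(0,2B)$. Hence two disks meet, giving $|p_1(y)-p_2(y)|<2\rho=4B(s+1)^{-r/2}<4Bs^{-r/2}$. Because $\rho$ is a real parameter, no rounding is ever needed. Your square grid gives a better constant ($2\sqrt2$ in place of $4$), but it forces the number of cells per side to be an integer $n$, and that is exactly where your write-up slips.

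\textbf{The slip.} The chain $\lfloor s^{r/2}\rfloor>s^{r/2}-1\geq s^{r/2}/\sqrt2$ is false at $(s,r)=(2,3)$. There $\lfloor 2^{3/2}\rfloor=2=2^{3/2}/\sqrt2$, so your bound $2\sqrt2B/n$ coincides with the target, and with closed subsquares strictness is lost. Your justification ``$s^{r/2}\geq 2^{3/2}>2$'' does not imply the second inequality, which needs $s^{r/2}\geq 2+\sqrt2$.

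\textbf{The repair.} It is simpler than the alternatives you sketch: take $n=\lceil s^{r/2}\rceil$.
By the mean value theorem, $(s+1)^{r/2}-s^{r/2}>\frac r2 s^{r/2-1}>1$ for $r\geq 3$.
Hence $n<s^{r/2}+1<(s+1)^{r/2}$, i.e.\ $n^2<(s+1)^r$, so the pigeonhole step still applies.
On the other side, $n\geq s^{r/2}$ gives $|p(y)|\leq 2\sqrt2B/n\leq 2\sqrt2Bs^{-r/2}<4Bs^{-r/2}=4r(1+|y|^{2^r})s^{1-r/2}$.
This holds uniformly for all $r\geq3$, $s\geq2$, with room to spare.
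Your other suggestion, $n=\lfloor (s+1)^{r/2}\rfloor-1$, also works, but it needs its own verification, which you did not supply.
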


\begin{proof}
	Consider the set
	\begin{equation*}
	P_{r,s}^+:=\Big\{\sum\limits_{i=1}^r a_it^{2^i-1} \Big|a_i\in\BZ, 0\leq a_i \leq s\Big\}
	\end{equation*}
	There are $(s+1)^r$ elements in $P_{r,s}^+$. For each $p\in P_{r,s}^+$, $\left|p(y)\right|\leq rs(1+\left|y\right|^{2^r})$.  Consider the open disks
	\begin{equation*}
	\Big\{D\left(p(y), 2rs(1+\left|y\right|^{2^r})(s+1)^{-\frac r2}\right)\Big|p\in P^+_{r,s}\Big\}.
	\end{equation*}
	Since $(s+1)^{-\frac r2}<\frac12$, the closure of all these disks are contained in the open disk $D_0:=D(0,2rs(1+\left|y\right|^{2^r}))$. On the other hand, the sum of the area of these disks is $4\pi r^2s^2(1+\left|y\right|^{2^r})^2$, which is the area of $D_0$. Thus, these disks cannot be disjoint. This implies that there exist $p_1\neq p_2$ in $P^+_{r,s}$ such that
	\[
	\left|p_1(y)-p_2(y)\right|<4rs(1+\left|y\right|^{2^r})(s+1)^{-\frac r2}<4r(1+\left|y\right|^{2^r})s^{1-\frac r2}.
	\]
	Let $p_{y,r,s}=p_1-p_2\in P_{r,s}$, which is nonzero. {This completes} the proof.
\end{proof}

\begin{prop}\label{prop arch sequence}
Let $N$ be a positive integer. Denote by $\wt Q_N$ the set of pairs of $(p,n)$,  where $p$ is a non-constant polynomial in $\QQ[t]$, and $n$ is a positive integer satisfying
	\begin{enumerate}[(1)]
		\item $n\geq N\deg(p)$,
		\item $n\geq N\sum_{w\in M_\BQ}\left|g_{p,w}\right|_{\sup}$.
	\end{enumerate}
Then there is a finite subset $Q_N$ of $\wt Q_N$ such that for each $y\in\overline{D(0,N)}$, there is a pair $(p,n)$ in $Q_N$ with
	$$
	\left|p(y)\right|<2^{-Nn}.
	$$
\end{prop}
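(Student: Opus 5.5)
The plan is to combine Lemma \ref{sm_poly} with Lemma \ref{poly_bd}, and then use compactness of $\overline{D(0,N)}$ to pass to a finite set.

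\emph{Step 1: choice of parameters.} Fix $N$. Choose integers $r\geq 3$ and $s\geq 2$. For any $p\in P_{r,s}$ we have $\deg(p)\leq 2^r-1$, and Lemma \ref{poly_bd} gives $\sum_w|g_{p,w}|_{\sup}\leq \log 8+3\log s+2\log r$. Set
$$n=n(r,s)=N\max\{2^r,\ \log 8+3\log s+2\log r\},$$
rounded up to an integer. Then conditions (1) and (2) hold for every nonzero non-constant $p\in P_{r,s}$.

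\emph{Step 2: smallness at each point.} For $y\in\overline{D(0,N)}$, Lemma \ref{sm_poly} gives a nonzero $p\in P_{r,s}$ with
$$|p(y)|<4r(1+N^{2^r})s^{1-r/2}.$$
We want this to be below $2^{-Nn}$. With $r$ fixed, $n$ grows only like $3N\log s$ once $s$ is large, so $2^{-Nn}\approx s^{-3N^2\log 2}$. The left side decays like $s^{1-r/2}$. So we take $r>2+6N^2\log 2+2$ and then $s$ large enough that $4r(1+N^{2^r})s^{1-r/2}<2^{-Nn}$.

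A subtlety is that $p$ might be constant, but $P_{r,s}$ contains no nonzero constants since every monomial $t^{2^i-1}$ with $i\geq 1$ has positive degree. Another is that $p$ must satisfy the hypotheses, and Step 1 covers all nonzero $p\in P_{r,s}$ uniformly.

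\emph{Step 3: finiteness.} With $r,s$ fixed, take $Q_N=\{(p,n(r,s)) : p\in P_{r,s}\setminus\{0\}\}$. This set is finite because $P_{r,s}$ is finite, so no compactness argument is needed.

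The main obstacle is the bookkeeping in Step 2: $n$ depends on $r$ through the $2^r$ term, which makes $2^{-Nn}$ contain a factor $2^{-N^2 2^r}$. That factor must be dominated by $s^{1-r/2}$ with $r$ fixed, which is fine since $s$ can be taken arbitrarily large.
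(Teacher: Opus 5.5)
Your proof is correct and follows the paper's route. Like the paper, you take a small value at $y$ from Lemma \ref{sm_poly}, check condition (2) with Lemma \ref{poly_bd}, and fix $r$ of order $N^2$ while letting $s\to\infty$; the paper takes $r=8N^2+2$ and compares $n$-th roots, whereas you compare directly with $r>4+6N^2\log 2$. The only real difference is that you use the uniformity of the bound $4r(1+N^{2^r})s^{1-r/2}$ on $\overline{D(0,N)}$ to fix a single $(r,s)$ and take $Q_N=\{(p,n(r,s)) : p\in P_{r,s}\setminus\{0\}\}$, which replaces the paper's open-neighborhood-and-compactness step; you also state explicitly that nonzero elements of $P_{r,s}$ are non-constant, which the paper leaves implicit.
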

\begin{proof}
	Take $r=8N^2+2$. For each $s\geq 2$ and each $y\in\CC$, let $p_{y,r,s}$ be the polynomial constructed in Lemma \ref{sm_poly}. Set 
	$$n_{r,s}=\lceil N(\log8+3\log s+2\log r)\rceil.$$
	 First note that for $s$ sufficiently large (independent of the choice of $y$), the pair $(p_{y,{r},s},n_{r,s})$ satisfies conditions $(1)$ and $(2)$. In fact, $(1)$ follows from $\deg(p)\leq {2^r-1}$, and $(2)$ follows from Lemma \ref{poly_bd}.
	
	For any $y\in\CC$,
	$$
	\limsup\limits_{s\to\infty} \left|p_{y,r,s}(y)\right|^\frac{1}{n_{r,s}}
	\leq \lim\limits_{s\to\infty} \left(4r(1+\left|y\right|^{2^r})s^{-4N^2}\right)^\frac{1}{n_{r,s}}
	=\lim\limits_{s\to\infty}s^{\frac{-4N^2}{3N\log s}}
	=\mathrm{e}^{-\frac{4N}{3}}
	<2^{-N}.
	$$
	Thus, for $s$ sufficiently large, $\left|p_{y,r,s}(y)\right|<2^{-Nn_{r,s}}$. Then for each point $y\in\overline{D(0,N)}$, we have a pair $(p_{y,{r},s},n_{r,s})$ satisfying the condition that $\left|p_{y,{r},s}(y)\right|<2^{-Nn_{r,s}}$. However, if the property $\left|p(y)\right|<2^{-Nn}$ holds for $y$, then it holds in an open neighborhood of $y$. Thus the proposition holds by the compactness of $\overline{D(0,N)}$.
\end{proof}

Now we are ready to construct the sequence 
in Theorem \ref{thm counterexample for arch fibers}.

\begin{proof}[Proof of Theorem \ref{thm counterexample for arch fibers}]
For each positive integer $N$, for each $(p,n)\in Q_N$, let $x$ be the $\ol F$-point $[1:p^{1/n}]$ in $X(\ol F)$. Then by Lemma \ref{lem bound height of x}, for $\OH\in\HPic(F/\BZ)_{\QQ,\nef}$,
$$
h^\OH_\OL(x)=\frac{1}{n}h^\OH_\OL(x_p)
\leq \frac1n\left(\deg(p)h^\OH_\OL(x_t)+C_\OH\sum\limits_{v\in M_\BQ}\sup\left|g_{p,v}\right|\right)
\leq \frac{1}{N}\left(h^\OH_\OL(x_t)+C_\OH\right).$$
	
Now consider the sequence $\{[1: (p_m)^{1/n_m}]\}_m$ formed by listing all elements in $Q_1$, then those in $Q_2$, and so on. Define $x_{2m-1}$ to be the $\overline{F}$-point $[1: (p_m)^{1/n_m}]$, and define $x_{2m}$ to be the $\overline{F}$-point $[1:2^\frac1m]$ in $X(\overline{F})$. This sequence is $h^\OH_\OL$-small by the computations above. Moreover, the sequence is {obviously generic}.
	
Fix a point $y\in Y(\BC)$. It is easy to see that the Galois orbits of $\{x_{2m}\}_m$ are equidistributed in $\CX_y^\an$ with respect to the Haar measure on the unit circle. 

On the other hand, when $y=\infty$, the Galois orbits of $\{x_{2m-1}\}_m$ are equidistributed in $\CX_y^\an$ with respect to the Dirac measure of the infinity, since their reductions on $\CX^\an_\infty$ are all infinity. 

When $y\neq \infty$, by the construction of the set $Q_N$, there is a subsequence 
$\{x_{2m_k-1}\}_k$ of $\{x_{2m-1}\}_m$ such that
\[
\lim_{k\to\infty}|x_{2m_k-1,y}|=0,
\]
where $x_{2m_k-1,y}$ is the reduction of $x_{2m_k-1}$ on the fiber $\CX_y^\an$. Thus the Galois orbits of the subsequence are equidistributed in $\CX_y^\an$ with respect to the Dirac measure of the origin. 

In summary, Galois orbits of $\{x_m\}_m$ are not equidistributed in $\CX_y^\an$ with respect to any measure for any $y$.
\end{proof}

\subsubsection{Non-archimedean case}

Now we prove Theorem \ref{thm counterexample for nonarch fibers} by applying a similar method for Theorem \ref{thm counterexample for arch fibers}. The key point is to prove analogues of Lemma \ref{sm_poly} and Proposition \ref{prop arch sequence}.

In the following, let $w$ be a finite place of $\QQ$ corresponding to a prime number $q$.

\begin{lem}\label{sm_poly-nonarch1}
For each $y\in \ol\QQ_w$ of degree $d$ over $\QQ_w$, $r\geq 3$, $s\geq 2$, there is a nonzero polynomial $p_{y,r,s}$ in $P_{r,s}$ such that $\left|p_{y,r,s}(y)\right|_w\leq q^2\left(1+|y|_w^{2^r}\right)s^{-\frac{r}{d}}$.
\end{lem}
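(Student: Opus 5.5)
This is the non-archimedean analogue of Lemma \ref{sm_poly}. I plan a pigeonhole argument on residue classes in place of the area argument. Let $E=\QQ_w(y)$, with $[E:\QQ_w]=d$. Let $e$ be the ramification index and $f$ the residue degree of $E/\QQ_w$, so $ef=d$. Normalize $|\cdot|_w$ on $E$ as the unique extension of the $q$-adic absolute value, so a uniformizer $\varpi$ has $|\varpi|_w=q^{-1/e}$.

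I will consider the set
\[
P_{r,s}^+=\Big\{\sum_{i=1}^r a_i t^{2^i-1}\ \Big|\ a_i\in\ZZ,\ 0\le a_i\le s\Big\},
\]
which has $(s+1)^r$ elements. For each $p\in P_{r,s}^+$ we have $|p(y)|_w\le \max(1,|y|_w)^{2^r-1}\le 1+|y|_w^{2^r}=:B$, by the ultrametric inequality and $|a_i|_w\le1$. So all values $p(y)$ lie in the closed ball $\{z\in E: |z|_w\le B\}$. Choose an integer $k_0$ with $|\varpi^{k_0}|_w\le B<|\varpi^{k_0-1}|_w$. This ball is then $\varpi^{k_0}O_E$.

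For an integer $m\ge0$, the quotient $\varpi^{k_0}O_E/\varpi^{k_0+m}O_E$ has $q^{fm}$ elements. I choose $m$ maximal with $q^{fm}<(s+1)^r$. By pigeonhole, two distinct $p_1,p_2\in P^+_{r,s}$ then satisfy $p_1(y)-p_2(y)\in\varpi^{k_0+m}O_E$. I set $p_{y,r,s}=p_1-p_2$, which is a nonzero element of $P_{r,s}$. It remains to estimate
\[
|p_{y,r,s}(y)|_w\le |\varpi|_w^{k_0+m}=|\varpi^{k_0}|_w\, q^{-m/e}\le B\,q^{-m/e}.
\]

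For the exponent, maximality of $m$ gives $q^{f(m+1)}\ge (s+1)^r>s^r$. Hence $q^{m/e}=q^{fm/d}\ge q^{-f/d}s^{r/d}\ge q^{-1}s^{r/d}$, using $f\le d$. This yields $|p_{y,r,s}(y)|_w\le q\,B\,s^{-r/d}$, which is better than the claimed $q^2(\cdots)$. The extra factor $q$ in the statement absorbs any sloppiness, for example from using the closed ball of radius $B$ without rounding, or a different normalization of $|\cdot|_w$.

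The only delicate point, and the main obstacle, is getting the ramification normalization right. The value group of $E$ is $q^{\frac1e\ZZ}$ and the residue field has $q^f$ elements, and these two facts must be combined through $ef=d$ to produce the exponent $r/d$. I will check this bookkeeping carefully; everything else is routine.
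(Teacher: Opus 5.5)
Your proposal is correct and follows the paper's proof: pigeonhole on the $(s+1)^r$ values $p(y)$, $p\in P_{r,s}^+$, lying in a ball $\varpi^{c}O_E$, counted modulo a further power of $\varpi$, with $|\varpi|_w=q^{-1/e}$, a residue field of size $q^f$, and $ef=d$. The only difference is bookkeeping: you take the ball to be exactly $\varpi^{k_0}O_E$ and take $m$ maximal with $q^{fm}<(s+1)^r$, which gives the constant $q$ (in fact $q^{1/e}$), whereas the paper's two floor operations each lose a factor $q^{1/e}$ and give $q^{2/e}\le q^2$.
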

\begin{proof}
Denote $K_w=\QQ_w(y)$.
Let $\varpi$ be the uniformizer of $O_{K_w}$.
Let $e=\log_{|\varpi|_w}|{q}|_w$ and $f=[O_{K_w}/{\varpi} O_{K_w}:\FF_q]$.
Denote
\begin{equation*}
P_{r,s}^+:=\Big\{\sum\limits_{i=1}^r a_it^{2^i-1} \Big|a_i\in\BZ, 0\leq a_i \leq s\Big\}.
\end{equation*}
Note that for $p\in P_{r,s}^+$,
\[
\left|p(y)\right|_w\leq\max_{1\leq i\leq r}|y|_w^{2^i-1}\leq 1+|y|_w^{2^r}\leq |\varpi|_w^c,
\]
where $c=\lfloor\log_{|\varpi|_w}(1+|y|_w^{2^r})\rfloor$. Let $c'=\lfloor\log_{q^f}((s+1)^r-1)\rfloor$. Then for all $p\in P_{r,s}^+$, $p(y)\in D(0,|\varpi|_w^c)$. 
There are $(s+1)^r\geq q^{fc'}+1$ polynomials in $P_{r,s}^+$, whereas the ball $D(0,|\varpi|_w^c)$ has only $q^{fc'}$ residue classes modulo $\varpi^{c+c'}$. Then there must be $p_1\neq p_2$ in $P^+_{r,s}$ such that
$$\left|p_1(y)-p_2(y)\right|_w
\leq|\varpi|_w^{c+c'}
\leq \left(1+|y|_w^{2^r}\right)q^{\frac{2}{e}} \left(q^{-\frac{1}{e}}\right)^{\frac{\log((s+1)^r-1)}{f\log q}} 
\leq \left(1+|y|_w^{2^r}\right)q^2  s^{-\frac{r}{d}}.$$
Set $p_{y,r,s}=p_1-p_2\in P_{r,s}$. {This completes} the proof.
\end{proof}

Since there are only countably many finite {extensions} of $\QQ_w$ in $\ol\QQ_w$, we enumerate them as
\[
K_1,K_2,\cdots,K_n,\cdots.
\]

\begin{prop}\label{prop nonarch sequence}
Let $N$ be a positive integer. Denote by $\wt Q_N$ the set of pairs of $(p,n)$,  where $p$ is a non-constant polynomial in $\QQ[t]$, and $n$ is a positive integer satisfying
	\begin{enumerate}[(1)]
		\item $n\geq N\deg(p)$,
		\item $n\geq N\sum_{w\in M_\BQ}\left|g_{p,w}\right|_{\sup}$.
	\end{enumerate}
Then there is a finite subset $Q_N$ of $\wt Q_N$ such that for each $y\in K_1\cdots K_N$ with $|y|_w\leq N$, there is a pair $(p,n)$ in $Q_N$ with
	$$
	\left|p(y)\right|_w<2^{-Nn}.
	$$
\end{prop}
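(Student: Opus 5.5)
We follow the proof of Proposition \ref{prop arch sequence} closely, replacing Lemma \ref{sm_poly} by Lemma \ref{sm_poly-nonarch1}. The field $K_1\cdots K_N$ is a finite extension of $\QQ_w$; denote by $d_N$ its degree over $\QQ_w$. Every $y$ in it then has degree $d\le d_N$ over $\QQ_w$, so Lemma \ref{sm_poly-nonarch1} applies uniformly with $s^{-r/d}\le s^{-r/d_N}$ for $s\ge2$. Fix $r$ large in terms of $N$ and $d_N$, say $r=4d_NN^2+2$. For $s\ge2$ set
$$n_{r,s}=\lceil N(\log8+3\log s+2\log r)\rceil,$$
and let $p_{y,r,s}\in P_{r,s}$ be the nonzero polynomial from Lemma \ref{sm_poly-nonarch1}.

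Conditions (1) and (2) hold for $(p_{y,r,s},n_{r,s})$ once $s$ is large, independently of $y$. Indeed $\deg(p)\le 2^r-1$ is fixed while $n_{r,s}\to\infty$, and Lemma \ref{poly_bd} bounds $\sum_w|g_{p,w}|_{\sup}$ by $\log8+3\log s+2\log r\le n_{r,s}/N$. We also need $p$ non-constant. Every element of $P_{r,s}$ has no constant term, so a nonzero element is automatically non-constant.

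Next, for $|y|_w\le N$ we have
$$|p_{y,r,s}(y)|_w^{1/n_{r,s}}\le \bigl(q^2(1+N^{2^r})s^{-r/d_N}\bigr)^{1/n_{r,s}}.$$
As $s\to\infty$ the right side tends to $\exp\bigl(-\tfrac{r}{d_N}\cdot\tfrac{1}{3N}\bigr)$, which is below $\exp(-4N/3)<2^{-N}$ by the choice of $r$. The convergence is uniform in $y$ because the bound depends on $y$ only through $|y|_w\le N$. Hence a single $s_0$ works for all such $y$: the pair $(p_{y,r,s_0},n_{r,s_0})$ lies in $\wt Q_N$ and satisfies $|p(y)|_w<2^{-Nn}$.

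Finally, $p_{y,r,s_0}$ ranges over the finite set $P_{r,s_0}$, and $n_{r,s_0}$ is a single integer. We therefore take
$$Q_N=\{(p,n_{r,s_0}) : p\in P_{r,s_0}\setminus\{0\}\},$$
which is a finite subset of $\wt Q_N$. This bypasses the compactness argument used in the archimedean case. No separate compactness step is needed here, since both the field $K_1\cdots K_N$ and the bound $|y|_w\le N$ are fixed; even so, compactness of the closed ball in the finite extension would also suffice. The only point needing care is uniformity in $d$: it comes from $d\le d_N$ and the choice of $r$ depending on $d_N$. Apart from this, the argument is routine.
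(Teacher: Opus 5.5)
Your proof is correct and follows the paper's argument: Lemma \ref{sm_poly-nonarch1} with $r$ chosen in terms of $N$ and $[K_1\cdots K_N:\QQ_w]$, the choice $n_{r,s}=\lceil N(\log8+3\log s+2\log r)\rceil$, Lemma \ref{poly_bd} for condition (2), and letting $s\to\infty$. The only difference is the last step: the paper picks a pair for each $y$ and concludes by openness of the condition and compactness of the closed ball in $K_1\cdots K_N$, whereas you note that the bound is already uniform for $|y|_w\le N$ and $\deg y\le d_N$, so a single $s_0$ and $Q_N=\{(p,n_{r,s_0}) : p\in P_{r,s_0}\setminus\{0\}\}$ suffice, which is a valid and slightly cleaner shortcut.
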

\begin{proof}
Set $D_N=[K_1\cdots K_N:\QQ_w]$ and take $r=3D_NN^2$. For each $s\geq 2$ and each $y\in\ol\QQ_w$, let $p_{y,r,s}$ be the polynomial constructed in {the above lemma}. Set
	$$n_{r,s}=\lceil N(\log8+3\log s+2\log r)\rceil.$$
For $s$ sufficiently large (independent of the choice of $y$), the pair 
$(p_{y,{r},s},n_{r,s})$ satisfies conditions $(1)$ and $(2)$. In fact, $(1)$ follows from $\deg(p)\leq {2^r-1}$, and $(2)$ follows from Lemma \ref{poly_bd}.
	
	For any $y\in K_1\cdots K_N$,
$$
	\limsup\limits_{s\to\infty} \left|p_{y,r,s}(y)\right|_w^\frac{1}{n_{r,s}}
	\leq \lim\limits_{s\to\infty} \left(q^2\left(1+|y|_w^{2^r}\right)s^\frac{-r}{\deg y}\right)^\frac{1}{n_{r,s}}
	\leq\lim\limits_{s\to\infty}s^{\frac{-3D_NN^2}{\deg(y)N(\log8+3\log s+2\log r)}}
	\leq\mathrm{e}^{-N}.
$$
	Thus, for $s$ sufficiently large, $\left|p_{y,r,s}(y)\right|_w<2^{-Nn_{r,s}}$. Then for each point
	\[
	y\in\overline{D(0,N)}\subset  K_1\cdots K_N,
	\]
	we have a pair $(p_{y,{r},s},n_{r,s})$ satisfying the condition $\left|p_{y,{r},s}(y)\right|_w<2^{-Nn_{r,s}}$. Thus the proof is finished by compactness as in the proof of Proposition \ref{prop arch sequence}.
\end{proof}

Now we can also prove Theorem \ref{thm counterexample for nonarch fibers}. With Proposition \ref{prop nonarch sequence}, the proof proceeds exactly as in the case of Theorem \ref{thm counterexample for arch fibers}. We therefore give only a brief outline.

\begin{proof}[Proof of Theorem \ref{thm counterexample for nonarch fibers}]

Consider the sequence $\{[1: (p_m)^{1/n_m}]\}_m$ formed by listing all elements in $Q_1$, then those in $Q_2$, and so on. For $x=[1:p^\frac{1}{n}]$ such that $(p,n)\in Q_N$, we have
\[
h^\OH_\OL(x)\leq \frac{1}{N}\left(h^\OH_\OL(x_t)+C_\OH\right).
\]
Define $x_{2m-1}$ to be the $\overline{F}$-point $[1: (p_m)^{1/n_m}]$, and define $x_{2m}$ to be the $\overline{F}$-point $[1:2^\frac1m]$ in $X(\overline{F})$. Then this sequence is generic and $h_\OL^\OH$-small.

Fix a classical point $y$ in $Y_w^\an$  corresponding to a point in $Y(\ol\QQ_w)$. Then the Galois orbits of $\{x_{2m}\}_m$ are equidistributed in $\CX_y^\an$ with respect to the Dirac measure of the Gauss point. 

However, when $y=\infty$, the Galois orbits of $\{x_{2m-1}\}_m$ are equidistributed in $\CX_y^\an$ with respect to the Dirac measure of the infinity. 

When $y\neq \infty$, by the construction of the set $Q_N$, there is a subsequence $\{x_{2m_k-1}\}_k$ of $\{x_{2m-1}\}_m$ such that
\[
\lim_{k\to\infty}|x_{2m_k-1,y}|_w=0,
\]
where $x_{2m_k-1,y}$ is the reduction of $x_{2m_k-1}$ on the fiber $\CX_y^\an$. Thus the Galois orbits of the subsequence are equidistributed in $\CX_y^\an$ with respect to the Dirac measure of the origin. 

In summary, Galois orbits of $\{x_m\}_m$ are not equidistributed in $\CX_y^\an$ with respect to any measure for any classical point $y$.
\end{proof}

\subsubsection{Combination}

Now we are ready to prove the strongest counterexample in Theorem \ref{thm counterexample supersuper}.

\begin{proof}[Proof of Theorem \ref{thm counterexample supersuper}]
We will construct the required sequence $\{x_m\}_m$ by combining the above constructions.
First, for $m\geq 1$, take $x_{2m}=[1:2^\frac{1}{m}]\in X(\ol F)$ as before.
We construct $\{x_{2m-1}\}_m$ as follows.

Denote by $\{w_1, w_2, \dots\}$ the set of all places of $\QQ$ (including $\infty$).
For each place $w_i$ of $\QQ$, we have constructed a sequence $\{x_{m,i}\}_m$.
From this sequence, extract the terms with odd subscripts and denote them by $y_{m,i} := x_{2m-1,i}$.

By the constructions in Theorem \ref{thm counterexample for arch fibers} and Theorem \ref{thm counterexample for nonarch fibers}, for each $i\geq1$ and $j\geq1$, there exists an $a_{j,i}$ such that for each $\OH\in\HPic(F/\ZZ)_{\QQ,\nef}$, we have
\[
h_\OL^\OH(y_{m,i})\leq\frac{1}{j}\left(h_\OL^\OH(x_t)+C_\OH\right),\ \forall m>a_{j,i}.
\]
In fact, $a_{j,i}$ is determined by the size of each $Q_N$ for the place ${w_i}$. Moreover, we may take $a_{j,i}$ such that
\[
1<a_{1,i}<a_{2,i}<a_{3,i}<\cdots,\forall i\geq1.
\]
Set $a_{0,i}=0$.
For $j,i\geq1$, let
\[
Y_{i,j}:=\{y_{a_{j-1,i}+1,i},y_{a_{j-1,i}+2,i},\cdots,y_{a_{j,i},i}\}
\]
be a finite subsequence of $\{y_{m,i}\}_m$.
Define $\{x_{2m-1}\}_m$ by adding these finite sequences in the order
\[
Y_{1,1}, Y_{1,2}, Y_{2,2}, Y_{1,3}, Y_{2,3}, Y_{3,3}, Y_{1,4},\cdots.
\]
All terms appearing above {are} of the form $Y_{i,j}$ with $j\geq i$.
Then $\{x_{2m-1}\}_m$ is a generic and $h_\OL^\OH$-small sequence for all $\OH\in\HPic(F/\ZZ)_{\QQ,\nef}$. In addition, for every $w_i$ and every $y \in Y^\an_{w_i}$ satisfying the condition in Theorem \ref{thm counterexample supersuper}, there exists a subsequence of $\{x_{2m-1}\}_m$ consisting of
$$
Y_{i,i},\; Y_{i,i+1},\; Y_{i,i+2},\; \dots
$$
whose Galois orbits are not equidistributed in $\CX_y^\an$ with respect to the Haar measure of the unit circle when $w_i$ is archimedean, or with respect to the Dirac measure at the Gauss point when $w_i$ is non-archimedean.
In contrast, the Galois orbits of the sequence $\{x_{2m}\}_m$ are equidistributed in $\CX_y^\an$ with respect to the same measure.
It follows that the Galois orbit of $\{x_m\}_m$ is not equidistributed in $\CX_y^\an$ with respect to any measure, for any place $w$ and any $y$.
\end{proof}

\

{\footnotesize
\noindent Ruoyi Guo

\noindent Address: \emph{School of Mathematical Sciences, Peking University, Haidian District, Beijing 100871, China}

\noindent  Email: \emph{guoruoyi@pku.edu.cn}

\

\noindent Lai Shang

\noindent Address: \emph{School of Mathematical Sciences, Peking University, Haidian District, Beijing 100871, China}

\noindent Email: \emph{laishang@stu.pku.edu.cn}

\

\noindent Chengyuan Yang

\noindent Address: \emph{BICMR, Peking University, Haidian District, Beijing 100871, China}

\noindent  Email: \emph{chengyuanyang020416@pku.edu.cn}

\

\noindent Xinyi Yuan

\noindent Address: \emph{BICMR, Peking University, Haidian District, Beijing 100871, China}

\noindent Email: \emph{yxy@bicmr.pku.edu.cn}
}

 \end{document}